\title{Global convergence towards pushed travelling fronts for parabolic gradient systems}
\author{\RamonsName{} and \myName}
\begin{document}
\hypersetup{pageanchor=false} % To avoid warnings "destination with same identifier"
\maketitle
\nnfootnote{%
\emph{2020 Mathematics Subject Classification:} 35B38, 35B40, 35K57.\\%
\emph{Key words and phrases:} parabolic gradient system, maximal linear invasion speed (linear spreading speed), maximal nonlinear invasion speed, variational speed, pushed travelling front, Poincaré inequality, global convergence.
}
\begin{abstract}
This article addresses the issue of global convergence towards pushed travelling fronts for solutions of parabolic systems of the form
\[
u_t = - \nabla V(u) + u_{xx}
\,,
\]
where % the spatial domain is the whole real line, the state variable $u$ is multidimensional, and 
the potential $V$ is coercive at infinity. It is proved that, if an initial condition $x\mapsto u(x,t=0)$ approaches, rapidly enough, a critical point $e$ of $V$ to the right end of space, and if, for some speed $c_0$ greater than the linear spreading speed associated with $e$, the energy of this initial condition in a frame travelling at the speed $c_0$ is negative --- with symbols, 
\[
\int_{\rr} e^{c_0 x}\left(\frac{1}{2} u_x(x,0)^2 + V\bigl(u(x,0)\bigr)- V(e)\right)\, dx < 0 
\,,
\]
then the corresponding solution invades $e$ at a speed $c$ greater than $c_0$, and approaches, around the leading edge and as time goes to $+\infty$, profiles of pushed fronts (in most cases a single one) travelling at the speed $c$. A necessary and sufficient condition for the existence of pushed fronts invading a critical point at a speed greater than its linear spreading speed follows as a corollary. In the absence of maximum principle, the arguments are purely variational. 
The key ingredient is a Poincaré inequality showing that, in frames travelling at speeds exceeding the linear spreading speed, the variational landscape does not differ much from the case where the invaded equilibrium $e$ is stable. 
The proof is notably inspired by ideas and techniques introduced by Th. Gallay and R. Joly, and subsequently used by C. Luo, in the setting of nonlinear damped wave equations. 
\end{abstract}
\thispagestyle{empty}
\pagestyle{empty}
\newpage%
\tableofcontents
\newpage
\hypersetup{pageanchor=true} % To avoid warnings "destination with same identifier"
\pagestyle{plain}
\setcounter{page}{1}
\section{Introduction and statements of the main results}
\label{sec:introduction}
\subsection{System, semi-flow}
\label{subsec:system_semi_flow}
Let us consider the nonlinear parabolic system
\begin{equation}
\label{parabolic_system}
u_t=-\nabla V (u) + u_{xx}
\,,
\end{equation}
where the time variable $t$ and the space variable $x$ are real, the spatial domain is the whole real line, the function $(x,t)\mapsto u(x,t)$ takes its values in $\rr^d$ with $d$ a positive integer, and the nonlinearity is the gradient of a \emph{potential} function $V:\rr^d\to\rr$, of class $\ccc^2$, and \emph{coercive at infinity} in the following sense:
\begin{gather} % Warning when using {equation}
\tag{$\text{H}_\text{coerc}$}
\lim_{R\to+\infty}\quad  \inf_{\abs{u}\ge R}\ \frac{u\cdot \nabla V(u)}{\abs{u}^2} >0
\,.
\label{hyp_coerc}
\end{gather}
%\todo[inline]{Other coercitity assumption? (see Muratov's papers). Or just assume that the solution under consideration is bounded? (to avoid ``choosing'' a coercivity assumption).}
The uniformly local Sobolev space $\HoneulofR$ (see \cref{subsec:global_existence_solutions_regularization} for references) provides a natural framework for the study of system \cref{parabolic_system} on the whole real line. This system defines a local semi-flow in this space, and according the assumption \cref{hyp_coerc}, this semi-flow is actually global (\cref{prop:attr_ball} below). Let us denote by $(S_t)_{t\ge0}$ this semi-flow. In the following, a \emph{solution of system \cref{parabolic_system}} will refer to a function 
\[
\rr\times[0,+\infty)\to\rr^d\,, \quad (x,t)\mapsto u(x,t)
\,,
\]
such that the function $u_0:x\mapsto u(x,t=0)$ (initial condition) is in $\HoneulofR$ and $u(\cdot,t)$ equals $(S_t u_0)(\cdot)$ for every nonnegative time $t$. 
\subsection{Invaded critical point}
\label{subsec:invaded_critical_point}
According to assumption \cref{hyp_coerc}, we may consider the quantity $\Vmin$ defined as
\begin{equation}
\label{def_Vmin}
\Vmin = \min_{u\in\rr^d}V(u)
\,.
\end{equation}
Let us consider a point $e$ of $\rr^d$, and let us assume that:
\begin{gather} % Warning when using {equation}
\tag{$\text{H}_{\text{crit},\,e}$}
\nabla V(e)=0
\quad\text{and}\quad
V(e)=0
\quad\text{and}\quad
\Vmin<0
\,.
\label{hyp_crit_point}
\end{gather}
In other words, $e$ is assumed to be a critical point which is \emph{not} a global minimum of $V$, and $V$ is normalized so that it takes the value $0$ at $e$. The aim of this paper is mainly to address the case where $e$ is \emph{not} a nondegenerate minimum point of $V$; that is, if $D^2V(u)$ denotes the Hessian matrix of $V$ at some point $u$ of $\rr^d$ and $\sigma\bigl(D^2V(u)\bigr)$ denotes the spectrum of this Hessian matrix, the case where 
\begin{equation}
\label{hyp_e_not_local_min_point_of_V}
\min\Bigl(\sigma\bigl(D^2V(e)\bigr)\Bigr)\le 0
\,.
\end{equation}
Indeed, if $e$ \emph{is} a local minimum point of $V$, global convergence towards travelling fronts invading $e$ can be addressed by differing techniques leading to stronger results \cite{Risler_globCVTravFronts_2008,Risler_globalBehaviour_2016}. As a consequence, the reader may assume that, in addition to assumption \cref{hyp_crit_point}, inequality \cref{hyp_e_not_local_min_point_of_V} holds, even if this inequality will nowhere be formally required. 
\subsection{Travelling waves/fronts}
\subsubsection{Definition}
Let $c$ be a \emph{positive} real quantity. A \emph{wave travelling at the speed $c$} for system \cref{parabolic_system} is a function of the form $(x,t)\mapsto \phi(x-ct)$, where $\phi$ is a solution (with values in $\rr^d$) of the second order differential system
\begin{equation}
\label{syst_trav_front_order_2}
\phi''=-c\phi'+\nabla V(\phi) 
\,,
\end{equation}
which is equivalent to the first order differential system 
\begin{equation}
\label{syst_trav_front_order_1}
\begin{pmatrix} \phi' \\ \psi' \end{pmatrix} = \begin{pmatrix}\psi \\ - c \psi + \nabla V(\phi)  \end{pmatrix}
\,.
\end{equation}
The function $\phi$ is called the \emph{profile} of the travelling wave. Notice that the solutions of systems \cref{syst_trav_front_order_2,syst_trav_front_order_1} may blow up in finite time, so that a travelling wave is, formally, not necessarily a solution of system \cref{parabolic_system} as defined in \cref{subsec:system_semi_flow}; its profile $\phi$ is a function $(T_-,T_+)\to\rr^d$, where $(T_-,T_+)$ is the (maximal) time of existence of $\phi$ as a solution of systems \cref{syst_trav_front_order_2,syst_trav_front_order_1} (thus $T_-$ is in $\{-\infty\}\cup\rr$ and $T_+$ is in $\rr\cup\{+\infty\}$, and the travelling wave $(x,t)\mapsto \phi(x-ct)$ itself is defined accordingly).

Let $\SigmaCrit(V)$ denote the set of critical points of $V$; with symbols, 
\[
\SigmaCrit(V) = \{u\in\rr^d: \nabla V(u) = 0\}
\,.
\] 
\begin{definition}[travelling wave/front invading $e$]
\label{def:travelling_wave_front_invading_e_speed_c}
Let us call \emph{travelling wave invading $e$ at the speed $c$} a wave travelling at the speed $c$ such that its profile $\phi$ is \emph{nonconstant}, defined on a maximal interval of the form $(T_-,+\infty)$, and satisfies:
\[
\phi(\xi)\xrightarrow[\xi\to +\infty]{} e
\,;
\]
and let us call \emph{front} this wave if $\phi$ is defined up to $-\infty$ and if there exists a negative quantity $V_{-\infty}$ such that the following limit holds:
\[
\dist\Bigl(\phi(\xi),\SigmaCrit(V)\cap V^{-1}\bigl(\{V_{-\infty}\}\bigr)\Bigr)\xrightarrow[\xi\to -\infty]{} 0
\,.
\]
Finally, let us call \emph{travelling wave (front) invading $e$} a travelling wave (front) invading $e$ at some positive speed. 
\end{definition}
\begin{remarks}
\begin{enumerate}
\item As stated in conclusion \cref{item:lem_asymptotics_at_the_two_ends_of_space_left_end} of \cref{lem:asymptotics_at_the_two_ends_of_space}, in order a travelling wave (invading $e$ at the speed $c$) to be a travelling \emph{front} in the sense of \cref{def:travelling_wave_front_invading_e_speed_c}, it is sufficient that its profile $\phi(\cdot)$ be defined up to $-\infty$ and bounded on $\rr$. 
\item Voluntarily, this definition a travelling front slightly differs from the usual one, since it does not require that $\phi(\xi)$ approach a single critical point of $V$ as $\xi$ goes to $-\infty$. However, in most cases (at least if the critical points of $V$ are isolated --- which is true for a generic potential $V$, or if $V$ is analytic, see for instance \cite{Haraux_someApplicationsLojasiewiczGradientIneq_2012}), then the profile of a travelling front in the sense of \cref{def:travelling_wave_front_invading_e_speed_c} \emph{does} approach a single critical point of $V$ to the left end of space. 
\end{enumerate}
\end{remarks}
\subsubsection{Linearization at the invaded critical point}
The linearization of the (equivalent) differential systems \cref{syst_trav_front_order_2,syst_trav_front_order_1} at the point $(e,0_{\rr^d})$ reads:
\begin{equation}
\label{syst_trav_front_order_1_2_linearized}
\phi'' = -c\phi' + D^2V(e)\cdot\phi
\quad\text{and}\quad
\frac{d}{d\xi}\begin{pmatrix}\phi \\ \psi\end{pmatrix} = \begin{pmatrix} 0 & I_d \\ D^2V(e) & -c I_d\end{pmatrix}\cdot\begin{pmatrix}\phi \\ \psi\end{pmatrix}
\,.
\end{equation}
For every real quantity $\mu$, let us consider the quantities $\lambda_{c,\pm}(\mu)$ defined as
\begin{equation}
\label{def_lambda_c_pm_of_mu}
\lambda_{c,\pm}(\mu) = \left\{
\begin{aligned}
-\frac{c}{2}\pm\sqrt{\frac{c^2}{4}+\mu} \quad\text{if}\quad -\frac{c^2}{4} \le \mu \,, \\
-\frac{c}{2}\pm i\sqrt{-\frac{c^2}{4}-\mu} \quad\text{if}\quad \mu \le -\frac{c^2}{4} \,,  
\end{aligned}
\right.
\end{equation}
see \cref{fig:eigenvalues}; these two quantities are the roots of the polynomial equation $\lambda^2 = -c\lambda +\mu$. Let $\mu_1,\dots,\mu_d$ denote the (real) eigenvalues of $D^2V(e)$, counted with algebraic multiplicity, and ordered, so that
\begin{equation}
\label{def_mu_1_mu_d}
\mu_1\le\dots\le\mu_d
\,.
\end{equation}
These quantities will also be referred to as the \emph{curvatures} of the potential $V$ at the critical point $e$. No assumption is made concerning their signs (see the remark following inequality \cref{hyp_e_not_local_min_point_of_V}). According to this notation, the eigenvalues of the linearized differential systems \cref{syst_trav_front_order_1_2_linearized} (counted with algebraic multiplicity) are the $2d$ quantities:
\begin{equation}
\label{def_sigma_c_2d_of_e}
\lambda_{c,-}(\mu_1),\,\lambda_{c,+}(\mu_1),\,\dots,\,\lambda_{c,-}(\mu_d),\,\lambda_{c,+}(\mu_d)
\,, 
\end{equation}
see \cref{fig:eigenvalues}. 
\begin{figure}[!htbp]
\centering
\includegraphics[width=.9\textwidth]{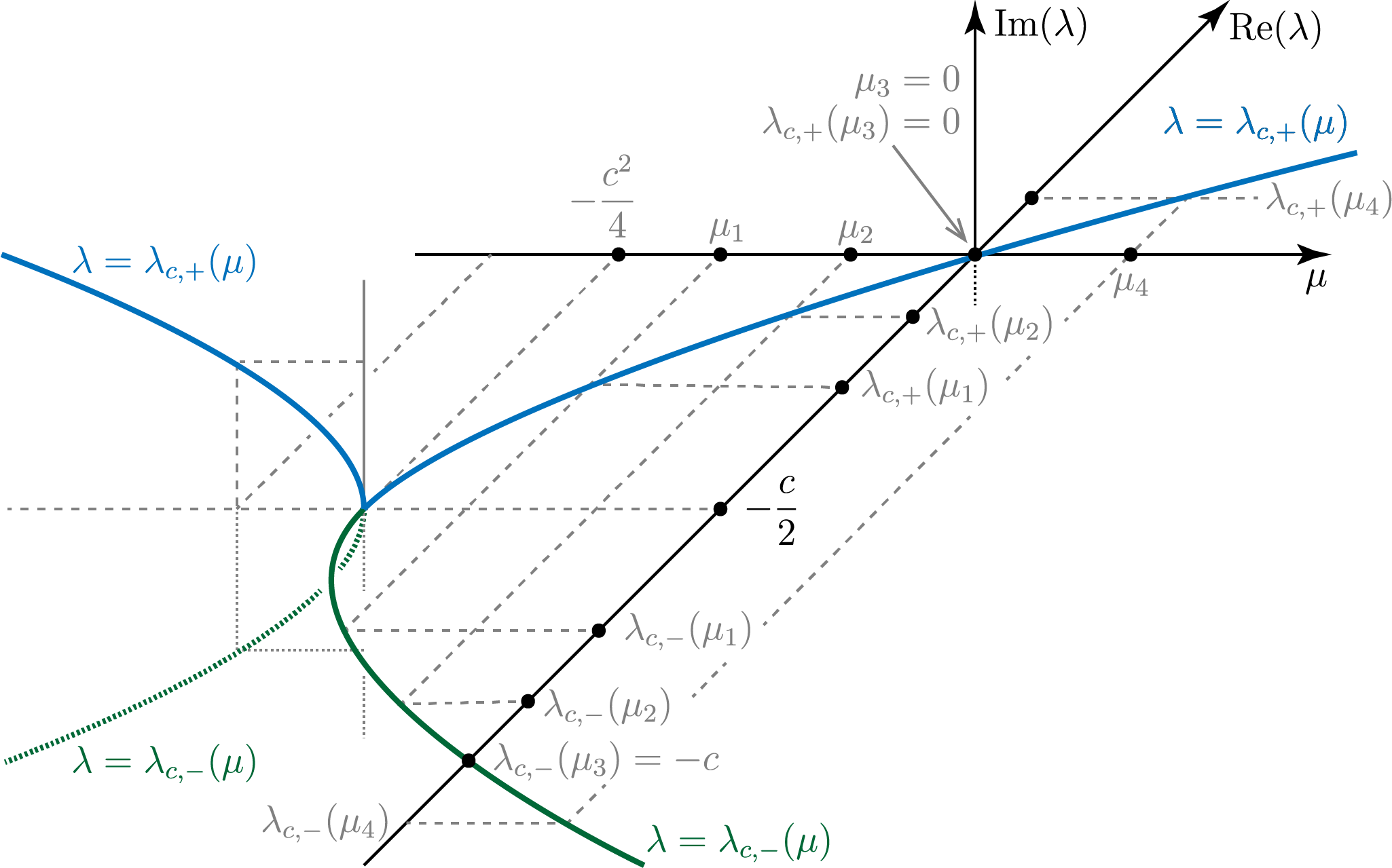}
\caption{Functions $\mu\mapsto\lambda_{c,\pm}(\mu)$ and eigenvalues of the linearized differential system \cref{syst_trav_front_order_1_2_linearized}. Here, as an example, the spectrum of $D^2V(e)$ is equal to $\{\mu_1,\dots,\mu_4\}$ with $-c^2/4 < \mu_1 < \mu_2 < \mu_3 = 0 < \mu_4$, but in the article $\mu_1$ is not necessarily nonpositive.}
\label{fig:eigenvalues}
\end{figure}
Let $(u_1,\dots,u_d)$ denote an orthonormal basis of $\rr^d$ such that, for every $j$ in $\{1,\dots,d\}$, $u_j$ is an eigenvector of $D^2V(e)$ for the eigenvalue $\mu_j$. Then, for every $j$ in $\{1,\dots,d\}$, the vectors
\begin{equation}
\label{eigenvectors}
\begin{pmatrix} u_j \\ \lambda_{c,-}(\mu_j) \, u_j\end{pmatrix}
\quad\text{and}\quad
\begin{pmatrix} u_j \\ \lambda_{c,+}(\mu_j) \, u_j\end{pmatrix}
\end{equation}
are eigenvectors of the linearized systems \cref{syst_trav_front_order_1_2_linearized}, for the eigenvalues $\lambda_{c,-}(\mu_j)$ and $\lambda_{c,+}(\mu_j)$, respectively. 
\subsubsection{Maximal linear invasion speed}
For every real quantity (curvature) $\mu$, let us consider the (nonnegative) quantity $\cLinNoIndex(\mu)$ defined as
\begin{equation}
\label{def_cLinNoIndex}
\cLinNoIndex(\mu) = 
\left\{
\begin{aligned}
2\sqrt{-\mu} = 2\sqrt{\abs{\mu}} \quad&\text{if}\quad \mu<0 \,,\\
0 \quad&\text{if}\quad \mu\ge 0\,,
\end{aligned}
\right.
\end{equation}
see \cref{fig:correspondence_mu_c}, and, for every $j$ in $\{1,\dots,d\}$, let us denote by $\cLin{j}$ the quantity $\cLinNoIndex(\mu_j)$. 
\begin{figure}[!htbp]
\centering
\includegraphics[width=.9\textwidth]{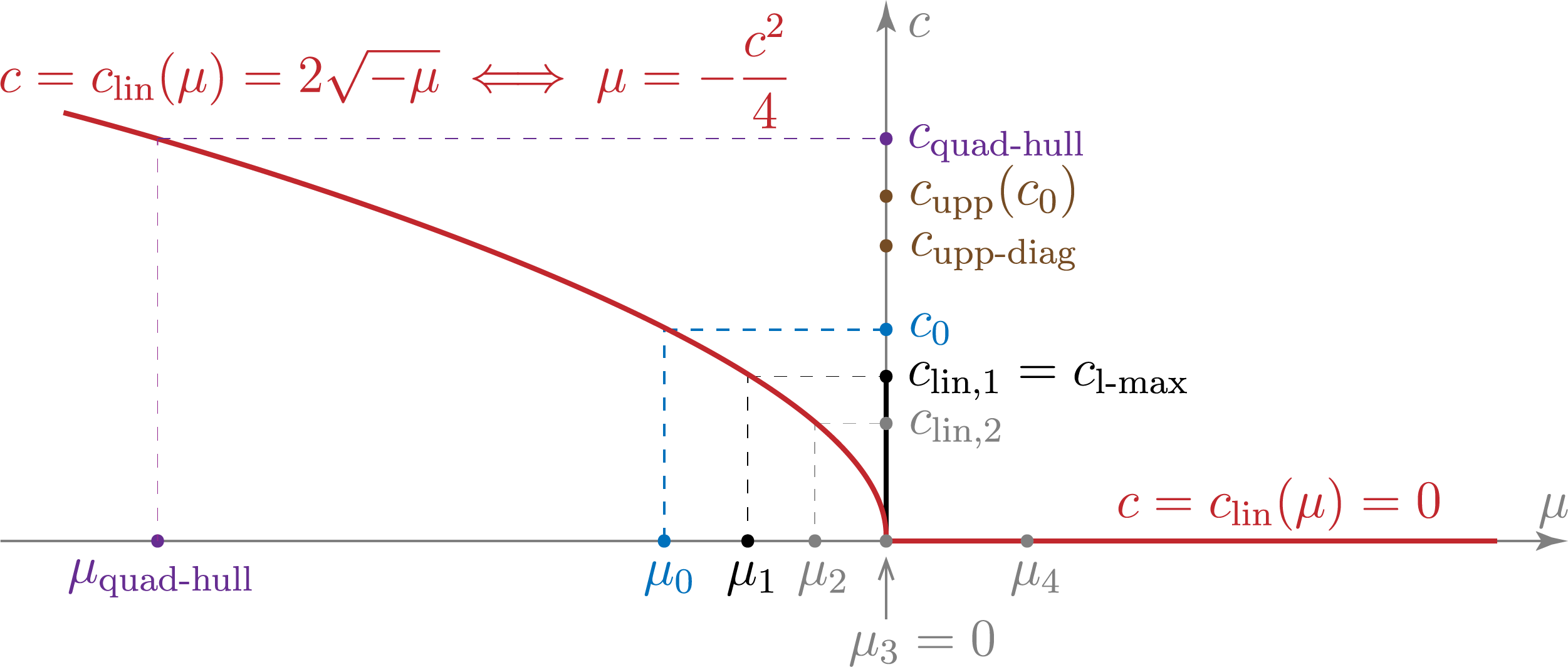}
\caption{Correspondence $\mu\mapsto\cLinNoIndex(\mu)$ between the curvatures of $V$ at $e$ and the corresponding linear invasion speeds. As on \cref{fig:correspondence_mu_c}, the spectrum of $D^2V(e)$ is equal to $\{\mu_1,\dots,\mu_4\}$ with $-c^2/4 < \mu_1 < \mu_2 < \mu_3 = 0 < \mu_4$. Only invasion at speeds greater than $\cLinMax$ are considered in this article. The quantity $c_0$ denotes a generic speed which is greater than (sometimes greater than or equal to) $\cLinMax$, and $\mu_0$ denotes its companion curvature. The quantities $\muQuadHull$ and $\cQuadHull$ will be introduced in \cref{subsubsec:lower_quadratic_hull}, and the quantities $\cUppDiag$ and $\cUpp(c_0)$ will be introduced in \cref{sec:upper_bound_speed_pushed_front}.}
\label{fig:correspondence_mu_c}
\end{figure}
\begin{definition}[maximal linear invasion speed]
\label{def:max_linear_invasion_speed}
Let us call \emph{maximal linear invasion speed} (associated with the critical point $e$) the (nonnegative) quantity $\cLinMax$ defined as
\begin{equation}
\label{def_max_linear_invasion_speed}
\cLinMax = \max(\cLin{1},\dots,\cLin{d})\,,
\quad\text{or equivalently}\quad
\cLinMax = \cLin{1}
\,.
\end{equation}
\end{definition}
Accordingly, the quantities $\cLin{1},\dots,\cLin{d}$ may be called the \emph{linear invasion speeds} associated with the eigenvalues $\mu_1,\dots,\mu_d$ of $D^2V(e)$, but only the maximal linear invasion speed $\cLinMax$ will play a significant role in the following. This quantity $\cLinMax$ is usually called \emph{linear spreading speed} in the literature, see for instance \cite{VanSaarloos_frontPropagationUnstableStates_2003}, and is referred as such in the abstract of this article. In the following only the denomination ``maximal linear invasion speed'' will be used to denote this quantity: the adjective ``maximal'' is relevant for systems, and the terms ``invasion/invaded/invading'', which fit with the phenomenon of propagation (to the right) into the state $e$ considered here, are ubiquitous. In addition, a ``maximal \emph{nonlinear} invasion speed'' $\cNonLinMax$ will be introduced in \cref{subsec:variational_structure_trave_frames}. According to the expression \cref{def_cLinNoIndex} of $\cLinNoIndex(\cdot)$, this maximal linear invasion speed $\cLinMax$ is nonnegative (but might vanish).

It follows from the expression \cref{def_lambda_c_pm_of_mu} of $\lambda_{c,\pm}(\cdot)$ that, for every $j$ in $\{1,\dots,d\}$, if 
\[
c>\cLin{j}
\,,
\quad\text{or equivalently if}\quad
-\frac{c^2}{4} < \mu_j
\,,
\]
then the eigenvalues $\lambda_{c,-}(\mu_j)$ and $\lambda_{c,+}(\mu_j)$ of the linear system \cref{syst_trav_front_order_1_2_linearized} are real and distinct, and the corresponding eigenvectors \cref{eigenvectors} are real (see \cref{fig:eigenvalues}).
\subsubsection{Pushed travelling waves and fronts invading a critical point}
Let us keep the previous notation, and let $\phi$ denote the profile of a wave invading the critical point $e$ at the speed $c$. According to the previous considerations, there must exist some nonpositive eigenvalue $\lambda$ among the quantities \cref{def_sigma_c_2d_of_e} such that
\begin{equation}
\label{def_steepness}
\frac{\ln\abs{\phi(\xi)-e}}{\xi}\to \lambda
\quad\text{as}\quad 
\xi\to+\infty
\,.
\end{equation}
\begin{definition}[steepness of a travelling wave invading $e$]
\label{def:steepness}
Let us call \emph{steepness} of the wave under consideration the quantity $\lambda$ defined by the limit \cref{def_steepness}. 
\end{definition}
\begin{definition}[pushed travelling wave/front invading $e$]
\label{def:pushed_travelling_wave_front}
A travelling wave (front) invading the critical point $e$ at some positive speed $c$ is said to be \emph{pushed} if its steepness $\lambda$ (\cref{def:steepness}) satisfies the inequality
\[
\lambda<-\frac{c}{2}\,,
\quad\text{or equivalently}\quad
\frac{c}{2}<\abs{\lambda}
\,,
\]
or equivalently if the following limit holds:
\[
\phi(\xi) -e  = o\bigl(e^{-\frac{1}{2}c\xi}\bigr)
\quad\text{as}\quad
\xi\to+\infty
\,.
\]
\end{definition}
\begin{remark}
As shown on \cref{fig:phase_portrait}, if the least eigenvalue $\mu_1$ of $D^2V(e)$ is negative then the pushed (or ``steep'') character is non-generic among the solutions of the differential system \cref{syst_trav_front_order_1} (with $c$ greater than $\cLinMax$) approaching $e$ at the right end of space. However, when pushed travelling fronts exist, these fronts are approached by solutions of the parabolic system \cref{parabolic_system} for a ``large and relevant'' set of initial conditions, at least if their speed is greater than the maximal linear invasion speed $\cLinMax$ (see for instance \cite{Rothe_convergenceToPushedFronts_1981} in the scalar case $d$ equals $1$, and \cref{thm:main} below --- the main result of this paper --- in the vector case $d$ larger than $1$). On the other hand, if $\mu_1$ is nonnegative (in particular if $e$ is a local minimum point of $V$), then all waves (fronts) invading $e$ at a positive speed are ``pushed'' in the sense of \cref{def:pushed_travelling_wave_front}, although usually not qualified as such. 
\end{remark}
\begin{figure}[!htbp]
\centering
\includegraphics[width=.4\textwidth]{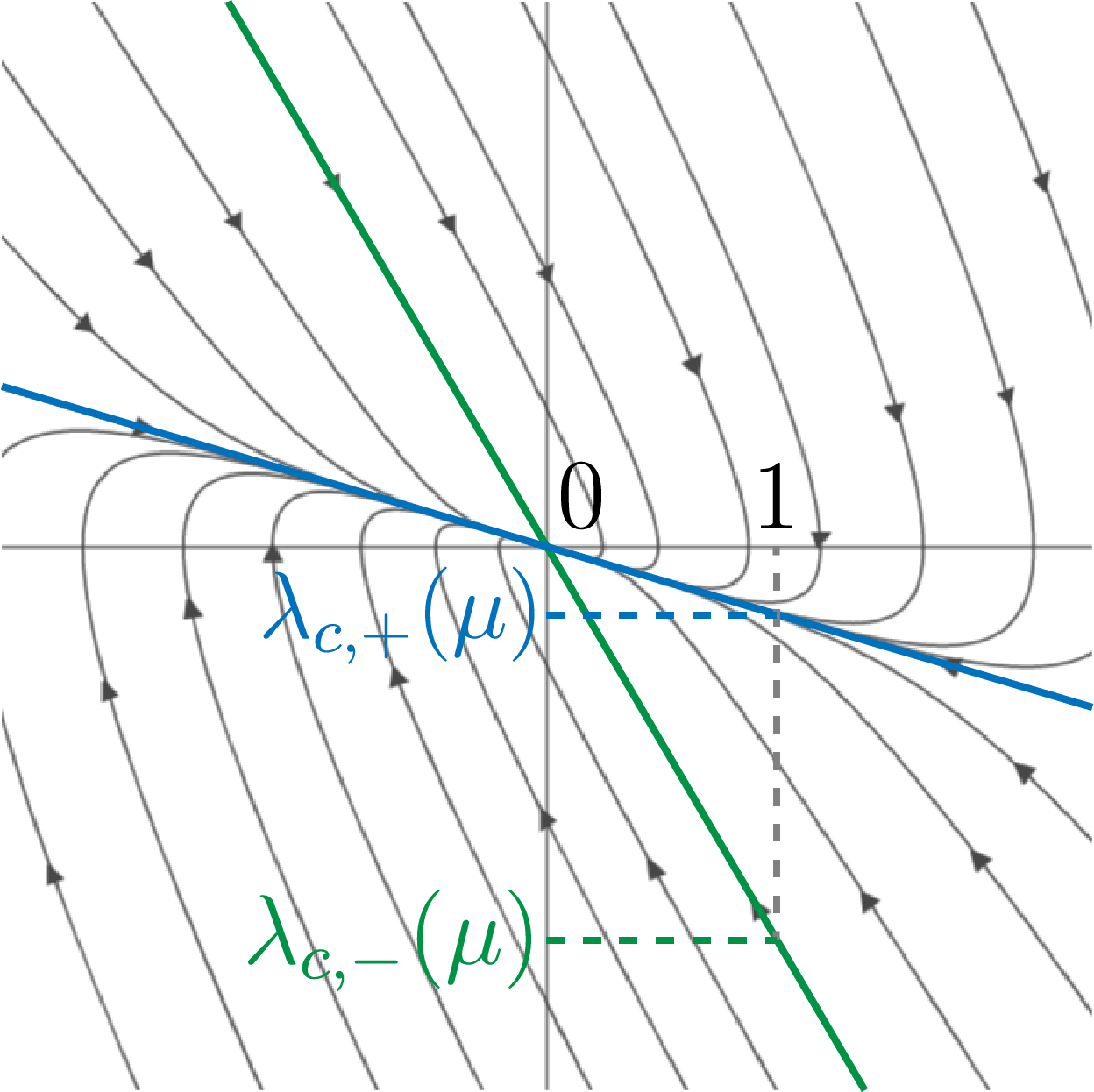}
\caption{Phase portrait of the linearized differential system \cref{syst_trav_front_order_1_2_linearized} for $d=1$, $c=2$, $D^2V(e)=\mu=-1/2$. The (real) eigenvalues are $\lambda_{c,\pm}(-1/2)=-1\pm 1/\sqrt{2}$. In this case, a pushed travelling wave invading $e$ corresponds to a solution of the (nonlinear) differential system \cref{syst_trav_front_order_1} approaching the origin tangentially to the eigenspace corresponding to the ``most stable'' eigenvalue $\lambda_{c,-}(-1/2)$ (in green on the figure). When these travelling waves are travelling fronts, they are sometimes qualified as \emph{nonlinear} while the fronts with profiles approaching the origin tangentially to the eigenspace corresponding to the ``least stable'' eigenvalue $\lambda_{c,+}(-1/2)$ (in blue of the figure) are qualified as \emph{linear}, \cite{CrossHohenberg_patternFormationOutsideEqu_1993,VanSaarloos_frontPropagationUnstableStates_2003}.}
\label{fig:phase_portrait}
\end{figure}
\subsubsection{Genericity/counting arguments related to the profiles of pushed travelling fronts}
\label{subsubsec:genericity_counting_arguments}
The following statements are proved in \cite{JolyOliverBRisler_genericTransvPulledPushedTFParabGradSyst_2023} (see also \cite{JolyRisler_genericTransversalityTravStandFrontsPulses_2023}): for a generic potential $V$, 
\begin{enumerate}
\item the set of profiles of pushed travelling fronts is discrete, 
\item and, for every pushed travelling front, 
\begin{enumerate}
\item the speed of the front is greater than the maximal linear invasion speed of the critical point invaded by the front, 
\item the profile of the front approaches a single ``invading'' critical point at the left end of space (compare with \cref{def:travelling_wave_front_invading_e_speed_c}), this invading critical point is a nondegenerate local minimum point of $V$,
\item the profile of the front approaches the invaded critical point at the right end of space (the invading critical point at the left end of space) tangentially to the least eigenvalue of the Hessian $D^2V(\cdot)$ at this invaded (invading) critical point. 
\label{item:conclusion_genericity_approach_through_least_eigenvalue}
\end{enumerate}
\end{enumerate}
A rough justification of these statements is provided by the following counting arguments. Let us recall that the Morse index of a nondegenerate critical point of $V$ is the number of negative eigenvalues of the Hessian $D^2V$ at this critical point. If $e_-$ and $e_+$ are two nondegenerate critical points of $V$ and $c$ is a positive speed, 
\begin{itemize}
\item the dimension of the unstable manifold of the equilibrium $(e_-,0_{\rr^d})$ for the differential system \cref{syst_trav_front_order_1} is equal to $d-m(e_-)$, 
\item and if $\jss$ denotes the smallest index $j$ in $\{1,\dots,d\}$ such that the $j$th eigenvalue $\mu_j$ of $D^2V(e_+)$ (by increasing order as in \cref{def_mu_1_mu_d}) is greater than $-c^2/4$, then the dimension of the ``steep stable'' manifold of $(e_+,0_{\rr^d})$ is equal to $d-\jss+1$ (see \cref{fig:eigenvalues}).
\end{itemize}
The profiles of pushed fronts connecting $e_-$ (the invading critical point) to $e_+$ (the invaded one) live in the intersection between these two manifolds, which is at least one-dimensional if nonempty. As a consequence, with the additional freedom provided by the speed parameter $c$, a transverse intersection between these two manifolds cannot occur unless $m(e_-)$ is equal to $0$ and $\jss$ is equal to $1$, and in this case this (transverse) intersection is made of isolated trajectories; and a similar counting argument supports conclusion \cref{item:conclusion_genericity_approach_through_least_eigenvalue} (see \cite{JolyOliverBRisler_genericTransvPulledPushedTFParabGradSyst_2023,JolyRisler_genericTransversalityTravStandFrontsPulses_2023} for details).

These statements are not directly involved in the results of this paper nor in their proofs, but they will be called upon in some comments, and they shed light on the general picture. In particular, they show that, among pushed travelling fronts, the most relevant ones are those with a speed \emph{exceeding the maximal linear invasion speed of the invaded critical point} (the others do generically not exist). It is precisely the convergence towards these pushed travelling fronts that the variational techniques used in this paper most easily apply to, and, as a matter of fact, only this convergence will be addressed. 
\subsubsection{Parametrization of pushed waves and fronts invading a critical point and travelling at a speed exceeding its maximal linear invasion speed}
\label{subsubsec:parametrization_pushed_tw_and_pushed_tf}
Let us keep the notation introduced before the previous \cref{subsubsec:genericity_counting_arguments}.
\begin{proposition}[local ``steep'' stable manifold of $e$ for a speed $c$ exceeding the maximal linear invasion speed]
\label{prop:local_steep_stable_manifold}
For every speed $c$ in $(\cLinMax,+\infty)$ and for every small enough positive quantity $\delta$, there exists a map
\begin{equation}
\label{def_wssloc}
\wssloc{e}{\delta}{c}:\widebar{B}_{\rr^d}(e,\delta)\to \rr^d
\end{equation}
such that, for every $(u,v)$ in $\widebar{B}_{\rr^d}(e,\delta)\times\rr^d$, the following two conditions are equivalent:
\begin{enumerate}
\item $v = \wssloc{e}{\delta}{c}(u)$,
\item the solution $\xi\mapsto\bigl(\phi(\xi),\psi(\xi)\bigr)$ of system \cref{syst_trav_front_order_1} with initial condition $(u,v)$ at $\xi$ equals $0$ remains in $\widebar{B}_{\rr^d}(e,\delta)\times\rr^d$ for all $\xi$ in $[0,+\infty)$ and is the profile of a pushed travelling wave invading $e$.
\end{enumerate}
\end{proposition}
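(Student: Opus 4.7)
The plan is to apply the classical \emph{strong stable manifold theorem} to the equilibrium $(e, 0_{\rr^d})$ of the autonomous ODE \cref{syst_trav_front_order_1}. The key starting observation is that, under the assumption $c > \cLinMax$, every eigenvalue $\mu_j$ of $D^2 V(e)$ satisfies $\mu_j > -c^2/4$, so by the formula \cref{def_lambda_c_pm_of_mu} the $2d$ eigenvalues of the linearization \cref{syst_trav_front_order_1_2_linearized} are real, distinct, and split into two groups of $d$ separated by $-c/2$: for every $j \in \{1,\dots,d\}$,
\[
\lambda_{c,-}(\mu_j) < -\frac{c}{2} < \lambda_{c,+}(\mu_j)
\,.
\]
The ``steep stable'' invariant subspace $E^{ss}$ spanned by the eigenvectors $\begin{pmatrix} u_j \\ \lambda_{c,-}(\mu_j) u_j \end{pmatrix}$, $j = 1, \dots, d$, of the list \cref{eigenvectors} is then $d$-dimensional and coincides with the graph of the linear map $A : \rr^d \to \rr^d$ defined by $A u_j = \lambda_{c,-}(\mu_j) u_j$, hence projects bijectively onto the $\phi$-component.

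The classical strong stable manifold theorem (Hirsch--Pugh--Shub; see for instance Shub's monograph) applied to the $\ccc^2$ vector field defining \cref{syst_trav_front_order_1} then yields, for every small enough $\delta > 0$, a locally invariant $\ccc^1$ manifold $W^{ss}_{\mathrm{loc}}$ of dimension $d$, tangent to $E^{ss}$ at $(e, 0_{\rr^d})$, contained in a fixed neighborhood of that equilibrium, and characterized dynamically as the set of initial conditions whose forward trajectory stays in that neighborhood and satisfies $\abs{\phi(\xi) - e} = o(e^{-c\xi/2})$ as $\xi \to +\infty$ --- precisely the pushed decay of \cref{def:pushed_travelling_wave_front}. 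Since $E^{ss}$ projects bijectively onto the $\phi$-space, the implicit function theorem writes $W^{ss}_{\mathrm{loc}}$, up to shrinking $\delta$, as the graph of a $\ccc^1$ map $\wssloc{e}{\delta}{c} : \widebar{B}_{\rr^d}(e, \delta) \to \rr^d$, with differential $A$ at $e$.

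It then remains to match the two conditions of the proposition with this dynamical characterization. The implication (1) $\Rightarrow$ (2) is immediate from the invariance and dynamical description of $W^{ss}_{\mathrm{loc}}$. For the converse (2) $\Rightarrow$ (1), observe that if $\phi(\cdot)$ stays in $\widebar{B}_{\rr^d}(e, \delta)$ on $[0, +\infty)$, then integrating the second line of \cref{syst_trav_front_order_1} and using that $\nabla V$ is bounded on $\widebar{B}_{\rr^d}(e, \delta)$ yields a uniform bound on $\psi(\cdot)$, so the whole trajectory remains in a fixed compact set. The pushed hypothesis then forces the exponential decay rate to exceed $e^{-c\xi/2}$, which by the spectral gap rules out any nonzero component along the local center-unstable manifold tangent to $E^{cu}$: such a component would decay no faster than $e^{\lambda_{c,+}(\mu_1) \xi}$, i.e., strictly slower than $e^{-c\xi/2}$. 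Thus the trajectory lies on $W^{ss}_{\mathrm{loc}}$, as required. The main technical point is precisely this last step --- matching the pushed condition with the dynamical definition of the strong stable manifold via the complementary center-unstable manifold --- although all ingredients are classical and follow from a standard Lyapunov--Perron argument.
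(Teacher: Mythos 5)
Your proposal is correct in substance and matches what the paper implicitly relies on: \cref{prop:local_steep_stable_manifold} is stated in the paper without proof, as a standard consequence of the strong (pseudo-hyperbolic) stable manifold theorem applied to the splitting of the $2d$ real eigenvalues \cref{def_sigma_c_2d_of_e} on either side of $-c/2$, exactly as you set it up. Where the paper does supply an argument is in \cref{prop:extension_local_steep_stable_manifold} of the appendix, which extends the graph property to the explicit radius $\deltaHess(c_0)$ by a different and more hands-on route: surjectivity of the projection onto the $\phi$-component via an isotopy/degree argument on spheres (\cref{lem:surjectivity}) combined with transversality of the tangent spaces to $\{0_{\rr^d}\}\times\rr^d$ obtained from the Poincaré inequality \cref{Poincare_inequality_gamma_equals_c_over_2} (\cref{lem:transversality}); your Lyapunov--Perron route buys the local statement cheaply, while the paper's appendix argument buys a quantitative radius that the abstract theorem does not give. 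Two points in your write-up deserve tightening. First, in the implication from condition (1) to condition (2), ``local invariance'' alone does not guarantee that the trajectory stays in the \emph{same} ball $\widebar{B}_{\rr^d}(e,\delta)$: a transient excursion of $\abs{\phi(\xi)-e}$ above $\delta$ must be excluded, which is exactly what the monotone decrease of $\abs{\phi(\xi)-e}$ along pushed profiles (statement \cref{item:lem_asymptotics_at_the_two_ends_of_space_negative_scalar_product} of \cref{lem:asymptotics_at_the_two_ends_of_space}, again a Poincaré-inequality argument) provides. Second, for the converse implication your bound on $\psi$ should be made quantitative: from the variation-of-constants formula and the confinement of $\phi$ to $\widebar{B}_{\rr^d}(e,\delta)$ one gets $\abs{\psi(0)} = O(\delta)$ (not merely boundedness), which is what places the initial condition inside the neighbourhood where the graph description and its uniqueness actually apply. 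Neither point is a fatal gap, but both are needed for the equivalence as literally stated.
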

In other words, the set $\Wssloc{e}{\delta}{c}$ defined as the graph, over $\widebar{B}_{\rr^d}(e,\delta)$, of the map $\wssloc{e}{\delta}{c}(\cdot)$:
\[
\Wssloc{e}{\delta}{c} = \Bigr\{\bigl(u,\wssloc{e}{\delta}{c}(u)\bigr):u\in \widebar{B}_{\rr^d}(e,\delta)\Bigr\}
\]
can be seen as a ``steep'' local stable manifold of the equilibrium $(e,0_{\rr^d})$ for the differential system \cref{syst_trav_front_order_1}, where ``steep'' means ``at an exponential rate which is greater than $c/2$''. 
\begin{notation}
For every $u$ in $\partial B_{\rr^d}(e,\delta)$, let us denote by $\phi_{c,u}(\cdot)$ the (maximal) solution of the differential system \cref{syst_trav_front_order_2} for the initial condition
\begin{equation}
\label{initial_condition_phi_c_u}
\bigl(\phi_{c,u}(0),\phi_{c,u}'(0)\bigr) = \bigl(u,\wssloc{e}{\delta}{c}(u)\bigr)
\end{equation}
(so that $\phi_{c,u}(\cdot)$ is the profile of a pushed travelling wave invading $e$ at the speed $c$), and let us consider the set
\begin{equation}
\label{def_uPushedFront}
\begin{aligned}
\uPushedFront{e}{\delta}{c} = \bigl\{ u\in\partial B_{\rr^d}(e,\delta): & \ \phi_{c,u}\text{ is the profile of a pushed front}\\
& \text{ invading $e$ at the speed $c$} \bigr\}
\,.
\end{aligned}
\end{equation}
\end{notation}
\begin{remark}
Depending on the value of the speed $c$, an explicit value of a quantity $\delta$ ensuring that the conclusions of \cref{prop:local_steep_stable_manifold} hold is provided by \vref{prop:extension_local_steep_stable_manifold}.
\end{remark}
\subsection{Invasion through profiles of pushed fronts}
Let us keep the previous notation, and in particular let us still consider a speed $c$ which is greater than the maximal linear invasion speed $\cLinMax$. 
\begin{definition}[invasion through profiles of pushed fronts]
\label{def:invasion_through_profiles_pushed_fronts}
A solution $(x,t)\mapsto u(x,t)$ of the parabolic system \cref{parabolic_system} is said to \emph{invade $e$ at the speed $c$ through profiles of pushed fronts} if there exist a positive quantity $\delta$ and a function $t\mapsto\tilde{x}(t)$ in $\ccc^1\bigl([0,+\infty),\rr\bigr)$ such that the following conclusions hold: 
\begin{enumerate}
\item the conclusions of \cref{prop:local_steep_stable_manifold} hold for the speed $c$ and the parameter $\delta$;
\item the set $\uPushedFront{e}{\delta}{c}$ is nonempty;
\item for $t$ large enough positive, $u\bigl(\tilde{x}(t),t\bigr)$ is in $\partial B_{\rr^d}(e,\delta)$;
\item the following three limits hold as $t$ goes to $+\infty$:
\begin{enumerate}
\item $\tilde{x}'(t)\to c$, 
\item $\dist\Bigl(u\bigl(\tilde{x}(t),t\bigr),\uPushedFront{e}{\delta}{c}\Bigr) \to 0$,
\label{item:def_invasion_through_profiles_pushed_fronts_approach_to_uPushedFront}
\item for every positive quantity $L$, 
\begin{equation}
\label{def_invasion_approach_profile}
\sup_{y\in[-L,+\infty)}\abs{u\bigl(\tilde{x}(t)+y,t\bigr) - \phi_{c,u(\tilde{x}(t),t)}(y)} \to 0
\,.
\end{equation}
\end{enumerate}
\end{enumerate}
\end{definition}
\begin{remarks}
\begin{enumerate}
\item The profiles $\phi_{c,u(\tilde{x}(t),t)}(\cdot)$ involved in the limit \cref{def_invasion_approach_profile} are profiles of pushed \emph{waves} invading $e$, but not necessarily of pushed \emph{fronts} (these profiles are therefore not necessarily defined up to $-\infty$). However, due to the limit provided by conclusion \cref{item:def_invasion_through_profiles_pushed_fronts_approach_to_uPushedFront}, these profiles get closer and closer to profiles of actual pushed fronts as time goes to $+\infty$.  \Cref{def:invasion_through_profiles_pushed_fronts} is actually unchanged if, in the limit \cref{def_invasion_approach_profile}, the vector $u(\tilde{x}(t),t)$ parametrizing the profile $\phi_{c,u(\tilde{x}(t),t)}(\cdot)$ is replaced with a ``close'' vector in the set $\uPushedFront{e}{\delta}{c}$, so that this profile is replaced with the profile of an actual pushed travelling front (this alternative approach is used in Theorem~1 of \cite{Risler_globCVTravFronts_2008}).
\item Whatever the choice made for its formulation, \cref{def:invasion_through_profiles_pushed_fronts} does not ensure the convergence towards a single pushed travelling front. More precisely, the $\omega$-limit set $\llll$ of the function $t\mapsto u(\tilde{x}(t),t)$, defined as
\[
\llll = \bigcap_{t>0} \overline{\bigcup_{s>t} \bigl\{u\bigl(\tilde{x}(s),s\bigr)\bigr\}}
\,,
\]
is a nonempty compact connected subset of $\partial B_{\rr^d}(e,\delta)$, and is, according to condition \cref{item:def_invasion_through_profiles_pushed_fronts_approach_to_uPushedFront}, included in the set $\uPushedFront{e}{\delta}{c}$. If this set $\llll$ is reduced to a singleton $\{\uPushedFrontSingleton\}$, then the limit \cref{def_invasion_approach_profile} can be replaced with the simpler and more precise limit
\[
\sup_{y\in[-L,+\infty)}\abs{u\bigl(\tilde{x}(t)+y,t\bigr) - \phi_{c,\uPushedFrontSingleton}(y)} \to 0 
\quad\text{as}\quad 
t\to+\infty
\,,
\]
involving the profile $\phi_{c,\uPushedFrontSingleton}$ of a single pushed travelling front. Notice that the set $\llll$ is \emph{necessarily} reduced to such a singleton if the set $\uPushedFront{e}{\delta}{c}$ is discrete, which, according to the statements of \cref{subsubsec:genericity_counting_arguments}, is true for a generic potential $V$. 
\item \Cref{def:invasion_through_profiles_pushed_fronts} does not provide any information concerning the behaviour of the solution in the wake (to the left) of the invasion. Let us however briefly mention that, if a solution invading $e$ at the speed $c$ through the profiles of pushed fronts is, in addition, close to a nondegenerate minimum point of $V$ at the left end of space, then, under generic assumptions on the potential $V$, the global behaviour of this solution is rather well understood \cite{Risler_globalBehaviour_2016}. 
\end{enumerate}
\end{remarks}
\subsection{Energy in a travelling frame}
Let $(x,t)\mapsto u(x,t)$ denote a solution of system \cref{parabolic_system}, let $c$ denote a \emph{positive} quantity, and let us consider the function $(\xi,t)\mapsto v(\xi,t)$ defined as:
\begin{equation}
\label{change_of_variable_stand_trav_frame}
v(\xi,t) = u(x,t)
\quad\text{for}\quad 
x = ct + \xi
\,.
\end{equation}
This function $v(\cdot,\cdot)$ is a solution of the system
\begin{equation}
\label{parabolic_system_trav_frame}
v_t - cv_\xi = - \nabla V(v) + v_{\xi\xi}
\,.
\end{equation}
Multiplying this equation by $e^{c\xi} v_t$ and integrating over $\rr$ leads us to introduce the energy below (\cref{def:eee_c}), and for that purpose the following weighted Sobolev spaces:
\begin{align}
\label{H1c}
H^1_c(\rr,\rr^d) &= \bigl\{ w \in \HoneLoc(\rr,\rr^d) : \text{ the functions } \xi\mapsto e^{\frac{1}{2}c\xi} w(\xi) \\
\nonumber
&\qquad\qquad\text{and } \xi\mapsto e^{\frac{1}{2}c\xi} w'(\xi) \text{ are in } L^2(\rr,\rr^d)\bigr\}\,, \quad \text{and} \\
\label{H1ce}
\HoneOfTwo{c}{e} &= \bigl\{ w \in \HoneLoc(\rr,\rr^d) : \text{ the function } \xi\mapsto w(\xi) - e \text{ is in } H^1_c(\rr,\rr^d) \bigr\}
\,.
\end{align}
\begin{definition}[energy in a frame travelling at the speed $c$]
\label{def:eee_c}
For every $w$ in $H^1_{c,e}(\rr,\rr^d)$, let us call \emph{energy (Lagrangian) of $w$ in the frame travelling as speed $c$}, and let us denote by $\eee_c[w]$ the quantity defined by the integral:
\begin{equation}
\label{def_eee_c}
\eee_c[w] = \int_{\rr}e^{c\xi}\left(\frac{1}{2} w'(\xi)^2 + V\bigl(w(\xi)\bigr)\right)\, d\xi
\,.
\end{equation}
\end{definition}
\begin{remark}
Everywhere in the paper (including in the definition above), if $u$ is a vector of $\rr^d$, the square of the euclidean norm $\abs{u}^2$ of $u$ is simply denoted by $u^2$. 
\end{remark}
For $w$ in $H^1_{c,e}(\rr,\rr^d)$, the integral to the right hand of \cref{def_eee_c} converges at the right end of $\rr$, and according to assumption \cref{hyp_coerc}, it either converges to the left end of $\rr$ or is equal to $+\infty$. Thus the expression \cref{def_eee_c} defines a functional: $H^1_{c,e}(\rr,\rr^d)\to\rr\cup\{+\infty\}$. For $w$ in $H^1_{c,e}(\rr,\rr^d)\cap\HoneulofR$ (in this case $w$ is bounded), the same integral also converges at the left end of $\rr$ and $\eee_c[w]$ is in $\rr$. The following proposition, proved in \cref{subsec:proof_time_derivative_energy_travelling_frame}, shows that this energy \cref{def_eee_c} defines a Lyapunov function for the solutions of system \cref{parabolic_system_trav_frame} that belong to $H^1_{c,e}(\rr,\rr^d)$. Recall that $(S_t)_{t\ge0}$ denotes the semi-flow of this system in $\HoneulofR$. 
\begin{proposition}[time derivative of energy in a travelling frame]
\label{prop:decrease_energy_trav_frame}
For every positive quantity $c$ and every solution $(x,t)\mapsto u(x,t)$ of the parabolic system \cref{parabolic_system}, if the initial condition $x\mapsto u(x,t=0)$ is in $\HoneOfTwo{c}{e}$ (in addition to being in $\HoneulofR$), then the same is true for the profile $x\mapsto u(x,t)$ of the solution at every nonnegative time $t$. In this case, the function $t\mapsto \eee_c[v(\cdot,t)]$ (for the function $v(\cdot,\cdot)$ defined in \cref{parabolic_system_trav_frame} and the functional $\eee_c[\cdot]$ defined in \cref{def_eee_c}) is continuous on $[0,+\infty)$, differentiable on $(0,+\infty)$, and, for every positive time $t$, the integral 
\begin{equation}
\label{def_Dc_of_t}
D_c(t) = \int_{\rr}e^{c\xi} v_t(\xi,t)^2 \, d\xi
\end{equation}
is finite, and the following equality holds:
\begin{equation}
\label{decrease_energy_trav_frame}
\frac{d}{dt}\eee_c[v(\cdot,t)] = - D_c(t)
\,.
\end{equation}
In addition, for every nonnegative time $t$, the restriction of the map $S_t$ (the semi-flow at time $t$) to the space $H^1_{c,e}(\rr,\rr^d)\cap\HoneulofR$ defines a continuous map from this space to itself, for the sum of the $H^1_{c,e}(\rr,\rr^d)$-norm and the $\HoneulofR$-norm. 
\end{proposition}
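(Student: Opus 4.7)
The plan is to perform the formal time-derivative computation and then make it rigorous via the substitution $w(\xi,t) = e^{c\xi/2}\bigl(v(\xi,t) - e\bigr)$, which converts the weighted norm into an ordinary $L^2$-norm and the equation for $v$ into a uniformly parabolic equation for $w$ on all of $\rr$, reducing matters to standard parabolic theory with a bounded-coefficient nonlinearity. Formally, multiplying $v_t - cv_\xi = -\nabla V(v) + v_{\xi\xi}$ by $e^{c\xi} v_t$ and integrating over $\rr$, the term $\int e^{c\xi} v_{\xi\xi}\cdot v_t\,d\xi$ is integrated by parts to yield $-\int\bigl(ce^{c\xi} v_\xi + e^{c\xi} v_{\xi t}\bigr)\cdot v_t\,d\xi$ plus boundary contributions at $\pm\infty$; the two $c\,v_\xi\cdot v_t$ terms cancel and what remains is exactly
\[
\int_{\rr} e^{c\xi} v_t^2 \,d\xi = -\int_{\rr} e^{c\xi}\bigl(v_\xi\cdot v_{\xi t} + \nabla V(v)\cdot v_t\bigr)\,d\xi = -\frac{d}{dt}\eee_c[v(\cdot,t)]\,.
\]

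For invariance and regularity of $\HoneOfTwo{c}{e}$ under the semi-flow, a direct computation shows that $w = e^{c\xi/2}(v-e)$ satisfies the uniformly parabolic equation
\[
w_t = w_{\xi\xi} - \frac{c^2}{4}w - e^{c\xi/2}\nabla V(v)\,.
\]
Since the semi-flow in $\HoneulofR$ produces a solution $v$ that is uniformly bounded on bounded time intervals (via assumption \cref{hyp_coerc} and the $H^1\hookrightarrow L^\infty$ embedding in one space dimension), and since $\nabla V(e) = 0$, the Lipschitz property of $\nabla V$ on bounded sets yields $\bigl|e^{c\xi/2}\nabla V(v)\bigr| \le K\,|w|$ pointwise, with $K$ depending only on the time interval and on $\|v\|_{L^\infty}$. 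Standard $L^2$ and $H^1$ energy estimates on this $w$-equation, closed by a Gronwall argument, then show that $\|w(\cdot,t)\|_{H^1(\rr)}$ remains finite for all $t\ge 0$, establishing $v(\cdot,t)\in\HoneOfTwo{c}{e}$. Parabolic smoothing applied to the same equation further gives $w_t(\cdot,t)\in L^2(\rr,\rr^d)$ for every positive $t$, which reads $D_c(t) = \|w_t(\cdot,t)\|_{L^2}^2 < +\infty$, and also yields the continuity of $t\mapsto\eee_c[v(\cdot,t)]$ on $[0,+\infty)$ and its differentiability on $(0,+\infty)$.

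To justify rigorously the formal identity above, the computation is first performed on a truncated interval $[-R, R]$; the cutoff boundary term at $+\infty$ vanishes along a subsequence $R_n\to+\infty$ because $e^{c\xi}\,v_\xi\cdot v_t = (w_\xi - \tfrac{c}{2}w)\cdot w_t$ lies in $L^1(\rr)$ by the above regularity, while at $-\infty$ the exponential weight and the uniform $\HoneulofR$-bounds on $v_\xi$ and $v_t$ on any half-line $(-\infty,R]$ make the boundary contribution tend to zero. Passing to the limit yields \cref{decrease_energy_trav_frame}. The continuity of $S_t$ on $\HoneOfTwo{c}{e}\cap\HoneulofR$ for the sum of the two norms is then a corollary: the difference of two solutions satisfies, after the same substitution, a linear equation in $w$ with bounded coefficients depending continuously on the data, so the $H^1$-norm of this difference at time $t$ is controlled by its value at time $0$.

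The main technical obstacle is that no sign condition is imposed on $D^2V(e)$, so the linearized weighted problem is not a priori coercive and one cannot close the weighted energy estimate directly in the $v$-variable. This is bypassed by the a priori $L^\infty$-bound on $v$ coming from the $\HoneulofR$-semi-flow: it allows $e^{c\xi/2}\nabla V(v)$ to be treated as a linear perturbation of $w$ with a bounded multiplier, whose contribution to the $w$-energy estimate is absorbed by Gronwall on any bounded time interval — which is exactly what the statement requires.
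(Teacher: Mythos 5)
Your overall strategy is sound and is, at bottom, the same as the paper's, seen through an isometry: the substitution $w(\xi,t)=e^{c\xi/2}\bigl(v(\xi,t)-e\bigr)$ is exactly the unitary map $L^2_c\to L^2$ that conjugates the operator $\partial_{\xi\xi}+c\partial_\xi$ into $\partial_{\xi\xi}-c^2/4$, so your ``uniformly parabolic equation for $w$ with a bounded multiplier'' is the explicit, self-contained version of the paper's appeal to the sectoriality of $\partial_{xx}$ on $L^2_c(\rr,\rr^d)$ and to Lunardi's Propositions 7.1.9--7.1.10. Your computation of the $w$-equation is correct, the bound $\abs{e^{c\xi/2}\nabla V(v)}\le K\abs{w}$ does follow from $\nabla V(e)=0$ in \cref{hyp_crit_point} together with the uniform bound on $v$, and the Gronwall/fixed-point argument then yields the invariance of $\HoneOfTwo{c}{e}$, the finiteness of $D_c(t)=\norm{w_t(\cdot,t)}_{L^2}^2$, and the continuity of $S_t$ on $\HoneOfTwo{c}{e}\cap\HoneulofR$, just as in the paper. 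What your route buys is independence from the weighted-space semigroup machinery; what it costs is nothing essential.

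The one place where you are materially less careful than the paper is the justification of the identity \cref{decrease_energy_trav_frame} itself. Your truncation to $[-R,R]$ correctly disposes of the integration-by-parts boundary terms (the subsequence argument at $+\infty$ from $L^1$-integrability, the crude exponential decay at $-\infty$), but it does not address the point the paper explicitly flags as the delicate one: the formal first equality requires differentiating $\int e^{c\xi}\bigl(\tfrac12 v_\xi^2+V(v)\bigr)\,d\xi$ under the integral sign, i.e.\ it presupposes that $v_{\xi t}$ exists with $e^{c\xi}v_\xi\cdot v_{\xi t}$ suitably dominated, which is not guaranteed by the regularity you have established ($w\in C^1((0,\infty),L^2)$ gives $w_t\in L^2$, not $w_{\xi t}\in L^2$). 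Moreover, your limit $R_n\to+\infty$ is taken at fixed $t$, so you obtain convergence of $\frac{d}{dt}E_{R_n}(t)$ along a $t$-dependent subsequence, from which you cannot directly conclude that $\frac{d}{dt}\lim_R E_R(t)$ equals the limit of the derivatives. Both issues are repaired by the same manoeuvre the paper uses: replace $v_{\xi t}$ by a discrete time difference (or, equivalently, first integrate the truncated identity over a time interval $[T_1,T_2]$), use Fubini to choose a single subsequence $R_n$ along which the time-integrated boundary term vanishes, pass to the limit to get $\eee_c[v(\cdot,T_2)]-\eee_c[v(\cdot,T_1)]=-\int_{T_1}^{T_2}D_c(t)\,dt$, and only then invoke the Fundamental Theorem of Calculus together with the continuity of $t\mapsto D_c(t)$ to recover the pointwise derivative. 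With that step inserted, your proof is complete.
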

\subsection{Variational structure in travelling frames}
\label{subsec:variational_structure_trave_frames}
\subsubsection{Infimum of the energy in a travelling frame}
\label{subsubsec:infimum_energy_trav_frame}
For every positive quantity $c$, let us consider the quantity
\[
\mathfrak{I}(c) = \inf_{w\in\HoneOfTwo{c}{e}} \eee_c[w]
\,;
\]
and, for every function $w$ in $\HoneOfTwo{c}{e}$, and every real quantity $\xi_0$, let us consider the function $T_{\xi_0}w$ defined as
\[
T_{\xi_0}w(\xi) = w(\xi-\xi_0)
\,.
\]
It follows from the equality
\[
\eee_c[T_{\xi_0}w] = e^{c\xi_0}\eee_c[w]
\] 
that the quantity $\mathfrak{I}(c)$ is equal to either $0$ or $-\infty$. In other words, the subsets $\ccc_{-\infty}$ and $\ccc_0$ of $(0,+\infty)$, defined as
\begin{equation}
\label{def_ccc_minus_infty_ccc_0}
\ccc_{-\infty} = \{c\in(0,+\infty): \mathfrak{I}(c) = -\infty \}
\quad\text{and}\quad
\ccc_0 = \{c\in(0,+\infty): \mathfrak{I}(c) = 0 \}
\end{equation}
are complementary subsets of $(0,+\infty)$; with symbols,
\[
\ccc_{-\infty} \sqcup \ccc_0 = (0,+\infty)
\,.
\]
\subsubsection{Lower quadratic hull of the potential at the invaded critical point}
\label{subsubsec:lower_quadratic_hull}
According to assumptions \cref{hyp_coerc} and \cref{hyp_crit_point}, the set
\[
\left\{\mu\in\rr: \text{ for every $u$ in $\rr^d$, } V(u)\ge\frac{1}{2}\mu (u-e)^2 \right\}
\]
is nonempty, and bounded from above by $\mu_1$. Let us denote by $\muQuadHull$ the supremum of this set (in this notation, the index ``quad-hull'' refers to ``the curvature of the lower quadratic hull'' of the graph of $V$, centred at $(e,0)$), see \cref{fig:graph_of_V_notation_muQuadHull}. 
\begin{figure}[!htbp]
\centering
\includegraphics[width=.8\textwidth]{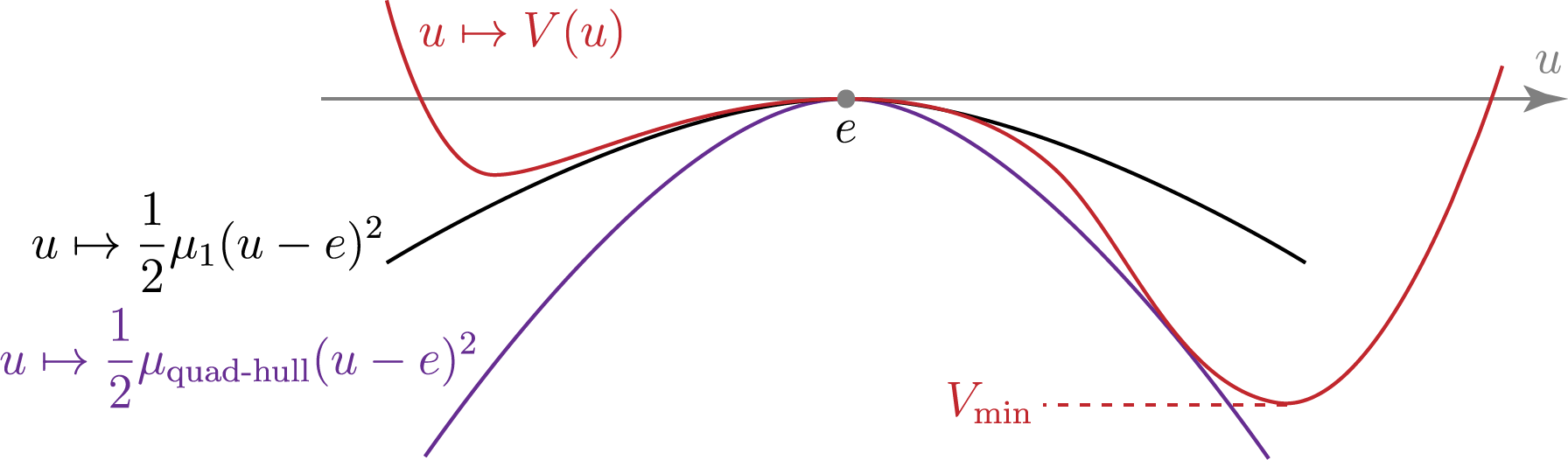}
\caption{Illustration of the notation $\mu_1$, $\muQuadHull$, and $\Vmin$ on the graph of the potential $V$. On this figure $\mu_1$ is negative, but in the article this quantity is not necessarily even nonpositive; and $\muQuadHull$ is less than $\mu_1$, which is a necessary (not sufficient) condition in order $\cNonLinMax$ to exceed $\cLinMax$ (inequality \cref{cLinMax_le_cNonLinMax_le_cQuadHull}).}
\label{fig:graph_of_V_notation_muQuadHull}
\end{figure}
Then, according to this definition,
\begin{equation}
\label{V_greater_than_lower_quadratic_hull}
-\infty<\muQuadHull\le\mu_1 
\quad\text{and, }\text{for every $u$ in $\rr^d$,}\quad 
V(u)\ge\frac{1}{2}\muQuadHull (u-e)^2
\,,
\end{equation}
and since according to \cref{hyp_crit_point} the quantity $\Vmin$ is negative, it follows that
\[
\muQuadHull < 0
\,.
\]
Let us write 
\[
\cQuadHull = 2\sqrt{-\muQuadHull}\,,
\quad\text{so that}\quad
\muQuadHull = - \frac{\cQuadHull^2}{4}
\quad\text{and}\quad
\cQuadHull \ge \cLinMax
\,.
\]
\subsubsection{Maximal nonlinear invasion speed}
\label{subsubsec:maximal_nonlinear_invasion_speed}
Some basic properties of the sets $\ccc_{-\infty}$ and $\ccc_0$ are stated in \cref{prop:basic_properties_variational_structure,cor:ccc_minus_infty_nonempty} (\cref{subsec:basic_properties_variational_structure}). Three of these properties are:
\begin{equation}
\label{subsets_ccc_minus_infty_and_ccc_0}
\ccc_{-\infty}\not=\emptyset
\quad\text{and}\quad
(0,\cLinMax) \subset \ccc_{-\infty} 
\quad\text{and}\quad
[\cQuadHull,+\infty) \subset \ccc_0
\,;
\end{equation}
notice that the quantity $\cLinMax$ may vanish, so that the first of these properties is not a consequence of the second. These properties set the ground for the next definition. 
\begin{definition}[maximal nonlinear invasion speed]
\label{def:max_nonlin_invasion_speed}
Let us call \emph{maximal nonlinear invasion speed} of the critical point $e$ the quantity $\cNonLinMax$ defined as
\begin{equation}
\label{def_maximal_nonlinear_invasion_speed}
\cNonLinMax = \sup(\ccc_{-\infty})
\end{equation}
(this quantity is noted as $c^\dag$ in \cite[Theorem 1.1]{LuciaMuratovNovaga_existTWInvasionGLCylinders_2008}).
\end{definition}
It follows from this definition and from the properties \cref{subsets_ccc_minus_infty_and_ccc_0} that
\begin{equation}
\label{cLinMax_le_cNonLinMax_le_cQuadHull}
0\le\cLinMax\le\cNonLinMax \le \cQuadHull
\quad\text{and}\quad
0<\cNonLinMax
\,.
\end{equation}
It will turn out, as a consequence of \cref{thm:main,thm:characterization_existence_pushed_front} below (the main results of this paper) that the two sets $\ccc_{-\infty}$ and $\ccc_0$ are actually nothing but the intervals $(0,\cNonLinMax)$ and $[\cNonLinMax,+\infty)$ (see \cref{cor:full_properties_ccc_minus_infty_and_ccc_0,fig:line_of_speeds} below); this additional information will not be used until then. 
\subsubsection{Decay speed and variational invasion speed of a solution}
\label{subsubsec:decay_variational_invasion_speeds}
Let $(x,t)\mapsto u(x,t)$ denote a solution of system \cref{parabolic_system} (in the space $\HoneulofR$). For every nonnegative time $t$, let us consider (following \cite[Definition 4.5, Proposition 4.6, Theorem 4.7]{Muratov_globVarStructPropagation_2004}) the sets $\cccDecay(t)$ and $\cccVar(t)$ defined as
\[
\begin{aligned}
\cccDecay(t)  &= \bigl\{c\in(0,+\infty): u(\cdot,t)\in \HoneOfTwo{c}{e}\bigr\} \, \\
\text{and}\quad
\cccVar(t) &= \bigl\{c\in(0,+\infty): u(\cdot,t)\in \HoneOfTwo{c}{e} \text{ and }\eee_c[u(\cdot,t)]<0\bigr\} 
\,,
\end{aligned}
\]
so that $\cccVar(t)$ is a subset of $\cccDecay(t)$. According to the definition \cref{def_maximal_nonlinear_invasion_speed} of the maximal nonlinear invasion speed $\cNonLinMax$, 
\[
\cccVar(t) = \bigl\{c\in(0,\cNonLinMax): u(\cdot,t)\in \HoneOfTwo{c}{e} \text{ and }\eee_c[u(\cdot,t)]<0\bigr\} \subset (0,\cNonLinMax)
\,.
\]
Let us consider the suprema $\sup\bigl(\cccDecay(t)\bigr)$ and $\sup\bigl(\cccVar(t)\bigr)$ of the two sets $\cccDecay(t)$ and $\cccVar(t)$, with the convention that each supremum equals $0$ if the corresponding set is empty. According to \cref{prop:decrease_energy_trav_frame}, both sets are non-decreasing for inclusion with respect to $t$, so that both suprema are non-decreasing. This leads to the following definition. 
\begin{definition}[decay speed, variational invasion speed]
\label{def:decay_variational_invasion_speed}
Let us call, respectively, \emph{decay speed} and \emph{variational invasion speed} of the solution $u$ the quantities $\cDecay[u]$ and $\cVar[u]$ defined as
\begin{equation}
\label{decay_variational_invasion_speed}
\cDecay[u] = \lim_{t\to+\infty} \sup\bigl(\cccDecay(t)\bigr)
\quad\text{and}\quad
\cVar[u] = \lim_{t\to+\infty} \sup\bigl(\cccVar(t)\bigr)
\,.
\end{equation}
\end{definition}
It follows from this definition that
\begin{equation}
\label{basic_inequalities_cVar_cDecay}
0\le \cVar[u]\le \cNonLinMax
\quad\text{and}\quad
\cVar[u]\le \cDecay[u] \le +\infty
\,.
\end{equation}
Similarly, if $w$ is a function in $\HoneulofR$ let us denote by $\cDecay[w]$ and $\cVar[w]$ the quantities $\cDecay[u]$ and $\cVar[u]$ defined by the solution $u$ of system \cref{parabolic_system} for the initial condition $w$ at time $0$. 
The following proposition, proved in \cref{subsubsec:lower_semi_continuity_variational_invasion_speed}, is a direct consequence of the definition of the variational invasion speed. 
\begin{proposition}[lower semi-continuity of the variational invasion speed]
\label{prop:lower_semi_continuity_variational_invasion_speed}
For every $c$ in $(\cNonLinMax,+\infty)$, the map
\[
\HoneulofR\cap\HoneOfTwo{c}{e} \to [0,\cNonLinMax] \,,
\quad
w\mapsto \cVar[w]
\]
is lower semi-continuous with respect to the topology induced by the sum of the $\HoneulofR$-norm and the $H^1_{c}(\rr,\rr^d)$-norm.
\end{proposition}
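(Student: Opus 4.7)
The plan is to unwind the definitions of $\cVar$ and $\cccVar(t)$ and to reduce the statement to the continuity of the semi-flow provided by the last sentence of \cref{prop:decrease_energy_trav_frame}, applied not at the ``ambient'' speed $c$ but at a well-chosen slightly smaller speed $c_1$.

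Fix $c>\cNonLinMax$ and assume that $(w_n)$ converges to $w$ in $\HoneulofR\cap\HoneOfTwo{c}{e}$ for the sum of the $\HoneulofR$-norm and the $H^1_c(\rr,\rr^d)$-norm. To show $\cVar[w]\le\liminf_n\cVar[w_n]$, I may assume $\cVar[w]>0$ and pick an arbitrary $c_0\in\bigl(0,\cVar[w]\bigr)$. Since the map $t\mapsto\sup\bigl(\cccVar(t)\bigr)$ is non-decreasing with limit $\cVar[w]>c_0$, there exist a time $t_0\ge 0$ and a speed $c_1$ with $c_0<c_1<\cNonLinMax<c$ such that the solution $u$ issuing from $w$ satisfies $u(\cdot,t_0)\in\HoneOfTwo{c_1}{e}$ and $\eee_{c_1}\bigl[u(\cdot,t_0)\bigr]<0$.

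The key technical observation is that convergence in $\HoneulofR\cap\HoneOfTwo{c}{e}$ automatically entails convergence in $\HoneulofR\cap\HoneOfTwo{c_1}{e}$ whenever $c_1\in(0,c]$: on $[0,+\infty)$ the weight $e^{c_1\xi}$ is dominated by $e^{c\xi}$, while on $(-\infty,0]$ a slicewise estimate of the form
\[
\int_{-\infty}^{0} e^{c_1\xi}\bigl(w_n-w\bigr)^2\, d\xi \le \sum_{k\ge 0} e^{-c_1 k}\, \|w_n-w\|_{L^2(-k-1,-k)}^2
\]
(and its analogue for the derivative term) is controlled by the $\HoneulofR$-norm of $w_n-w$. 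Consequently $w_n\to w$ in $\HoneulofR\cap\HoneOfTwo{c_1}{e}$ as well, and \cref{prop:decrease_energy_trav_frame} applied at the speed $c_1$ yields that $u_n(\cdot,t_0)\to u(\cdot,t_0)$ in the same topology, where $u_n$ denotes the solution issuing from $w_n$.

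To conclude, the functional $\eee_{c_1}$ is continuous on $\HoneulofR\cap\HoneOfTwo{c_1}{e}$: the kinetic term is handled by polarization and weighted Cauchy--Schwarz in $L^2(e^{c_1\xi}\,d\xi)$, while the potential term is controlled through the Taylor estimate $|V(v)|\le C_K\,(v-e)^2$ valid on any bounded subset $K$ of $\rr^d$ (applicable since $\HoneulofR$-convergence provides a uniform $L^\infty$-bound on the sequence), combined once more with weighted Cauchy--Schwarz. It follows that $\eee_{c_1}\bigl[u_n(\cdot,t_0)\bigr]<0$ for $n$ large, so $c_1\in\cccVar(t_0)$ for $u_n$ and $\cVar[w_n]\ge c_1>c_0$ eventually. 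Since $c_0<\cVar[w]$ was arbitrary, the desired lower semi-continuity follows. The only genuine technical points are the compatibility of the $c$- and $c_1$-weighted norms and the continuity of the weighted energy functional; both are elementary consequences of weighted integral manipulations and of the uniform local boundedness coming from the $\HoneulofR$-part of the topology.
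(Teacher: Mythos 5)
Your proof is correct and follows essentially the same route as the paper's: locate a time $t_0$ and a speed $c_1<\cVar[w]$ with $\eee_{c_1}\bigl[u(\cdot,t_0)\bigr]<0$, then invoke the continuity of the semi-flow in $\HoneulofR\cap\HoneOfTwo{c}{e}$ (last assertion of \cref{prop:decrease_energy_trav_frame}) to transfer the strict inequality to nearby initial conditions. The two technical points you spell out — that convergence in the $c$-weighted norm together with the $\HoneulofR$-norm implies convergence in the $c_1$-weighted norm for $c_1\le c$, and that $\eee_{c_1}$ is continuous on $\HoneulofR\cap\HoneOfTwo{c_1}{e}$ — are exactly the steps the paper leaves implicit, so your write-up is if anything more complete.
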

\begin{remark}
For a stronger version of this result (lower semi-continuity of the invasion speed with respect to a wider class of functions) when the invaded critical point $e$ is a nondegenerate local minimum point of $V$, see \cite[Theorem~2]{Risler_globCVTravFronts_2008}.
\end{remark}
\subsection{Main results}
\label{subsec:main_results}
Let $V$ denote a potential function in $\ccc^2(\rr^d,\rr)$ and $e$ denote a critical point of $V$, and let us assume that assumptions \cref{hyp_coerc} and \cref{hyp_crit_point} hold. The following two statements call upon:
\begin{itemize}
\item the notation $\cLinMax$ (\cref{def:max_linear_invasion_speed}) and $\cNonLinMax$ (\cref{def:max_nonlin_invasion_speed}) denoting, respectively, the maximal linear invasion speed and the maximal nonlinear invasion speed of the critical point $e$, 
\item the notation $\cDecay[u]$ and $\cVar[u]$ (\cref{def:decay_variational_invasion_speed}) denoting, respectively, the decay speed and the variational speed of a solution $u$ of system \cref{parabolic_system},
\item \Cref{def:pushed_travelling_wave_front} of a pushed travelling front invading $e$, and \cref{def:invasion_through_profiles_pushed_fronts} of invasion through profiles of such pushed fronts.
\end{itemize}
The first of these two statements (\cref{thm:main}) is the main result of this paper, and the second (\cref{thm:characterization_existence_pushed_front}, proved in \cref{subsec:proof_cor_main}), is mainly a consequence of \cref{thm:main}.
\begin{theorem}[global convergence towards pushed fronts]
\label{thm:main}
Every solution $u$ of the parabolic system (1.1) satisfying the condition 
\begin{equation}
\label{thm_main_cLinMax_smaller_than_cVar_smaller_than_cDecay}
\cLinMax < \cVar[u] < \cDecay[u]
\end{equation}
invades the critical point $e$ at its variational speed $\cVar[u]$ through profiles of pushed fronts. 
\end{theorem}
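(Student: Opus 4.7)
The plan is to work in the travelling frame at the critical speed $c := \cVar[u]$. By assumption \cref{thm_main_cLinMax_smaller_than_cVar_smaller_than_cDecay}, this speed satisfies $\cLinMax < c < \cDecay[u]$, so for $t$ past some time $T_0$ the shifted solution $v(\xi,t) := u(ct+\xi,t)$ lies in $\HoneOfTwo{c}{e}$ and \cref{prop:decrease_energy_trav_frame} applies. The uniform $L^\infty$-bound on $u$ provided by assumption \cref{hyp_coerc}, together with $v - e \in H^1_c(\rr,\rr^d)$ and the integrability of $e^{c\xi}$ at $-\infty$, bounds $\eee_c[v(\cdot,t)]$ from below; the monotonicity of $\eee_c$ then ensures that $E_\infty := \lim_{t\to+\infty}\eee_c[v(\cdot,t)]$ is finite and $\int_{T_0}^{+\infty} D_c(t)\,dt < +\infty$. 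Hence there exists a sequence $t_n \to +\infty$ with $D_c(t_n) \to 0$.

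To extract a limit profile, I pick an auxiliary speed $c_0 \in (\cLinMax, c)$. The inequality $c_0 > \cLinMax$, equivalently $-c_0^2/4 < \mu_1$, activates the Poincaré inequality advertised in the abstract, which makes the quadratic form $w \mapsto \int e^{c_0\xi}\bigl(\tfrac{1}{2}(w')^2 + \tfrac{1}{2}D^2V(e) w\cdot w\bigr)\,d\xi$ coercive on $H^1_{c_0}(\rr,\rr^d)$. Combined with the lower quadratic hull estimate \cref{V_greater_than_lower_quadratic_hull} and the negativity $\eee_{c_0}[v(\cdot,t)] < 0$ (which holds eventually since $c_0 < \cVar[u]$), this coercivity provides uniform bounds on $v(\cdot,t) - e$ in $H^1_{c_0}(\rr,\rr^d) \cap \HoneulofR$. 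Passing to a subsequential limit of $v(\cdot,t_n)$ in $H^1_{\mathrm{loc}}$, one obtains $\phi \in \HoneOfTwo{c}{e}$, globally defined on $\rr$, which solves the travelling-wave system \cref{syst_trav_front_order_2} at speed $c$ (the defect $v_t$ is killed by $D_c(t_n)\to 0$).

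The main obstacle is to upgrade $\phi$ to the profile of a pushed travelling \emph{front} invading $e$. That $\phi(\xi)\to e$ at $+\infty$ is immediate from $\phi - e \in H^1_c(\rr,\rr^d)$, and the \emph{pushed} character is forced by $\int_0^{+\infty} e^{c\xi}(\phi - e)^2\,d\xi < +\infty$: any non-pushed asymptotic $\phi(\xi) - e \sim a\, e^{\lambda_{c,+}(\mu_j)\xi}$ with $\lambda_{c,+}(\mu_j) \in (-c/2,0)$ would make the integrand non-integrable, so $\phi$ is pushed in the sense of \cref{def:pushed_travelling_wave_front} and, for $\xi$ large, lies on the steep stable manifold $\Wssloc{e}{\delta}{c}$ of \cref{prop:local_steep_stable_manifold}. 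For the left-end behaviour, the Hamiltonian $H(\xi) := \tfrac{1}{2}(\phi')^2 - V(\phi)$ satisfies $H' = -c(\phi')^2 \le 0$ along \cref{syst_trav_front_order_2}, and the $\HoneulofR$-boundedness of $\phi$ together with coercivity \cref{hyp_coerc} localises the $\alpha$-limit set of $\phi$ in $\SigmaCrit(V) \cap V^{-1}(\{V_{-\infty}\})$ for some $V_{-\infty}\in\rr$. Strict negativity $V_{-\infty} < 0$, required by \cref{def:travelling_wave_front_invading_e_speed_c}, is obtained by testing \cref{syst_trav_front_order_2} against $e^{c_0\xi}\phi'$: integration by parts yields $\eee_{c_0}[\phi] = \tfrac{c_0 - c}{c_0}\int e^{c_0\xi}(\phi')^2\,d\xi < 0$ for $\phi$ non-constant, which rules out $V_{-\infty} \ge 0$.

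To conclude, take $\delta$ as in \cref{prop:local_steep_stable_manifold} and define $\tilde x(t)$ as the rightmost point at which $u(\cdot,t)$ crosses $\partial B_{\rr^d}(e,\delta)$; for $t$ large, regularity of this crossing yields $\tilde x \in \ccc^1$. The extraction argument applied along every sequence $t_n\to+\infty$ shows that every accumulation point of $u(\tilde x(t),t)$ lies in $\uPushedFront{e}{\delta}{c}$, giving conclusion \cref{item:def_invasion_through_profiles_pushed_fronts_approach_to_uPushedFront} of \cref{def:invasion_through_profiles_pushed_fronts}. The convergence $\tilde x'(t) \to c$ follows because in the $c$-frame the coordinate $\tilde x(t) - ct$ stays bounded — otherwise the translation identity $\eee_c[T_{\xi_0}w] = e^{c\xi_0}\eee_c[w]$ would make $\eee_c[v(\cdot,t)]$ diverge — so $\tilde x(t) - ct$ accumulates in a compact set of zeros of $|\phi - e| = \delta$, and the monotonicity of $\eee_c$ together with the relative compactness of $\{v(\cdot,t): t \ge T_0\}$ force $\tilde x'(t) \to c$. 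The uniform approach \cref{def_invasion_approach_profile} then follows from the parametrisation \cref{initial_condition_phi_c_u} of pushed waves by $\partial B_{\rr^d}(e,\delta)$.
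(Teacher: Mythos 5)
Your proposal reproduces the easy outer shell of the argument (energy decreasing in travelling frames, compactness, identification of limits as pushed profiles) but it skips the one step that constitutes the actual difficulty, and the step you substitute for it does not work. The central gap is this: from $\int_{T_0}^{+\infty}D_c(t)\,dt<+\infty$ you extract $t_n$ with $D_c(t_n)\to 0$, where $D_c(t)=\int_\rr e^{c\xi}v_t(\xi,t)^2\,d\xi$ carries the weight $e^{c\xi}$ normalized at the \emph{fixed} origin of the $c$-frame. The convergence $\widebar{x}(t)/t\to\cVar[u]$ (\cref{prop:invasion_speed}) only forces $\widebar{\xi}_{c}(t)=o(t)$; it is perfectly compatible with $\widebar{\xi}_{c}(t)\to-\infty$ sublinearly, in which case the exponential weight crushes the dissipation occurring near the invasion point and $D_c(t_n)\to 0$ tells you nothing about $v_t$ there. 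Your limit profile is then either extracted at the wrong place or not controlled where it matters. Your patch --- ``$\tilde x(t)-ct$ stays bounded, otherwise the translation identity would make $\eee_c[v(\cdot,t)]$ diverge'' --- is not a proof: the translation identity applies to translating a fixed function, not to the time evolution, and indeed the paper never establishes boundedness of $\widebar{\xi}_{\widebar{c}}(t)$. The entire machinery of \cref{lem:lin_decrease_up_to_pollution_Fc}, \cref{cor:control_moves_to_left_invasion_point_trav_frame}, \cref{prop:upper_bound_energy_at_invasion_point}, \cref{cor:Lipschitz_cont_energy} and the broken-line relaxation scheme of \cref{prop:relaxation} exists precisely to replace your shortcut: one must follow the invasion point, re-adjust the frame speed between dissipation bursts, and control the resulting changes of energy via the Lipschitz continuity, with respect to the speed, of the energy renormalized at the invasion point.

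A second, related error: the claimed uniform bound on $v(\cdot,t)-e$ in $H^1_{c_0}(\rr,\rr^d)$ for $c_0\in(\cLinMax,\cVar[u])$ is false. The correct lower bound on the energy (\cref{lem:lower_bound_energy_trav_frame}, or \cref{lower_bound_Ec}) reads
\begin{equation*}
E_{c_0}(t)\ \ge\ -e^{c_0\widebar{\xi}_{c_0}(t)}\frac{\abs{\Vmin}}{c_0}+\alpha\int_{\widebar{\xi}_{c_0}(t)}^{+\infty}e^{c_0\xi}\bigl(v_\xi^2+v^2\bigr)\,d\xi\,,
\end{equation*}
and since $c_0<\cVar[u]$ the invasion point satisfies $\widebar{\xi}_{c_0}(t)\sim(\cVar[u]-c_0)\,t\to+\infty$, so the error term blows up and no uniform $H^1_{c_0}$ control centred at the origin of the $c_0$-frame follows; moreover the coercivity of the quadratic form you invoke uses $V(v)\ge\tfrac12 D^2V(e)(v-e)\cdot(v-e)$, which holds only in a neighbourhood of $e$ and not in the wake. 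The bound that is actually true and needed is \cref{cor:uniform_bound_H1_cStar}: the $H^1$-norm with weight $e^{c^*y}$ renormalized at the invasion point, restricted to the right of that point, and for a speed $c^*$ slightly \emph{greater} than $\cVar[u]$ --- and it is a consequence of \cref{prop:upper_bound_energy_at_invasion_point}, i.e.\ of the machinery you omitted. The remaining ingredients of your sketch (pushed character from $\phi-e\in H^1_{c^*}$, left-end behaviour via the Hamiltonian/coercivity, the sign of $\eee_{c_0}[\phi]$ from \cref{prop:energy_pushed_front_travelling_frame}) are essentially in line with the paper, but they rest on the uniform control near the invasion point that your argument does not provide.
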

\begin{theorem}[existence of a pushed front invading $e$ at a speed greater than the maximal linear invasion speed]
\label{thm:characterization_existence_pushed_front}
The following two conditions are equivalent:
\begin{enumerate}
\item $\cLinMax < \cNonLinMax$;
\label{item:thm_characterization_existence_pushed_front_cLinMax_le_cNonLinMax}
\item there exists a pushed front invading $e$ at a speed which is greater than $\cLinMax$.
\label{item:thm_characterization_existence_pushed_front_existence}
\end{enumerate}
Moreover, if these conditions hold then the following two additional conclusions also hold:
\begin{enumerate}%[label=\Roman*.] I did not manage to write down a "." after the label in the statement of the theorem without displaying it in references to this label. 
\setcounter{enumi}{2}
\item there exists a pushed front invading $e$ at the speed $\cNonLinMax$;
\label{item:thm_characterization_existence_pushed_front_existence_speed_cNonLinMax}
\item the profile of every pushed front invading $e$ at the speed $\cNonLinMax$ is a global minimizer of the energy $\eee_{\cLinMax}[\cdot]$ in $\HoneOfTwo{\cNonLinMax}{e}$. 
\label{item:thm_characterization_existence_pushed_front_global_minimizer}
\end{enumerate}
\end{theorem}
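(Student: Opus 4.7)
The core tool is an energy identity that every pushed front satisfies. Given a pushed front $\phi$ invading $e$ at speed $c$, with steepness $\lambda<-c/2$, I multiply the profile equation $\phi''+c\phi'=\nabla V(\phi)$ by $e^{c'\xi}\phi'$ for any $c'\in(0,-2\lambda)$ and integrate over $\rr$. The boundary terms at $+\infty$ vanish because $\phi-e$ and $\phi'$ decay like $e^{\lambda\xi}$ with $c'+2\lambda<0$; those at $-\infty$ vanish because $\phi$ stays bounded with $\phi'\to 0$ while the weight $e^{c'\xi}$ itself vanishes. A short computation then yields
\[
\eee_{c'}[\phi] = \frac{c'-c}{c'}\int_\rr e^{c'\xi}\phi'(\xi)^2\,d\xi
\,,
\]
and it is easily checked that $\phi\in\HoneOfTwo{c'}{e}$ on the same range of $c'$. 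This identity drives the whole proof.

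For \cref{item:thm_characterization_existence_pushed_front_existence}$\Rightarrow$\cref{item:thm_characterization_existence_pushed_front_cLinMax_le_cNonLinMax}, taking any $c'\in(\cLinMax,c)$ in the identity gives $\eee_{c'}[\phi]<0$, whence $c'\in\ccc_{-\infty}$ and $\cNonLinMax\ge c'>\cLinMax$. For the converse, I pick $c_0\in(\cLinMax,\cNonLinMax)$; the definition $\cNonLinMax=\sup\ccc_{-\infty}$ provides some $c\in(c_0,\cNonLinMax]$ in $\ccc_{-\infty}$ and a $w\in\HoneOfTwo{c}{e}$ with $\eee_c[w]<0$. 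A standard double truncation (replace $w$ by $e$ beyond a large right threshold $R$, using vanishing of the tail energy by weighted integrability, and by a bounded constant beyond $-R'$, using decay of $e^{c\xi}$ at $-\infty$) produces $\tilde w\in\HoneulofR$ with $\tilde w-e$ compactly supported and $\eee_c[\tilde w]<0$. Since $\tilde w-e$ has compact support, $\tilde w\in\HoneOfTwo{c'}{e}$ for every $c'>0$, so the solution $u$ of \cref{parabolic_system} with initial data $\tilde w$ satisfies $\cLinMax<c\le\cVar[u]\le\cNonLinMax<+\infty=\cDecay[u]$. Thus \cref{thm:main} applies: $u$ invades $e$ at speed $\cVar[u]$ through profiles of pushed fronts, and the nonemptiness of $\uPushedFront{e}{\delta}{\cVar[u]}$ in \cref{def:invasion_through_profiles_pushed_fronts} yields a pushed front at speed $\cVar[u]>\cLinMax$.

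Assume now that conditions \cref{item:thm_characterization_existence_pushed_front_cLinMax_le_cNonLinMax} and \cref{item:thm_characterization_existence_pushed_front_existence} hold. I first establish $\mathfrak{I}(\cNonLinMax)=0$: if $\cNonLinMax$ were in $\ccc_{-\infty}$, truncating the corresponding $w$ as above would give $\tilde w$ with $\tilde w-e$ compactly supported, $\eee_{\cNonLinMax}[\tilde w]<0$, and $\tilde w\in\HoneOfTwo{c'}{e}$ for every $c'>0$; by continuity of $c'\mapsto\eee_{c'}[\tilde w]$, then $\eee_{c'}[\tilde w]<0$ for some $c'>\cNonLinMax$, contradicting $\cNonLinMax=\sup\ccc_{-\infty}$. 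For conclusion \cref{item:thm_characterization_existence_pushed_front_existence_speed_cNonLinMax}, I apply the implication \cref{item:thm_characterization_existence_pushed_front_cLinMax_le_cNonLinMax}$\Rightarrow$\cref{item:thm_characterization_existence_pushed_front_existence} along a sequence $c_n\uparrow\cNonLinMax$, obtaining pushed fronts $\phi_n$ at speeds $c_n^*\in[c_n,\cNonLinMax]$, and translate each so that $\abs{\phi_n(0)-e}=\delta$ for a fixed small $\delta$. Coercivity of $V$ gives uniform boundedness of the $\phi_n$, the ODE gives uniform $C^2_{\mathrm{loc}}$ bounds, and Arzelà--Ascoli extracts a limit $\phi$ solving the profile equation at speed $\cNonLinMax$ with $\abs{\phi(0)-e}=\delta$. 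Continuous dependence of the local steep stable manifold $\Wssloc{e}{\delta}{c}$ of \cref{prop:local_steep_stable_manifold} on $c$ ensures that $\phi$ inherits the pushed decay at $+\infty$, while uniform boundedness together with coercivity yield the front behaviour at $-\infty$. Conclusion \cref{item:thm_characterization_existence_pushed_front_global_minimizer} is then immediate from the identity with $c'=c=\cNonLinMax$: for any pushed front $\phi$ at speed $\cNonLinMax$, $\eee_{\cNonLinMax}[\phi]=0=\mathfrak{I}(\cNonLinMax)$, so $\phi$ is a global minimizer in $\HoneOfTwo{\cNonLinMax}{e}$.

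The main technical difficulty is the compactness step in the proof of \cref{item:thm_characterization_existence_pushed_front_existence_speed_cNonLinMax}: the Arzelà--Ascoli extraction is routine, but certifying that the limit is genuinely a pushed front---rather than merely a bounded solution of the ODE with \emph{a priori} unknown asymptotic behaviour---requires uniform-in-$c$ control of the exponential decay rate on $\Wssloc{e}{\delta}{c}$ as $c\to\cNonLinMax$, and the correct behaviour at $-\infty$ of the limit must be recovered from uniform estimates exploiting the coercivity of $V$ together with the fact that each $\phi_n$ is a bona fide front profile.
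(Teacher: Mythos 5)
Your proposal is correct and follows essentially the same route as the paper's proof: the energy identity $\eee_{c'}[\phi]=\bigl(1-\tfrac{c}{c'}\bigr)\int_{\rr}e^{c'\xi}\phi'(\xi)^2\,d\xi$ (\cref{prop:energy_pushed_front_travelling_frame}) for the equivalence and for conclusion \cref{item:thm_characterization_existence_pushed_front_global_minimizer}, truncation of a negative-energy profile combined with \cref{thm:main} for the existence direction, closedness of $\ccc_0$ for the minimizing property, and a compactness argument along a sequence of speeds increasing to $\cNonLinMax$ for conclusion \cref{item:thm_characterization_existence_pushed_front_existence_speed_cNonLinMax}. The only cosmetic difference is in that last step: the paper identifies the limiting profile directly as the point of the steep stable manifold $\Wssloc{e}{\deltaHess(c_0)}{\cNonLinMax}$ (extended to the radius $\deltaHess(c_0)$ by \cref{prop:extension_local_steep_stable_manifold}) lying over $\lim_n\phi_n(0)$, so that it is pushed by construction, whereas you take an Arzel\`a--Ascoli limit and then invoke continuity of the manifold with respect to the speed --- the same standard fact the paper implicitly uses to transfer the uniform bound on the $\phi_n$ to the limit.
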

\begin{remarks}
\begin{enumerate}
\item A sufficient condition for the condition \cref{thm_main_cLinMax_smaller_than_cVar_smaller_than_cDecay} of \cref{thm:main} to hold is: there exist speeds $c$ and $c'$ such that
\[
\cLinMax \le c 
\quad\text{and}\quad
\cNonLinMax < c'
\quad\text{and}\quad
\eee_c[u(\cdot,0)]< 0
\quad\text{and}\quad
u(\cdot,0)\in \HoneOfTwo{c'}{e}
\,.
\]
This condition is, however, more demanding that condition \cref{thm_main_cLinMax_smaller_than_cVar_smaller_than_cDecay}, especially concerning the rate at which the initial condition $u(\cdot,0)$ approaches $e$ at the right end of space. 
\item The key (and costly, if $\mu_1$ is negative or equivalently $\cLinMax$ is positive) assumption of \cref{thm:main} is the inequality $\cLinMax < \cVar[u]$ of \cref{thm_main_cLinMax_smaller_than_cVar_smaller_than_cDecay}, which ensures that invasion occurs at a speed which is greater than the maximal linear invasion speed $\cLinMax$, and implicitly requires the strict inequality $\cLinMax < \cNonLinMax$ (see \cref{fig:line_of_speeds}). 
\item By contrast, if $\mu_1$ is nonnegative (or equivalently if $\cLinMax$ equals $0$), then it follows from \cref{prop:sufficient_condition_invasion} that the condition \cref{thm_main_cLinMax_smaller_than_cVar_smaller_than_cDecay} is satisfied for a large set of solutions, and it follows from \cref{thm:characterization_existence_pushed_front} that there exists (at least) one (pushed) travelling front invading $e$ at the (positive) speed $\cNonLinMax$. This generalizes (in particular) \cite[Corollary~1]{Risler_globCVTravFronts_2008}, \cite[Theorem~2]{AlikakosKatzourakis_heteroclinicTW_2011}, and \cite[Theorem~1]{OliverBonafoux_heteroclinicTW1dParabSystDegenerate_2021}.
\label{item:remark_main_corollary_mu1_nonnegative}
\item The second inequality of \cref{thm_main_cLinMax_smaller_than_cVar_smaller_than_cDecay} is required for the variational arguments involved in the proof. This condition is stronger than the one required in the setting of parabolic scalar equations \cite{Rothe_convergenceToPushedFronts_1981}, where the speed $c$ of the (unique) pushed front is a priori known, and global convergence towards this pushed front only requires that the initial condition approach the critical point at an exponential rate which is larger than $\abs{\lambda_{c,+}(\mu_1)}$, \cite[condition~$(\varphi 4)$]{Rothe_convergenceToPushedFronts_1981}. Unfortunately \cref{thm:main} says nothing concerning the behaviour of solutions for which the first inequality of \cref{thm_main_cLinMax_smaller_than_cVar_smaller_than_cDecay} is fulfilled, but the profile is say only in $\HoneOfTwo{\abs{\lambda_{\cVar[u],+}(\mu_1)}}{e}$. To the best knowledge of the authors, this is an open question.
\item \Cref{thm:main} only deals with convergence towards pushed front travelling at speeds that are greater than the maximal linear invasion speed $\cLinMax$, and \cref{thm:characterization_existence_pushed_front} deals with the existence of those pushed fronts only. Pushed fronts travelling at speeds not greater than $\cLinMax$ may exist for certain potentials (an easy way to build such an example is to consider two uncoupled scalar equations, see for instance \cite{JolyOliverBRisler_genericTransvPulledPushedTFParabGradSyst_2023}), but as already mentioned in \cref{subsubsec:genericity_counting_arguments}, they do not exist for a generic potential, \cite{JolyOliverBRisler_genericTransvPulledPushedTFParabGradSyst_2023}. 
\item \Cref{thm:characterization_existence_pushed_front} is the analogue, in the simpler setting of a spatial domain equal to $\rr$ considered here, of \cite[Theorem~1.1]{LuciaMuratovNovaga_existTWInvasionGLCylinders_2008} which is concerned with gradient systems in infinite cylinders (see also \cite[Theorem~2.8]{LuciaMuratovNovaga_linNonlinSelectionPropSpeedInvasionUnstableEquil_2004} for scalar parabolic equations and \cite[Theorem~3.3]{MuratovNovaga_frontPropIVariational_2008} for scalar equations in cylinders). The parameter $\nu_0$ of \cite{LuciaMuratovNovaga_existTWInvasionGLCylinders_2008,MuratovNovaga_frontPropIVariational_2008} is the ``cylinder'' analogue of the quantity denoted here by $\mu_1$.
\label{item:remark_main_corollary_Muratov_related_results}
\end{enumerate}
\end{remarks}
\subsection{Implications on the variational structure in travelling frames}
The following corollary, proved in \cref{subsec:proof_cor_full_properties_ccc_minus_infty_and_ccc_0}, is a direct consequence of \cref{thm:main,thm:characterization_existence_pushed_front}. 
\begin{corollary}[variational structure, full picture]
\label{cor:full_properties_ccc_minus_infty_and_ccc_0}
The following equalities hold:
\begin{equation}
\label{full_properties_ccc_minus_infty_and_ccc_0}
\ccc_{-\infty} = (0,\cNonLinMax)
\,,\quad\text{or equivalently}\quad
\ccc_0 = [\cNonLinMax,+\infty)
\,.
\end{equation}
In addition, if $\cNonLinMax$ is larger than $\cLinMax$ (in particular if $\cLinMax$ equals $0$), then
\begin{equation}
\label{cor_full_properties_cNonLinMax_less_than_cQuadHull}
\cNonLinMax < \cQuadHull
\,,
\end{equation}
and in this case $\cNonLinMax$ is the only speed $c$ in $(0,+\infty)$ for which the energy $\eee_c[\cdot]$ has a global minimizer in $\HoneOfTwo{c}{e}$ which is not identically equal to $e$. 
\end{corollary}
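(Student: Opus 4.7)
The plan is to establish the three assertions in turn, using as main ingredients \cref{thm:main}, \cref{thm:characterization_existence_pushed_front}, and a Poincaré-type rewriting of $\eee_c$.

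For the equality $\ccc_{-\infty}=(0,\cNonLinMax)$, I would first argue that $\ccc_{-\infty}$ is an open subset of $(0,+\infty)$: if $c_0\in\ccc_{-\infty}$, a standard cutoff yields some $\tilde w\in\HoneulofR$ with $\tilde w-e$ compactly supported and $\eee_{c_0}[\tilde w]<0$, and $c\mapsto\eee_c[\tilde w]$ is then analytic in $c$, so a neighborhood of $c_0$ lies in $\ccc_{-\infty}$. Since $\cNonLinMax=\sup\ccc_{-\infty}$ and $\ccc_{-\infty}\subset(0,\cNonLinMax]$, openness forces the inclusion $\ccc_{-\infty}\subset(0,\cNonLinMax)$. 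For the reverse inclusion, the case $c\in(0,\cLinMax)$ is given by \cref{subsets_ccc_minus_infty_and_ccc_0}; for $c\in[\cLinMax,\cNonLinMax)$ I would pick $c_0\in(c,\cNonLinMax)\cap\ccc_{-\infty}$ and a compactly supported $\tilde w$ as above. The solution $u$ with $u(\cdot,0)=\tilde w$ then satisfies $\cDecay[u]=+\infty$ (by preservation of $H^1_{c,e}$-regularity stated in \cref{prop:decrease_energy_trav_frame}) and $\cVar[u]\ge c_0>\cLinMax$, so \cref{thm:main} ensures invasion of $e$ at the variational speed through pushed-front profiles. Since the wake of the invasion approaches some critical point $e_-$ with $V(e_-)<0$, in any frame moving at a speed $c'<\cVar[u]$ the energy $\eee_{c'}[v(\cdot,t)]$ tends to $-\infty$ as $t\to+\infty$ (the wake fills an expanding interval on which $V$ is strictly negative), placing $c$ in $\ccc_{-\infty}$.

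For the second assertion ($\cLinMax<\cNonLinMax\Rightarrow\cNonLinMax<\cQuadHull$), \cref{thm:characterization_existence_pushed_front}(3)--(4) provides a pushed front profile $\phi$ at speed $\cNonLinMax$ that minimizes $\eee_{\cNonLinMax}$ in $\HoneOfTwo{\cNonLinMax}{e}$, and by the first assertion this minimum equals $0$. Writing $v=w-e$ and integrating by parts the expansion of $(v'+\tfrac{c}{2}v)^2$ against the weight $e^{c\xi}$ yields the identity
\[
\eee_c[w] \;=\; \frac{1}{2}\int_{\rr} e^{c\xi}\Bigl(v'+\frac{c}{2}v\Bigr)^{\!2}\, d\xi \;+\; \int_{\rr} e^{c\xi}\Bigl(V(w)+\frac{c^2}{8}v^2\Bigr)\, d\xi\,.
\]
At $c=\cQuadHull$ the definition of $\muQuadHull$ makes the second integrand nonnegative, while the only $v$ in $H^1_{\cQuadHull}(\rr,\rr^d)$ solving $v'+\tfrac{1}{2}\cQuadHull v=0$ is $v\equiv0$; hence $\eee_{\cQuadHull}[w]>0$ for every $w\ne e$, and the equality $\cNonLinMax=\cQuadHull$ would force $\phi=e$, contradicting the nontriviality of the front.

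For the third assertion (uniqueness of $\cNonLinMax$): if $c<\cNonLinMax$ then $\inf\eee_c=-\infty$ by the first assertion, precluding any minimizer; at $c=\cNonLinMax$ the pushed front just described is a nontrivial minimizer; so I would assume for contradiction that a nontrivial minimizer $\phi$ exists for some $c>\cNonLinMax$. Then $\eee_c[\phi]=0$ and by Euler--Lagrange $\phi$ solves \cref{syst_trav_front_order_2}, so it is the profile of a (necessarily pushed) travelling wave at speed $c$. Multiplying \cref{syst_trav_front_order_2} by $e^{c\xi}\phi'$ yields the conservation law
\[
\frac{d}{d\xi}\Bigl[e^{c\xi}\bigl(\tfrac{1}{2}\phi'^2-V(\phi)\bigr)\Bigr] \;=\; -c\,e^{c\xi}\bigl(\tfrac{1}{2}\phi'^2+V(\phi)\bigr)\,,
\]
and setting $f(c'):=\eee_{c'}[\phi]$, differentiating under the integral and then integrating by parts via this identity (with boundary terms vanishing at $+\infty$ by pushed decay of $\phi$ and at $-\infty$ by the exponential weight combined with boundedness of $\phi$) gives $f'(c)=\tfrac{1}{c}\int_\rr e^{c\xi}\phi'^{2}\, d\xi>0$. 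Picking $c'\in(\cNonLinMax,c)$ close enough to $c$ one then has $\eee_{c'}[\phi]<0$, i.e.\ $c'\in\ccc_{-\infty}=(0,\cNonLinMax)$ by the first assertion, contradicting $c'>\cNonLinMax$. The most delicate point of the whole argument is precisely the control of the boundary term at $-\infty$ in this last integration by parts: it requires showing that any nontrivial minimizer $\phi$ at supercritical speed remains bounded at $-\infty$, an analysis resting on the monotonicity identity $\tfrac{d}{d\xi}\bigl(\tfrac{1}{2}\phi'^2-V(\phi)\bigr)=-c\phi'^2\le 0$ afforded by the gradient structure together with the coercivity assumption \cref{hyp_coerc} on $V$.
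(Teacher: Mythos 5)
The one genuine gap is in your proof of the inclusion $[\cLinMax,\cNonLinMax)\subset\ccc_{-\infty}$. After applying \cref{thm:main} to the cutoff initial datum, you conclude that $c\in\ccc_{-\infty}$ because ``the wake of the invasion approaches some critical point $e_-$ with $V(e_-)<0$'', so that $\eee_{c}[v(\cdot,t)]\to-\infty$. This is not available: \cref{def:invasion_through_profiles_pushed_fronts}, and hence \cref{thm:main}, gives no information about the behaviour of the solution in the wake (the paper says so explicitly in the remarks following that definition), and the convergence \cref{def_invasion_approach_profile} holds only on half-lines $[\tilde{x}(t)-L,+\infty)$ for each \emph{fixed} $L$, which does not control an expanding region behind the front; even granting some wake convergence, the divergence of the energy in a slower frame would still require a quantitative computation. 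The repair is immediate and is what the paper does: it suffices to have \emph{one} pushed front, which conclusion \cref{item:thm_characterization_existence_pushed_front_existence_speed_cNonLinMax} of \cref{thm:characterization_existence_pushed_front} provides at the speed $\cNonLinMax$ (you invoke this conclusion anyway in your second step); its profile $\phi$ is a fixed test function with $\eee_{c}[\phi]=\bigl(1-\cNonLinMax/c\bigr)\int_{\rr}e^{c\xi}\phi'(\xi)^2\,d\xi<0$ for every $c$ in $(0,\cNonLinMax)$ by \cref{prop:energy_pushed_front_travelling_frame}, which places the whole interval in $\ccc_{-\infty}$ in one stroke, with no reference to the dynamics of $u$.

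The rest of your argument is essentially the paper's. Your completed-square identity for $\eee_{\cQuadHull}$ is a re-derivation of statement \cref{item:lem_lower_bound_energy_trav_frame_without_bar_xi} of \cref{lem:lower_bound_energy_trav_frame} (via the Poincaré inequality \cref{Poincare_inequality_on_R}), and the contradiction with $\eee_{\cNonLinMax}[\phi]=0$ proving \cref{cor_full_properties_cNonLinMax_less_than_cQuadHull} is exactly the paper's. For the uniqueness of $\cNonLinMax$, your computation $f'(c)=\tfrac{1}{c}\int_{\rr}e^{c\xi}\phi'(\xi)^2\,d\xi>0$ is the derivative at $c'=c$ of the closed formula \cref{energy_pushed_front_travelling_frame}, which the paper uses directly to place all of $(0,c)$ in $\ccc_{-\infty}$ and reach the same contradiction; the boundedness of the minimizer at $-\infty$ that worries you is obtained in the paper from the coercivity assumption \cref{hyp_coerc}, exactly along the lines you sketch.
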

Thus there are exactly four possible configurations for the respective positions of the quantities $0$, $\cLinMax$, $\cNonLinMax$, and $\cQuadHull$ on the real line (see \cref{fig:line_of_speeds}):
\begin{enumerate}
\item $0 = \cLinMax < \cNonLinMax < \cQuadHull$,
\label{item:four_cases_marginally_stable_invaded}
\item $0 < \cLinMax = \cNonLinMax = \cQuadHull$, 
\label{item:four_cases_KPP}
\item $0 < \cLinMax = \cNonLinMax < \cQuadHull$,
\label{item:four_cases_not_enough_pushed}
\item $0 < \cLinMax < \cNonLinMax < \cQuadHull$,
\label{item:four_cases_strict_inequalities}
\end{enumerate}
and, according to \cref{thm:characterization_existence_pushed_front}, there exists a pushed front invading $e$ at a speed larger than $\cLinMax$ if and only if $\cNonLinMax$ is larger than $\cLinMax$, that is in cases \cref{item:four_cases_marginally_stable_invaded,item:four_cases_strict_inequalities}. 
\begin{figure}[!htbp]
\centering
\includegraphics[width=.8\textwidth]{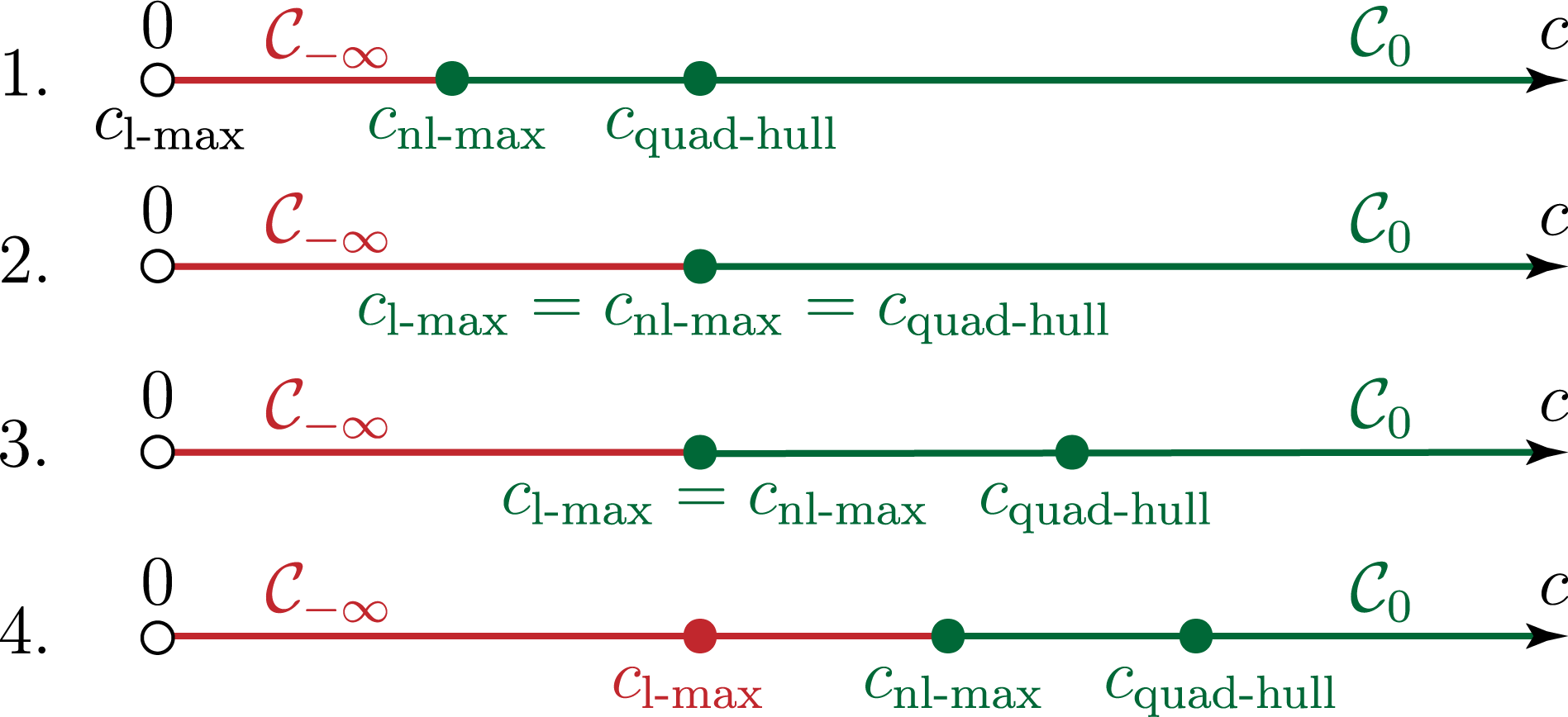}
\caption{Half-line of positive speeds, complementary intervals $\ccc_{-\infty}$ and $\ccc_0$, and values of $\cLinMax$ and $\cNonLinMax$ and $\cQuadHull$ in the four possible cases \cref{item:four_cases_marginally_stable_invaded,item:four_cases_KPP,item:four_cases_not_enough_pushed,item:four_cases_strict_inequalities} that are compatible with inequalities \cref{cLinMax_le_cNonLinMax_le_cQuadHull} and the positivity of $\cNonLinMax$ and conclusion \cref{cor_full_properties_cNonLinMax_less_than_cQuadHull} of \cref{cor:full_properties_ccc_minus_infty_and_ccc_0}. The three cases \cref{item:four_cases_KPP,item:four_cases_not_enough_pushed,item:four_cases_strict_inequalities} are encountered in the scalar example (Fisher's model) discussed in \cref{sec:Fischer_model}.}
\label{fig:line_of_speeds}
\end{figure}
\subsection{Short historical review}
\label{subsec:short_historical_review}
Global convergence towards pulled travelling fronts for scalar equations was first established in the celebrated work of Kolmogorov, Petrovskii, and Piskunov \cite{KolmogorovPetrovskii_diffusionEquation_1937}. The adjectives ``pulled'' and ``pushed'' were introduced by Stokes in \cite{Stokes_twoTypesMovingFrontQuasilinearDiffusion_1976}. Concerning global convergence towards pushed travelling fronts for scalar equations, the first results were obtained by Kanel \cite{Kanel_problemsBurningTheoryEquations_1961,Kanel_stabSolCauchyPbEquCombustion_1962} in the ``combustion'' (``ignition'') case. Fife and McLeod proved the global stability of fronts propagating into stable equilibria (``bistable fronts'', which can be seen as a particular class of pushed fronts) \cite{FifeMcLeod_approachTravFront_1977} and of stacked families of such bistable fronts \cite{FifeMcLeod_phasePlaneDisc_1981}. Still in the scalar case, global convergence towards general pushed fronts was proved by Stokes \cite{Stokes_twoTypesMovingFrontQuasilinearDiffusion_1976} and Rothe \cite{Rothe_convergenceToPushedFronts_1981}, and extended to the setting of cylinders by Roquejoffre \cite{Roquejoffre_eventualMonotonicityCVTravFrontCylinders_1997}. In all those references, proofs of global convergence rely on comparison principles, and the main result of this paper (\cref{thm:main} above) can be seen as an extension to systems of \cite[Theorem~1]{Rothe_convergenceToPushedFronts_1981}, where maximum principles are replaced with variational arguments. Still in the scalar case, a minmax expression of the minimal speed of monotone fronts invading a critical point, and therefore a characterization of the nature (pushed or pulled) of the corresponding front was provided by Hadeler and Rothe in \cite{HadelerRothe_travellingFrontsNonlinearDiffEqu_1975}. For a broader picture and a thorough review of experimental observations of invasion processes across the sciences, see \cite{VanSaarloos_frontPropagationUnstableStates_2003}, and for a deeper explanation of the difference between pushed and pulled travelling fronts, see \cite{GarnierGilettiHamelRoques_insideDynPulledPushed_2012}. 

For parabolic gradient \emph{systems} as those considered in this article (when the dimension $d$ exceeds $1$), maximum principles do not hold any more in general. However, many of the global stability results known in the scalar case can still be recovered by variational methods for such systems. The fundamental observation underlying the proofs of such extensions is the fact that a variational structure (an energy decreasing with time, at least formally) exists not only in standing frames, but also in frames travelling at any constant velocity. This fact is known for a long time, and was used for instance by Fife and McLeod in their proof of the global stability of bistable fronts in the scalar case \cite{FifeMcLeod_approachTravFront_1977} and by Roquejoffre \cite{Roquejoffre_cVTravWavesSemilinearParabEqu_1994}. However, attempts to fully embrace the implications of this rich variational structure are more recent, and originated with the works of Heinze \cite{Heinze_variationalApproachTW_2001}, and especially of Muratov and his collaborators \cite{Muratov_globVarStructPropagation_2004,LuciaMuratovNovaga_linNonlinSelectionPropSpeedInvasionUnstableEquil_2004,LuciaMuratovNovaga_existTWInvasionGLCylinders_2008,MuratovNovaga_frontPropIVariational_2008,MuratovNovaga_globExpConvTW_2012,MuratovZhong_thresholdSymSol_2013,MuratovZhong_thresholdPhenSymDecrRadialSolRDEquations_2017} (by the way, as mentioned in remark \cref{item:remark_main_corollary_Muratov_related_results} above, \cref{thm:characterization_existence_pushed_front} is essentially contained in \cite[Theorem~1.1]{LuciaMuratovNovaga_existTWInvasionGLCylinders_2008}, and conclusion \cref{item:prop_basic_properties_variational_structure_inclusion_ccc_0} of \vref{prop:basic_properties_variational_structure} is a reformulation of \cite[Theorem~3.7]{Muratov_globVarStructPropagation_2004}). Pushing further these ideas, Gallay and the second author proved global convergence results towards travelling fronts invading stable equilibria for parabolic systems of the form \cref{parabolic_system} \cite{GallayRisler_globStabBistableTW_2007,Risler_globCVTravFronts_2008}, and a rather comprehensive description of the asymptotic behaviour of solutions that are stable at both ends of $\rr$ (``bistable'' solutions) was obtained by the second author, for parabolic systems \cite{Risler_globalRelaxation_2016,Risler_globalBehaviour_2016}, for their hyperbolic analogues \cite{Risler_globalBehaviourHyperbolicGradient_2017}, and for radially symmetric solutions of parabolic systems in higher space dimension \cite{Risler_globalBehaviourRadiallySymmetric_2017,Risler_noInvasionCaseHigherSpace_2020}, under generic assumptions on the potential $V$ \cite{JolyRisler_genericTransversalityTravStandFrontsPulses_2023,Risler_genericTransvRadSymStatSol_2023}. In the meanwhile, the same variational structure has been successfully applied to a broader range of settings: harmonic heat flow \cite{BertschMuratovPrimi_tWSolHarmHeatFlow_2006}, heterogeneous environments \cite{BouhoursNadin_variationalApproachRDForcedSpeedDim1_2015}, FitzHugh--Nagumo system \cite{ChenChoi_travPulseSolutionsToFHNequs_2015,ChenChenHuang_TWFitzHughNagumoInfiniteChannel_2016}, two-dimensional heteroclinic travelling waves \cite{OliverBonafoux_TWparabAllenCahn_2021,ChenChienHuang_varApproach3PhaseTWgradSyst_2021}. 

In the setting of scalar hyperbolic equations, a set of new ideas and techniques was introduced by Gallay and Joly to derive from the same gradient structure the global stability of bistable travelling waves \cite{GallayJoly_globStabDampedWaveBistable_2009}. Their approach turns out to be especially relevant to prove global convergence towards pushed travelling fronts, as was shown by Luo \cite{Luo_globStabDampedWaveEqu_2013} still in the same setting of scalar hyperbolic equations. It is the same set of ideas and techniques, adapted to parabolic systems, that are the main building blocks of the proof of \cref{thm:main} provided here.
 
The initial motivation for this work is a recent result of the first author \cite{OliverBonafoux_heteroclinicTW1dParabSystDegenerate_2021} about the existence of travelling waves connecting degenerate minimum sets of the potential $V$ for system \cref{parabolic_system}, proved by a completely different approach, which extends the method introduced by Alikakos and Katzourakis in \cite{AlikakosKatzourakis_heteroclinicTW_2011} to curves taking values in an infinite-dimensional Hilbert space, in the spirit of earlier works by Monteil and Santambrogio \cite{MonteilSantambrogio_metricMethodsHetConnectInfDimHilbert_2020} and Smyrnelis \cite{Smyrnelis_connectOrbitsHilbertSpacesApplPDE_2020}. As a matter of fact, as already mentioned in remark \cref{item:remark_main_corollary_mu1_nonnegative} above, \cref{thm:characterization_existence_pushed_front} extends the existence part of \cite[Theorem~1]{OliverBonafoux_heteroclinicTW1dParabSystDegenerate_2021}; more refined results in the specific setting of propagation into degenerate minimum sets will be provided in the forthcoming work \cite{OliverBonafouxRisler_invasionMinSetParGradSyst_2023}. 
\section{Preliminaries}
\label{sec:preliminaries}
Let us consider a potential function $V$ in $\ccc^2(\rr^d,\rr)$ and a critical point $e$ of $V$, satisfying assumptions \cref{hyp_coerc} and \cref{hyp_crit_point}. 
\subsection{Global existence of solutions and regularization}
\label{subsec:global_existence_solutions_regularization}
Among various possible choices for the functional space where the semi-flow of system \cref{parabolic_system} can be considered, the space $\HoneulofR$ (see for instance \cite{GallaySlijepcevic_energyFlowFormallyGradient_2001,ArrietaRodriguezBernal_linearParabEquLocUnifSpaces_2004}) fits well with the purpose of this article and the variational methods involved in the proofs: it contains bounded solutions (among which travelling fronts) and the regularity of its functions allows to consider the energy of a solution from time zero (\cref{prop:decrease_energy_trav_frame}). The following proposition is standard (for a proof see for instance \cite{Risler_globCVTravFronts_2008,Risler_globalRelaxation_2016}).
\begin{proposition}[global existence and uniform bound on solutions]
\label{prop:attr_ball}
For every function $u_0$ in $\HoneulofR$, system \cref{parabolic_system} has a unique globally defined solution $t\mapsto S_t u_0$ in $\ccc^0\bigl([0,+\infty),\HoneulofR\bigr)$ with initial condition $u_0$. In addition, the quantity
\[
\limsup_{t\to+\infty} \norm{x\mapsto(S_t u_0)(x)}_{\Linfty}
\]
is bounded from above by a quantity depending only on $V$. 
\end{proposition}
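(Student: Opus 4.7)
The plan is to follow the standard scheme for semilinear parabolic problems in the uniformly local setting, combined with the coercivity hypothesis \cref{hyp_coerc} to produce an a priori $L^\infty$ bound that makes global existence immediate and yields the asymptotic bound depending only on $V$.

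First, I would establish local well-posedness in $\HoneulofR$. The heat semigroup $e^{t\partial_{xx}}$ extends to a strongly continuous semigroup on this space, and the one-dimensional Sobolev embedding $\HoneulofR\hookrightarrow L^\infty(\rr,\rr^d)$ together with $V\in\ccc^2$ ensures that the superposition operator $u\mapsto\nabla V(u)$ is locally Lipschitz from $\HoneulofR$ into itself. A Banach fixed-point argument applied to the mild formulation
\[
u(t) = e^{t\partial_{xx}} u_0 - \int_0^t e^{(t-s)\partial_{xx}} \nabla V\bigl(u(s)\bigr)\, ds
\]
then yields a unique maximal solution in $\ccc^0\bigl([0,T_{\max}),\HoneulofR\bigr)$.

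Next, I would derive an a priori bound on $\|u(\cdot,t)\|_{\Linfty}$. Setting $\phi(x,t)=|u(x,t)|^2$, a direct computation gives
\[
\phi_t - \phi_{xx} = -2|u_x|^2 - 2\, u\cdot\nabla V(u) \le -2\, u\cdot\nabla V(u).
\]
Splitting according to whether $|u|$ exceeds the threshold provided by \cref{hyp_coerc}, one obtains constants $\alpha>0$ and $K\ge 0$ depending only on $V$ such that $-2\, u\cdot\nabla V(u)\le -2\alpha|u|^2 + K$ for every $u\in\rr^d$. Hence $\phi$ is a subsolution of the scalar equation $\phi_t - \phi_{xx} + 2\alpha\phi \le K$, and a comparison with the spatially constant supersolution $y(t) = e^{-2\alpha t}\|u_0\|_{\Linfty}^2 + (K/(2\alpha))(1-e^{-2\alpha t})$ yields $|u(x,t)|^2 \le y(t)$ on $\rr\times[0,T_{\max})$. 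This gives both a uniform bound on $\|u(\cdot,t)\|_{\Linfty}$ on $[0,T_{\max})$ and the asymptotic inequality $\limsup_{t\to+\infty}\|u(\cdot,t)\|_{\Linfty}\le\sqrt{K/(2\alpha)}$, a quantity depending only on $V$.

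Finally, I would upgrade this $L^\infty$ bound to a uniform $\HoneulofR$ bound on each compact sub-interval of $[0,T_{\max})$: multiplying the equation by $\chi^2 u$ (and by $\chi^2 u_{xx}$) for translates of a smooth cut-off $\chi$ of unit width, integrating by parts, and inserting the $L^\infty$ control of $\nabla V(u)$, a standard Grönwall estimate controls $\sup_{x_0\in\rr}\|u(\cdot,t)\|_{H^1(x_0-1,\,x_0+1)}$ locally in time, rules out blow-up, and gives $T_{\max}=+\infty$. The main subtlety, and where a little care is required, lies in the comparison argument of the second step: in the uniformly local (non-decaying) setting the classical maximum principle does not apply directly, so the inequality $\phi\le y$ must be justified either by approximating on large bounded intervals with compatible boundary data and passing to the limit, by convolving the subsolution inequality with the heat kernel and running a Duhamel-type comparison, or by a Kato-type inequality in $L^\infty_{ul}$; these tools are classical in this framework and are developed for instance in \cite{ArrietaRodriguezBernal_linearParabEquLocUnifSpaces_2004,GallaySlijepcevic_energyFlowFormallyGradient_2001}.
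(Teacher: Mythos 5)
Your proposal is correct and follows the same route as the paper's cited sources (the paper itself gives no proof of this proposition, declaring it standard and referring to \cite{Risler_globCVTravFronts_2008,Risler_globalRelaxation_2016}): local well-posedness by a fixed point on the mild formulation in $\HoneulofR$, an a priori $L^\infty$ bound derived from the coercivity hypothesis via a comparison argument applied to $|u|^2$, and a Gr\"onwall/smoothing upgrade to an $\HoneulofR$ bound that rules out finite-time blow-up. You also correctly identify the one delicate point, namely justifying the comparison principle for bounded, non-decaying subsolutions on the whole line.
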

In addition, the parabolic system \cref{parabolic_system} has smoothing properties (Henry \cite{Henry_geomSemilinParab_1981}). Due to these properties, since $V$ is of class $\ccc^2$ and thus the nonlinearity $\nabla V$ is of class $\ccc^1$, for every quantity $\alpha$ in the interval $(0,1)$, every solution $t\mapsto S_t u_0$ in $\ccc^0\bigl([0,+\infty),\HoneulofR\bigr)$ actually belongs to
\[
\ccc^0\left((0,+\infty),\cccb{2,\alpha}\right)\cap \ccc^1\left((0,+\infty),\cccb{0,\alpha}\right),
\]
and, for every positive quantity $\varepsilon$, the quantities
\begin{equation}
\label{bound_u_ut_ck_preliminaries}
\sup_{t\ge\varepsilon}\norm{S_t u_0}_{\cccb{2,\alpha}}
\quad\text{and}\quad
\sup_{t\ge\varepsilon}\norm{\frac{d(S_t u_0)}{dt}(t)}_{\cccb{0,\alpha}}
\end{equation}
are finite. In addition, there exists a quantity $\Ratt$ (radius of an attracting ball for the $\ccc^1$-norm), depending only on $V$, such that, for every large enough positive time $t$, 
\begin{equation}
\label{att_ball_C1_norm}
\norm{S_t u_0}_{\ccc^1(\rr,\rr^d)} \le \Ratt
\,.
\end{equation}
\subsection{Asymptotic compactness}
\label{subsec:compactness}
The next lemma follows from the bounds \cref{bound_u_ut_ck_preliminaries} above.
\begin{lemma}[asymptotic compactness]
\label{lem:compactness}
For every solution $(x,t)\mapsto u(x,t)$ of system \cref{parabolic_system}, and for every sequence $(x_n,t_n)_{n\in\nn}$ in $\rr\times[0,+\infty)$ such that $t_n\to+\infty$ as $n\to+\infty$, there exists a entire solution $u_\infty$ of system \cref{parabolic_system} in 
\[
\ccc^0\left(\rr,\cccb{2}\right)\cap \ccc^1\left(\rr,\cccb{0}\right)
\,,
\]
such that, up to replacing the sequence $(x_n,t_n)_{n\in\nn}$ by a subsequence, 
\begin{equation}
\label{compactness}
D^{2,1}u(x_n+\cdot,t_n+\cdot)\to D^{2,1}u_\infty
\quad\text{as}\quad
n\to+\infty
\,,
\end{equation}
uniformly on every compact subset of $\rr^2$, where the symbol $D^{2,1}v$ stands for $(v,v_x,v_{xx},v_t)$ (for $v$ equal to $u$ or $u_\infty$). 
\end{lemma}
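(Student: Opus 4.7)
The plan is a standard diagonal-extraction/Arzelà--Ascoli argument, combined with parabolic regularity, and the only real work is to upgrade the bounds \cref{bound_u_ut_ck_preliminaries} enough to get equicontinuity of the second spatial and first temporal derivatives.

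First, define the translates $u_n(x,t) = u(x_n + x, t_n + t)$; since $t_n \to +\infty$, for every $T > 0$ the function $u_n$ is defined on $\rr \times [-T,T]$ for all sufficiently large $n$, and each $u_n$ solves system \cref{parabolic_system} on its domain. Fix an arbitrary exponent $\alpha$ in $(0,1)$. The bounds \cref{bound_u_ut_ck_preliminaries} (applied to $u$ with $\varepsilon$ fixed) yield, for $n$ large,
\[
\sup_{t\in[-T,T]} \norm{u_n(\cdot,t)}_{\cccb{2,\alpha}} + \sup_{t\in[-T,T]} \norm{\partial_t u_n(\cdot,t)}_{\cccb{0,\alpha}} \le C(T),
\]
with $C(T)$ independent of $n$.

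Next, I would bootstrap once, using the PDE itself, to upgrade regularity in time. Since $\partial_t u_n = \partial_{xx} u_n - \nabla V(u_n)$ and $\nabla V$ is of class $\ccc^1$ with $u_n$ bounded in $\cccb{2,\alpha}$ uniformly, applying $\partial_x$ to the equation expresses $\partial_t \partial_x u_n$ in terms of quantities controlled in $\cccb{0,\alpha}$; standard parabolic interior Schauder estimates then give uniform bounds on $u_n$ in $\ccc^{2+\alpha,1+\alpha/2}\bigl(\rr \times [-T,T]\bigr)$ on each strip. In particular, the family $\bigl(D^{2,1}u_n\bigr)_n = \bigl(u_n,\partial_x u_n,\partial_{xx}u_n,\partial_t u_n\bigr)_n$ is uniformly equicontinuous on every compact subset of $\rr^2$.

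By Arzelà--Ascoli and a standard diagonal extraction along an exhaustion of $\rr^2$ by compact sets, a subsequence (still denoted $u_n$) satisfies $D^{2,1}u_n \to (u_\infty, v_1, v_2, w)$ uniformly on every compact subset of $\rr^2$. Uniform convergence of $u_n$ together with uniform convergence of its partial derivatives forces $v_1 = \partial_x u_\infty$, $v_2 = \partial_{xx} u_\infty$, and $w = \partial_t u_\infty$, with $u_\infty$ in $\ccc^0(\rr,\cccb{2})\cap\ccc^1(\rr,\cccb{0})$ (the boundedness being inherited from the uniform-in-$n$ sup bounds). Passing to the limit pointwise in the identity $\partial_t u_n = \partial_{xx} u_n - \nabla V(u_n)$, using the continuity of $\nabla V$, shows that $u_\infty$ is an entire solution of \cref{parabolic_system}, which is exactly \cref{compactness}.

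The only potential obstacle is the extra regularity needed to apply Arzelà--Ascoli to $\partial_{xx} u_n$ and $\partial_t u_n$, since \cref{bound_u_ut_ck_preliminaries} alone provides Hölder control only in one variable for each of these derivatives. This is resolved by the one-step Schauder bootstrap sketched above, which is routine given that $\nabla V$ is $\ccc^1$ and the $L^\infty$ bound \cref{att_ball_C1_norm} keeps the argument of $\nabla V$ in a fixed compact set for large $t$.
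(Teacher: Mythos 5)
Your overall strategy is sound and is, in substance, the standard argument; note that the paper itself does not prove this lemma but simply refers to \cite{MatanoPolacik_entireSolutionBistableParabEquTwoCollidingPulses_2017} and to the proof of the analogous statement in \cite{Risler_globalRelaxation_2016}, so there is no ``paper proof'' to diverge from --- the translation-plus-Arzel\`a--Ascoli scheme with a parabolic regularity upgrade is exactly what those references carry out. Your diagnosis of the only nontrivial point is also correct: the bounds \cref{bound_u_ut_ck_preliminaries} give H\"older control of $u_{xx}$ and $u_t$ in $x$ only, and some joint space--time equicontinuity must be extracted before Arzel\`a--Ascoli applies to $D^{2,1}u_n$.

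The one step that is not right as written is the bootstrap via differentiating the equation in $x$: the identity $\partial_t\partial_x u_n=\partial_{xxx}u_n-D^2V(u_n)\cdot\partial_x u_n$ involves $\partial_{xxx}u_n$, which is \emph{not} controlled by the uniform $\cccb{2,\alpha}$ bound, so the right-hand side is not ``controlled in $\cccb{0,\alpha}$''. This detour is unnecessary. Apply the interior parabolic Schauder estimate directly to $\partial_t u_n=\partial_{xx}u_n-\nabla V(u_n)$ on strips $\rr\times[-T-1,T+1]$: since $u_n$, $\partial_x u_n$ and $\partial_t u_n$ are uniformly bounded there (by \cref{att_ball_C1_norm} and \cref{bound_u_ut_ck_preliminaries}), the map $(x,t)\mapsto u_n(x,t)$ is uniformly Lipschitz jointly in $(x,t)$ with values in a fixed compact set, and $\nabla V$ is $\ccc^1$ hence Lipschitz on that set; therefore the source term $-\nabla V(u_n)$ is uniformly bounded in $\ccc^{\alpha,\alpha/2}$ jointly in space--time, and Schauder yields uniform $\ccc^{2+\alpha,1+\alpha/2}$ bounds on $\rr\times[-T,T]$. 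From there your diagonal extraction, the identification of the limits of the derivatives, the passage to the limit in the equation, and the membership of $u_\infty$ in $\ccc^0(\rr,\cccb{2})\cap\ccc^1(\rr,\cccb{0})$ (inherited from the uniform joint H\"older bounds on $\partial_{xx}u_n$ and $\partial_t u_n$) all go through as you describe. With that one correction the proof is complete.
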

\begin{proof}
See \cite[1963]{MatanoPolacik_entireSolutionBistableParabEquTwoCollidingPulses_2017} or the proof of \cite[\GlobalRelaxationLemAsymptCompactness]{Risler_globalRelaxation_2016}. 
\end{proof}
\subsection{Invaded critical point at the origin of \texorpdfstring{$\rr^d$}{Rd}}
\label{subsec:invaded_crit_point_at_origin}
For convenience, it will be assumed all along the current \cref{sec:preliminaries} (and along most of the next \cref{sec:proof}) that
\begin{equation}
\label{e_equals_origin}
e = 0_{\rr^d}
\,.
\end{equation}
This assumption amounts to replacing the initial potential function $u\mapsto V(u)$ by the ``new'' potential function $u\mapsto V(e+u)$, and it can be made without loss of generality; indeed, even if assumption \cref{hyp_coerc} is not necessarily satisfied by this new potential, this assumption will not be directly used in the proof: only its consequences (the global existence of solutions and their asymptotic compactness stated in the two \namecrefs{subsec:compactness} above) will, and these consequences still hold after a translation in the state variable $u$. 
\subsection{Time derivative of energy in a travelling frame}
\label{subsec:proof_time_derivative_energy_travelling_frame}
The aim of this \namecref{subsec:proof_time_derivative_energy_travelling_frame} is to prove \cref{prop:decrease_energy_trav_frame}. 
For every positive quantity $c$, following the notation $H^1_c(\rr,\rr^d)$ introduced in \cref{H1c}, let us introduce the following weighted Sobolev spaces:
\begin{align}
\nonumber
L^2_c(\rr,\rr^d) &= \bigl\{w\in \LtwoLoc(\rr,\rr^d): \text{ the function } \xi\mapsto e^{\frac{1}{2}c\xi} \, w(\xi) \text{ is in }  L^2(\rr,\rr^d) \}\,, \\
\label{H2c}
H^2_c(\rr,\rr^d) &= \bigl\{w\in H^1_c(\rr,\rr^d):    \text{ the function } \xi\mapsto e^{\frac{1}{2}c\xi} \, w''(\xi) \text{ is in } L^2(\rr,\rr^d)\bigr\}
\,.
\end{align}
\begin{proof}[Proof of \cref{prop:decrease_energy_trav_frame}]
The proof follows from standard results of analytic semi-group theory, see \cite{Lunardi_analyticSemigroupsOptimalRegularityParabolicPbs_1995}, and similar statements in related settings can be found in the literature, see for instance \cite[Proposition~4.1]{Muratov_globVarStructPropagation_2004}, \cite[Proposition~5.1]{MuratovNovaga_frontPropIVariational_2008}, and \cite[Proposition~3.1]{MuratovNovaga_globExpConvTW_2012}. The setting considered in these two last references (scalar equations in cylindrical domains) differs from the one considered here, however the semi-group arguments proving the result are unchanged. 

Here are some elements of the proof. The operator 
\[
\partial_{xx}:H^2_c(\rr,\rr^d)\to L^2_c(\rr,\rr^d)
\]
is a densely defined sectorial operator of $L^2_c(\rr,\rr^d)$ and since the values $u(x,t)$ taken by the solution are bounded (uniformly with respect to $x$ in $\rr$ and $t$ in $[0,+\infty)$), the nonlinearity $w\mapsto \nabla V(w)$ can be considered as a globally Lipschitz map of $L^2_c(\rr,\rr^d)$ onto itself (up to changing the values of $\nabla V$ outside of a large ball of $\rr^d$ containing all the values taken by the solution). Thus, the last conclusion of \cref{prop:decrease_energy_trav_frame} (semi-flow in $H^1_c(\rr,\rr^d)\cap\HoneulofR$ and continuity with respect to the initial condition in this space) follows from \cite[Proposition~7.1.9]{Lunardi_analyticSemigroupsOptimalRegularityParabolicPbs_1995}, and it follows from \cite[Proposition~7.1.10]{Lunardi_analyticSemigroupsOptimalRegularityParabolicPbs_1995} that, for every $\alpha$ in $(0,1)$, the solution $t\mapsto u(\cdot,t)$ belongs to the space
\begin{equation}
\label{space_solution}
\ccc^0\bigl([0,+\infty),H^1_c(\rr,\rr^d)\bigr) \,\cap\, 
\ccc^\alpha\bigl((0,+\infty),H^2_c(\rr,\rr^d)\bigr) \,\cap\, 
\ccc^{1,\alpha}\bigl((0,+\infty),L^2_c(\rr,\rr^d)\bigr)
\,;
\end{equation}
in particular, its time derivative $t\mapsto u_t(\cdot,t)$ belongs to the space
\[
\ccc^\alpha\bigl((0,+\infty),L^2_c(\rr,\rr^d)\bigr)
\,,
\]
which means that $D_c$ is uniformly continuous on $(0,+\infty)$. It follows from the first among these two conclusions that the function $t\mapsto \eee_c[v(\cdot,t)]$ is continuous on $[0,+\infty)$. Regarding its differentiability, a formal derivation under the integral sign yields:
\[
\frac{d}{dt}\eee_c\bigl[v(\cdot,t)\bigr] = \int_{\rr} e^{c\xi}\bigl(v_\xi\cdot v_{t\xi} + \nabla V(v)\cdot v_t\bigr) \, d\xi
\,,
\]
which provides the intended conclusion after integrating by parts the term $e^{c\xi}v_\xi\cdot v_{t\xi}$. However, this computation is not rigorously justified since the function $v_{t\xi}$ may not belong to $L^2_c(\rr,\rr^d)$, or even exist. A way to circumvent this issue is to work with the discrete time derivative of $v_\xi$. Notice that, due to local parabolic estimates and since $\nabla V$ is of class $\ccc^1$, the function $v$ is of class $\ccc^1$ in time and $\ccc^2$ in space on $\rr\times(0,+\infty)$. As a consequence, for all positive quantities $h$ and $L$, introducing the functions $v^h$ and $v^h_{\xi}$ defined as
\[
v^h(\xi,t) = \frac{v(\xi,t+h/2) - v(\xi,t-h/2)}{h}
\quad\text{and}\quad
v^h_{\xi}(\xi,t) = \frac{v_\xi(\xi,t+h/2) - v_\xi(\xi,t-h/2)}{h}
\,,
\]
the following integration by parts holds:
\[
\begin{aligned}
\int_{-L}^L e^{c\xi} v^h_{\xi}(\xi,t)\cdot v_{\xi}(\xi,t) \, d\xi &= e^{cL}v^h(L,t)\cdot v_\xi(L,t)-e^{-cL}v^h(-L,t)\cdot v_{\xi}(-L,t) \\
&\quad -\int_{-L}^L  e^{c\xi} v^h(\xi,t) \cdot \bigl(v_{\xi\xi}(\xi,t)+cv_{\xi}(\xi,t)\bigr) \, d\xi
\,.
\end{aligned}
\]
It follows from the regularity of $v$ that
\[
\begin{aligned}
\int_{-L}^L e^{c\xi} v^h_{\xi}(\xi,t)\cdot v_{\xi}(\xi,t) \, d\xi &= e^{cL}v_t(L,t)\cdot v_\xi(L,t)-e^{-cL}v_t(-L,t)\cdot v_{\xi}(-L,t) \\
&\quad -\int_{-L}^L  e^{c\xi} v_t(\xi,t) \cdot \bigl(v_{\xi\xi}(\xi,t)+cv_{\xi}(\xi,t)\bigr) \, d\xi + O_{h\to 0}(h)
\,.
\end{aligned}
\]
For all positive quantities $T_1$ and $T_2$ satisfying the inequalities $h/2<T_1<T_2$, integrating this equality on the interval $[T_1,T_2]$ and applying Fubini's Theorem yields:
\[
\begin{aligned}
&\frac{1}{h}\int_{T_2-h/2}^{T_2} \int_{-L}^L e^{c\xi} v_\xi(\xi,t+h/2)\cdot v_\xi(\xi,t)\, d\xi\, dt \\
- &\frac{1}{h}\int_{T_1-h/2}^{T_1} \int_{-L}^L e^{c\xi} v_\xi(\xi,t+h/2)\cdot v_\xi(\xi,t)\, d\xi\, dt \\
&\quad =\int_{T_1}^{T_2} \bigl( e^{cL} v_t(L,t)\cdot v_\xi(L,t) - e^{-cL} v_t(-L,t)\cdot v_{\xi}(-L,t)\bigr) \, dt \\
&\qquad -\int_{-L}^L \int_{T_1}^{T_2} e^{c\xi} v_t(\xi,t) \cdot \bigl(v_{\xi\xi}(\xi,t)+cv_{\xi}(\xi,t)\bigr) \, dt \, d\xi + O_{h\to 0}(h)
\,;
\end{aligned}
\]
according to the continuity of $v_\xi$, passing to the limit as $h$ goes to $0$ yields:
\[
\begin{aligned}
\frac{1}{2}\int_{-L}^L e^{c\xi} v_{\xi}^2(\xi,T_2)\, d\xi & -\frac{1}{2}\int_{-L}^L e^{c\xi}  v_{\xi}^2(\xi,T_1)\, d\xi \\
&=\int_{T_1}^{T_2} \bigl( e^{cL} v_t(L,t)\cdot v_\xi(L,t) - e^{-cL} v_t(-L,t)\cdot v_{\xi}(-L,t)\bigr) \, dt \\
&\quad -\int_{-L}^L \int_{T_1}^{T_2} e^{c\xi} v_t(\xi,t) \cdot \bigl(v_{\xi\xi}(\xi,t)+cv_{\xi}(\xi,t)\bigr) \, dt \, d\xi
\,;
\end{aligned}
\]
and, since $t \mapsto u(\cdot,t)$ belongs to the space \cref{space_solution}, passing to the limit as $L$ goes to $+\infty$ along a suitable subsequence yields, after another application of Fubini's Theorem,
\begin{equation}
\label{T2_T1_derivatives}
\begin{aligned}
\frac{1}{2}\int_{\rr} e^{c\xi}  v_{\xi}^2(\xi,T_2) \, d\xi & - \frac{1}{2}\int_{\rr} e^{c\xi} v_{\xi}^2(\xi,T_1)\, d\xi \\
&= -\int_{T_1}^{T_2} \int_{\rr} e^{c\xi} v_t(\xi,t) \cdot \bigl(v_{\xi\xi}(\xi,t) + cv_{\xi}(\xi,t)\bigr) \, d\xi \, dt
\,.
\end{aligned}
\end{equation}
Another consequence of Fubini's Theorem is the identity
\begin{equation}
\label{T2_T1_potentials}
\int_{\rr} e^{c\xi} V\bigl(v(\xi,T_2)\bigr) \, d\xi - \int_{\rr} e^{c\xi} V\bigl(v(\xi,T_1)\bigr)  \, d\xi = \int_{T_1}^{T_2} \int_{\rr} e^{c\xi} \nabla V\bigl(v(\xi,t)\bigr) \cdot v_t(\xi,t)\, d\xi \, dt
\,.
\end{equation}
It follows from \cref{T2_T1_derivatives,T2_T1_potentials} that
\[
\eee_c\bigl[v(\cdot,T_2)\bigr]-\eee_c\bigl[v(\cdot,T_1)\bigr] = -\int_{T_1}^{T_2} D_c(t) \, dt
\,,
\]
which, by the Fundamental Theorem of Calculus, implies that $t\mapsto \eee_c\bigl[v(\cdot,t)\bigr]$ is differentiable on $(0,+\infty)$ and that its derivative at every positive time $t$ is equal to $-D_c(t)$, which completes the proof.
\end{proof}
\subsection{Energy of a pushed front in a travelling frame}
Let $\phi$ denote the profile of a pushed front travelling at some (positive) speed $c$ and invading $0_{\rr^d}$. According to the notation \cref{def_sigma_c_2d_of_e} for the eigenvalues of the linearized differential system \cref{syst_trav_front_order_1_2_linearized} at $0_{\rr^d}$ and to the \cref{def:pushed_travelling_wave_front} of a pushed travelling front, there exists an integer $j$ in $\{1,\dots,d\}$ such that, for $k$ in $\{0,1,2\}$ and if $D^k\phi$ denotes the $k$-th derivative of $\phi$,
\[
D^k \phi(\xi) = O\bigl(e^{\lambda_{c,-}(\mu_j)\xi}\bigr)
\quad\text{as}\quad
\xi\to+\infty
\,.
\] 
According to the expression \cref{def_lambda_c_pm_of_mu} of $\lambda_{c,-}(\mu)$ and to the \cref{def:pushed_travelling_wave_front} of a pushed travelling front,
\[
2\lambda_{c,-}(\mu_j) < -c 
\,.
\]
The following result (see \cref{fig:graph_eee_cprime_of_phi}) was first established by Muratov \cite[Proposition~3.10]{Muratov_globVarStructPropagation_2004} in the setting of gradient parabolic systems in cylinders. For sake of completeness, a proof in the present setting is provided below. 
\begin{proposition}[energy of a pushed front in a travelling frame]
\label{prop:energy_pushed_front_travelling_frame}
For every speed $c'$ in the interval $\bigl(0,2\abs{\lambda_{c,-}(\mu_j)}\bigr)$, the following equality holds:
\begin{equation}
\label{energy_pushed_front_travelling_frame}
\eee_{c'}[\phi] = \left(1-\frac{c}{c'}\right) \int_{\rr} e^{c'\xi} \phi'(\xi)^2 \, d\xi
\,;
\end{equation}
in particular, the energy of a pushed front in the frame travelling at its own speed vanishes:
\begin{equation}
\label{energy_of_pushed_front_vanishes}
\eee_{c}[\phi] = 0 
\,.
\end{equation}
\end{proposition}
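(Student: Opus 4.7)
The plan is to test the travelling wave equation $\phi'' + c\phi' = \nabla V(\phi)$ against the weighted derivative $e^{c'\xi}\phi'$ and integrate over $\mathbb{R}$. More precisely, I would write
\[
\int_{\mathbb{R}} e^{c'\xi}\bigl(\phi''(\xi) + c\phi'(\xi) - \nabla V(\phi(\xi))\bigr)\cdot\phi'(\xi)\, d\xi = 0
\]
and integrate by parts the two exact-derivative terms, using that $e^{c'\xi}\phi''\cdot\phi' = e^{c'\xi}\frac{d}{d\xi}\bigl(\tfrac{1}{2}\phi'^2\bigr)$ and $e^{c'\xi}\nabla V(\phi)\cdot\phi' = e^{c'\xi}\frac{d}{d\xi}\bigl(V(\phi)\bigr)$. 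Once the boundary terms are shown to vanish, the resulting identity reads
\[
-c'\int_{\mathbb{R}}e^{c'\xi}\frac{\phi'^2}{2}\,d\xi + c\int_{\mathbb{R}} e^{c'\xi}\phi'^2\, d\xi = -c'\int_{\mathbb{R}} e^{c'\xi} V(\phi)\,d\xi,
\]
from which dividing by $-c'$ and adding $\tfrac{1}{2}\int e^{c'\xi}\phi'^2\, d\xi$ to both sides yields the announced formula \eqref{energy_pushed_front_travelling_frame}; the special case $c'=c$ gives \eqref{energy_of_pushed_front_vanishes}.

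The main work is therefore to justify the integrability of each piece and the vanishing of the boundary terms at $\pm\infty$. At the right end of space, the pushed character means that $\phi(\xi)$, together with its derivatives up to order two, is $O\bigl(e^{\lambda_{c,-}(\mu_j)\xi}\bigr)$ as $\xi\to+\infty$ for some $j$ (which follows from standard invariant-manifold theory applied to \eqref{syst_trav_front_order_1} at the equilibrium $(e,0_{\mathbb{R}^d})$ together with \cref{def:pushed_travelling_wave_front}). Since $V(\phi) = O(|\phi|^2)$ near $e$, both $e^{c'\xi}\phi'^2$ and $e^{c'\xi}V(\phi)$ decay like $e^{(c' + 2\lambda_{c,-}(\mu_j))\xi}$, and the hypothesis $c' < 2\abs{\lambda_{c,-}(\mu_j)}$ is precisely what makes this exponent negative. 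This gives both the integrability on $[0,+\infty)$ and the vanishing of boundary terms at $+\infty$.

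At the left end of space, the front hypothesis (together with \cref{lem:asymptotics_at_the_two_ends_of_space}) ensures that $\phi$ stays bounded and $\phi'$ tends to $0$ as $\xi\to-\infty$; in particular $\phi'^2$ and $V(\phi)$ are bounded on $(-\infty,0]$, so the weight $e^{c'\xi}$ alone (with $c' > 0$) provides exponential decay of the integrands and the vanishing of the boundary terms at $-\infty$. This is the most delicate technical step, though straightforward once one invokes the asymptotic behaviour of bounded trajectories of \eqref{syst_trav_front_order_1}. Once all boundary and integrability issues are taken care of, the identity above follows by a direct computation, and inserting the expression for $\int e^{c'\xi}V(\phi)\, d\xi$ into the definition \eqref{def_eee_c} of $\mathcal{E}_{c'}[\phi]$ yields \eqref{energy_pushed_front_travelling_frame}.
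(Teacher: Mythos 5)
Your proposal is correct and follows essentially the same route as the paper: multiply the profile equation \cref{syst_trav_front_order_2} by $e^{c'\xi}\phi'$, integrate over $\rr$, and integrate by parts the exact-derivative terms, the hypothesis $c'<2\abs{\lambda_{c,-}(\mu_j)}$ ensuring convergence at $+\infty$ and the boundedness of the front profile together with $c'>0$ handling the left end. You in fact spell out the integrability and boundary-term justifications more explicitly than the paper does.
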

\begin{figure}[!htbp]
\centering
\includegraphics[width=.5\textwidth]{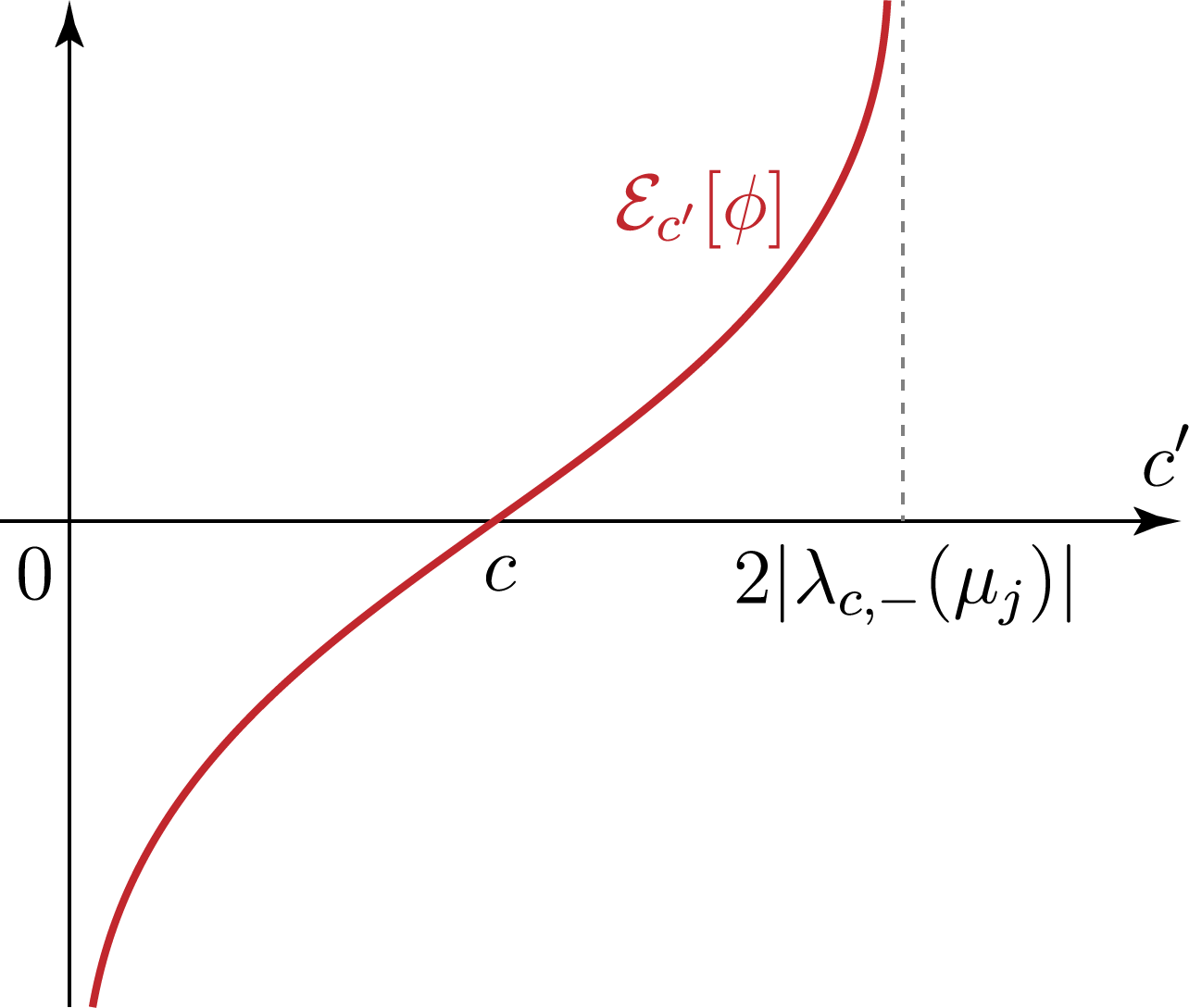}
\caption{Graph of the function $c'\mapsto \eee_{c'}[\phi]$ (\cref{prop:energy_pushed_front_travelling_frame}). The function vanishes and changes sign at $c'$ equals $c$. It diverges as $c'$ goes to $0$ due to the ratio $c/c'$ in the expression \cref{energy_pushed_front_travelling_frame}, and it also diverges as $c'$ goes to $2\abs{\lambda_{c,-}(\mu_j)}$ due to the asymptotics of $\phi'(\xi)$ as $\xi$ goes to $+\infty$.}
\label{fig:graph_eee_cprime_of_phi}
\end{figure}
\begin{proof}
Multiplying the differential system \cref{syst_trav_front_order_2} governing the profile of $\phi$ by $e^{c'\xi} \phi'(\xi)$ and integrating over $\rr$ leads to (omitting the argument $\xi$ of $\phi$ and its derivatives in the integrand):
\[
\int_{\rr} e^{c'\xi}\left(\phi'\cdot \phi'' + c (\phi')^2 - \nabla V(\phi)\cdot\phi'\right)\, d\xi = 0
\,,
\]
or equivalently, 
\[
\begin{aligned}
(c'-c) \int_{\rr} e^{c'\xi} (\phi')^2 \, d\xi &= \int_{\rr} e^{c'\xi}\left(\phi'\cdot \phi'' + c' (\phi')^2 - \nabla V(\phi)\cdot\phi'\right)\, d\xi \\
&= \int_{\rr} e^{c'\xi}\left(-\frac{1}{2}c'(\phi')^2 + c' (\phi')^2 +c' V(\phi)\right)\, d\xi \\
&= c'\int_{\rr} e^{c'\xi}\left(\frac{1}{2}(\phi')^2 + V(\phi)\right)\, d\xi
\,,
\end{aligned}
\]
which is the intended equality \cref{energy_pushed_front_travelling_frame}. Choosing $c'$ equal to $c$ in this equality \cref{energy_pushed_front_travelling_frame} yields the second equality \cref{energy_of_pushed_front_vanishes}. 
\end{proof}
\subsection{Poincaré inequalities in weighted Sobolev spaces}
As was already observed by Muratov \cite{Muratov_globVarStructPropagation_2004}, Poincaré inequalities in the weighted Sobolev spaces $H^1_c(\rr,\rr^d)$ are a key ingredient for exploiting the variational structure in travelling frames, in that they provide lower bounds on the energy $\eee_c[\cdot]$. The following lemma is a variant of \cite[Lemma~2.7]{Muratov_globVarStructPropagation_2004}, \cite[inequalities~(2.8) and (2.11)]{GallayRisler_globStabBistableTW_2007}, and \cite[Proposition~4.3]{GallayJoly_globStabDampedWaveBistable_2009}. It will be used in the proof of \cref{lem:lower_bound_energy_trav_frame} in the next \namecref{subsec:lower_bound_energy_trav_frame}, and furthermore all along the proof of \cref{thm:main}.
\begin{lemma}[Poincaré inequalities]
\label{lem:Poincare_inequality}
For every positive quantity $c$ and every function $v$ in $H^1_c(\rr,\rr^d)$, the following conclusions hold.
\begin{enumerate}
\item The following limits hold: $\quad v(\xi) = o\bigl(e^{-\frac{1}{2}c\xi}\bigr) \text{ as }\xi\to\pm\infty$.
\label{item:lem_Poincare_inequality_convergence_at_both_ends}
\item For every real quantity $\xi_0$, every real quantity $\xi_1$ greater than $\xi_0$, and every positive quantity $\lambda$, the following inequalities hold:
\begin{align}
\label{Poincare_inequality_any_gamma_bounded_interval}
\int_{\xi_0}^{\xi_1} e^{c\xi}v'(\xi)^2 \, d\xi &\ge \lambda e^{c\xi_0}v(\xi_0)^2 - \lambda e^{c\xi_1}v(\xi_1)^2 + \lambda(c-\lambda)\int_{\xi_0}^{\xi_1} e^{c\xi}v(\xi)^2\, d\xi
\,, \\
\label{Poincare_inequality_any_gamma}
\int_{\xi_0}^{+\infty} e^{c\xi}v'(\xi)^2 \, d\xi &\ge \lambda e^{c\xi_0}v(\xi_0)^2 + \lambda(c-\lambda)\int_{\xi_0}^{+\infty} e^{c\xi}v(\xi)^2\, d\xi
\,, \\
\label{Poincare_inequality_gamma_equals_c_over_2}
%\text{and}\quad
\int_{\xi_0}^{+\infty} e^{c\xi}v'(\xi)^2\, d\xi &\ge \frac{c}{2}e^{c\xi_0}v(\xi_0)^2 + \frac{c^2}{4}\int_{\xi_0}^{+\infty}e^{c\xi}v(\xi)^2\, d\xi
\,, \\
\label{Poincare_inequality_on_R}
%\text{and}\quad
\int_{-\infty}^{+\infty} e^{c\xi}v'(\xi)^2\, d\xi &\ge \frac{c^2}{4}\int_{-\infty}^{+\infty}e^{c\xi}v(\xi)^2\, d\xi
\,.
\end{align}
In addition, if $v$ is not identically equal to $0_{\rr^d}$ on $\rr$ then inequality \cref{Poincare_inequality_on_R} is actually strict, and so is inequality \cref{Poincare_inequality_gamma_equals_c_over_2} if $v$ is not identically equal to $0_{\rr^d}$ on $[\xi_0,+\infty)$. 
\end{enumerate}
\end{lemma}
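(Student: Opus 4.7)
My plan is to prove conclusion \cref{item:lem_Poincare_inequality_convergence_at_both_ends} first by a change of unknown, and then derive the four inequalities from a single completion-of-squares identity, with the decay supplying the boundary control needed to pass to the limit at $\pm\infty$.

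For the decay conclusion, I would set $w(\xi) = e^{c\xi/2} v(\xi)$. The definition \cref{H1c} of $H^1_c(\rr,\rr^d)$ ensures that $w \in L^2(\rr,\rr^d)$, and since $w'(\xi) = (c/2)\, e^{c\xi/2} v(\xi) + e^{c\xi/2} v'(\xi)$ is the sum of two $L^2(\rr,\rr^d)$ functions, $w$ belongs to $H^1(\rr,\rr^d)$. The one-dimensional Sobolev embedding then yields that $w$ is continuous on $\rr$ and tends to $0$ at $\pm\infty$, which is precisely $v(\xi) = o\bigl(e^{-c\xi/2}\bigr)$ at both ends.

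For the inequalities, the starting point is the pointwise bound (for any $\lambda > 0$)
\[
0 \le e^{c\xi}\bigl|v'(\xi) + \lambda v(\xi)\bigr|^2 = e^{c\xi} v'(\xi)^2 + \lambda e^{c\xi}\bigl(v(\xi)^2\bigr)' + \lambda^2 e^{c\xi} v(\xi)^2,
\]
which rearranges to $e^{c\xi} v'(\xi)^2 \ge -\lambda e^{c\xi}\bigl(v(\xi)^2\bigr)' - \lambda^2 e^{c\xi} v(\xi)^2$. Integrating this inequality over $[\xi_0,\xi_1]$ and handling the middle term on the right-hand side by integration by parts (which produces the boundary contribution $-\lambda[e^{c\xi}v^2]_{\xi_0}^{\xi_1}$ and the factor $\lambda c$ in front of $\int e^{c\xi}v^2$) yields \cref{Poincare_inequality_any_gamma_bounded_interval} immediately. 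Letting $\xi_1 \to +\infty$, with the boundary term $\lambda e^{c\xi_1} v(\xi_1)^2$ killed by conclusion \cref{item:lem_Poincare_inequality_convergence_at_both_ends}, produces \cref{Poincare_inequality_any_gamma}; specializing to $\lambda = c/2$ gives \cref{Poincare_inequality_gamma_equals_c_over_2}; and letting $\xi_0 \to -\infty$ in \cref{Poincare_inequality_gamma_equals_c_over_2}, again using conclusion \cref{item:lem_Poincare_inequality_convergence_at_both_ends}, produces \cref{Poincare_inequality_on_R}.

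For the strictness statements, equality in \cref{Poincare_inequality_gamma_equals_c_over_2} or \cref{Poincare_inequality_on_R} would force the nonnegative integrand $e^{c\xi}\bigl|v' + (c/2) v\bigr|^2$ to vanish almost everywhere on the relevant interval, hence $v(\xi) = K\, e^{-c\xi/2}$ for some $K$ in $\rr^d$ there. On $\rr$, such an exponential is not in $L^2_c(\rr,\rr^d)$ unless $K = 0$; on $[\xi_0,+\infty)$, it would give $e^{c\xi/2} v(\xi) \equiv K$, which contradicts the decay of conclusion \cref{item:lem_Poincare_inequality_convergence_at_both_ends} unless $K = 0$. I do not anticipate any real obstacle: the only step demanding care is the vanishing of the boundary contributions at $\pm\infty$, which is exactly what conclusion \cref{item:lem_Poincare_inequality_convergence_at_both_ends} is designed to handle.
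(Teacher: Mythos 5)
Your proof is correct and follows essentially the same route as the paper: the four inequalities come from the same completion-of-squares identity in the free parameter $\lambda$, followed by integration by parts over $[\xi_0,\xi_1]$ and the decay of conclusion (1) to kill the boundary terms at $\pm\infty$, and the strictness argument (equality forces $v' + \tfrac{c}{2}v \equiv 0$, hence $v(\xi) = K e^{-c\xi/2}$, hence $K=0$) is identical. The only, immaterial, difference is in conclusion (1), where you pass to $w = e^{c\xi/2}v$ and invoke the embedding of $H^1(\rr,\rr^d)$ into continuous functions vanishing at infinity, whereas the paper argues directly that $e^{c\xi}v(\xi)^2$ admits a limit at each end (via the fundamental theorem of calculus) which must vanish by integrability of $e^{c\xi}v(\xi)^2$.
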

\begin{remark}
Inequality \cref{Poincare_inequality_any_gamma} is the limit of inequality \cref{Poincare_inequality_any_gamma_bounded_interval} as $\xi_1$ goes to $+\infty$, and inequality \cref{Poincare_inequality_gamma_equals_c_over_2} is nothing but inequality \cref{Poincare_inequality_any_gamma} for $\lambda$ equal to $c/2$. This choice of $\lambda$ is optimal to maximize the term involving the integral of $v(\xi)^2$ to the right-hand side of these inequalities; in particular, it is the best possible choice if the integration domain is the whole real line (inequality \cref{Poincare_inequality_on_R}). In inequality \cref{Poincare_inequality_any_gamma}, choosing a quantity $\lambda$ which is larger than $c/2$ (say between $c/2$ and $c$) increases the size of the term involving $v(\xi_0)^2$ at the expense of the integral (see statement \cref{item:lem_lower_bound_energy_trav_frame_without_bar_xi} of \cref{lem:lower_bound_energy_trav_frame}) --- and choosing $\lambda$ smaller than $c/2$ does not make sense. In inequality \cref{Poincare_inequality_any_gamma_bounded_interval} by contrast, choosing $\lambda$ smaller than $c/2$ can make sense since this decreases the size of the negative term on the right-hand side (see the proof of \cref{lem:gap_between_invasion_points_is_bounded}).
\end{remark}
\begin{proof}
For every quantity $\xi_1$ greater than $\xi_0$, 
\[
e^{c\xi_1}v(\xi_1)^2 - e^{c\xi_0}v(\xi_0)^2 = \int_{\xi_0}^{\xi_1} e^{c\xi} \bigl(c v(\xi)^2 + 2 v(\xi)\cdot v'(\xi) \bigr)\, d\xi
\,.
\]
Since $v$ is in $H^1_c(\rr,\rr^d)$, the right-hand side of this inequality converges to a finite limit as $\xi_1$ goes to $+\infty$; thus the same is true for the quantity $e^{c\xi_1}v(\xi_1)^2$, and since the function $\xi\mapsto e^{c\xi}v(\xi)^2$ is in $L^1(\rr,\rr^d)$, this limit is necessarily $0$. The same argument shows that the quantity $e^{c\xi_0}v(\xi_0)^2$ must also go to $0$ as $\xi_0$ goes to $-\infty$. This proves conclusion \cref{item:lem_Poincare_inequality_convergence_at_both_ends}. 

For every positive quantity $\lambda$, using the polar identity
\[
2 v(\xi)\cdot v'(\xi) = - \lambda^{-1} v'(\xi)^2 - \lambda v(\xi)^2 + \bigl(\lambda^{-1/2}v'(\xi)+\lambda^{1/2}v(\xi)\bigr)^2
\]
and multiplying the previous equality by $\lambda$, it follows that
\begin{equation}
\label{Poincare_equality_any_gamma}
\begin{aligned}
\int_{\xi_0}^{\xi_1} e^{c\xi}v'(\xi)^2\, d\xi = & \lambda e^{c\xi_0}v(\xi_0)^2 - \lambda e^{c\xi_1}v(\xi_1)^2 + \lambda(c-\lambda) \int_{\xi_0}^{\xi_1} e^{c\xi}v(\xi)^2\, d\xi \\
& + \lambda\int_{\xi_0}^{\xi_1}e^{c\xi}\bigl(\lambda^{-1/2}v'(\xi)+\lambda^{1/2}v(\xi)\bigr)^2\, d\xi
\,,
\end{aligned}
\end{equation}
and dropping the last (nonnegative) integral gives inequality \cref{Poincare_inequality_any_gamma_bounded_interval}. According to conclusion \cref{item:lem_Poincare_inequality_convergence_at_both_ends}, passing to the limit as $\xi_1$ goes to $+\infty$ in equality \cref{Poincare_equality_any_gamma} gives
\[
\begin{aligned}
\int_{\xi_0}^{+\infty} e^{c\xi}v'(\xi)^2\, d\xi = & \lambda e^{c\xi_0}v(\xi_0)^2 + \lambda(c-\lambda) \int_{\xi_0}^{+\infty} e^{c\xi}v(\xi)^2\, d\xi \\
& + \lambda\int_{\xi_0}^{+\infty}e^{c\xi}\bigl(\lambda^{-1/2}v'(\xi)+\lambda^{1/2}v(\xi)\bigr)^2\, d\xi
\,,
\end{aligned}
\]
and dropping the last (nonnegative) integral gives inequality \cref{Poincare_inequality_any_gamma} and inequality \cref{Poincare_inequality_gamma_equals_c_over_2} for $\lambda$ equal to $c/2$, and finally inequality \cref{Poincare_inequality_on_R} by passing to the limit as $\xi_0$ goes to $+\infty$. 

To prove the ``strict'' version of inequalities \cref{Poincare_inequality_gamma_equals_c_over_2,Poincare_inequality_on_R}, observe that, if the quantity 
\begin{equation}
\label{integral_of_polar_square_in_Poincare_equality}
\int_{\xi_0}^{+\infty}e^{c\xi}\bigl(\lambda^{-1/2}v'(\xi)+\lambda^{1/2}v(\xi)\bigr)^2\, d\xi
\end{equation}
vanishes, then there must exists some vector $w$ of $\rr^d$ such that, for every $\xi$ in $[\xi_0,+\infty)$, 
\[
v(\xi) = e^{-\lambda\xi} w
\,,
\]
and if in addition $\lambda$ is equal to $c/2$, then according to conclusion \cref{item:lem_Poincare_inequality_convergence_at_both_ends} this vector $w$ must be equal to $0_{\rr^d}$. In other words, if $v$ is not identically equal to $0_{\rr^d}$ on $[\xi_0,+\infty)$ then the integral \cref{integral_of_polar_square_in_Poincare_equality} is positive, and the same is true for the same integral over $\rr$ if $v$ is not identically equal to $0_{\rr^d}$ on $\rr$.
\end{proof}
\begin{remark}
As can be seen on equality \cref{Poincare_equality_any_gamma}, the fact that the quantity $e^{c\xi_1}v(\xi_1)^2$ goes to $0$ as $\xi_1$ goes to $+\infty$ is crucial to obtain a meaningful lower bounds on the integrals of $e^{c\xi}v'(\xi)^2$ at the left-hand side of inequalities \cref{Poincare_inequality_any_gamma,Poincare_inequality_gamma_equals_c_over_2,Poincare_inequality_on_R}. 
\end{remark}
\subsection{Lower bound on energy in a travelling frame}
\label{subsec:lower_bound_energy_trav_frame}
Let us consider a positive quantity $c_0$ and a negative quantity $\mu_0$ such that
\begin{align}
\label{condition_mu_0_c_0_lower_bound_energy}
\mu_0 = -\frac{c_0^2}{4}\,,&
\quad\text{or equivalently}\quad
c_0 = 2\sqrt{-\mu_0}
\,,\\
\text{and}\quad
\mu_0 \le\mu_1\,,&
\quad\text{or equivalently}\quad
\cLinMax \le c_0 
\,,
\nonumber
\end{align}
see \cref{fig:correspondence_mu_c}. Let us consider a quantity $c$ in $[c_0,+\infty)$ and a function $v$ in $H^1_c(\rr,\rr^d)$, and, in accordance with the notation $\lambda_{c,\pm}(\cdot)$ introduced in \cref{def_lambda_c_pm_of_mu}, let us consider the quantities
\begin{equation}
\label{def_lambda_c_pm_of_mu0}
\lambda_{c,\pm}(\mu_0) = -\frac{c}{2}\pm\sqrt{\frac{c^2}{4}+\mu_0}
\,,
\end{equation}
see \cref{fig:eigenvalues}. The next lemma will rely on the assumption that the inequality 
\begin{equation}
\label{V_greater_than_one_half_mu0_v_of_xi_square}
V\bigl(v(\xi)\bigr)\ge\frac{1}{2}\mu_0 v(\xi)^2
\end{equation}
holds for $\xi$ in $\rr$ or for $\xi$ in some interval $[\widebar{\xi},+\infty)$ of $\rr$ (see \cref{fig:graph_of_V_for_appendix}). Conclusion \cref{item:lem_lower_bound_energy_trav_frame_without_bar_xi} of this lemma is similar to \cite[Lemma~3.6]{Muratov_globVarStructPropagation_2004} and conclusion \cref{item:lem_lower_bound_energy_trav_frame_with_bar_xi} is similar to \cite[inequality~(2.9)]{GallayRisler_globStabBistableTW_2007}, \cite[inequality~(4.18)]{GallayJoly_globStabDampedWaveBistable_2009}, and \cite[inequalities~(2.8) and (6.15)]{Luo_globStabDampedWaveEqu_2013}.
\begin{lemma}[lower bound on energy in a travelling frame]
\label{lem:lower_bound_energy_trav_frame}
The following two statements hold. 
\begin{enumerate}
\item If $v$ is not identically equal to $0_{\rr^d}$ and inequality \cref{V_greater_than_one_half_mu0_v_of_xi_square} holds for every $\xi$ in $\rr$, then 
\begin{equation}
\label{lower_bound_energy_trav_frame_without_bar_xi}
\eee_c[v]>0
\,.
\end{equation}
\label{item:lem_lower_bound_energy_trav_frame_without_bar_xi}
\item If there exists $\widebar{\xi}$ in $\rr$ such that inequality \cref{V_greater_than_one_half_mu0_v_of_xi_square} holds for every $\xi$ in $[\widebar{\xi},+\infty)$, then
\begin{equation}
\label{lower_bound_energy_trav_frame_with_bar_xi}
\eee_c[v] \ge e^{c\widebar{\xi}}\left(-\frac{\abs{\Vmin}}{c}+\frac{1}{2}\abs{\lambda_{c,-}(\mu_0)}v(\widebar{\xi})^2\right)
\,,
\end{equation}
and if in addition $c$ is greater than $c_0$, then there exists a positive quantity $\alpha$, depending on $c$ and $c_0$ (only) such that
\begin{equation}
\label{lower_bound_energy_trav_frame_H1_norm_from_bar_xi}
\eee_c[v] \ge -e^{c\widebar{\xi}}\frac{\abs{\Vmin}}{c}+\alpha\int_{\widebar{\xi}}^{+\infty} e^{c\xi}\bigl(v'(\xi)^2+v(\xi)^2\bigr)\, d\xi
\,.
\end{equation}
\label{item:lem_lower_bound_energy_trav_frame_with_bar_xi}
\end{enumerate}
\end{lemma}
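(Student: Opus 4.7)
The plan is to handle the two statements separately, in both cases using the pointwise quadratic lower bound $V \ge \frac{1}{2}\mu_0 v^2$ to reduce $\eee_c[v]$ to a weighted quadratic form, and then absorbing the negative $\frac{1}{2}\mu_0 v^2$ contribution into the positive $\frac{1}{2}v'^2$ contribution via the Poincaré inequalities of \cref{lem:Poincare_inequality}. The assumption $c \ge c_0$ translates into $c^2/4 \ge -\mu_0$, which is exactly the threshold that makes this absorption work.

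For statement \cref{item:lem_lower_bound_energy_trav_frame_without_bar_xi}, I would substitute the pointwise bound on $V$ (valid on all of $\rr$) into the definition of $\eee_c[v]$, obtaining at least $\frac{1}{2}\int_\rr e^{c\xi}v'(\xi)^2\,d\xi + \frac{1}{2}\mu_0\int_\rr e^{c\xi}v(\xi)^2\,d\xi$. Invoking the \emph{strict} form of inequality \cref{Poincare_inequality_on_R} (strict because $v$ is not identically zero on $\rr$) together with the inequality $c^2/4 \ge -\mu_0$ then yields $\eee_c[v] > 0$.

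For statement \cref{item:lem_lower_bound_energy_trav_frame_with_bar_xi}, I would split the integral at $\widebar{\xi}$. On $(-\infty,\widebar{\xi}]$ the kinetic term is discarded and the bound $V \ge -\abs{\Vmin}$ produces the contribution $-\abs{\Vmin}e^{c\widebar{\xi}}/c$. On $[\widebar{\xi},+\infty)$ the quadratic bound on $V$ is substituted and inequality \cref{Poincare_inequality_any_gamma} is applied with the specific choice $\lambda = \abs{\lambda_{c,-}(\mu_0)}$. The decisive algebraic fact is that $\lambda_{c,-}(\mu_0)$ and $\lambda_{c,+}(\mu_0)$ are the two (real, negative) roots of $X^2 + cX - \mu_0 = 0$, so their product equals $-\mu_0$; equivalently, $\lambda(c - \lambda) = \abs{\lambda_{c,-}(\mu_0)}\,\abs{\lambda_{c,+}(\mu_0)} = -\mu_0$, which cancels the coefficient of $\int e^{c\xi}v^2\,d\xi$ exactly and leaves only the boundary term $\frac{1}{2}\abs{\lambda_{c,-}(\mu_0)}\,e^{c\widebar{\xi}}v(\widebar{\xi})^2$. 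This gives inequality \cref{lower_bound_energy_trav_frame_with_bar_xi}.

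For the refinement \cref{lower_bound_energy_trav_frame_H1_norm_from_bar_xi} when $c > c_0$, I would set aside a fraction $\frac{1}{2} - \beta$ of the kinetic term and apply inequality \cref{Poincare_inequality_gamma_equals_c_over_2} only to the remaining $\beta$-fraction, discarding its nonnegative boundary term. Choosing $\beta$ in the nonempty interval $\bigl(c_0^2/(2c^2),\,1/2\bigr)$ makes the resulting coefficient $\beta c^2/4 + \mu_0/2$ of $\int e^{c\xi}v^2\,d\xi$ strictly positive, and setting $\alpha = \min\bigl(\tfrac{1}{2} - \beta,\, \beta c^2/4 + \mu_0/2\bigr)$ delivers the announced weighted $H^1_c$ bound. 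The main (and essentially only) conceptual obstacle is the choice $\lambda = \abs{\lambda_{c,-}(\mu_0)}$: recognizing it via the characteristic polynomial of the linearization \cref{syst_trav_front_order_1_2_linearized} and seeing that it exactly saturates the Poincaré bound is the one non-mechanical step; everything else is bookkeeping.
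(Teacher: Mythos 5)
Your proposal is correct and follows essentially the same route as the paper's proof: pointwise substitution of the quadratic lower bound on $V$, the split at $\widebar{\xi}$ with the crude bound $V\ge -\abs{\Vmin}$ on the left, and absorption of the $\tfrac{1}{2}\mu_0 v^2$ term via the Poincaré inequalities of \cref{lem:Poincare_inequality}, with the same optimal choice $\lambda=\abs{\lambda_{c,-}(\mu_0)}$ satisfying $\lambda(c-\lambda)=-\mu_0$ for the boundary estimate. Your $\beta$-splitting of the kinetic term in the last step is just a reparametrization of the paper's direct computation with $Q$ and $\alpha$, and yields the same admissible range for $\alpha$.
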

\begin{proof}
It follows from the expression \cref{def_eee_c} of $\eee_c[v]$ that
\[
\begin{aligned}
\eee_c[v] &= \int_{\rr}e^{c\xi} \left(\frac{1}{2} v'(\xi)^2 + V\bigl(v(\xi)\bigr)\right)\, d\xi \\
&= \int_{\rr}e^{c\xi} \left( \frac{1}{2}\Bigl(v'(\xi)^2 -\frac{c^2}{4}v(\xi)^2\Bigr) + \frac{c^2-c_0^2}{8}v(\xi)^2 + \Bigl(V\bigl(v(\xi)\bigr)-\frac{\mu_0}{2}v(\xi)^2\Bigr) \right) \, d\xi
\,,
\end{aligned}
\]
so that, since $c$ is greater than or equal to $c_0$, if inequality \cref{V_greater_than_one_half_mu0_v_of_xi_square} holds for every $\xi$ in $\rr$, then
\[
\eee_c[v] \ge \frac{1}{2} \int_{\rr}e^{c\xi}\Bigl(v'(\xi)^2 -\frac{c^2}{4}v(\xi)^2\Bigr)\, d\xi
\,,
\]
and if in addition $v\not\equiv0_{\rr^d}$, inequality \cref{lower_bound_energy_trav_frame_without_bar_xi} follows from inequality \cref{Poincare_inequality_on_R} of \cref{lem:Poincare_inequality}. Statement \cref{item:lem_lower_bound_energy_trav_frame_without_bar_xi} is proved. 

Now, let us assume that there exists $\widebar{\xi}$ in $\rr$ such that inequality \cref{V_greater_than_one_half_mu0_v_of_xi_square} holds for every $\xi$ in $[\widebar{\xi},+\infty)$. It again follows from the expression \cref{def_eee_c} of $\eee_c[v]$ that 
\begin{align}
\nonumber
\eee_c[v] &= \int_{-\infty}^{\widebar{\xi}} e^{c\xi}\left(\frac{1}{2}v'(\xi)^2 + V\bigl(v(\xi)\bigr)\right)\, d\xi + \int_{\widebar{\xi}}^{+\infty} e^{c\xi}\left(\frac{1}{2}v'(\xi)^2 + V\bigl(v(\xi)\bigr)\right)\, d\xi \\
\nonumber
&\ge\int_{-\infty}^{\widebar{\xi}} e^{c\xi} V_{\min}\, d\xi + \frac{1}{2}\int_{\widebar{\xi}}^{+\infty} e^{c\xi}\left(v'(\xi)^2 + \mu_0 v(\xi)^2\right)\, d\xi \\
\label{lower_bound_Ec_proof}
&= -e^{c\widebar{\xi}}\frac{\abs{\Vmin}}{c} + \frac{1}{2}\int_{\widebar{\xi}}^{+\infty} e^{c\xi}\left(v'(\xi)^2 + \mu_0 v(\xi)^2\right)\, d\xi
\,.
\end{align}
Thus, if we consider a quantity $\lambda$ satisfying
\[
\lambda(c-\lambda) = -\mu_0 \iff \lambda^2 - c\lambda - \mu_0 = 0 \iff \lambda = \frac{c}{2}\pm\sqrt{\frac{c^2}{4}+\mu_0} = \abs{\lambda_{c,\pm}(\mu_0)}
\,,
\]
then it follows from the lower bound \cref{lower_bound_Ec_proof} on $\eee_c[v]$ and from inequality \cref{Poincare_inequality_any_gamma} of \cref{lem:Poincare_inequality} that
\[
\eee_c[v] \ge 
-e^{c\widebar{\xi}}\frac{\abs{\Vmin}}{c} + \frac{1}{2}\lambda e^{c\widebar{\xi}} v(\widebar{\xi})^2
\,,
\]
so that, if $\lambda$ is chosen equal to $\abs{\lambda_{c,-}(\mu_0)}$ (which provides a better lower bound than if it is chosen equal to $\abs{\lambda_{c,+}(\mu_0)}$), then inequality \cref{lower_bound_energy_trav_frame_with_bar_xi} follows. Inequality \cref{lower_bound_energy_trav_frame_with_bar_xi} of statement \cref{item:lem_lower_bound_energy_trav_frame_with_bar_xi} is proved. 

Let $\alpha$ denote a positive quantity to be chosen below, and let us introduce the quantity $Q$ defined as
\[
Q = \eee_c[v] + e^{c\widebar{\xi}}\frac{\abs{\Vmin}}{c} - \alpha\int_{\widebar{\xi}}^{+\infty} e^{c\xi}\bigl(v'(\xi)^2+v(\xi)^2\bigr)\, d\xi
\,;
\]
proving the second inequality \cref{lower_bound_energy_trav_frame_H1_norm_from_bar_xi} of statement \cref{item:lem_lower_bound_energy_trav_frame_with_bar_xi} amounts to prove that $Q$ is nonnegative. It follows from the lower bound \cref{lower_bound_Ec_proof} on $\eee_c[v]$ that
\[
Q \ge \int_{\widebar{\xi}}^{+\infty} e^{c\xi}\left(\Bigl(\frac{1}{2}-\alpha\Bigr)v'(\xi)^2) - \Bigl(\frac{c_0^2}{8}+\alpha\Bigr)v(\xi)^2\right)\, d\xi
\,.
\]
Thus, if $\alpha$ is smaller than or equal to $1/2$, it follows from inequality \cref{Poincare_inequality_gamma_equals_c_over_2} of \cref{lem:Poincare_inequality} that
\[
\begin{aligned}
Q &\ge \left(\Bigl(\frac{1}{2}-\alpha\Bigr)\frac{c^2}{4}  - \Bigl(\frac{c_0^2}{4}+\alpha\Bigr)\right) \int_{\widebar{\xi}}^{+\infty} e^{c\xi}v(\xi)^2\, d\xi \\
&= \frac{1}{8}\big(c^2 - c_0^2 - \alpha(8+2c^2)\big) \int_{\widebar{\xi}}^{+\infty} e^{c\xi}v(\xi)^2\, d\xi 
\,,
\end{aligned}
\]
so that, if $\alpha$ is chosen as
\[
\alpha = \min\left(\frac{c^2-c_0^2}{8+2c^2},\frac{1}{2}\right)
\,,
\]
then $\alpha$ is positive and the quantity $Q$ is nonnegative. This proves inequality \cref{lower_bound_energy_trav_frame_H1_norm_from_bar_xi}, and therefore completes the proof of statement \cref{item:lem_lower_bound_energy_trav_frame_with_bar_xi}. 
\end{proof}
\begin{remark}
In the proof of statement \cref{item:lem_lower_bound_energy_trav_frame_with_bar_xi}, using Poincaré inequality \cref{Poincare_inequality_gamma_equals_c_over_2} (that is, choosing $\lambda$ equal to $c/2$ instead of $\abs{\lambda_{c,-}(\mu_0)}$) would have led to the (slightly weaker) inequality
\[
\eee_c[v] \ge e^{c\widebar{\xi}}\left(-\frac{\abs{\Vmin}}{c}+\frac{c}{4}v(\widebar{\xi})^2\right)
\,,
\]
which would actually have fulfilled the same needs as the stronger inequality \cref{lower_bound_energy_trav_frame_with_bar_xi}, in the remaining of the paper. 
\end{remark}
\subsection{Basic properties of the variational structure}
\label{subsec:basic_properties_variational_structure}
\subsubsection{Basic properties of the sets \texorpdfstring{$\ccc_{-\infty}$ and $\ccc_0$}{C-infty and C0}}
\label{subsubsec:basic_properties_sets_ccc_minus_infty_ccc_0}
Let us recall the notation $\ccc_{-\infty}$ and $\ccc_0$ introduced in \cref{subsubsec:infimum_energy_trav_frame}. 
\begin{proposition}[basic properties of the sets $\ccc_{-\infty}$ and $\ccc_0$]
\label{prop:basic_properties_variational_structure} 
The following conclusions hold.
\begin{enumerate}
\item The set $\ccc_0$ contains the interval $[\cQuadHull,+\infty)$; in addition, if $c$ is the speed of a pushed travelling front invading $0_{\rr^d}$, then 
\begin{equation}
\label{speed_of_a_pushd_front_is_less_than_cQuadHull}
c<\cQuadHull
\,.
\end{equation}
\label{item:prop_basic_properties_variational_structure_inclusion_ccc_0}
\item The set $\ccc_{-\infty}$ contains the interval $(0,\cLinMax)$.
\label{item:prop_basic_properties_variational_structure_inclusion_ccc_minus_infty}
\item The set $\ccc_{-\infty}$ is open; equivalently, the set $\ccc_0$ is closed. 
\label{item:prop_basic_properties_variational_structure_topology}
\end{enumerate}
\end{proposition}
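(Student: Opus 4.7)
The plan is to prove the three conclusions separately, using as main tools the Poincaré inequalities of \cref{lem:Poincare_inequality}, the lower quadratic hull \cref{V_greater_than_lower_quadratic_hull}, and (for the pushed-front bound) the vanishing of the energy of a pushed front in its own travelling frame \cref{energy_of_pushed_front_vanishes}. As in \cref{subsec:invaded_crit_point_at_origin}, I assume $e = 0_{\rr^d}$. For the inclusion $[\cQuadHull, +\infty) \subset \ccc_0$, fix $c \geq \cQuadHull$ and $w \in \HoneOfTwo{c}{e}$, and combine the pointwise bound $V(u) \geq \tfrac{1}{2}\muQuadHull u^2$ with inequality \cref{Poincare_inequality_on_R} to obtain
\[
\eee_c[w] \geq \frac{1}{2}\int_{\rr}e^{c\xi}\bigl(w'(\xi)^2 + \muQuadHull w(\xi)^2\bigr)\, d\xi \geq \frac{1}{2}\Bigl(\frac{c^2}{4}+\muQuadHull\Bigr)\int_{\rr}e^{c\xi}w(\xi)^2\, d\xi \geq 0,
\]
using $c^2/4 \geq -\muQuadHull$; since $\eee_c[e] = 0$, this yields $c \in \ccc_0$. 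For the strict bound $c < \cQuadHull$ on the speed of a pushed front with profile $\phi$: if on the contrary $c \geq \cQuadHull$, then \cref{prop:energy_pushed_front_travelling_frame} gives $\phi \in H^1_c(\rr,\rr^d)$ and $\eee_c[\phi] = 0$, but the \emph{strict} version of \cref{Poincare_inequality_on_R}, valid because $\phi \not\equiv 0_{\rr^d}$, sharpens the chain above to $\eee_c[\phi] > 0$, a contradiction.

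For the inclusion $(0, \cLinMax) \subset \ccc_{-\infty}$, the case $\cLinMax = 0$ is vacuous; otherwise I let $u_1$ be a unit eigenvector of $D^2V(0_{\rr^d})$ for the eigenvalue $\mu_1$ and consider, for each $L > 0$, the test function $f_L(\xi) = \sin(\pi\xi/L)\, e^{-c\xi/2}$ on $[0,L]$ extended by zero. A direct trigonometric computation gives
\[
\frac{\int_{\rr} e^{c\xi}f_L'(\xi)^2\, d\xi}{\int_{\rr} e^{c\xi}f_L(\xi)^2\, d\xi} = \frac{\pi^2}{L^2} + \frac{c^2}{4},
\]
which drops below $-\mu_1$ once $L$ is large, since $c < \cLinMax = 2\sqrt{-\mu_1}$. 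The Taylor expansion $V(\varepsilon f_L u_1) = \tfrac{1}{2}\mu_1\varepsilon^2 f_L(\xi)^2 + o(\varepsilon^2 f_L(\xi)^2)$ then yields $\eee_c[\varepsilon f_L u_1] < 0$ for $\varepsilon$ small, and the translation identity $\eee_c[T_{\xi_0}\cdot] = e^{c\xi_0}\eee_c[\cdot]$ with $\xi_0 \to +\infty$ produces $\inf\eee_c = -\infty$, so $c \in \ccc_{-\infty}$.

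For the openness of $\ccc_{-\infty}$, fix $c_0 \in \ccc_{-\infty}$ and pick $w \in H^1_{c_0}(\rr,\rr^d)$ with $\eee_{c_0}[w] < 0$. The strategy is to replace $w$ by a compactly supported approximation $w_N = \chi_N w$, where $\chi_N$ is a smooth cutoff equal to $1$ on $[-N,N]$ and vanishing outside $[-N-1,N+1]$, and check that $\eee_{c_0}[w_N] \to \eee_{c_0}[w]$ as $N \to +\infty$. Since $w_N$ has compact support, it belongs to $\HoneOfTwo{c}{e}$ for \emph{every} positive $c$, and dominated convergence gives $\eee_c[w_N] \to \eee_{c_0}[w_N]$ as $c \to c_0$; hence $\eee_c[w_N] < 0$ on a neighbourhood of $c_0$, and the translation identity delivers $\inf\eee_c = -\infty$ on that neighbourhood. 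The main obstacle is justifying the convergence of the potential term $\int e^{c_0\xi}V(w_N)\, d\xi \to \int e^{c_0\xi}V(w)\, d\xi$: because $w$ need not be bounded at $-\infty$, one must control $V(w) - V(\chi_N w)$ on the left-hand transition region $[-N-1,-N]$, but the lower bound $V \geq \Vmin$ (which makes $e^{c_0\xi}(V(w)-\Vmin)$ absolutely integrable, since $\eee_{c_0}[w]$ is finite and $e^{c_0\xi}$ is integrable near $-\infty$) together with the decay $w(\xi) = o(e^{-c_0\xi/2})$ at $+\infty$ from \cref{item:lem_Poincare_inequality_convergence_at_both_ends} of \cref{lem:Poincare_inequality} make the approximation rigorous.
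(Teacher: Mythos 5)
Your argument for conclusion (i) coincides with the paper's: the paper routes the positivity of $\eee_c[\cdot]$ through statement (i) of \cref{lem:lower_bound_energy_trav_frame} (applied with $\mu_0$ replaced by $\muQuadHull$), but that statement is proved by exactly your combination of the lower quadratic hull with the strict Poincaré inequality \cref{Poincare_inequality_on_R}, and the exclusion of pushed fronts of speed $c\ge\cQuadHull$ via the vanishing of $\eee_c[\phi]$ is identical. For conclusion (ii) you take a genuinely different route: the paper tests with $\chi(1-\xi)e^{-(c+\varepsilon)\xi/2}u_1$ and lets $\varepsilon\to0^+$, so that the negative contribution of the tail overwhelms the cutoff region, whereas your compactly supported sine profile makes the Rayleigh quotient $\pi^2/L^2+c^2/4$ explicit and produces a single negative-energy test function in one stroke; both are correct, and yours avoids any asymptotic analysis at the right end of space. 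For conclusion (iii) you truncate on both sides where the paper truncates only on the right; your variant has the merit that $w_N$ belongs to $H^1_{c'}(\rr,\rr^d)$ for \emph{every} positive $c'$, so that perturbing the speed is unproblematic on both sides of $c_0$.

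The one step you should firm up is the left transition region in (iii). The bound $V\ge\Vmin$ together with the integrability of $\xi\mapsto e^{c_0\xi}\bigl(V(w(\xi))-\Vmin\bigr)$ controls $V(w)$, but on $[-N-1,-N]$ the integrand involves $V\bigl(\chi_N(\xi)w(\xi)\bigr)$, and the point $\chi_N(\xi)w(\xi)$ sweeps the whole segment from $0_{\rr^d}$ to $w(\xi)$; since $w$ is only known to satisfy $w(\xi)=o(e^{-c_0\xi/2})$ at $-\infty$ and may be unbounded there, $V$ could a priori be much larger on that segment than at its endpoint, so your stated justification does not yet yield the required upper bound. This is repaired by the coercivity assumption \cref{hyp_coerc}: choosing $R_0$ such that $u\cdot\nabla V(u)>0$ whenever $\abs{u}\ge R_0$, the map $s\mapsto V(su)$ is nondecreasing on $[R_0/\abs{u},1]$, whence $\sup_{s\in[0,1]}V(su)\le\max\bigl(\sup_{\abs{v}\le R_0}V(v),V(u)\bigr)$, and the transition integral then tends to $0$ by dominated convergence. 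With that ingredient added, your proof is complete.
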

\begin{remark} 
Conclusion \cref{item:prop_basic_properties_variational_structure_inclusion_ccc_0} is close to \cite[Lemma~3.6 and Theorem~3.7]{Muratov_globVarStructPropagation_2004}; the quantities $\muQuadHull$ and $\cQuadHull$ are denoted by $\mu_-$ and $c_{\max}$ in this reference, see \cite[notation~(2.17) and notation~(3.10)]{Muratov_globVarStructPropagation_2004}.
%\item Conclusions \cref{item:prop_basic_properties_variational_structure_inclusion_ccc_minus_infty,item:prop_basic_properties_variational_structure_topology} are not required for the proof of the main results \cref{thm:main,thm:characterization_existence_pushed_front} of this paper. Conclusion \cref{item:prop_basic_properties_variational_structure_inclusion_ccc_minus_infty} is required for the proof of \cref{cor:full_properties_ccc_minus_infty_and_ccc_0} only, and conclusion \cref{item:prop_basic_properties_variational_structure_topology} will be recovered as a consequence of \cref{cor:full_properties_ccc_minus_infty_and_ccc_0}. The proof of this conclusion \cref{item:prop_basic_properties_variational_structure_topology} provided below is therefore redundant with the other proofs provided in the paper. 
\end{remark}
\begin{proof}
Let us us consider a speed $c$ in the interval $[\cQuadHull,+\infty)$ and a function $v$ in $H^1_c(\rr,\rr^d)$ which is not identically equal to $0_{\rr^d}$. According to the last inequality of \cref{V_greater_than_lower_quadratic_hull}, the assumptions of statement \cref{item:lem_lower_bound_energy_trav_frame_without_bar_xi} of \cref{lem:lower_bound_energy_trav_frame} hold when the parameter $\mu_0$ involved in this lemma is replaced with $\muQuadHull$. According to this statement, the quantity $\eee_c[v]$ must be positive. This shows that the interval $[\cQuadHull,+\infty)$ is included in the set $\ccc_0$. In addition, since the energy of a pushed travelling front in the frame travelling at its own speed vanishes (equality \cref{energy_of_pushed_front_vanishes} of \cref{prop:energy_pushed_front_travelling_frame}), the quantity $c$ cannot be the speed of a pushed travelling front invading $0_{\rr^d}$. Conclusion \cref{item:prop_basic_properties_variational_structure_inclusion_ccc_0} is proved. 

Let us prove conclusion \cref{item:prop_basic_properties_variational_structure_inclusion_ccc_minus_infty}. If the maximal linear invasion speed $\cLinMax$ is zero (that is, if the least eigenvalue $\mu_1$ of $D^2V(0_{\rr^d})$ is nonnegative) there is nothing to prove. Let us assume that $\cLinMax$ is positive, or equivalently that $\mu_1$ is negative, and let $c$ denote a quantity (speed) in the interval $(0,\cLinMax)$. Let $u_1$ denote a normalized eigenvector of $D^2V(0_{\rr^d})$ for the eigenvalue $\mu_1$ and let $\chi:\rr\to\rr$ denote a smooth cutoff function satisfying
\begin{equation}
\label{cut_off}
\chi(x) = \left\{
\begin{aligned}
1 \quad\text{if}\quad x\le 0\,,\\
0 \quad\text{if}\quad 1\le x\,,
\end{aligned}
\right.
\quad\text{and, for all $x$ in $\rr$,}\quad
0\le \chi(x)\le 1 
\,.
\end{equation}
Let $\varepsilon$ denote a positive quantity, small enough so that
\[
c+\varepsilon < \cLinMax
\,,
\]
and let us consider the function $w:\rr\to\rr^d$, defined as:
\[
w(\xi) = \chi(1-\xi) e^{-\frac{c+\varepsilon}{2}\xi} u_1 
\,,
\]
and which belongs to $H^1_c(\rr,\rr^d)$. It follows from this expression that, for all $\xi$ in $[1,+\infty)$, 
\[
\frac{1}{2} w'(\xi)^2 = \frac{1}{8} (c+\varepsilon)^2 e^{-(c+\varepsilon)\xi}
\,,
\]
and that
\[
V\bigl(w(\xi)\bigr) \sim \frac{1}{2} \mu_1 e^{-(c+\varepsilon)\xi} = - \frac{1}{8} \cLinMax^2 e^{-(c+\varepsilon)\xi}
\quad\text{as}\quad
\xi\to+\infty
\,,
\]
so that
\[
e^{c\xi}\left(\frac{1}{2} w'(\xi)^2 + V\bigl(w(\xi)\bigr)\right) \sim - \frac{1}{8} \bigl(\cLinMax^2-(c+\varepsilon)^2\bigr) e^{-\varepsilon\xi}
\quad\text{as}\quad
\xi\to+\infty
\,.
\]
It follows that 
\[
\eee_c[w]\to -\infty 
\quad\text{as}\quad
\varepsilon\to0\,, \quad \varepsilon >0
\,.
\]
This shows that $c$ belongs to $\ccc_{-\infty}$, and therefore proves conclusion \cref{item:prop_basic_properties_variational_structure_inclusion_ccc_minus_infty}. 

To prove conclusion \cref{item:prop_basic_properties_variational_structure_topology}, let us consider a quantity $c$ in the set $\ccc_{-\infty}$ (this quantity $c$ is therefore positive). According to the definition \cref{def_ccc_minus_infty_ccc_0} of $\ccc_{-\infty}$, there exists a function $w$ in $H^1_c(\rr,\rr^d)$ such that the energy $\eee_c[w]$ is negative. Let us consider again the smooth cutoff function $\chi$ satisfying the conditions \cref{cut_off}. 
Let $\xLarge$ denote a (large) positive quantity to be chosen below and let us consider the function $\tilde{w}$ defined as
\begin{equation}
\label{expression_cutoffed_initial_condition}
\tilde{w}(x) = \chi(x-\xLarge) w(x)
\,.
\end{equation}
Since $w$ is in $H^1_c(\rr,\rr^d)$, the quantity $\eee_c[\tilde{w}]$ goes to $\eee_c[w]$ as $\xLarge$ goes to $+\infty$; thus, if $\xLarge$ is large enough positive, the quantity $\eee_c[\tilde{w}]$ is (also) negative; it follows that, for $c'$ close enough to $c$, the quantity $\eee_{c'}[\tilde{w}]$ is again negative, which shows that $c'$ belongs to $\ccc_{-\infty}$ and yields the intended conclusion. 
\end{proof}
\subsubsection{A sufficient condition for invasion to occur}
\label{subsubsec:sufficient_condition_invasion}
It follows from conclusion \cref{item:prop_basic_properties_variational_structure_inclusion_ccc_minus_infty} of \cref{prop:basic_properties_variational_structure} above that, if the maximal nonlinear invasion speed $\cLinMax$ is positive (that is, if the least eigenvalue $\mu_1$ of $D^2V(0_{\rr^d})$ is negative), then the set $\ccc_{-\infty}$ (which according to conclusion \cref{item:prop_basic_properties_variational_structure_inclusion_ccc_minus_infty} of \cref{prop:basic_properties_variational_structure} contains the interval $(0,\cLinMax)$) is nonempty. The next proposition (which extends \cite[Lemma~7]{Risler_globCVTravFronts_2008}) sets the ground for the upcoming \cref{cor:ccc_minus_infty_nonempty} which states that the the set $\ccc_{-\infty}$ is actually \emph{always} nonempty.
\begin{proposition}[a sufficient condition for invasion]
\label{prop:sufficient_condition_invasion}
For every positive quantity $\cDecay$ and every $w$ in $H^1_{\cDecay}(\rr,\rr^d)$, if 
\begin{equation}
\label{sufficient_condition_invasion}
\limsup_{L\to+\infty} \int_{-L}^{+\infty}\left(\frac{1}{2}w'(x)^2 + V\bigl(w(x)\bigr)\right)\, dx < 0
\,,
\end{equation}
then, for every sufficiently small speed $c$ (in the interval $(0,\cDecay]$), the energy $\eee_{c}[w]$ is negative. 
\end{proposition}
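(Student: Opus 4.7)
The plan is to rewrite the energy as a weighted integral of the tail $F(\xi) := \int_\xi^{+\infty}\rho(s)\,ds$, where $\rho(\xi) := \frac{1}{2}w'(\xi)^2 + V\bigl(w(\xi)\bigr)$, a quantity on which the hypothesis directly supplies a negative upper bound at $-\infty$. From the $\limsup$ condition I first extract $\delta > 0$ and $\xi_0 \in \rr$ such that $F(\xi) \le -\delta$ for every $\xi \le \xi_0$. The tail $F$ is well defined on $\rr$ with the following a priori bounds: at $+\infty$, the quadratic estimate $\abs{V(u)} \le C\abs{u}^2$ near $0_{\rr^d}$ (which follows from $V \in \ccc^2$, $V(0_{\rr^d}) = 0$, $\nabla V(0_{\rr^d}) = 0$) combined with $w \in H^1_{\cDecay}(\rr,\rr^d)$ gives $\abs{F(\xi)} \le C e^{-\cDecay \xi}$ for $\xi$ large; at $-\infty$, the pointwise bound $\rho \ge \Vmin$ coming from $\frac{1}{2}w'^2 \ge 0$ and \cref{hyp_coerc} provides the crude linear lower estimate $F(\xi) \ge F(\xi_0) + \Vmin\,(\xi_0-\xi)$ for $\xi \le \xi_0$.

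The key step will be the integration by parts
\[
\int_a^b e^{c\xi}\rho(\xi)\,d\xi = e^{ca}F(a) - e^{cb}F(b) + c\int_a^b e^{c\xi}F(\xi)\,d\xi,
\]
valid on any finite interval via $F' = -\rho$, followed by the passage to the limits $b \to +\infty$ and $a \to -\infty$. The boundary term at $+\infty$ vanishes for every $c < \cDecay$ thanks to the exponential decay of $F$; the boundary term at $-\infty$ vanishes because $\abs{e^{ca}F(a)} \le e^{ca}\bigl(\abs{F(\xi_0)} + \abs{\Vmin}\,(\xi_0-a)\bigr) \to 0$, with the exponential $e^{ca}$ beating the linear growth of $\abs{F(a)}$. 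This yields the identity
\[
\eee_c[w] = c\int_{\rr} e^{c\xi}F(\xi)\,d\xi,
\]
valid for $c \in (0,\cDecay)$.

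I then split the integral at $\xi_0$. On $(-\infty,\xi_0]$, the bound $F \le -\delta$ yields
\[
c\int_{-\infty}^{\xi_0}e^{c\xi}F(\xi)\,d\xi \le -\delta\, e^{c\xi_0},
\]
which tends to $-\delta$ as $c \to 0^+$. On $[\xi_0,+\infty)$, the contribution is $O(c)$ as $c \to 0^+$: since $\abs{F}$ is locally bounded on $[\xi_0,+\infty)$ and satisfies $\abs{F(\xi)} \le Ce^{-\cDecay\xi}$ for $\xi$ large, the integral $\int_{\xi_0}^{+\infty}e^{c\xi}\abs{F(\xi)}\,d\xi$ is uniformly bounded for $c \in (0,\cDecay/2]$. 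Combining, $\eee_c[w] \le -\delta e^{c\xi_0} + O(c)$, which is strictly negative for every sufficiently small $c$ in $(0,\cDecay]$.

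The main obstacle will be justifying the integration-by-parts identity, specifically the vanishing of the boundary term $e^{ca}F(a)$ at $-\infty$: the hypothesis provides only an upper bound on $F(a)$, and the complementary linear lower bound --- essential to control $\abs{e^{ca}F(a)}$ --- must come from the coercivity assumption \cref{hyp_coerc} through the crude pointwise estimate $\rho \ge \Vmin$. This appears to be the only substantial use of coercivity in the proof of this proposition.
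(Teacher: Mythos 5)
Your proof is correct and follows essentially the same route as the paper's: introduce the tail $F$ of the pointwise energy density, integrate by parts to get $\eee_c[w]=c\int e^{c\xi}F(\xi)\,d\xi$, use the hypothesis to bound the contribution of $(-\infty,\xi_0]$ by $-\delta e^{c\xi_0}$, and observe that the remaining contribution is $O(c)$ as $c\to0^+$. The only (cosmetic) difference is that the paper first truncates the domain at a large abscissa $\xRightTrunc$ where the density is nonnegative and the weighted tail is small, so that its tail function vanishes at the right endpoint, whereas you keep the full tail and instead establish its exponential decay at $+\infty$ from $w\in H^1_{\cDecay}(\rr,\rr^d)$ and the quadratic bound on $V$ near the critical point; both variants handle the boundary term at $-\infty$ via the crude bound $\rho\ge\Vmin$, exactly as you anticipated.
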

\begin{proof}
Let $\cDecay$ denote a positive quantity and $w$ denote a function in $H^1_{\cDecay}(\rr,\rr^d)$. Let us consider the function $\mathfrak{e}:\rr\to\rr$ defined as
\[
\mathfrak{e}(x) = \frac{1}{2}w'(x)^2 + V\bigl(w(x)\bigr)
\,,
\]
and let us assume that assumption \cref{sufficient_condition_invasion} above holds, that is
\[
\limsup_{x\to-\infty}\int_{x}^{+\infty}\mathfrak{e}(y)\, dy < 0
\,.
\]
It follows from this assumption that there exists a (small) positive quantity $\varepsilon$ and a (large) negative quantity $\xLeftNegEn$ such that
\begin{equation}
\label{rewording_assumption_prop_sufficient_condition_negative_energy_at_small_positive_speed}
x\le\xLeftNegEn 
\implies
\int_x^{+\infty}\mathfrak{e}(y)\, dy \le -\varepsilon
\,,
\end{equation}
see \cref{fig:tauberian_theorem}.
\begin{figure}[!htbp]
\centering
\includegraphics[width=\textwidth]{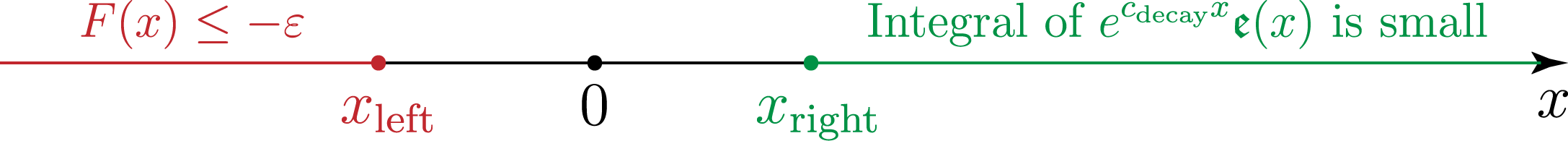}
\caption{Illustration of the proof of \cref{prop:sufficient_condition_invasion}.}
\label{fig:tauberian_theorem}
\end{figure}
Since $w$ belongs to $H^1_{\cDecay}(\rr,\rr^d)$, there exists a (large, positive) quantity $\xRightTrunc$ such that the following conclusions hold:
\begin{align}
\label{V_nonnegative_for_x_not_smaller_than_xRightTrunc}
& x\ge \xRightTrunc \implies V\bigl(w(x)\bigr)\ge 0 \,,
\quad\text{and thus}\quad
\mathfrak{e}(x)\ge 0\,, \\
\text{and}\quad
&\int_{\xRightTrunc}^{+\infty} e^{\cDecay x} \mathfrak{e}(x) \, dx \le \frac{\varepsilon}{2}
\label{integral_to_the_right_of_xRightTrunc_is_negligible}
\,,
\end{align}
see again \cref{fig:tauberian_theorem}. Let $c$ denote a quantity in $(0,\cDecay]$. It follows from the implication \cref{V_nonnegative_for_x_not_smaller_than_xRightTrunc} that inequality \cref{integral_to_the_right_of_xRightTrunc_is_negligible} still holds if $\cDecay$ is replaced with $c$, and it follows that
\begin{equation}
\label{inequality_introduction_eee_trunc}
\eee_c[w] \le \eeecTrunc[w] + \frac{\varepsilon}{2}\,,
\quad\text{where}\quad
\eeecTrunc[w] = \int_{-\infty}^{\xRightTrunc} e^{cx} \mathfrak{e}(x)\, dx
\,.
\end{equation}
Let us consider the function $F:\rr\to\rr$ defined as:
\[
F(x) = \int_x^{\xRightTrunc} \mathfrak{e}(y)\, dy
\,,\quad\text{so that}\quad
F(\xRightTrunc) = 0 
\quad\text{and}\quad
F'(x) = - \mathfrak{e}(x)
\,.
\]
Integrating by parts the expression of $\eeecTrunc[w]$ yields:
\[
\begin{aligned}
\eeecTrunc[w] &= \Bigl[-e^{cx}F(x)\Bigr]_{-\infty}^{\xRightTrunc} + \int_{-\infty}^{\xRightTrunc} c e^{cx}F(x) \, dx \\
&= \int_{-\infty}^{\xRightTrunc} c e^{cx}F(x) \, dx \\
&= \int_{-\infty}^{\xLeftNegEn} c e^{cx}F(x) \, dx + \int_{\xLeftNegEn}^{\xRightTrunc} c e^{cx}F(x) \, dx 
\,.
\end{aligned}
\]
Since $\mathfrak{e}(x)$ is nonnegative for $x$ not smaller than $\xRightTrunc$, it follows from inequality \cref{rewording_assumption_prop_sufficient_condition_negative_energy_at_small_positive_speed} that
\[
x\le\xLeftNegEn \implies F(x)\le-\varepsilon
\,.
\]
Thus it follows from the expression of $\eeecTrunc[w]$ above that
\[
\eeecTrunc[w] \le -\varepsilon e^{c\xLeftNegEn} + \bigl(e^{c\xRightTrunc}-e^{c\xLeftNegEn}\bigr)\max_{x\in[\xLeftNegEn,\xRightTrunc]}F(x)
\,.
\]
As a consequence, if the positive quantity $c$ is small enough, the following inequality holds:
\[
\eeecTrunc[w] <-\frac{\varepsilon}{2}
\,,
\]
and the intended conclusion follows from inequality \cref{inequality_introduction_eee_trunc}. \Cref{prop:sufficient_condition_invasion} is proved. 
\end{proof}
\begin{corollary}[non-emptiness of the set $\ccc_{-\infty}$]
\label{cor:ccc_minus_infty_nonempty}
There exists a positive quantity $\varepsilon$ such that 
\begin{equation}
\label{ccc_minus_infty_nonempty}
(0,\varepsilon)\subset\ccc_{-\infty}
\,.
\end{equation}
\end{corollary}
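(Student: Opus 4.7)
The plan is to apply \cref{prop:sufficient_condition_invasion} to an explicit test function built from a global minimizer of $V$. By assumption \cref{hyp_coerc} together with continuity of $V$, the infimum $\Vmin$ is attained at some point $u_*$ of $\rr^d$, and according to assumption \cref{hyp_crit_point}, $V(u_*) = \Vmin < 0$.

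Using the smooth cutoff $\chi$ introduced in \cref{cut_off}, I would define the function $w : \rr \to \rr^d$ by $w(x) = e + \chi(x)\,(u_* - e)$, so that $w \equiv u_*$ on $(-\infty, 0]$, $w \equiv e$ on $[1, +\infty)$, and $w-e$ is smooth and compactly supported on the right. For every positive $\cDecay$, the function $e^{\cDecay x/2}\bigl(w(x)-e\bigr) = e^{\cDecay x/2}\chi(x)(u_*-e)$ vanishes on $[1,+\infty)$, is bounded on $[0,1]$, and is dominated by a constant multiple of $e^{\cDecay x/2}$ on $(-\infty, 0]$; hence it lies in $L^2(\rr,\rr^d)$, and the analogous fact holds for its derivative. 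Therefore $w \in \HoneOfTwo{\cDecay}{e}$.

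Next I would verify the limsup condition \cref{sufficient_condition_invasion}: splitting the integration domain into $(-L,0)$, $[0,1]$, and $[1,+\infty)$, the integrand equals $V(u_*) = \Vmin$ on the first piece, is bounded on the second, and vanishes on the third (where $w \equiv e$ and $V(e) = 0$). Consequently
\[
\int_{-L}^{+\infty}\left(\frac{1}{2}w'(x)^2 + V\bigl(w(x)\bigr)\right) dx = L\,\Vmin + O(1) \xrightarrow[L\to+\infty]{} -\infty,
\]
so the limsup is strictly negative (in fact equal to $-\infty$). \Cref{prop:sufficient_condition_invasion} then furnishes a positive quantity $\varepsilon$ (which we may take no larger than $\cDecay$) such that $\eee_c[w] < 0$ for every $c$ in $(0,\varepsilon)$. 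By the translation identity $\eee_c[T_{\xi_0}w] = e^{c\xi_0}\eee_c[w]$ recalled in \cref{subsubsec:infimum_energy_trav_frame}, sending $\xi_0 \to +\infty$ forces $\mathfrak{I}(c) = -\infty$, hence $c \in \ccc_{-\infty}$, which gives the inclusion \cref{ccc_minus_infty_nonempty}. There is no substantial obstacle: the corollary is essentially a direct application of \cref{prop:sufficient_condition_invasion}, the only nontrivial choice being the test function, for which a global minimizer of $V$ is the natural candidate since $\Vmin$ is negative by \cref{hyp_crit_point}.
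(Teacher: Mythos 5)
Your proof is correct and follows essentially the same route as the paper: the paper also builds the test function $w(x)=\chi(x)u_-$ from the cutoff $\chi$ of \cref{cut_off} and a point $u_-$ with $V(u_-)<0$ (any such point, not necessarily a global minimizer, and with $e$ normalized to $0_{\rr^d}$), and then invokes \cref{prop:sufficient_condition_invasion}. Your additional verifications (membership in $\HoneOfTwo{\cDecay}{e}$, the limsup computation, and the passage from $\eee_c[w]<0$ to $\mathfrak{I}(c)=-\infty$ via the translation identity) are exactly the details the paper leaves implicit.
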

\begin{proof}
Let $u_-$ denote a point of $\rr^d$ such that $V(u_-)$ is negative (the existence of such a point $u_-$ follows from the negativity of $\Vmin$ stated in \cref{hyp_crit_point}). Let us consider the cutoff function $\chi$ introduced in \cref{cut_off}, and let us consider the function $w$ defined as
\[
w(x) = \chi(x)u_-
\,.
\]
This function $w$ fulfils the assumptions of \cref{prop:sufficient_condition_invasion} so that, according to its conclusion, the intended conclusion \cref{ccc_minus_infty_nonempty} follows. 
\end{proof}
\subsubsection{Lower semi-continuity of the variational invasion speed}
\label{subsubsec:lower_semi_continuity_variational_invasion_speed}
\begin{proof}[Proof of \cref{prop:lower_semi_continuity_variational_invasion_speed}]
Let $c$ denote a quantity (speed) in $(\cNonLinMax,+\infty)$, let $w$ denote a function in the space $\HoneulofR\cap H^1_c(\rr,\rr^d)$ (recall that the critical point $e$ is assumed to be equal to $0_{\rr^d}$ in this section), and let $u$ denote the solution of the parabolic system \cref{parabolic_system} for the initial condition $w$ at time $0$. According to the definition of the variational speed \cref{def:decay_variational_invasion_speed}, $\cVar[w]$ is equal to $\cVar[u]$, and, for every positive quantity $\varepsilon$, there exists a nonnegative time $t_0$ such that 
\[
\eee_{\cVar[w]-\varepsilon}\bigl[u(\cdot,t_0)\bigr] < 0
\,.
\]
According to the continuity of the semi-flow of system \cref{parabolic_system} (restricted to $\HoneulofR\cap H^1_c(\rr,\rr^d)$) with respect to initial conditions (last assertion of \cref{prop:decrease_energy_trav_frame}), for every function $\tilde{w}$ in $\HoneulofR\cap H^1_c(\rr,\rr^d)$ close enough to $w$ for the $\HoneulofR$-norm and the $H^1_c(\rr,\rr^d)$-norm, if $\tilde{u}$ denotes the solution of system \cref{parabolic_system} for the initial condition $\tilde{w}$ at time $0$, then 
\[
\eee_{\cVar[w]-\varepsilon}\bigl[\tilde{u}(\cdot,t_0)\bigr] < 0
\,;
\]
it follows that
\[
\cVar[\tilde{w}] > \cVar[w]-\varepsilon
\,,
\]
which is the intended conclusion. 
\end{proof}
\subsection{Expression of the dissipation in a travelling frame}
\label{subsec:expression_dissipation_travelling_frame}
The expression of the dissipation in \cref{def_Dc_of_t,decrease_energy_trav_frame} leads us to consider, for $v$ in the weighted Sobolev space $H^2_c(\rr,\rr^d)$ (defined in \cref{H2c}), the dissipation functional $\ddd_c[v]$ defined as
\[
\ddd_c[v] = \int_{\rr} e^{c\xi}\bigl(- \nabla V(v) + c v' + v''\bigr)^2 \, d\xi
\,.
\]
According to this expression (omitting the argument $\xi$ of $v$ in the integrand), 
\[
\begin{aligned}
\ddd_c[v] &= \int_{\rr} e^{c\xi}\Bigl(\bigl(\nabla V(v)^2 + (c v' + v'')\cdot\bigl(- 2\nabla V(v) + c v' + v'' \bigr)\Bigr)\, d\xi \\
&= \int_{\rr} e^{c\xi}\Bigl(\nabla V(v)^2 + (c v' + v'')\cdot\bigl(- 2\nabla V(v) + c v'\bigr) + cv'\cdot v'' + (v'')^2\Bigr)\, d\xi 
\,,
\end{aligned}
\]
and according to the equality,
\[
\bigl(e^{c\xi}v'\bigr)' = e^{c\xi}(cv' + v'')
\,,
\]
it follows from an integration by parts of the middle term of the integrand that
\begin{align}
\nonumber
\ddd_c[v] &= \int_{\rr} e^{c\xi}\Bigl(\nabla V(v)^2 + 2 D^2V(v)\cdot v'\cdot v' - cv'\cdot v'' + cv'\cdot v'' + (v'')^2\Bigr)\, d\xi \\
&= \int_{\rr} e^{c\xi}\Bigl(\nabla V(v)^2 + 2 D^2V(v)\cdot v'\cdot v' + (v'')^2\Bigr)\, d\xi
\,,
\label{expression_ddd_c_of_v}
\end{align}
see \cite[912]{GallayRisler_globStabBistableTW_2007} for an identical expression in the scalar case. This expression will not be used as such, but it will justify the introduction, in \cref{subsec:control_energy_to_the_right_of_invasion_point}, of another function $F_c(t)$ with the purpose of controlling the amount of energy to the right of the invasion point, in a frame travelling at a speed close to the invasion speed. 
\section{Proof of the main results}
\label{sec:proof}
\subsection{Set-up}
\label{subsec:set_up}
The proof closely follows the arguments of \cite{GallayRisler_globStabBistableTW_2007,GallayJoly_globStabDampedWaveBistable_2009,Luo_globStabDampedWaveEqu_2013}. Let us consider:
\begin{itemize}
\item a potential function $V$ in $\ccc^2(\rr^d,\rr)$ and a critical point $e$ of $V$ satisfying assumptions \cref{hyp_coerc} and \cref{hyp_crit_point}; 
\item a solution $(x,t)\mapsto u(x,t)$ of the parabolic system \cref{parabolic_system} satisfying the condition \cref{thm_main_cLinMax_smaller_than_cVar_smaller_than_cDecay} of \cref{thm:main}, that is:
\[
\cLinMax < \cVar[u] < \cDecay[u]
\,.
\]
\end{itemize}
Let $c_0$ and $\cDecay$ denote two quantities (speeds) satisfying:
\begin{equation}
\label{inequalities_between_speeds_beginning_proof}
\cLinMax < c_0 < \cVar[u] < \cDecay < \cDecay[u]
\,.
\end{equation}
According to the definition \cref{decay_variational_invasion_speed} of $\cVar[u]$ and $\cDecay[u]$ (\cref{def:decay_variational_invasion_speed}), it may be assumed that, up to changing the origin of times, 
\begin{equation}
\label{assumptions_origin_of_times}
\eee_{c_0}[u(\cdot,0)] < 0 
\quad\text{and}\quad
u(\cdot,0) \in H^1_{\cDecay}(\rr,\rr^d)
\,.
\end{equation}
Let us consider the (negative) quantity $\mu_0$ defined as
\[
\mu_0 = - \frac{c_0^2}{4}
\,;
\]
It follows from inequalities \cref{cLinMax_le_cNonLinMax_le_cQuadHull,basic_inequalities_cVar_cDecay,inequalities_between_speeds_beginning_proof} that $c_0$ is less than $\cQuadHull$; thus, 
\begin{equation}
\label{c_0_between_cLinMax_and_cQuadHull}
\cLinMax < c_0 < \cQuadHull\,,
\quad\text{or equivalently,}\quad
\muQuadHull < \mu_0 < \mu_1
\,.
\end{equation}
\subsubsection{Maximal radius of stability for pushed invasion at the speed \texorpdfstring{$c_0$}{c0}}
\label{subsubsec:max_radius_stability_pushed_invasion}
\begin{definition}[maximal radius of stability for pushed invasion at the speed $c_0$]
\label{def:radius_stability_pushed_invasion}
Let us call \emph{maximal radius of stability for pushed invasion at the speed $c_0$} the quantity $\deltaStab(c_0)$ defined as:
\begin{equation}
\deltaStab(c_0) = \inf \left\{\abs{u}:u\in\rr^d \text{ and } V(u)<\frac{1}{2}\mu_0(u-e)^2\right\} 
\,.
\end{equation}
\end{definition}
According to this definition and to inequalities \cref{c_0_between_cLinMax_and_cQuadHull}, 
\[
0 < \deltaStab(c_0) < +\infty
\,,
\]
and
\begin{equation}
\label{V_greater_than_mu0_u2_for_abs_u_smaller_than_deltaStab_of_c_0}
\text{for every $u$ in $\widebar{B}_{\rr^d}\bigl(e,\deltaStab(c_0)\bigr)$},\quad V(u)\ge \frac{1}{2}\mu_0 (u-e)^2
\,;
\end{equation}
in addition, $\deltaStab(c_0)$ is the largest positive quantity satisfying this property \cref{V_greater_than_mu0_u2_for_abs_u_smaller_than_deltaStab_of_c_0}, see \cref{fig:correspondence_mu_c,fig:graph_of_V}. 
\begin{figure}[!htbp]
\centering
\includegraphics[width=\textwidth]{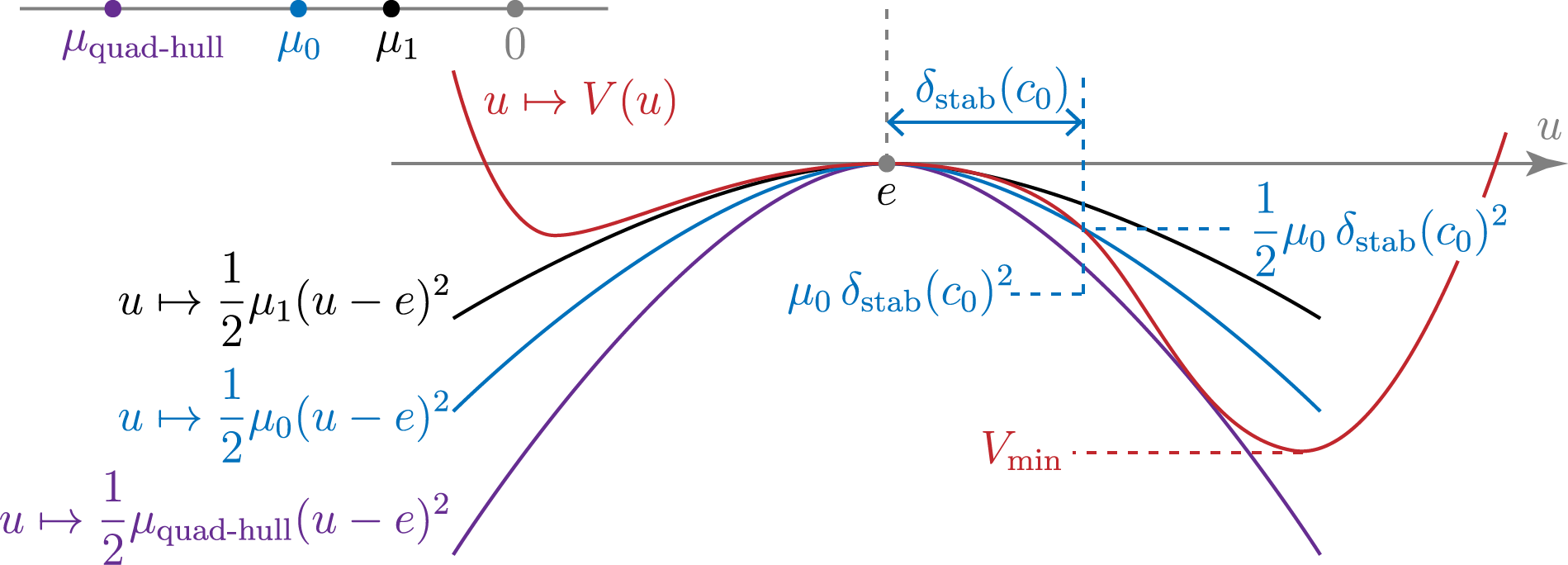}
\caption{Maximal radius of stability $\deltaStab(c_0)$.}
\label{fig:graph_of_V}
\end{figure}
\subsubsection{Invaded critical point at the origin of \texorpdfstring{$\rr^d$}{Rd} and upper bound on the solution}
For convenience, it will be assumed, until the end of \cref{sec:proof}, that $e$ is equal to the origin of $\rr^d$. Let us recall the quantity $\Ratt$, depending only on $V$, introduced in inequality \cref{att_ball_C1_norm}. According to this inequality, up to changing the origin of times (and without loss of generality), it may be assumed that, for every nonnegative time $t$, 
\begin{equation}
\label{bound_u_ux_setup}
\sup_{x\in\rr}\abs{u(x,t)} + \abs{u_x(x,t)} \le \Ratt
\,. 
\end{equation}
Likewise, it may be assumed that the conclusion \cref{decrease_energy_trav_frame} of \cref{prop:decrease_energy_trav_frame} holds for every nonnegative time $t$ (and not only for every positive time $t$).
\subsection{Invasion point}
For every nonnegative time $t$ and every quantity $\delta$ in $\bigl(0,\deltaStab(c_0)\bigr]$, let us consider the set
\begin{equation}
\label{def_SigmaFar}
\begin{aligned}
\SigmaFar{\delta}(t) &= \bigl\{x\in\rr: \abs{u(x,t)}>\delta\bigr\}
\,.
\end{aligned}
\end{equation}
It follows from the properties \cref{assumptions_origin_of_times} of $u(\cdot,0)$ and from \cref{prop:decrease_energy_trav_frame} that the quantity $\eee_{c_0}\bigl[u(\cdot,t)\bigr]$ is negative, so that the set $\SigmaFar{\delta}(t)$ is:
\begin{itemize}
\item according to inequality \cref{lower_bound_energy_trav_frame_without_bar_xi} of statement \cref{item:lem_lower_bound_energy_trav_frame_without_bar_xi} of \cref{lem:lower_bound_energy_trav_frame}, nonempty, 
\item and according to \cref{prop:decrease_energy_trav_frame}, bounded from above.
\end{itemize}
\begin{definition}[invasion point]
Let us call \emph{invasion point in the laboratory frame} (at time $t$) the quantity $\widebar{x}(t)$ defined as
\begin{equation}
\label{def_bar_x}
\widebar{x}(t) = \sup\bigl(\SigmaFar{\deltaStab(c_0)}(t)\bigr)
\end{equation}
(according to the remark above this quantity is finite), and, for every real quantity $c$, let us call \emph{invasion point in the frame travelling at the speed $c$} the quantity $\widebar{\xi}_c(t)$ defined as
\begin{equation}
\label{def_bar_xi_c_of_t}
\widebar{\xi}_c(t) = \widebar{x}(t) - ct
\,,
\end{equation}
see \cref{fig:travelling_frame_invasion_points}. 
\end{definition}
\begin{remarks}
\begin{enumerate}
\item The point labelled as ``invasion point'' in this article is often called ``leading edge'' in the literature, see for instance \cite{Muratov_globVarStructPropagation_2004,LuciaMuratovNovaga_linNonlinSelectionPropSpeedInvasionUnstableEquil_2004}.
\item In most places, $c$ will be assumed to be positive; however allowing $c$ to be nonpositive in the notation \cref{def_bar_xi_c_of_t} above is more convenient for the presentation of \cref{lem:almost_non_decrease_invasion_point} and \cref{cor:control_moves_to_left_invasion_point_trav_frame} in \cref{subsec:control_moves_to_the_left_invasion_point}. 
\end{enumerate}
\end{remarks}
\begin{figure}[!htbp]
\centering
\includegraphics[width=.9\textwidth]{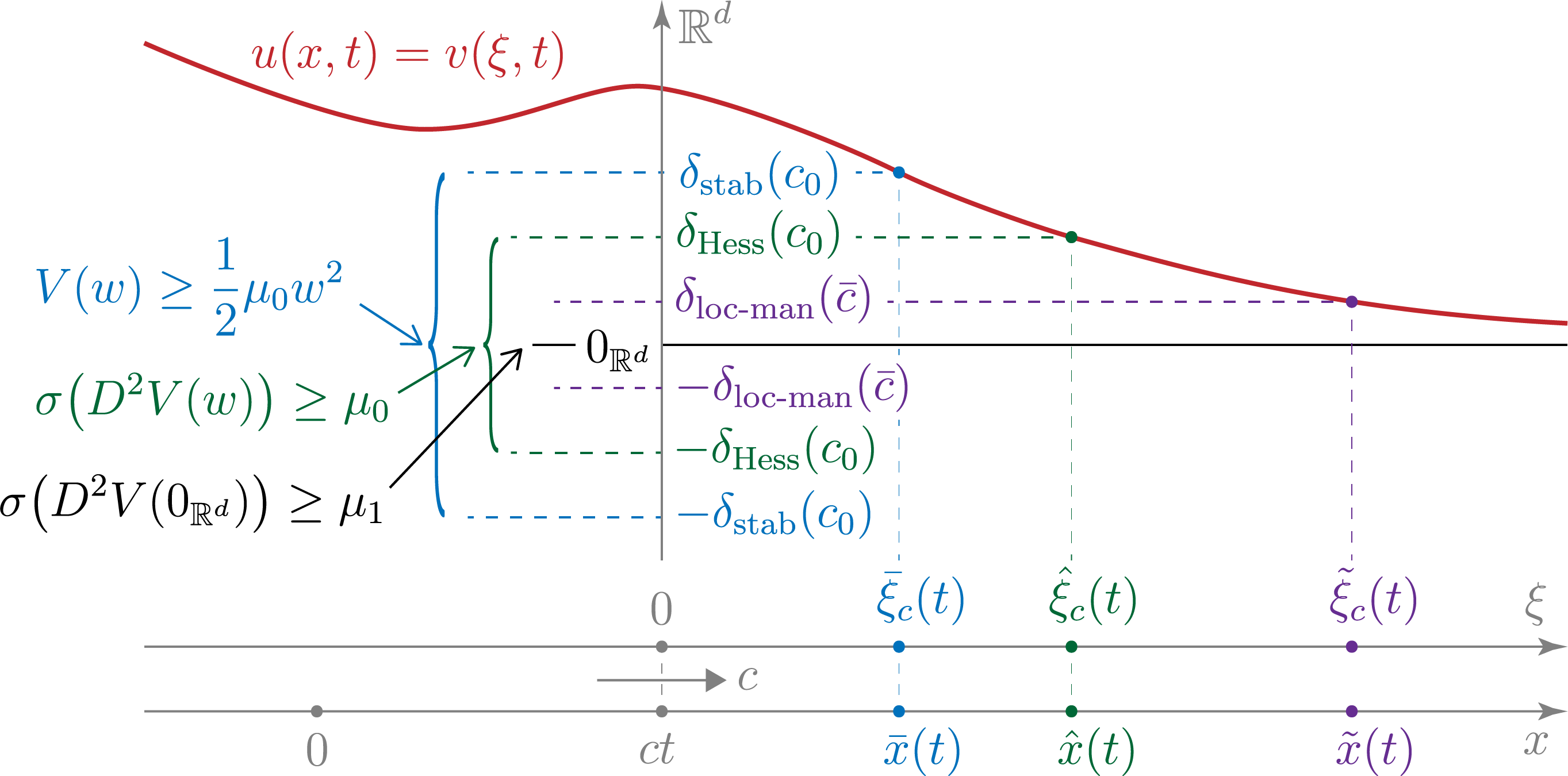}
\caption{Frame travelling at a (positive) speed $c$, maximal radius of stability $\deltaStab(c_0)$ for pushed invasion at the speed $c_0$, and invasion points $\bar{x}(t)$ and $\bar{\xi}_c(t)$ defined by this radius; and invasion points $\hat{x}(t)$ and $\hat{\xi}_c(t)$ introduced in \cref{subsec:invasion_point_defined_by_smaller_radius}, defined by the radius $\deltaHess(c_0)$, and $\tilde{x}(t)$ and $\tilde{\xi}_c(t)$ introduced in \cref{subsec:convergence}, defined by the radius $\deltaLocMan(\bar{c})$.}
\label{fig:travelling_frame_invasion_points}
\end{figure}
According to this notation, 
\begin{equation}
\label{abs_v_smaller_than_deltaStab_of_c0_to_the_right_of_invasion_point}
\abs{v\bigl(\widebar{\xi}_c(t),t\bigr)} = \deltaStab(c_0) 
\quad\text{and, for all $\xi$ in $\bigl[\widebar{\xi}_c(t),+\infty\bigr)$,}\quad
\abs{v(\xi,t)} \le \deltaStab(c_0) 
\,,
\end{equation}
and both quantities $\widebar{x}(t)$ and $\widebar{\xi}_c(t)$ are lower semi-continuous (but not necessarily continuous) with respect to $t$. This lower semi-continuity will not be used as such (more quantitative estimates on these quantities will be obtained in the next \namecref{subsec:invasion_speed}).
\subsection{Invasion speed}
\label{subsec:invasion_speed}
The content of this \namecref{subsec:invasion_speed} owe much to the arguments of \cite[Propositions~3.1 and 3.2]{GallayRisler_globStabBistableTW_2007}, \cite[section~5]{GallayJoly_globStabDampedWaveBistable_2009},  and \cite[section 3]{Luo_globStabDampedWaveEqu_2013}. 
\subsubsection{Lower bound on energy in a travelling frame}
Let $c$ denote a quantity in the interval $(c_0,\cDecay]$, and let us consider the solution $(\xi,t)\mapsto v(\xi,t)$ of system \cref{parabolic_system_trav_frame} defined in \cref{change_of_variable_stand_trav_frame} (for the speed $c$). For every nonnegative time $t$, it follows from the second assertion of \cref{assumptions_origin_of_times} and from \cref{prop:decrease_energy_trav_frame} that the quantity $E_c(t)$ defined as
\begin{equation}
\label{def_Ec_of_t}
E_c(t) = \eee_c\bigl[v(\cdot,t)\bigr]
\end{equation}
is finite. In addition, it follows from inequality \cref{lower_bound_energy_trav_frame_H1_norm_from_bar_xi} of \cref{lem:lower_bound_energy_trav_frame} that there exists a positive quantity $\alpha$, depending on $c$ and $c_0$ (only) such that, for every nonnegative time $t$, 
\begin{equation}
\label{lower_bound_Ec}
E_c(t)\ge - e^{c\widebar{\xi}_c(t)}\frac{\abs{V_{\min}}}{c} + \alpha \int_{\widebar{\xi}_c(t)}^{+\infty} e^{c\xi}\bigl(v_\xi(\xi,t)^2 + v(\xi,t)^2 \bigr)\, d\xi
\,.
\end{equation}
\subsubsection{Bounds on invasion point}
\begin{lemma}[lower bound on invasion point]
\label{lem:lower_bound_invasion_point}
For every $c$ in $\bigl(c_0,\cVar[u]\bigr)$, there exists a positive quantity $K$ such that, for every large enough positive time $t$, 
\begin{equation}
\label{lower_bound_invasion_point}
K + ct \le \widebar{x}(t)
\,.
\end{equation}
\end{lemma}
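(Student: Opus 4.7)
The plan is to exploit two competing controls on the energy $E_c(t) = \eee_c[v(\cdot,t)]$ in the frame travelling at speed $c$: from above via the monotone decrease of $E_c$, from below via inequality \cref{lower_bound_Ec}. Fixing $c$ in $(c_0,\cVar[u])$, I first aim to produce a time $t_0 \ge 0$ at which $E_c(t_0) < 0$; by \cref{prop:decrease_energy_trav_frame} the energy $E_c(t)$ is then bounded above by the negative constant $E_c(t_0)$ for every $t \ge t_0$.

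To obtain such a $t_0$, I would use that $c < \cVar[u] = \lim_{t\to+\infty}\sup\cccVar(t)$: for $t$ large enough, $\sup\cccVar(t) > c$. The delicate point is to pass from this to the strict membership $c \in \cccVar(t)$, that is, to $E_c(t) < 0$. I expect this to follow by combining the fact that $c_0$ lies in $\cccVar(t)$ for every $t$ (since $E_{c_0}(0) < 0$ by \cref{assumptions_origin_of_times} and $E_{c_0}$ is non-increasing) with the continuity of $c' \mapsto \eee_{c'}[u(\cdot,t)]$ on $(0,\cDecay]$ (by dominated convergence, since $u(\cdot,t) \in \HoneOfTwo{\cDecay}{e}$). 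In the worst case, the same time-translation argument that was used in the set-up to arrange $\eee_{c_0}[u(\cdot,0)] < 0$ also works directly for the given $c$, since $c$ satisfies the same constraints $\cLinMax < c < \cVar[u]$. This step is where I expect the main technical friction to arise.

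For the lower bound, I invoke \cref{lower_bound_Ec}: its hypothesis is met because, to the right of the invasion point, the solution stays within the stability radius, $\abs{v(\xi,t)} \le \deltaStab(c_0)$ for $\xi \ge \widebar{\xi}_c(t)$ by \cref{abs_v_smaller_than_deltaStab_of_c0_to_the_right_of_invasion_point}, which implies $V\bigl(v(\xi,t)\bigr) \ge \tfrac{1}{2}\mu_0\, v(\xi,t)^2$ there (by \cref{V_greater_than_mu0_u2_for_abs_u_smaller_than_deltaStab_of_c_0}). Dropping the nonnegative integral term in \cref{lower_bound_Ec} simplifies the bound to
\[
E_c(t) \ge -\frac{\abs{\Vmin}}{c}\, e^{c\widebar{\xi}_c(t)}.
\]

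Combining the two bounds, for every $t \ge t_0$,
\[
-\frac{\abs{\Vmin}}{c}\, e^{c\widebar{\xi}_c(t)} \le E_c(t) \le E_c(t_0) < 0,
\]
so that $e^{c\widebar{\xi}_c(t)} \ge c\abs{E_c(t_0)}/\abs{\Vmin}$ and hence $\widebar{\xi}_c(t) \ge K$ with $K = \tfrac{1}{c}\ln\bigl(c\abs{E_c(t_0)}/\abs{\Vmin}\bigr)$. Since $\widebar{x}(t) = \widebar{\xi}_c(t) + ct$ by \cref{def_bar_xi_c_of_t}, this yields $\widebar{x}(t) \ge K + ct$ for every large enough $t$, which is the intended inequality \cref{lower_bound_invasion_point}.
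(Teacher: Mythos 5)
Your argument is essentially the paper's: obtain a time $t'$ with $E_c(t')<0$, use the monotone decrease of $E_c$ together with the lower bound \cref{lower_bound_Ec} (with the nonnegative integral dropped), and take a logarithm to get $\widebar{\xi}_c(t)\ge K$ with the same constant $K=\tfrac{1}{c}\ln\bigl(c\abs{E_c(t')}/\abs{\Vmin}\bigr)$. The step you flag as the main friction is dispatched in the paper exactly by your fallback option: since $c<\cVar[u]$, the definition \cref{decay_variational_invasion_speed} of the variational speed directly provides a time $t'$ at which $E_c(t')<0$, just as the set-up arranged $\eee_{c_0}[u(\cdot,0)]<0$.
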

\begin{proof}
According to the definition \cref{def_bar_xi_c_of_t} of $\widebar{\xi}_c(t)$, the intended inequality \cref{lower_bound_invasion_point} is equivalent to
\begin{equation}
\label{lower_bound_invasion_point_proof}
K \le \widebar{\xi}_c(t)
\,.
\end{equation}
Since $c$ is assume to be smaller than $\cVar[u]$, it follows from the definition \cref{decay_variational_invasion_speed} of $\cVar[u]$ that there exists a nonnegative time $t'$ such that $E_c(t')$ is negative. Then, according to \cref{prop:decrease_energy_trav_frame} and inequality \cref{lower_bound_Ec}, for every time $t$ greater than or equal to $t'$, 
\[
0> E_c(t') \ge E_c(t) \ge - e^{c\widebar{\xi}_c(t)}\frac{\abs{V_{\min}}}{c}
\,,
\quad\text{so that}\quad
0 < \abs{E_c(t')} \le e^{c\widebar{\xi}_c(t)}\frac{\abs{V_{\min}}}{c}
\,,
\]
and so that, if we consider the quantity
\[
K = \frac{1}{c} \ln\left(\abs{E_c(t')}\frac{c}{\abs{V_{\min}}}\right)
\,,
\]
then inequality \cref{lower_bound_invasion_point_proof} follows. 
\end{proof}
Besides \cref{prop:invasion_speed} below, which provides more control on the asymptotic behaviour of the invasion point, the following elementary lemma will be convenient in some of the upcoming arguments. 
\begin{lemma}[upper bound over bounded time intervals on the invasion point]
\label{lem:upper_bound_bounded_time_intervals_invasion_point}
For every positive time $T$, the quantity 
\begin{equation}
\label{upper_bound_bounded_time_intervals_invasion_point}
\sup_{t\in[0,T]} \widebar{x}(t) < +\infty
\,.
\end{equation}
\end{lemma}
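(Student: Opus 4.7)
The plan is to combine the continuity of the semi-flow in the weighted Sobolev space $H^1_{\cDecay}(\rr,\rr^d)$ with the uniform Lipschitz bound on $u(\cdot,t)$ in space, in order to upgrade the pointwise-in-$t$ finiteness of $\widebar{x}(t)$ to a uniform bound on compact time intervals.

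First I would invoke the second property stated in \cref{assumptions_origin_of_times}, namely $u(\cdot,0)\in \HoneOfTwo{\cDecay}{e}$, together with the last assertion of \cref{prop:decrease_energy_trav_frame}: the semi-flow $(S_t)_{t\ge 0}$ restricts to a continuous semi-flow on $\HoneOfTwo{\cDecay}{e}\cap\HoneulofR$ for the sum of the two corresponding norms. Recalling that $e=0_{\rr^d}$ in the present section, this yields in particular the continuity of the map $t\mapsto u(\cdot,t)$ from $[0,+\infty)$ to $H^1_{\cDecay}(\rr,\rr^d)$. By compactness of $[0,T]$, the quantity
\[
M = \sup_{t\in[0,T]} \int_{\rr} e^{\cDecay\xi}\, u(\xi,t)^2 \, d\xi
\]
is therefore finite.

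Next I would use the uniform $\ccc^1$-bound \cref{bound_u_ux_setup}, which implies that each function $x\mapsto u(x,t)$ is Lipschitz with constant $\Ratt$, uniformly in $t$. Setting $r = \deltaStab(c_0)/(2\Ratt)$, for every $x_0$ in $\SigmaFar{\deltaStab(c_0)}(t)$ one has $\abs{u(x,t)}\ge \deltaStab(c_0)/2$ on the interval $[x_0,x_0+r]$, so that
\[
M \ge \int_{x_0}^{x_0+r} e^{\cDecay\xi}\, u(\xi,t)^2 \, d\xi \ge r\left(\frac{\deltaStab(c_0)}{2}\right)^2 e^{\cDecay x_0}
\,.
\]
Taking logarithms, this yields the uniform upper bound
\[
x_0 \le \frac{1}{\cDecay}\ln\!\left(\frac{4M}{r\,\deltaStab(c_0)^2}\right)
\,,
\]
valid for every $t\in[0,T]$ and every $x_0$ in $\SigmaFar{\deltaStab(c_0)}(t)$. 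Passing to the supremum over such $x_0$ gives the intended bound \cref{upper_bound_bounded_time_intervals_invasion_point}. I expect no genuine obstacle: everything hinges on the continuity of the semi-flow in the weighted space, which is already recorded in \cref{prop:decrease_energy_trav_frame}, and on the uniform Lipschitz control coming from \cref{bound_u_ux_setup}.
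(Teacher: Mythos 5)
Your argument is correct, but it follows a genuinely different route from the paper's. The paper proceeds by contradiction with a soft compactness argument: if $\widebar{x}(t_n)\to+\infty$ along a sequence $t_n\to t_\infty\in[0,T]$, then the decay of $u(\cdot,t_\infty)$ at $+\infty$ (from $u(\cdot,t_\infty)\in H^1_{\cDecay}(\rr,\rr^d)$) together with the continuity of the solution in $\HoneulofR$ with respect to time forces $u(x,t)$ to be small for $x$ large and $t$ near $t_\infty$, contradicting $\abs{u(\widebar{x}(t_n),t_n)}=\deltaStab(c_0)$. You instead give a direct, quantitative bound: the Lipschitz control from \cref{bound_u_ux_setup} converts the pointwise condition $\abs{u(x_0,t)}>\deltaStab(c_0)$ into a definite contribution of order $e^{\cDecay x_0}$ to the weighted $L^2$ norm, which is bounded uniformly on $[0,T]$. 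Your version yields an explicit bound on $\sup_{t\in[0,T]}\widebar{x}(t)$ in terms of $M$, $\Ratt$ and $\deltaStab(c_0)$, and avoids extracting subsequences; the paper's version uses less regularity in time (only continuity in $\HoneulofR$ plus membership of each time slice in $H^1_{\cDecay}$). One caution on your citation: the last assertion of \cref{prop:decrease_energy_trav_frame} states continuity of $S_t$ \emph{with respect to the initial condition} at each fixed $t$, which does not by itself give continuity of $t\mapsto u(\cdot,t)$ with values in $H^1_{\cDecay}(\rr,\rr^d)$. The fact you actually need — that the solution belongs to $\ccc^0\bigl([0,+\infty),H^1_{\cDecay}(\rr,\rr^d)\bigr)$ — is indeed available, but it is established inside the \emph{proof} of \cref{prop:decrease_energy_trav_frame} (the membership in the space \cref{space_solution}), so you should point there rather than to the stated continuity with respect to initial data. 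With that reference corrected, the argument is complete.
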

\begin{proof}
Let us proceed by contradiction and assume that, for some positive time $T$, the converse holds. Then there exists a sequence $(t_n)_{n\in\nn}$ of nonnegative times, converging to some limit $t_\infty$ in $[0,T]$, such that $\widebar{x}(t_n)$ goes to $+\infty$ as $n$ goes to $+\infty$. Since the function $x\mapsto u(x,t_\infty)$ is in $H^1_{\cDecay}(\rr,\rr^d)$, it converges to $0_{\rr^d}$ to the right end of space. Thus, since the solution varies continuously in $\HoneulofR$ with respect to time, $u(x,t)$ is arbitrarily close to $0_{\rr^d}$ if $x$ is large enough positive and $t$ is close enough to $t_\infty$, a contradiction with the fact that $\abs{u\bigl(\widebar{x}(t_n),t_n\bigr)}$ equals $\deltaStab(c_0)$ for all $n$ in $\nn$. 
\end{proof}
\subsubsection{Upper control on invasion point}
Let us consider the quantities
\[
\widebar{c}_- = \liminf_{t\to+\infty}\frac{\widebar{x}(t)}{t}
\quad\text{and}\quad
\widebar{c}_+ = \limsup_{t\to+\infty}\frac{\widebar{x}(t)}{t}
\,.
\]
It follows from \cref{lem:lower_bound_invasion_point} and from these expressions that
\begin{equation}
\label{cLow_smaller_than_c_minus_and_c_plus_smaller_than_cUpp}
\cVar[u] \le \widebar{c}_- \le \widebar{c}_+ \le +\infty
\,.
\end{equation}
\begin{proposition}[invasion speed]
\label{prop:invasion_speed}
The following equality holds:
\[
\cVar[u] = \widebar{c}_- = \widebar{c}_+ 
\,.
\]
\end{proposition}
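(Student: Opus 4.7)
The lower bound $\cVar[u] \le \widebar{c}_-$ is already contained in \cref{cLow_smaller_than_c_minus_and_c_plus_smaller_than_cUpp}, so the remaining content is the upper bound $\widebar{c}_+ \le \cVar[u]$. The plan is to argue by contradiction: if $\widebar{c}_+ > \cVar[u]$, I will extract from a sequence of space-time translates the profile of a pushed travelling front at an intermediate speed, and then derive an impossibility by evaluating its energy in a slightly slower travelling frame.

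Suppose $\widebar{c}_+ > \cVar[u]$ and fix $c^*, c'$ with $\cVar[u] < c' < c^* < \min(\widebar{c}_+, \cDecay)$. Since $c^* < \cDecay$, \cref{prop:decrease_energy_trav_frame} together with the setup ensures that $E_c(t) := \eee_c[v(\cdot, t)]$ is finite and non-increasing for $c \in \{c', c^*\}$; combined with $c', c^* > \cVar[u]$, the definition of the variational invasion speed and this monotonicity force $E_{c^*}(t) \ge 0$ and $E_{c'}(t) \ge 0$ for \emph{every} $t \ge 0$. In particular, $\int_0^\infty D_{c^*}(t)\, dt$ is finite. I would then pick $t_n \to +\infty$ with $\widebar{x}(t_n)/t_n \to \widebar{c}_+$, so that $\widebar{\xi}_{c^*}(t_n) \to +\infty$, and consider the shifts $\tilde{v}_n(\zeta, s) := u\bigl(\widebar{x}(t_n) + c^* s + \zeta, t_n + s\bigr)$. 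By \cref{lem:compactness} a subsequence converges in $C^{2,1}_{\mathrm{loc}}$ to an entire solution $v_\infty$ of the $c^*$-travelling-frame system. The dissipation identity
\[
\int_0^T\!\!\int_\rr e^{c^* \zeta} (\partial_s \tilde{v}_n)^2 \, d\zeta\, ds = e^{-c^* \widebar{\xi}_{c^*}(t_n)} \int_{t_n}^{t_n + T} D_{c^*}(\sigma)\, d\sigma \xrightarrow[n \to \infty]{} 0
\]
forces $\partial_s v_\infty \equiv 0$, so that $v_\infty(\zeta)$ satisfies the travelling-wave ODE at speed $c^*$; the lower bound \cref{lower_bound_energy_trav_frame_H1_norm_from_bar_xi} applied to $v(\cdot, t_n)$, once shifted and combined with the boundedness of $E_{c^*}(t_n)$, yields a uniform bound on $\int_0^{+\infty} e^{c^* \zeta}(\tilde{v}_n^2 + (\partial_\zeta \tilde{v}_n)^2)\, d\zeta$. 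Passing to the limit with Fatou and invoking conclusion \cref{item:lem_Poincare_inequality_convergence_at_both_ends} of \cref{lem:Poincare_inequality} places $v_\infty$ in $H^1_{c^*}([0, +\infty))$ and gives $v_\infty(\zeta) = o(e^{-c^* \zeta / 2})$ as $\zeta \to +\infty$; since $|v_\infty(0)| = \deltaStab(c_0) > 0$ and $v_\infty$ is bounded on $\rr$ by \cref{bound_u_ux_setup}, the remark following \cref{def:travelling_wave_front_invading_e_speed_c} identifies $v_\infty$ as the profile of a \emph{pushed} travelling front invading $0_{\rr^d}$ at speed $c^*$.

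To close the argument, I would invoke \cref{prop:energy_pushed_front_travelling_frame} at the intermediate speed $c'$:
\[
\eee_{c'}[v_\infty] = \Bigl(1 - \frac{c^*}{c'}\Bigr) \int_\rr e^{c' \zeta} v_\infty'(\zeta)^2 \, d\zeta < 0,
\]
since $c' < c^*$ and $v_\infty$ is nonconstant. Repeating the shift construction in the $c'$-frame, where $\widebar{\xi}_{c'}(t_n) \to +\infty$ as well (because $\widebar{c}_+ > c'$), a dominated-convergence argument---supported on $[0, +\infty)$ by the weighted bound above together with $e^{c' \zeta} \le e^{c^* \zeta}$, and on $(-\infty, 0]$ by the pointwise estimate \cref{bound_u_ux_setup} against the integrable weight $e^{c' \zeta}$---delivers
\[
e^{-c' \widebar{\xi}_{c'}(t_n)} E_{c'}(t_n) \xrightarrow[n \to \infty]{} \eee_{c'}[v_\infty] < 0,
\]
so that $E_{c'}(t_n) < 0$ for $n$ sufficiently large, in direct contradiction with $E_{c'}(t) \ge 0$ established above.

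The hard part will be the two passages to the limit in the weighted energies: both the vanishing of $\partial_s v_\infty$ via the dissipation and the identification of the limit of the shifted $c'$-energy with $\eee_{c'}[v_\infty]$ require uniform integrability on the whole real line, which must be pieced together from the one-sided weighted $H^1$-bound provided by \cref{lower_bound_energy_trav_frame_H1_norm_from_bar_xi} on $[\widebar{\xi}_{c^*}(t_n), +\infty)$ and the uniform pointwise bound \cref{bound_u_ux_setup} against the exponentially small weight on the complementary half-line.
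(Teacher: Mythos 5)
Your proposal is correct in substance, and its first half (contradiction hypothesis, choice of $t_n$ with $\widebar{x}(t_n)/t_n\to\widebar{c}_+$, compactness, vanishing of the dissipation via the exponential prefactor $e^{-c^*\widebar{\xi}_{c^*}(t_n)}$, and the uniform weighted $H^1_{c^*}$-bound obtained from \cref{lower_bound_energy_trav_frame_H1_norm_from_bar_xi} plus Fatou) coincides with the paper's proof. Where you genuinely diverge is in how the contradiction is extracted. The paper never identifies the limit object as a front: it observes that $\partial_t u_\infty + c\,\partial_x u_\infty\equiv 0$ must hold for \emph{every} admissible speed $c$ in a nondegenerate interval, so that $u_\infty$ is a \emph{constant} at distance $\deltaStab(c_0)$ from the origin, and the finiteness of the Fatou limit $\jjj_\infty$ of the weighted integrals immediately contradicts this. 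You instead work with a single speed $c^*$, identify $u_\infty$ as the profile of a pushed front invading $0_{\rr^d}$ at speed $c^*$, apply \cref{prop:energy_pushed_front_travelling_frame} to get $\eee_{c'}[u_\infty(\cdot,0)]<0$ for $c'\in(\cVar[u],c^*)$, and transfer this negativity back to $E_{c'}(t_n)$, contradicting $E_{c'}(t)\ge 0$. Your route is more ``variational'' in spirit and incidentally produces pushed fronts at a continuum of speeds under the contradiction hypothesis, but it pays for this with an extra limit-interchange: the convergence $e^{-c'\widebar{\xi}_{c'}(t_n)}E_{c'}(t_n)\to\eee_{c'}[u_\infty(\cdot,0)]$ is \emph{not} literal dominated convergence on $[0,+\infty)$, since the uniform bound on $\int_0^{+\infty}e^{c^*\zeta}(u^2+u_x^2)$ is an integral bound, not a fixed dominating function. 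It does go through, but via a uniform-tail argument: writing $e^{c'\zeta}=e^{-(c^*-c')\zeta}e^{c^*\zeta}$ and using $\abs{V(u)}\le C\abs{u}^2$ on $\widebar{B}_{\rr^d}\bigl(0_{\rr^d},\deltaStab(c_0)\bigr)$ (valid since $V(0)=0$ and $\nabla V(0)=0$), the tails $\int_A^{+\infty}$ are $O\bigl(e^{-(c^*-c')A}\bigr)$ uniformly in $n$, while $[0,A]$ is handled by locally uniform convergence and $(-\infty,0]$ by the pointwise bound \cref{bound_u_ux_setup} against the integrable weight $e^{c'\zeta}$. With that step made precise, your argument is complete; the paper's two-speed trick simply avoids this interchange altogether, at the cost of extracting less information about $u_\infty$.
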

\begin{proof}
Let us proceed by contradiction and assume that
\[
\cVar[u] < \widebar{c}_+
\,,
\]
and let us consider a sequence $(t_n)_{n\in\nn}$ of nonnegative times going to $+\infty$, such that
\[
\frac{\widebar{x}(t_n)}{t_n}\xrightarrow[n\to+\infty]{}\widebar{c}_+
\,.
\]
By compactness (\cref{lem:compactness}), up to replacing the sequence $(t_n)_{n\in\nn}$ by a subsequence, there exists an entire solution $u_\infty$ of system \cref{parabolic_system} such that, with the notation of \cref{subsec:compactness}, 
\begin{equation}
\label{compactness_proof_existence_invasion_speed}
D^{2,1}u\bigl(\widebar{x}(t_n)+\cdot,t_n+\cdot\bigr)\to D^{2,1}u_\infty
\quad\text{as}\quad
n\to+\infty
\,,
\end{equation}
uniformly on every compact subset of $\rr^2$. Recall that, according to the definition \cref{def_bar_x} of $\widebar{x}(\cdot)$, $\abs{u\bigl(\widebar{x}(t_n),t_n\bigr)}$ equals $\deltaStab(c_0)$ (for every nonnegative integer $n$), so that \emph{the same is true for $\abs{u_\infty(0,0)}$} (this property will be called upon at the end of the proof). 

Let us pick a quantity $c$ in the interval $\bigl(\cVar[u],\min(\widebar{c}_+,\cDecay)\bigr)$. Since $c$ is less than $\widebar{c}_+$, 
\begin{equation}
\label{lim_of_xi_of_t_subsequence}
\widebar{\xi}_c(t_n)\xrightarrow[n\to+\infty]{}+\infty
\,,
\end{equation}
and since $c$ is greater than $\cVar[u]$ but less than $\cDecay$, the quantity $E_c(t)$ is nonnegative for every nonnegative time $t$. It follows that, for every positive quantity $T$, the nonnegative quantity 
\[
E_c(t_n-T)-E_c(t_n+T)\,,
\quad\text{which, according to equality \cref{decrease_energy_trav_frame}, equals}\quad
\int_{t_n-T}^{t_n+T}D_c(t)dt
\,,
\]
goes to $0$ as $n$ goes to $+\infty$. Let us consider the function $v(\xi,t)$ defined as in \cref{change_of_variable_stand_trav_frame}. For every positive quantity $L$, the substitutions
\[
\xi = x-ct 
\quad\text{and}\quad
t = t_n + s 
\quad\text{and}\quad
x = \widebar{x}(t_n) + y 
\]
lead to:
\begin{align}
\nonumber
\int_{t_n-T}^{t_n+T}D_c(t)dt &= \int_{t_n-T}^{t_n+T}\left(\int_{\rr}e^{c\xi}v_t(\xi,t)^2 \, d\xi\right) \, dt \\
\nonumber
&= \int_{t_n-T}^{t_n+T}\left(\int_{\rr}e^{c(x-ct)}(u_t+cu_x)^2(x,t) \, dx\right) \, dt \\
\nonumber
&= \int_{-T}^{T}e^{c(\widebar{x}(t_n)-ct_n-cs)}\left(\int_{\rr}e^{cy}(u_t+cu_x)^2\bigl(\widebar{x}(t_n)+y,t_n+s\bigr) \, dy\right) \, ds \\
&\ge \int_{-T}^{T}e^{c(\widebar{x}(t_n)-ct_n-cs)}\left(\int_{-L}^{L}e^{cy}(u_t+cu_x)^2\bigl(\widebar{x}(t_n)+y,t_n+s\bigr) \, dy\right) \, ds 
\,.
\label{inequalities_proof_existence_invasion_speed}
\end{align}
Let us consider the integrals
\[
\begin{aligned}
\iii_n &= \int_{-T}^{T}\left(\int_{-L}^{L}(u_t+cu_x)^2\bigl(\widebar{x}(t_n)+y,t_n+s\bigr)\, dy\right)\, ds \\
\text{and}\quad
\iii_\infty &= \int_{-T}^{T}\left(\int_{-L}^{L}(\partial_t u_\infty+c\partial_x u_\infty)^2(y,s)\, dy\right)\, ds
\,.
\end{aligned}
\]
It follows from inequality \cref{inequalities_proof_existence_invasion_speed} that
\[
\int_{t_n-T}^{t_n+T}D_c(t)dt \ge e^{c\bigl(\widebar{x}(t_n)-ct_n-cT-L\bigr)}\iii_n = e^{c\bigl(\widebar{\xi}(t_n)-cT-L\bigr)}\iii_n
\,,
\]
and according to the limit \cref{lim_of_xi_of_t_subsequence} the exponential factor of $\iii_n$ on the right-hand side of this inequality goes to $+\infty$ as $n$ goes to $+\infty$. Thus the nonnegative quantity $\iii_n$ must go to $0$ as $n$ goes to $+\infty$. On the other hand, according to the limits \cref{compactness_proof_existence_invasion_speed}, $\iii_n$ goes to $\iii_\infty$ as $n$ goes to $+\infty$, so that $\iii_\infty$ must be equal to $0$. Since this holds for all positive quantities $T$ and $L$, the function $\partial_t u_\infty+c\partial_x u_\infty$ must be identically equal to $0_{\rr^d}$ on $\rr^2$. 

At this stage, the key observation is that, while the entire solution $u_\infty$ defined by the limits \cref{compactness_proof_existence_invasion_speed} does not depend on $c$, the previous conclusion must hold not only for one particular value of $c$, but for \emph{every} $c$ in the interval $(\widebar{c}_-,\widebar{c}_+)$. It thus follows that both functions $\partial_t u_\infty$ and $\partial_x u_\infty$ must actually be identically equal to $0_{\rr^d}$ on $\rr^2$. In other words, $u_\infty$ must be identically equal to some point of $\rr^d$, which, according to the remark made at the beginning of the proof, must be at distance $\deltaStab(c_0)$ from $0_{\rr^d}$. 

Besides, it follows from inequality \cref{lower_bound_Ec} that, for every nonnegative integer $n$, 
\begin{equation}
\label{Ec_of_zero_greater_than_alpha_H1c_integral}
E_c(0) \ge E_c(t_n) \ge -e^{c\widebar{\xi}_c(t_n)}\frac{\abs{V_{\min}}}{c} + \alpha \int_{\widebar{\xi}_c(t_n)}^{+\infty} e^{c\xi}\bigl(v_\xi(\xi,t_n)^2+v(\xi,t_n)^2\bigl)\, d\xi
\,.
\end{equation}
Let us consider the integrals
\[
\begin{aligned}
\jjj_n &= \int_{\widebar{\xi}_c(t_n)}^{+\infty} e^{c\bigl(\xi - \widebar{\xi}_c(t_n)\bigr)} \bigl(v_\xi(\xi,t_n)^2+v(\xi,t_n)^2\bigl)\, d\xi \\
\text{and}\quad
\jjj_\infty &= \int_0^{+\infty} e^{c x} \bigl(\partial_x u_\infty(x,0)^2 + u_\infty(x,0)^2\bigr) \, dx
\,.
\end{aligned}
\]
It follows from inequality \cref{Ec_of_zero_greater_than_alpha_H1c_integral} that 
\begin{equation}
\label{Ec_of_zero_greater_than_alpha_H1c_integral_bis}
\frac{1}{\alpha}\left(e^{-c\widebar{\xi}_c(t_n)} E_c(0) + \frac{\abs{V_{\min}}}{c}\right)\ge \jjj_n
\,.
\end{equation}
On the other hand, 
\[
\begin{aligned}
\jjj_n &= \int_{\widebar{x}(t_n)}^{+\infty}e^{c\bigl(x-\widebar{x}(t_n)\bigr)}\bigl(u_x(x,t_n)^2 + u(x,t_n)^2\bigr)\, dx \\
&= \int_0^{+\infty} e^{cy}(u_x^2+u^2)\bigl(\widebar{x}(t_n)+y,t_n\bigr) \, dy 
\,,
\end{aligned}
\]
so that, in view of the limits \cref{compactness_proof_existence_invasion_speed} and according to Fatou Lemma, 
\[
\jjj_\infty \le \liminf_{n\to+\infty} \jjj_n
\,,
\]
and since according to the limit \cref{lim_of_xi_of_t_subsequence} $\widebar{\xi}_c(t_n)$ goes to $+\infty$ as $n$ goes to $+\infty$, it follows from inequality \cref{Ec_of_zero_greater_than_alpha_H1c_integral_bis} that 
\[
\limsup_{n\to+\infty} \jjj_n < +\infty
\,,
\]
so that the integral $\jjj_\infty$ is finite, a contradiction with the fact that $\abs{u_\infty(\cdot,0)}$ must be identically equal to $\deltaStab(c_0)$. \Cref{prop:invasion_speed} is proved. 
\end{proof}
In the following, the positive quantity equal to $\cVar[u]$ and to $\widebar{c}_-$ and to $\widebar{c}_+$ will simply be denoted as $\widebar{c}$; with symbols,
\[
\widebar{c} = \cVar[u] = \widebar{c}_- = \widebar{c}_+
\,.
\]
\subsection{Scheme of the end of the proof}
To complete the proof, the crux is to prove that the dissipation goes to $0$, on every compact interval around the invasion point, in the frame travelling at the speed $\widebar{c}$ (\vref{prop:relaxation}); from this stage, the convergence readily follows (\vref{subsec:convergence}). 

If the converse holds (if that dissipation does not go to $0$), then there exists a sequence of times $t_n$, going to $+\infty$, at which ``some'' dissipation occurs. If, up to replacing the sequence $(t_n)$ by a subsequence, the quantities $\widebar{\xi}_{\widebar{c}}(t_n)$ are bounded from below, then reaching a contradiction is rather straightforward (see \cite[figure~2 and beginning of the proof of Proposition~4.4]{GallayRisler_globStabBistableTW_2007}): for $c$ slightly greater than $\widebar{c}$, the energy $E_c(t)$ remains bounded from below in spite of an arbitrarily large ``amount'' of dissipation, which is impossible. 

Unfortunately, if $\widebar{\xi}_{\widebar{c}}(t_n)$ goes to $-\infty$ as $n$ goes to $+\infty$, this argument fails: in a frame travelling at a constant speed, adjusted so that the invasion point is (say) at the origin $\xi=0$ at two times $t_n$ and $t_{n+p}$ (for some positive integer $p$), the energy is indeed bounded from below at time $t_{n+p}$, but, while the ``dissipation bursts'' at times $t_n$ and $t_{n+p}$ contribute to a non-negligible amount of dissipation, the other dissipation bursts occurring in between (at times $t_{n+q}$ for the integers $q$ that are positive and smaller than $p$) may be very small, since they may occur far to the left of the origin of this travelling frame, \cite[figure~4]{GallayRisler_globStabBistableTW_2007}. 

To circumvent this difficulty, the strategy proposed by Gallay and Joly in \cite{GallayJoly_globStabDampedWaveBistable_2009} is to \emph{follow} the invasion point between consecutive dissipation bursts: the benefit of this setting is that each dissipation burst contributes significantly to the decrease of the energy, and that the energy remains bounded from below, but the price to be paid is that the speed of the travelling frame must be adjusted between each dissipation burst. These speed changes induce changes in the value of the energy in the corresponding travelling frames, and these latter changes must be, asymptotically, arbitrarily small if the intended contradiction is to be reached. The crucial step is thus to obtain some control on the variation of the energy with respect to the speed (\vref{cor:Lipschitz_cont_energy}). This control will in turn follow from the key observation, made for the first time by Gallay and Joly in \cite{GallayJoly_globStabDampedWaveBistable_2009}, that, for some speed $c^*$ greater than $\widebar{c}$ but close enough to $\widebar{c}$, the \emph{energy at the invasion point} in a frame travelling at the speed $c^*$ (\cref{def:energy_at_invasion_point}) remains bounded from above (\vref{prop:upper_bound_energy_at_invasion_point}).

Finally, the proof of this \cref{prop:upper_bound_energy_at_invasion_point} follows from two arguments:
\begin{enumerate}
\item this energy (at the invasion point) has (due to Poincaré inequality \cref{Poincare_inequality_any_gamma}) the same magnitude as its ``kinetic'' part (\vref{lem:frame_Ec_with_Fc}), and, due to the parabolic system \cref{parabolic_system} satisfied by the solution, this kinetic part decreases with time, up to some ``pollution'' issued from the half space to the left of the invasion point, \vref{lem:lin_decrease_up_to_pollution_Fc};
\item the ``drift to the left'' of the invasion point (which induces an increase of this energy at the invasion point as soon as it occurs, equality \vref{def_Ec_of_xi_and_t}), is controlled, \vref{cor:control_moves_to_left_invasion_point_trav_frame}. 
\end{enumerate}
In order to state the ``linear decrease up to pollution'' \cref{lem:lin_decrease_up_to_pollution_Fc} mentioned above, another invasion point is introduced in the next \namecref{subsec:invasion_point_defined_by_smaller_radius}.
\subsection{Invasion point defined by a smaller radius}
\label{subsec:invasion_point_defined_by_smaller_radius}
Observe that the set
\begin{equation}
\label{set_defining_delta_Hess}
\left\{ \abs{w} : w\in\rr^d\text{ and } \sigma\bigl(D^2V(w)\bigr)\not\subset[\mu_0,+\infty) \right\}
\end{equation}
is nonempty; indeed, if this set was empty, then we would have, for every $w$ in $\rr^d$, 
\begin{equation}
\label{V_greater_than_one_half_hatmulow_w_square}
V(w)\ge \frac{1}{2}\mu_0 \abs{w}^2
\,,
\end{equation}
a contradiction with the fact that $\mu_0$ is greater than the quantity $\muQuadHull$ (inequalities \cref{c_0_between_cLinMax_and_cQuadHull}). Let us denote by $\deltaHess(c_0)$ the infimum of this set \cref{set_defining_delta_Hess}. Since $\sigma\bigl(D^2V(0_{\rr^d})\bigr)$ is included in $[\mu_1,+\infty)$, this quantity $\deltaHess(c_0)$ must be positive. In addition, for every $w$ in $\widebar{B}_{\rr^d}(0_{\rr^d},\deltaHess(c_0))$, 
\begin{equation}
\label{def_hat_delta}
\sigma\bigl(D^2V(w)\bigr)\subset[\mu_0,+\infty)
\,,
\end{equation}
see \cref{fig:travelling_frame_invasion_points}, and inequality \cref{V_greater_than_one_half_hatmulow_w_square} holds. According to the definition of $\deltaStab(c_0)$ (in \cref{subsec:set_up}), it follows that
\begin{equation}
\label{deltaHess_less_than_deltaStab}
0 < \deltaHess(c_0) \le \deltaStab(c_0)
\,.
\end{equation}
For every nonnegative time $t$, let us consider the set $\SigmaFar{\deltaHess(c_0)}(t)$ (see definition \cref{def_SigmaFar}). It follows from the second inequality of \cref{deltaHess_less_than_deltaStab} that this set contains the set $\SigmaFar{\deltaStab(c_0)}(t)$; it is therefore nonempty, and, for the same reason as for the set $\SigmaFar{\deltaStab(c_0)}(t)$ (namely, according to \cref{prop:decrease_energy_trav_frame}), it is bounded from above; let $\hat{x}(t)$ denote its supremum and, as in \cref{def_bar_xi_c_of_t}, let us write (for a real quantity $c$)
\[
\hat{\xi}_c(t) = \hat{x}(t) - ct
\,,
\]
see \cref{fig:travelling_frame_invasion_points}. According to these definitions, 
\[
\widebar{x}(t)\le\hat{x}(t)<+\infty
\quad\text{and}\quad
\widebar{\xi}_c(t)\le\hat{\xi}_c(t)<+\infty
\,.
\]
This new ``invasion point'' $\hat{x}(t)$ (and $\hat{\xi}_c(t)$ in a travelling frame), defined by the smaller radius $\deltaHess(c_0)$, will be used in the next two \namecrefs{subsec:control_energy_to_the_right_of_invasion_point}. The next lemma shows that this new invasion point and the previous one do not behave much differently. 
\begin{lemma}[the gap between the two invasion points is bounded]
\label{lem:gap_between_invasion_points_is_bounded}
The (nonnegative) quantity $\hat{x}(t)-\widebar{x}(t)$ is bounded, uniformly with respect to $t$ in $[0,+\infty)$. In particular, 
\[
\frac{\hat{x}(t)}{t} \to \widebar{c}
\quad\text{as}\quad
t\to+\infty
\,.
\]
\end{lemma}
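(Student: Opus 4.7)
The plan is to pick any speed $c$ strictly between $c_0$ and $\widebar{c}$, and to play off the lower bound \cref{lower_bound_energy_trav_frame_H1_norm_from_bar_xi} (valid on $[\widebar{\xi}_c(t),+\infty)$ because $\abs{v(\cdot,t)}\le\deltaStab(c_0)$ there, by \cref{abs_v_smaller_than_deltaStab_of_c0_to_the_right_of_invasion_point} and inequality \cref{V_greater_than_mu0_u2_for_abs_u_smaller_than_deltaStab_of_c_0}) against the monotonicity $E_c(t)\le E_c(0)$ from \cref{prop:decrease_energy_trav_frame}. Since $c<\cDecay$ and $u(\cdot,0)$ belongs to $\HoneOfTwo{\cDecay}{e}\cap\HoneulofR$, the initial energy $E_c(0)$ is finite.

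The core step is to extract a quadratic contribution of the integrand $e^{c\xi}v(\xi,t)^2$ localised around $\hat{\xi}_c(t)$. Since $\abs{v(\hat{\xi}_c(t),t)}$ equals $\deltaHess(c_0)$ and the spatial derivative $v_\xi$ is controlled uniformly by $\Ratt$ (inequality \cref{bound_u_ux_setup}), choosing $\rho=\deltaHess(c_0)/(2\Ratt)$ and applying the mean value theorem gives $\abs{v(\xi,t)}\ge\deltaHess(c_0)/2$ for every $\xi$ in $[\hat{\xi}_c(t)-\rho,\hat{\xi}_c(t)]$. Provided the gap $\hat{\xi}_c(t)-\widebar{\xi}_c(t)$ is at least $\rho$, this interval lies inside $[\widebar{\xi}_c(t),+\infty)$, so restricting the integral in \cref{lower_bound_energy_trav_frame_H1_norm_from_bar_xi} to this window yields
\[
E_c(0)\ge E_c(t)\ge -e^{c\widebar{\xi}_c(t)}\frac{\abs{\Vmin}}{c}+\alpha' e^{c\hat{\xi}_c(t)}
\]
for a positive constant $\alpha'$ depending only on $c$, $c_0$, $\deltaHess(c_0)$ and $\Ratt$. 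Dividing by $e^{c\widebar{\xi}_c(t)}$ then gives
\[
e^{c(\hat{\xi}_c(t)-\widebar{\xi}_c(t))}\le\frac{1}{\alpha'}\left(\frac{\abs{\Vmin}}{c}+E_c(0)e^{-c\widebar{\xi}_c(t)}\right).
\]
Because $c<\widebar{c}$, \cref{lem:lower_bound_invasion_point} ensures that $\widebar{\xi}_c(t)\to+\infty$ as $t\to+\infty$, hence is bounded below for large $t$; the right-hand side is therefore bounded, which yields a uniform upper bound on $\hat{x}(t)-\widebar{x}(t)=\hat{\xi}_c(t)-\widebar{\xi}_c(t)$ for all sufficiently large $t$.

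For small times, rerunning the argument of \cref{lem:upper_bound_bounded_time_intervals_invasion_point} with $\deltaHess(c_0)$ in place of $\deltaStab(c_0)$ shows that $\sup_{t\in[0,T]}\hat{x}(t)<+\infty$ for every $T>0$, so the gap is also bounded on each compact time interval. Combining the two regimes produces the desired uniform bound on $[0,+\infty)$, and the limit $\hat{x}(t)/t\to\widebar{c}$ then follows at once from $\widebar{x}(t)/t\to\widebar{c}$ (\cref{prop:invasion_speed}). The delicate point is the construction of the exponentially-weighted quadratic lower bound localised on $[\hat{\xi}_c(t)-\rho,\hat{\xi}_c(t)]$: the width $\rho$ must be independent of $t$, which is what the uniform $C^1$-attractor estimate \cref{bound_u_ux_setup} guarantees; and the window must remain inside the stability region, which is exactly where the assumption $c>c_0$, via inequality \cref{V_greater_than_mu0_u2_for_abs_u_smaller_than_deltaStab_of_c_0}, is used.
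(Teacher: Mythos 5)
Your proof is correct, but it reaches the key estimate by a genuinely different route than the paper. Both arguments start from $E_c(0)\ge E_c(t)$ and from the fact that $V\bigl(v(\xi,t)\bigr)\ge\tfrac12\mu_0 v(\xi,t)^2$ to the right of $\widebar{\xi}_c(t)$, and both must manufacture a term $+\,\text{const}\cdot e^{c\hat{\xi}_c(t)}$ in the lower bound for $E_c(t)$. The paper gets it purely from the \emph{boundary terms} of two Poincaré inequalities --- \cref{Poincare_inequality_any_gamma_bounded_interval} with $\lambda=c_0/2$ on $[\widebar{\xi}_c(t),\hat{\xi}_c(t)]$ and \cref{Poincare_inequality_gamma_equals_c_over_2} on $[\hat{\xi}_c(t),+\infty)$ --- using only the pointwise value $\abs{v(\hat{\xi}_c(t),t)}=\deltaHess(c_0)$; the two contributions at $\hat{\xi}_c(t)$ combine to the net coefficient $\tfrac{c-c_0}{4}\deltaHess(c_0)^2>0$, and the normalization is then done by the $t$-dependent choice $c=\widebar{x}(t)/t$, which makes $\widebar{\xi}_c(t)=0$. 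You instead invoke the packaged bound \cref{lower_bound_energy_trav_frame_H1_norm_from_bar_xi} and convert its bulk $L^2$ term into $e^{c\hat{\xi}_c(t)}$ via a window of $t$-independent width $\rho=\deltaHess(c_0)/(2\Ratt)$ on which $\abs{v}\ge\deltaHess(c_0)/2$, which requires the a priori gradient bound \cref{bound_u_ux_setup}; you then keep $c$ fixed in $(c_0,\widebar{c})$ and control $e^{-c\widebar{\xi}_c(t)}$ by \cref{lem:lower_bound_invasion_point}. The paper's version is the more ``purely variational'' one (no regularity input beyond the energy framework) and yields an explicit constant; yours is more elementary and robust, at the price of importing the $\ccc^1$ attractor estimate. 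Two small loose ends in your write-up: (i) the dichotomy ``either $\hat{\xi}_c(t)-\widebar{\xi}_c(t)<\rho$, in which case there is nothing to prove, or the window $[\hat{\xi}_c(t)-\rho,\hat{\xi}_c(t)]$ sits inside $[\widebar{\xi}_c(t),+\infty)$'' should be stated explicitly --- or avoided altogether by taking the window $[\hat{\xi}_c(t),\hat{\xi}_c(t)+\rho]$ to the \emph{right} of $\hat{\xi}_c(t)$, which is automatically contained in $[\widebar{\xi}_c(t),+\infty)$ and on which the same $\ccc^1$ argument gives $\abs{v}\ge\deltaHess(c_0)/2$; (ii) on compact time intervals you bound $\hat{x}$ from above but never say why $\widebar{x}$ is bounded from below there; as in the paper, this follows from the negativity of $E_{c_0}(t)$ together with inequality \cref{lower_bound_energy_trav_frame_with_bar_xi}, which gives $\widebar{\xi}_{c_0}(t)\ge K$ for all $t\ge0$.
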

\begin{proof}
Take a speed $c$ in $(c_0,\cDecay]$ and let us define the function $v(\xi,t)$ as in \cref{change_of_variable_stand_trav_frame}. For every nonnegative time $t$ (omitting the argument $(\xi,t)$ of $v$ and $v_\xi$),
\[
\begin{aligned}
E_c(t) &= \int_{-\infty}^{\widebar{\xi}_c(t)}e^{c\xi}\left(\frac{1}{2}v_\xi^2+V(v)\right)\, d\xi + \int_{\widebar{\xi}_c(t)}^{+\infty}e^{c\xi}\left(\frac{1}{2}v_\xi^2+V(v)\right)\, d\xi \\
\ge& -\frac{\abs{\Vmin}}{c}e^{c\widebar{\xi}_c(t)} + \int_{\widebar{\xi}_c(t)}^{\hat{\xi}_c(t)}e^{c\xi}\left(\frac{1}{2}v_\xi^2+\frac{1}{2}\mu_0 v^2\right)\, d\xi 
+ \int_{\hat{\xi}_c(t)}^{+\infty}e^{c\xi}\left(\frac{1}{2}v_\xi^2+\frac{1}{2}\mu_0 v^2\right)\, d\xi
\,,
\end{aligned}
\]
so that, applying Poincaré inequality \cref{Poincare_inequality_any_gamma_bounded_interval} with $\lambda$ equals $c_0/2$ on the interval $[\widebar{\xi}_c(t),\hat{\xi}_c(t)]$ and Poincaré inequality \cref{Poincare_inequality_gamma_equals_c_over_2} (that is, \cref{Poincare_inequality_any_gamma} with $\lambda$ equals $c/2$) on the interval $[\hat{\xi}_c(t),+\infty)$, it follows that
\[
\begin{aligned}
E_c(t) &\ge -\frac{\abs{\Vmin}}{c}e^{c\widebar{\xi}_c(t)} + \frac{c_0}{4}\deltaStab(c_0)^2 e^{c\widebar{\xi}_c(t)} - \frac{c_0}{4}\deltaHess(c_0)^2 e^{c\hat{\xi}_c(t)} \\
&\quad + \frac{1}{2}\left(\frac{c_0(2c-c_0)}{4}+\mu_0\right) \int_{\widebar{\xi}_c(t)}^{\hat{\xi}_c(t)}e^{c\xi} v^2\, d\xi + \frac{c}{4}\deltaHess(c_0)^2 e^{c\hat{\xi}_c(t)}
\,.
\end{aligned}
\]
Since $2c-c_0$ is greater than $c_0$ the factor of the remaining integral on the right-hand side of this inequality is nonnegative, so that
\[
E_c(t) \ge -\frac{\abs{\Vmin}}{c}e^{c\widebar{\xi}_c(t)} + \frac{c_0}{4}\deltaStab(c_0)^2 e^{c\widebar{\xi}_c(t)} + \frac{c-c_0}{4}\deltaHess(c_0)^2 e^{c\hat{\xi}_c(t)} 
\]
Let us assume that $t$ is large enough (positive) so that $\widebar{x}(t)/t$ is in $(c_0,\cDecay]$ and let us choose $c$ equal to $\widebar{x}(t)/t$. It follows that
\[
\widebar{\xi}_c(t) = 0
\quad\text{thus}\quad
\hat{\xi}_c(t) = \hat{\xi}_c(t) - \widebar{\xi}_c(t) = \hat{x}(t) - \widebar{x}(t)
\,,
\]
so that the previous inequality reads
\[
E_c(t) \ge -\frac{\abs{\Vmin}}{c} + \frac{c_0}{4}\deltaStab(c_0)^2 + \frac{c-c_0}{4}\deltaHess(c_0)^2 e^{c\bigl(\hat{x}(t) - \widebar{x}(t)\bigr)}
\,.
\]
Equivalently, 
\[
\hat{x}(t) - \widebar{x}(t) \le \frac{1}{c}\ln\Biggl(\frac{4}{(c-c_0)\deltaHess(c_0)^2}\biggl(E_c(t) + \frac{\abs{\Vmin}}{c} - \frac{c_0}{4}\deltaStab(c_0)^2 \biggr)\Biggr)
\,,
\]
and the argument of the logarithm must be positive. Since $E_c(t)$ is less than or equal to $E_c(0)$ and since $\widebar{x}(t)/t$ goes to $\widebar{c}$ as $t$ goes to $+\infty$, it follows that
\[
\limsup_{t\to+\infty} \hat{x}(t) - \widebar{x}(t) \le \frac{1}{\widebar{c}}\ln\Biggl(\frac{4}{(\widebar{c}-c_0)\deltaHess(c_0)^2}\biggl(E_{\widebar{c}}(0) + \frac{\abs{\Vmin}}{\widebar{c}} \biggr)\Biggr) < +\infty
\,.
\]
Thus there exists a positive time $T$ such that the quantity $\hat{x}(t) - \widebar{x}(t)$ is bounded, uniformly with respect to $t$ in $[T,+\infty)$. 

On the other hand, the function $t\mapsto\hat{x}(t)$ is bounded from above on the bounded interval $[0,T]$ (the reason is the same as for $t\mapsto\widebar{x}(t)$, see the proof of \cref{lem:upper_bound_bounded_time_intervals_invasion_point}). Since according to \cref{lem:lower_bound_invasion_point} the function $t\mapsto\widebar{x}(t)$ is bounded from below on $[0,T]$, the same is true for the difference $\hat{x}(t) - \widebar{x}(t)$, and the conclusion follows. 
\end{proof}
\subsection{Delayed control of the energy to the right of the invasion point}
\label{subsec:control_energy_to_the_right_of_invasion_point}
For $c$ in $(0,\cDecay]$, let us consider the function $t\mapsto F_c(t)$, defined on $[0,+\infty)$ as
\[
F_c(t) = \int_{\rr} e^{c\xi}\frac{1}{2} v_\xi(\xi,t)^2\, d\xi
\,,
\]
where the function $v(\xi,t)$ is defined as in \cref{change_of_variable_stand_trav_frame}. According to the second assumption of \cref{assumptions_origin_of_times} and to \cref{prop:decrease_energy_trav_frame}, the function $x\mapsto u(x,t)$ (and thus the function $\xi\mapsto v(\xi,t))$ is in $H^1_c(\rr,\rr^d)$ for every nonnegative time $t$, so that $F_c(t)$ is well defined (and finite). The reason for introducing this function $F_c(\cdot)$ is that it satisfies the ``linear decrease up to pollution'' property \cref{lin_decrease_up_to_pollution_Fc} stated by the next \cref{lem:lin_decrease_up_to_pollution_Fc}, which as a consequence will provide some control over the energy function $E_c(t)$, as stated in \cref{cor:control_on_Ec_induced_by_Fc} below. It would be more straightforward if this ``linear decrease up to pollution'' held directly for $E_c(t)$, as it happens when the invaded equilibrium is stable, see \cite[inequality~(2.14)]{GallayRisler_globStabBistableTW_2007}. Unfortunately this does not seem to hold in the present context where the invaded equilibrium is not stable, see the remark in the proof of \cref{lem:lin_decrease_up_to_pollution_Fc} below. Introducing the function $F_c(t)$ is thus a way to circumvent this difficulty. 

Let us recall that, according to inequality \cref{cLow_smaller_than_c_minus_and_c_plus_smaller_than_cUpp}, the speed $c_0$ is smaller than the invasion speed $\widebar{c}$, and let us consider quantities $c_0'$ and $\mu_0'$ satisfying:
\[
c_0' \in(c_0,\widebar{c})
\quad\text{and}\quad
\mu_0' = -\frac{(c_0')^2}{4}
\,,
\quad\text{so that}\quad
\mu_0' < \mu_0 
\quad\text{and}\quad
\abs{\mu_0'} > \abs{\mu_0}
\,,
\]
see \vref{fig:speeds}. The value of $c_0'$ does not matter much, provided that it is in the interval $(c_0,\widebar{c})$; for instance $c_0'$ can be chosen as the mean of $c_0$ and $\widebar{c}$. The following lemma is the ``parabolic'' analogue of \cite[inequality~(6.7)]{GallayJoly_globStabDampedWaveBistable_2009}, and of \cite[inequality~(4.9) and the inequality at the bottom of p.~276]{Luo_globStabDampedWaveEqu_2013}, and the ``unstable invaded equilibrium'' version of \cite[inequality~(2.14)]{GallayRisler_globStabBistableTW_2007}. 
\begin{lemma}[linear decrease up to pollution for $F_c(t)$]
\label{lem:lin_decrease_up_to_pollution_Fc}
There exist positive quantities $\nu$ and $K_F$ (depending only on $V$ and $c_0$ and $c_0'$) such that, for every $c$ in $[c_0',\cDecay]$ and for every nonnegative time $t$, the following inequality holds:
\begin{equation}
\label{lin_decrease_up_to_pollution_Fc}
F_c'(t) \le -\nu F_c(t) + K_F e^{c\hat{\xi}_c(t)}
\,.
\end{equation}
\end{lemma}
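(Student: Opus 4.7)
The plan is to derive a closed expression for $F_c'(t)$ by testing the equation satisfied by $w := v_\xi$ against $e^{c\xi} v_\xi$, and then to split the resulting Hessian integral at the point $\hat{\xi}_c(t)$, exploiting the pointwise bound $D^2V(v)\ge\mu_0 I_d$ that holds to the right of that point (property \cref{def_hat_delta}), while absorbing the uncontrolled left-hand contribution into the pollution term $K_F e^{c\hat{\xi}_c(t)}$.

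In detail, differentiating the system \cref{parabolic_system_trav_frame} in $\xi$ yields the linear parabolic equation $w_t = w_{\xi\xi} + c w_\xi - D^2V(v)\,w$ for $w := v_\xi$. Testing with $e^{c\xi} w$ and integrating over $\rr$, the drift contribution $c\int e^{c\xi} w\cdot w_\xi\,d\xi$ and the first-order piece produced by the weighted integration by parts of $\int e^{c\xi} w\cdot w_{\xi\xi}\,d\xi$ cancel one another, the boundary terms vanishing thanks to conclusion \cref{item:lem_Poincare_inequality_convergence_at_both_ends} of \cref{lem:Poincare_inequality} applied to $v_\xi$, combined with the uniform $\ccc^2$-bound on $v$ provided by the parabolic smoothing \cref{bound_u_ut_ck_preliminaries}. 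This gives the clean identity
\[
F_c'(t) \;=\; -\int_{\rr} e^{c\xi}\, v_{\xi\xi}(\xi,t)^2\,d\xi \;-\; \int_{\rr} e^{c\xi}\, D^2V\bigl(v(\xi,t)\bigr)\,v_\xi(\xi,t)\cdot v_\xi(\xi,t)\,d\xi.
\]

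For the Hessian integral I then split at $\hat{\xi}_c(t)$. On $[\hat{\xi}_c(t),+\infty)$ the property \cref{def_hat_delta} gives $D^2V(v)\ge\mu_0 I_d$, so this piece is bounded above by $|\mu_0|\int_{\hat{\xi}_c(t)}^{+\infty} e^{c\xi} v_\xi^2\,d\xi\le 2|\mu_0|F_c(t)$. On $(-\infty,\hat{\xi}_c(t)]$, the a priori bound \cref{bound_u_ux_setup} on $|v|$ and $|v_\xi|$ together with the $\ccc^2$-regularity of $V$ furnish a constant $M$, depending only on $V$, such that $|D^2V(v)\,v_\xi\cdot v_\xi|\le M$, whence this piece is bounded by $(M/c)\,e^{c\hat{\xi}_c(t)}\le (M/c_0')\,e^{c\hat{\xi}_c(t)}$. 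Finally, Poincaré inequality \cref{Poincare_inequality_on_R} applied to $v_\xi\in H^1_c(\rr,\rr^d)$ (which belongs to this space for $t>0$ by the regularity recalled in the proof of \cref{prop:decrease_energy_trav_frame}) yields $-\int_{\rr} e^{c\xi}\,v_{\xi\xi}^2\,d\xi\le -\tfrac{c^2}{2}\,F_c(t)$.

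Combining these three bounds and using $|\mu_0|=c_0^2/4$, the coefficient of $F_c(t)$ in the upper bound for $F_c'(t)$ becomes $-\tfrac{1}{2}(c^2-c_0^2)$, which is uniformly at most $-\nu$ for $\nu:=\tfrac{1}{2}\bigl((c_0')^2-c_0^2\bigr)>0$ as soon as $c\ge c_0'$; setting $K_F:=M/c_0'$ delivers the announced inequality. The minor but conceptually essential point — and the reason the $H^1_c$-quantity $F_c$ is preferred to $E_c$ itself — is that the Hessian-generated loss on the right of $\hat{\xi}_c(t)$ is \emph{precisely} at the critical Poincaré threshold $c_0^2/4$: only the strict inequality $c_0'>c_0$, secured a priori by \cref{inequalities_between_speeds_beginning_proof}, leaves enough room for the kinetic dissipation coming from $v_{\xi\xi}$ to strictly dominate and produce a genuine exponential contraction for $F_c$.
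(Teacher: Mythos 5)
Your proof is correct and follows essentially the same route as the paper's: the same identity $F_c'(t) = -\int_{\rr} e^{c\xi}\bigl(v_{\xi\xi}^2 + D^2V(v)\cdot v_\xi\cdot v_\xi\bigr)\,d\xi$, the Poincaré inequality \cref{Poincare_inequality_on_R} applied to $v_\xi$, the split at $\hat{\xi}_c(t)$ using $D^2V(v)\ge\mu_0 I_d$ on the right and the uniform bound \cref{bound_u_ux_setup} on the left, and in fact the very same value $\nu = \tfrac{1}{2}\bigl((c_0')^2-c_0^2\bigr) = 2\bigl(\abs{\mu_0'}-\abs{\mu_0}\bigr)$. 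The only differences are cosmetic (you derive the identity by testing the differentiated equation rather than substituting $v_t$ directly, and you bookkeep the constants by enlarging the right-hand Hessian integral to the whole line instead of arranging an exact cancellation on $[\hat{\xi}_c(t),+\infty)$).
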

\begin{proof}
It follows from the expression of $F_c(t)$ that, for every nonnegative time $t$ (omitting the arguments $(\xi,t)$ of $v$ and its partial derivatives and proceeding as in \cref{subsec:expression_dissipation_travelling_frame}, and according to the assumptions made in \cref{subsec:set_up} on the origin of times), 
\[
\begin{aligned}
F_c'(t) &= \int_{\rr} e^{c\xi} v_\xi\cdot v_{\xi t}\, d\xi \\
&= -\int_{\rr} e^{c\xi} (cv_\xi + v_{\xi\xi}) \cdot v_t \, d\xi \\
&= -\int_{\rr} e^{c\xi} (cv_\xi + v_{\xi\xi}) \cdot \bigl(cv_\xi + v_{\xi\xi} - \nabla V(v)\bigr) \, d\xi \\
&= -\int_{\rr} e^{c\xi} \Bigl((cv_\xi + v_{\xi\xi})\cdot \bigl(cv_\xi - \nabla V(v) \bigr) + (c v_\xi + v_{\xi\xi})\cdot v_{\xi\xi}\Bigr) \, d\xi \\
&= -\int_{\rr} e^{c\xi} \Bigl(v_\xi\cdot\bigl(-cv_{\xi\xi}+D^2V(v)\cdot v_\xi\bigr) + (c v_\xi + v_{\xi\xi})\cdot v_{\xi\xi}\Bigr) \, d\xi \\
&= -\int_{\rr} e^{c\xi} \bigl(v_{\xi\xi}^2 + D^2V(v)\cdot v_\xi\cdot v_\xi \bigr) \, d\xi
\,.
\end{aligned}
\]
According to \cref{prop:decrease_energy_trav_frame}, the function $v(\cdot,t)$ is in $H_c^2(\rr,\rr^d)$, so that Poincaré inequality \cref{Poincare_inequality_on_R} applies to the function $v_\xi(\cdot,t)$, leading to
\[
\begin{aligned}
F_c'(t) &\le -\int_{\rr} e^{c\xi} \left(\frac{c^2}{4}v_{\xi}^2 + D^2V(v)\cdot v_\xi\cdot v_\xi \right) \, d\xi \\
&\le -\int_{\rr} e^{c\xi} \left(\abs{\mu_0'}v_{\xi}^2 + D^2V(v)\cdot v_\xi\cdot v_\xi \right) \, d\xi
\,.
\end{aligned}
\]
\begin{remark}
Observe that this last expression is easier to handle than expression \cref{expression_ddd_c_of_v} of $\ddd_c[\cdot]$ obtained in \cref{subsec:expression_dissipation_travelling_frame}: here the ``bad'' term $D^2V(v)\cdot v_\xi\cdot v_\xi$ (``bad'' since it may be as negative as roughly $\mu_1 v_\xi^2$ when $v$ is small) is, fortunately, not strengthened by a ``$2$'' factor as in the expression of $\ddd_c[\cdot]$. 
\end{remark}
According to the property \cref{def_hat_delta} defining $\deltaHess(c_0)$, it follows from the previous inequality that
\[
F_c'(t) \le - \int_{-\infty}^{\hat{\xi}_c(t)} e^{c\xi} \left(\abs{\mu_0'}v_{\xi}^2 + D^2V(v)\cdot v_\xi\cdot v_\xi \right) \, d\xi - \int_{\hat{\xi}_c(t)}^{+\infty}e^{c\xi} \bigl(\abs{\mu_0'} -\abs{\mu_0}\bigr)v_{\xi}^2 \, d\xi
\,,
\]
so that, for every positive quantity $\nu$, 
\begin{equation}
\label{F_prime_c_plus_nu_Fc_less_than}
\begin{aligned}
F_c'(t) + \nu F_c(t) &\le - \int_{-\infty}^{\hat{\xi}_c(t)}e^{c\xi} \Biggl(\left(\abs{\mu_0'} - \frac{\nu}{2}\right)v_\xi^2 + D^2V(v)\cdot v_\xi\cdot v_\xi\Biggr) \, d\xi  \\
&\quad - \int_{\hat{\xi}_c(t)}^{+\infty}e^{c\xi} \left(\abs{\mu_0'} -\abs{\mu_0} - \frac{\nu}{2}\right)v_{\xi}^2 \, d\xi
\,.
\end{aligned}
\end{equation}
Thus, if the (positive) quantity $\nu$ is chosen as
\[
\nu = 2\bigl(\abs{\mu_0'} -\abs{\mu_0}\bigr)
\,,
\quad\text{so that}\quad
\frac{\nu}{2} = \abs{\mu_0'} -\abs{\mu_0} < \abs{\mu_0'}
\,,
\]
then it follows from inequality \cref{F_prime_c_plus_nu_Fc_less_than} that
that
\[
F_c'(t) + \nu F_c(t) \le - \int_{-\infty}^{\hat{\xi}_c(t)}e^{c\xi} D^2V(v)\cdot v_\xi\cdot v_\xi \, d\xi
\,,
\]
and inequality \cref{lin_decrease_up_to_pollution_Fc} follows from the bound \cref{bound_u_ux_setup} on the solution. 
\end{proof}
\begin{lemma}[framing of $E_c(t)$ with $F_c(t)$]
\label{lem:frame_Ec_with_Fc}
There exist positive quantities $C_1$ and $C_2$ (depending only on $V$ and $c_0$ and $c_0'$) such that, for every $c$ in $[c_0',\cDecay]$ and for every nonnegative time $t$, the following inequality holds:
\begin{equation}
\label{frame_Ec_with_Fc}
\frac{1}{C_1}F_c(t) - C_2 e^{c\hat{\xi}_c(t)} \le E_c(t) \le C_1 F_c(t) + C_2 e^{c\hat{\xi}_c(t)}
\,.
\end{equation}
\end{lemma}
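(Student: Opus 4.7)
The starting observation is the identity
\[
E_c(t) - F_c(t) = \int_{\rr} e^{c\xi} V\bigl(v(\xi,t)\bigr)\, d\xi\,,
\]
so the task reduces to comparing the integral on the right with $F_c(t)$, up to an error of the form $e^{c\hat{\xi}_c(t)}$. The plan is to split this integral at the invasion point $\hat{\xi}_c(t)$ (the one defined in \cref{subsec:invasion_point_defined_by_smaller_radius} through the radius $\deltaHess(c_0)$), and to treat the two halves separately.

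On the left half $(-\infty,\hat{\xi}_c(t)]$, the bound \cref{bound_u_ux_setup} ensures that $\abs{v(\xi,t)}\le\Ratt$, so that $V(v)$ is bounded in absolute value by the constant $M_0=\sup_{\abs{w}\le\Ratt}\abs{V(w)}$ depending only on $V$. A direct integration then gives
\[
\left|\int_{-\infty}^{\hat{\xi}_c(t)} e^{c\xi}V\bigl(v(\xi,t)\bigr)\,d\xi\right|
\le \frac{M_0}{c}\, e^{c\hat{\xi}_c(t)}
\le \frac{M_0}{c_0'}\, e^{c\hat{\xi}_c(t)}
\,,
\]
which contributes exactly to the $C_2 e^{c\hat{\xi}_c(t)}$ term in both the upper and the lower bounds of \cref{frame_Ec_with_Fc}.

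On the right half $[\hat{\xi}_c(t),+\infty)$, one uses the definition of $\deltaHess(c_0)$: since $\abs{v(\xi,t)}\le\deltaHess(c_0)$ there, the spectrum of $D^2V$ along the range of $v$ stays in $[\mu_0,+\infty)$, so Taylor's formula at $0_{\rr^d}$ (recall $V(0_{\rr^d})=0$, $\nabla V(0_{\rr^d})=0$) yields the two-sided estimate
\[
\frac{1}{2}\mu_0\, v(\xi,t)^2 \le V\bigl(v(\xi,t)\bigr) \le \frac{M}{2}\, v(\xi,t)^2
\,,
\]
where $M=\sup_{\abs{w}\le\deltaHess(c_0)}\norm{D^2V(w)}$. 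Applying Poincaré inequality \cref{Poincare_inequality_gamma_equals_c_over_2} from $\hat{\xi}_c(t)$ (noting that $v(\cdot,t)\in H^1_c(\rr,\rr^d)$) yields
\[
\int_{\hat{\xi}_c(t)}^{+\infty} e^{c\xi} v(\xi,t)^2\, d\xi
\le \frac{4}{c^2} \int_{\hat{\xi}_c(t)}^{+\infty} e^{c\xi} v_\xi(\xi,t)^2\, d\xi
\le \frac{8}{c^2}\, F_c(t)
\,,
\]
so that the right half of $\int_{\rr}e^{c\xi}V(v)\,d\xi$ is bounded above by $\frac{4M}{c^2}F_c(t)$ and below by $-\frac{4\abs{\mu_0}}{c^2}F_c(t)=-\frac{c_0^2}{c^2}F_c(t)$.

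Combining the two halves gives immediately the upper bound of \cref{frame_Ec_with_Fc} with $C_1$ depending on $M$, $c_0'$, and the lower bound
\[
E_c(t) \ge \left(1-\frac{c_0^2}{c^2}\right) F_c(t) - \frac{M_0}{c_0'}\, e^{c\hat{\xi}_c(t)}
\ge \left(1-\frac{c_0^2}{(c_0')^2}\right) F_c(t) - \frac{M_0}{c_0'}\, e^{c\hat{\xi}_c(t)}
\,.
\]
The crucial ingredient ensuring that the coefficient of $F_c(t)$ in the lower bound is a strictly positive constant uniform in $c\in[c_0',\cDecay]$ is precisely the strict inequality $c_0'>c_0$, which is the very reason $c_0'$ was introduced at the beginning of \cref{subsec:control_energy_to_the_right_of_invasion_point}. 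This is the only delicate point in the proof; taking $C_1$ equal to the maximum of the two constants thus obtained and $C_2=M_0/c_0'$ provides the desired inequalities.
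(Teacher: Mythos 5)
Your proof is correct and follows essentially the same route as the paper's: split the potential integral at the invasion point $\hat{\xi}_c(t)$, bound $V$ by a constant on the left half (using \cref{bound_u_ux_setup}) and by the two-sided quadratic estimate coming from the definition of $\deltaHess(c_0)$ on the right half, then convert $\int e^{c\xi}v^2$ into $F_c(t)$ via Poincaré inequality \cref{Poincare_inequality_gamma_equals_c_over_2}, the strict inequality $c_0'>c_0$ guaranteeing a positive coefficient in the lower bound. Even the constants match: your lower-bound factor $1-c_0^2/(c_0')^2$ is exactly the paper's $\varepsilon=1-\abs{\mu_0}/\abs{\mu_0'}$.
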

\begin{proof}
Let $\varepsilon$ denote quantity in $(0,1]$. For every nonnegative time $t$ (omitting the argument $(\xi,t)$ of $v$ and $v_\xi$ in the integrands and using the Poincaré inequality \cref{Poincare_inequality_gamma_equals_c_over_2}),
\[
\begin{aligned}
E_c(t) - \varepsilon F_c(t) &= \int_{-\infty}^{\hat{\xi}_c(t)}\left(\frac{1}{2}(1-\varepsilon)v_\xi^2 + V(v)\right)\, d\xi + \int_{\hat{\xi}_c(t)}^{+\infty}\left(\frac{1}{2}(1-\varepsilon)v_\xi^2 + V(v)\right)\, d\xi \\
&\ge - \frac{\abs{\Vmin}}{c} e^{c\hat{\xi}_c(t)} + \frac{1}{2}\int_{\hat{\xi}_c(t)}^{+\infty}\bigl((1-\varepsilon)v_\xi^2 -\abs{\mu_0} v^2 \bigr)\, d\xi \\
&\ge - \frac{\abs{\Vmin}}{c} e^{c\hat{\xi}_c(t)} + \frac{1}{2}\bigl((1-\varepsilon)\abs{\mu_0'}-\abs{\mu_0}\bigr)\int_{\hat{\xi}_c(t)}^{+\infty}v^2 \, d\xi
\,,
\end{aligned}
\]
so that if $\varepsilon$ is chosen as
\[
\varepsilon = 1 - \frac{\abs{\mu_0}}{\abs{\mu_0'}}
\,,
\]
then the factor of the integral of this last expression vanishes. On the other hand, introducing the quantities $\mu_{0,\max}$ and $\Vmax$ defined as
\[
\mu_{0,\max} = \max_{w\in\widebar{B}_{\rr^d}(0_{\rr^d},\deltaHess(c_0))}\max\sigma\bigl(D^2V(w)\bigr)
\quad\text{and}\quad
\Vmax = \max_{w\in\rr^d,\, \abs{w}\le \Ratt} V(w)
\,,
\]
it follows from \cref{bound_u_ux_setup} that
\[
\begin{aligned}
E_c(t) &\le \frac{\Vmax}{c}  e^{c\hat{\xi}_c(t)} + \int_{-\infty}^{\hat{\xi}_c(t)}\frac{1}{2}v_\xi^2\, d\xi + \int_{\hat{\xi}_c(t)}^{+\infty}\left(\frac{1}{2}v_\xi^2 + \frac{1}{2}\mu_{0,\max} v^2 \right)\, d\xi \\
&\le \frac{\Vmax}{c}  e^{c\hat{\xi}_c(t)} + \int_{-\infty}^{\hat{\xi}_c(t)}\frac{1}{2}v_\xi^2\, d\xi + \frac{1}{2}\left(1 + \max(\mu_{0,\max},0)\frac{4}{c^2}\right)\int_{\hat{\xi}_c(t)}^{+\infty}v_\xi^2\, d\xi \\
&\le \frac{\Vmax}{c}  e^{c\hat{\xi}_c(t)} + \frac{1}{2} \left( 1+ \frac{\max(\mu_{0,\max},0)}{\abs{\mu_0'}}\right)F_c(t)
\,.
\end{aligned}
\]
Finally, if $C_1$ and $C_2$ are chosen as
\[
\begin{aligned}
C_1 &= \max\Biggl(\frac{\abs{\mu_0'}}{\abs{\mu_0'}-\abs{\mu_0}},\frac{1}{2} \left( 1+ \frac{\max(\mu_{0,\max},0)}{\abs{\mu_0'}}\right)\Biggr)\,, \\
\text{and}\quad
C_2 &= \max\left(\frac{\abs{\Vmin}}{c_0'},\frac{\max(\Vmax,0)}{c_0'}\right)
\end{aligned}
\]
(these quantities depend only on $V$ and $c_0$ and $c_0'$), then inequality \cref{frame_Ec_with_Fc} holds. 
\end{proof}
The following corollary of \cref{lem:lin_decrease_up_to_pollution_Fc,lem:frame_Ec_with_Fc} is the analogue of \cite[Lemma~6.4]{GallayJoly_globStabDampedWaveBistable_2009} and \cite[Lemma~4.2]{Luo_globStabDampedWaveEqu_2013}.
\begin{corollary}[delayed control of the energy to the right of the invasion point]
\label{cor:control_on_Ec_induced_by_Fc}
There exist positive quantities $K$ and $K'$ (depending only on $V$ and $c_0$ and $c_0'$) such that, for every $c$ in $[c_0',\cDecay]$ and for all nonnegative times $t$ and $T$, the following inequality holds:
\begin{equation}
\label{control_on_Ec_induced_by_Fc}
E_c(t+T) \le K e^{-\nu T}E_c(t) + K' e^{c\hat{\xi}_{c,\sup}(t,T)}\,,
\quad\text{where}\quad
\hat{\xi}_{c,\sup}(t,T) = \sup_{t\le s\le t+T}\hat{\xi}_c(s)
\,.
\end{equation}
\end{corollary}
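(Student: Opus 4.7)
The plan is to combine the two preceding lemmas in the natural way: first push the differential inequality on $F_c$ forward in time by Gronwall, then convert back and forth between $E_c$ and $F_c$ via the framing inequality.

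Concretely, I would start by fixing $c$ in $[c_0',\cDecay]$ and $t\ge 0$, and introducing the auxiliary function $s\mapsto g(s)=F_c(t+s)$ on $[0,T]$. \Cref{lem:lin_decrease_up_to_pollution_Fc} gives $g'(s)\le -\nu g(s)+K_F e^{c\hat{\xi}_c(t+s)}$, so multiplying by the integrating factor $e^{\nu s}$ yields $\bigl(e^{\nu s}g(s)\bigr)'\le K_F e^{\nu s}e^{c\hat{\xi}_c(t+s)}$. Integrating on $[0,T]$ and bounding $e^{c\hat{\xi}_c(t+s)}\le e^{c\hat{\xi}_{c,\sup}(t,T)}$ on the right-hand side leads to
\[
F_c(t+T)\le e^{-\nu T}F_c(t)+\frac{K_F}{\nu}\bigl(1-e^{-\nu T}\bigr)e^{c\hat{\xi}_{c,\sup}(t,T)}\le e^{-\nu T}F_c(t)+\frac{K_F}{\nu}e^{c\hat{\xi}_{c,\sup}(t,T)}.
\]

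Then I would feed this into the two-sided bound of \cref{lem:frame_Ec_with_Fc}. The upper bound applied at time $t+T$ gives $E_c(t+T)\le C_1 F_c(t+T)+C_2 e^{c\hat{\xi}_c(t+T)}$, while the lower bound at time $t$ rearranges to $F_c(t)\le C_1 E_c(t)+C_1 C_2 e^{c\hat{\xi}_c(t)}$. Substituting both into the displayed estimate above, and using $e^{c\hat{\xi}_c(t)}\le e^{c\hat{\xi}_{c,\sup}(t,T)}$ and $e^{c\hat{\xi}_c(t+T)}\le e^{c\hat{\xi}_{c,\sup}(t,T)}$, I obtain
\[
E_c(t+T)\le C_1^2 e^{-\nu T} E_c(t)+\Bigl(C_1^2 C_2 e^{-\nu T}+C_1\frac{K_F}{\nu}+C_2\Bigr)e^{c\hat{\xi}_{c,\sup}(t,T)},
\]
which is the desired \cref{control_on_Ec_induced_by_Fc} with $K=C_1^2$ and $K'=C_1^2 C_2+C_1 K_F/\nu+C_2$, quantities depending only on $V$, $c_0$ and $c_0'$ since $C_1$, $C_2$, $K_F$ and $\nu$ do.

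There is no real obstacle: once \cref{lem:lin_decrease_up_to_pollution_Fc,lem:frame_Ec_with_Fc} are in hand, this is a routine Gronwall argument. The only point requiring a bit of care is the regularity of $F_c$ needed to justify the integration of the differential inequality (absolute continuity of $t\mapsto F_c(t)$), which follows from the regularity already established in the space \cref{space_solution} of \cref{prop:decrease_energy_trav_frame}; alternatively, one can argue directly from the integrated form used in the proof of \cref{prop:decrease_energy_trav_frame}.
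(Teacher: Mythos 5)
Your proposal is correct and follows essentially the same route as the paper: integrate the differential inequality of \cref{lem:lin_decrease_up_to_pollution_Fc} (Gronwall) to get $F_c(t+T)\le e^{-\nu T}F_c(t)+\tfrac{K_F}{\nu}e^{c\hat{\xi}_{c,\sup}(t,T)}$, then sandwich with \cref{lem:frame_Ec_with_Fc} to pass from $F_c$ back to $E_c$, arriving at the same constants $K=C_1^2$ and $K'=C_1^2C_2+C_1K_F/\nu+C_2$. The only difference is that you spell out the integrating-factor step and the regularity needed to justify it, which the paper leaves implicit.
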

\begin{proof}
For all nonnegative times $t$ and $T$, with the notation $\hat{\xi}_{c,\sup}(t,T)$ of \cref{control_on_Ec_induced_by_Fc}, it follows from \cref{lem:lin_decrease_up_to_pollution_Fc} that
\[
F_c(t+T) \le e^{-\nu T}F_c(t) + \frac{K_F}{\nu} e^{c\hat{\xi}_{c,\sup}(t,T)}
\,,
\]
so that, according to inequality \cref{frame_Ec_with_Fc} of \cref{lem:frame_Ec_with_Fc},
\[
\begin{aligned}
E_c(t+T) &\le C_1 F_c(t+T) + C_2 e^{c\hat{\xi}_{c,\sup}(t,T)} \\
&\le C_1 e^{-\nu T}F_c(t) + \left(\frac{C_1 K_F}{\nu}+ C_2\right)e^{c\hat{\xi}_{c,\sup}(t,T)} \\
&\le C_1^2 e^{-\nu T}E_c(t) + \left(C_1^2C_2 + \frac{C_1 K_F}{\nu} + C_2\right)e^{c\hat{\xi}_{c,\sup}(t,T)}
\,,
\end{aligned}
\]
and so that, choosing the quantities $K$ and $K'$ as
\[
K = C_1^2
\quad\text{and}\quad
K' = C_1^2C_2 + \frac{C_1 K_F}{\nu} + C_2
\]
(these quantities depend only on $V$ and $c_0$ and $c_0'$), inequality \cref{control_on_Ec_induced_by_Fc} follows. 
\end{proof}
\subsection{Control on the left drift of the invasion point}
\label{subsec:control_moves_to_the_left_invasion_point}
The following notation is the ``parabolic'' analogue of the one introduced in \cite[(4.7)]{GallayJoly_globStabDampedWaveBistable_2009} and \cite[(1.14)]{Luo_globStabDampedWaveEqu_2013}.
\begin{notation}
For every speed $c$ in $(0,\cDecay]$, every nonnegative time $t$ and every real quantity $\xi$, if $v(\cdot,\cdot)$ denotes the function introduced in \cref{change_of_variable_stand_trav_frame}, let us consider the quantities
\[
\begin{aligned}
E_c(\xi,t) &= \eee_c[v(\xi+\cdot,t)] = \int_{\rr}e^{c\zeta}\left(\frac{1}{2}v_\xi(\xi+\zeta,t)^2 + V\bigl(v(\xi+\zeta,t)\bigr)\right)\, d\zeta \\
\text{and}\quad
D_c(\xi,t) &= \int_{\rr}e^{c\zeta} v_t(\xi+\zeta,t)^2\, d\zeta
\,.
\end{aligned}
\] 
\end{notation}
These quantities differ from the energy $E_c(t)$ and the dissipation $D_c(t)$ defined in \cref{def_Dc_of_t,def_Ec_of_t} only by the fact that their exponential weight is normalized so that it takes the value $1$ at the value $\xi$ (rather than $0$) of the travelling abscissa, and they are related to these initial energy and dissipation by:
\begin{align}
\label{def_Ec_of_xi_and_t}
E_c(\xi,t) &= e^{-c\xi} E_c(0,t) = e^{-c\xi} E_c(t) \\
\text{and}\quad
D_c(\xi,t) &= e^{-c\xi} D_c(0,t) = e^{-c\xi} D_c(t)
\,,
\label{def_Dc_of_xi_and_t}
\end{align}
so that, according to \cref{prop:decrease_energy_trav_frame}, 
\[
\partial_t E_c(\xi,t) = - D_c(\xi,t)
\,.
\]
In addition, the two-variable function $(\xi,t)\mapsto E_c(\xi,t)$ is:
\begin{itemize}
\item non-increasing with respect to the time variable $t$,
\item and exponentially decreasing, in size, with respect to the space variable $\xi$,
\end{itemize}
see \cref{fig:behaviour_Ec_of_xi_t}.
\begin{figure}[!htbp]
\centering
\includegraphics[width=.5\textwidth]{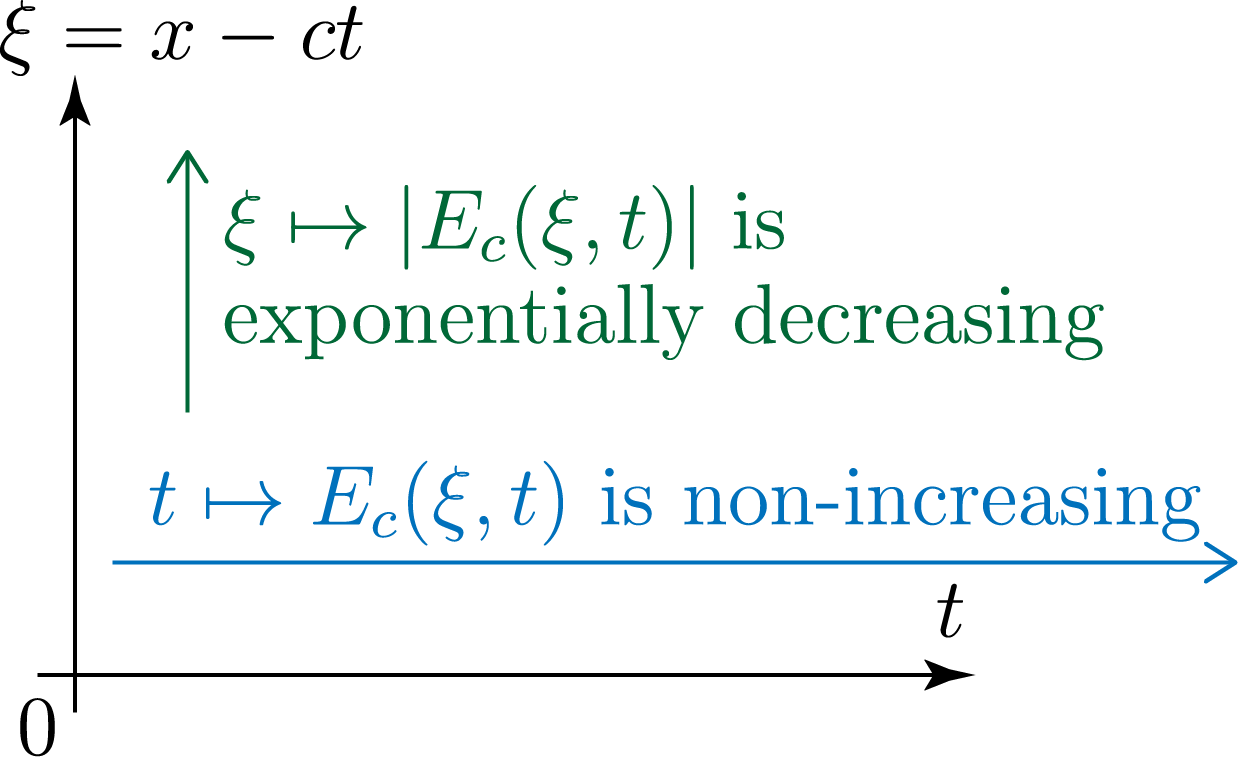}
\caption{Monotonicity of the function $(\xi,t)\mapsto E_c(\xi,t)$ with respect to its arguments.}
\label{fig:behaviour_Ec_of_xi_t}
\end{figure}
The following lemma is the analogue of \cite[Lemma~6.2]{GallayJoly_globStabDampedWaveBistable_2009}.
\begin{lemma}[control on the left drift of the invasion point in a frame travelling at a speed which is less than $\widebar{c}$] 
\label{lem:almost_non_decrease_invasion_point}
For every speed $c$ in $(-\infty,\widebar{c})$,
\begin{equation}
\label{inf_hat_xi_c_of_tprime_minus_hat_xi_c_of_t_finite}
\inf_{0\le t\le t'}\hat{\xi}_c(t') - \hat{\xi}_c(t) > -\infty
\,.
\end{equation}
\end{lemma}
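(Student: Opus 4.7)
The plan is to argue by contradiction, using the asymptotic invasion $\hat{x}(t)/t \to \widebar{c}$ (\cref{lem:gap_between_invasion_points_is_bounded}) together with the asymptotic compactness provided by \cref{lem:compactness}.

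As a preliminary, I would observe that since $c < \widebar{c}$ and $\hat{x}(t)/t \to \widebar{c}$, the quantity $\hat{\xi}_c(t) = \hat{x}(t) - ct$ tends to $+\infty$ as $t \to +\infty$; and the argument used in the proof of \cref{lem:upper_bound_bounded_time_intervals_invasion_point}, applied to $\hat{x}$ instead of $\widebar{x}$, combined with the lower semi-continuity of $\hat{x}$, shows that $\hat{x}$ (and therefore $\hat{\xi}_c$) is bounded on every bounded time interval. Together these two facts imply that $\hat{\xi}_c$ is bounded from below on all of $[0, +\infty)$.

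Supposing for contradiction that there exist sequences $(t_n)_{n\in\nn}$ and $(t'_n)_{n\in\nn}$ with $0 \le t_n \le t'_n$ and $K_n := \hat{\xi}_c(t_n) - \hat{\xi}_c(t'_n) \to +\infty$, the preceding lower bound forces $\hat{\xi}_c(t_n) \to +\infty$ and hence $t_n \to +\infty$. Writing $s_n := t'_n - t_n$, the identity
\[
\hat{x}(t'_n) - \hat{x}(t_n) = c s_n - K_n
\]
combined with $\hat{x}(t)/t \to \widebar{c}$ yields the constraint $K_n + (\widebar{c} - c) s_n = o(t'_n)$; since $K_n \to +\infty$ and $(\widebar{c} - c) s_n \ge 0$, this in turn forces $s_n = o(t'_n)$, $K_n = o(t'_n)$, and $t_n/t'_n \to 1$ (any subsequential limit $t'_n/t_n \to r > 1$ would force $K_n/t_n$ to converge to a non-positive limit, a contradiction).

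To conclude, I would apply \cref{lem:compactness} to the shifted sequence $u_n(x, t) := u(\hat{x}(t_n) + x, t_n + t)$, extracting a subsequence converging in $\ccc^{2,1}_{\mathrm{loc}}$ to an entire solution $u_\infty$ of \cref{parabolic_system} satisfying $|u_\infty(0, 0)| = \deltaHess(c_0)$ and $|u_\infty(y, 0)| \le \deltaHess(c_0)$ for every $y \ge 0$. Distinguishing whether $(s_n)$ is bounded or $s_n \to +\infty$, and passing to the limit along a further subsequence $s_n \to s_* \in [0, +\infty)$ in the bounded case, the divergence $\hat{x}(t'_n) - \hat{x}(t_n) = c s_n - K_n \to -\infty$ combined with locally uniform convergence would force $|u_\infty(y, s_*)| \le \deltaHess(c_0)$ for every $y \in \rr$. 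Combining this ``falling into the stable region'' with the $\mu_0$-convexity of $V$ on $\widebar{B}_{\rr^d}(0_{\rr^d}, \deltaHess(c_0))$ from \cref{def_hat_delta}, and with the uniform bound on $u_t$ from \cref{bound_u_ut_ck_preliminaries} controlling $|u_\infty(0, s_*)|$ in terms of the attained value $|u_\infty(0, 0)| = \deltaHess(c_0)$, would then yield the contradiction.

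The hard part will be this final limiting step: rendering precise the structural contradiction between the pointwise attainment $|u_\infty(0, 0)| = \deltaHess(c_0)$ and the global bound $|u_\infty(\cdot, s_*)| \le \deltaHess(c_0)$ on the entire solution, especially in the unbounded regime $s_n \to +\infty$ where a suitable rescaling exploiting $s_n = o(t'_n)$ is needed. This parallels the strategy introduced by Gallay and Joly in the proof of the analogous \cite[Lemma~6.2]{GallayJoly_globStabDampedWaveBistable_2009} for damped wave equations, whose adaptation to the present parabolic setting is the core of the argument.
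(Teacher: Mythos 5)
There is a genuine gap, and it sits exactly where you flag ``the hard part'': the final contradiction. The facts you propose to collide --- $\abs{u_\infty(0,0)} = \deltaHess(c_0)$, $\abs{u_\infty(\cdot,s_*)}\le\deltaHess(c_0)$ on all of $\rr$, the spectral bound $\sigma\bigl(D^2V\bigr)\subset[\mu_0,+\infty)$ on the ball, and the uniform bound on $u_t$ --- are not mutually contradictory: nothing prevents an entire solution from touching the sphere of radius $\deltaHess(c_0)$ at one space-time point and lying inside the closed ball at a later time (a solution relaxing towards $e$ would do exactly that), and the $\mu_0$-convexity of $V$ inside the ball gives no obstruction. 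No purely pointwise/structural argument of this kind closes the proof, in either the bounded or unbounded regime for $s_n$; your growth-rate bookkeeping ($s_n=o(t_n')$, $K_n=o(t_n')$) is correct but does not feed into any contradiction.

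The paper's proof gets its contradiction from the energy, not from the geometry of $u_\infty$. First it reduces to $c$ in $[c_0,\widebar{c})$ via the monotonicity of $c\mapsto\hat{\xi}_c(t')-\hat{\xi}_c(t)$ (the energy machinery needs $c\ge c_0$). Then it considers the energy $E_c\bigl(\hat{\xi}_c(t_n),\cdot\bigr)$ normalized at the \emph{first} invasion point: since $\hat{\xi}_c(t_n)\to+\infty$, this energy is $e^{-c\hat{\xi}_c(t_n)}E_c(0)\to 0$ at time $0$, hence has $\limsup\le 0$ at time $t_n$ by monotonicity; and since $\hat{\xi}_c(t_n')-\hat{\xi}_c(t_n)\to-\infty$, the lower bound \cref{lower_bound_Ec} gives $\liminf\ge 0$ at time $t_n'$. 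The squeeze forces $\int_{t_n}^{t_n'}D_c\bigl(\hat{\xi}_c(t_n),t\bigr)\,dt\to 0$, so $\partial_t u_\infty + c\,\partial_x u_\infty\equiv 0$; repeating this for a continuum of speeds $c'$ in $(c,\widebar{c})$ forces $\partial_t u_\infty\equiv\partial_x u_\infty\equiv 0$, i.e.\ $u_\infty$ is a constant of norm $\deltaHess(c_0)$. The final contradiction is then the one from \cref{prop:invasion_speed}: the lower bound \cref{lower_bound_Ec} also yields a uniform bound on $\int_0^{+\infty}e^{cy}\bigl(u_x^2+u^2\bigr)\bigl(\hat{x}(t_n)+y,t_n\bigr)\,dy$, which by Fatou forces $u_\infty(\cdot,0)$ to lie in a weighted $L^2$ space and hence to decay at $+\infty$ --- impossible for a nonzero constant. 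These three ingredients (the two-sided energy squeeze at the shifted reference point, the variation over speeds, and the weighted $H^1$ bound) are the substance of the proof and are absent from your proposal.
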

\begin{proof}
If inequality \cref{inf_hat_xi_c_of_tprime_minus_hat_xi_c_of_t_finite} holds for some speed $c$, then it also holds for every speed $\cLess$ in $(-\infty,c]$; indeed, for such speeds $c$ and $\cLess$ and for all times $t$ and $t'$ satisfying $0\le t\le t'$, 
\begin{equation}
\label{hat_xi_cLess_of_tPrime_minus_hat_xi_cLess_of_t}
\begin{aligned}
\hat{\xi}_{\cLess}(t') - \hat{\xi}_{\cLess}(t) &= \hat{x}(t') - \hat{x}(t) - \cLess(t'-t) \\
&= \hat{\xi}_c(t') - \hat{\xi}_c(t) + (c-\cLess)(t'-t) \\
&\ge \hat{\xi}_c(t') - \hat{\xi}_c(t)
\,.
\end{aligned}
\end{equation}
Thus, it is sufficient to prove inequality \cref{inf_hat_xi_c_of_tprime_minus_hat_xi_c_of_t_finite} for $c$ almost equal to $\widebar{c}$ (unsurprisingly, it is for such speeds that this conclusion will called upon later), and in particular for $c$ greater than or equal to $c_0$. Let us proceed by contradiction and assume that there exists some speed $c$ in $[c_0,\widebar{c})$ and sequences of times $(t_n)_{n\in\nn}$ and $(t_n')_{n\in\nn}$ such that
\begin{align}
\label{tn_less_than_or_equal_to_tprimen}
& 0\le t_n\le t_n'\text{ for every nonnegative integer $n$}\\
\text{and}\quad
& \hat{\xi}_c(t_n')-\hat{\xi}_c(t_n)\to -\infty
\quad\text{as}\quad
n\to+\infty
\,,
\label{hat_xi_c_tn_minus_tn_prime_goes_to_infty}
\end{align}
see \cref{fig:proof_lem_almost_non_decrease_invasion_point}.
\begin{figure}[!htbp]
\centering
\includegraphics[width=\textwidth]{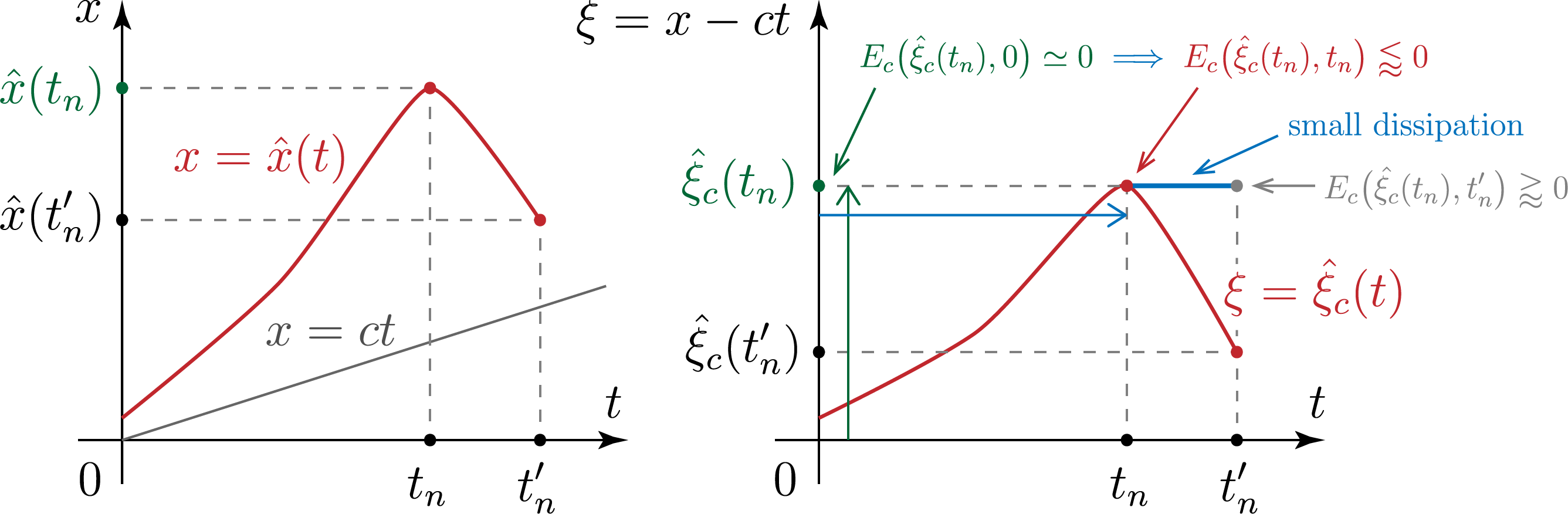}
\caption{Illustration of the proof of \cref{lem:almost_non_decrease_invasion_point}.}
\label{fig:proof_lem_almost_non_decrease_invasion_point}
\end{figure}
According to \cref{lem:gap_between_invasion_points_is_bounded}, the limit \cref{hat_xi_c_tn_minus_tn_prime_goes_to_infty} still holds if $\hat{\xi}_c(\cdot)$ is replaced with $\widebar{\xi}_c(\cdot)$, namely:
\begin{equation}
\label{bar_xi_c_tn_minus_tn_prime_goes_to_infty}
\widebar{\xi}_c(t_n)-\widebar{\xi}_c(t_n')\to +\infty
\quad\text{as}\quad
n\to+\infty
\,,
\end{equation}
Now it follows from the inequality \cref{tn_less_than_or_equal_to_tprimen}, from the limit \cref{bar_xi_c_tn_minus_tn_prime_goes_to_infty}, and from \cref{lem:lower_bound_invasion_point,lem:upper_bound_bounded_time_intervals_invasion_point} that $t_n'$ must go to $+\infty$ as $n$ goes to $+\infty$. On the other hand, since $c$ is less then $\widebar{c}$, $\widebar{\xi}_c(t)$ goes to $+\infty$ as $t$ goes to $+\infty$, and it follows that $\widebar{\xi}_c(t_n')$ goes to $+\infty$ as $n$ goes to $+\infty$ (and the same holds for $\hat{\xi}_c(t_n')$). Thus $\widebar{\xi}_c(t_n)$ also goes to $+\infty$ as $n$ goes to $+\infty$ (and the same holds for $\hat{\xi}_c(t_n)$), and again due to \cref{lem:upper_bound_bounded_time_intervals_invasion_point}, it follows that $t_n$ goes to $+\infty$ as $n$ goes to $+\infty$. In short, the three quantities
\[
t_n
\quad\text{and}\quad
\hat{\xi}_c(t_n)
\quad\text{and}\quad
\hat{\xi}_c(t_n)-\hat{\xi}_c(t_n')
\]
go to $+\infty$ as $n$ goes to $+\infty$, see \cref{fig:proof_lem_almost_non_decrease_invasion_point}. By compactness (\cref{lem:compactness}), up to replacing the sequence $(t_n)_{n\in\nn}$ by a subsequence, there exists an entire solution $u_\infty$ of system \cref{parabolic_system} such that, with the notation of \cref{subsec:compactness}, 
\begin{equation}
\label{compactness_proof_almost_non_decrease_invasion_point}
D^{2,1}u\bigl(\hat{x}(t_n)+\cdot,t_n+\cdot\bigr)\to D^{2,1}u_\infty
\quad\text{as}\quad
n\to+\infty
\,,
\end{equation}
uniformly on every compact subset of $\rr^2$. Since 
\[
E_c\bigl(\hat{\xi}_c(t_n),0\bigr) = \exp\bigl(-\hat{\xi}_c(t_n)\bigr) E_c(0,0)
\,,
\]
it follows that
\[
E_c\bigl(\hat{\xi}_c(t_n),0\bigr)\to 0
\quad\text{as}\quad
n\to+\infty
\,,
\]
see \cref{fig:proof_lem_almost_non_decrease_invasion_point}; and since, according to \cref{prop:decrease_energy_trav_frame}, the function $t\mapsto E_c(\hat{\xi}_c(t_n),t)$ is non-increasing, it follows that 
\[
\limsup_{n\to+\infty} E_c\bigl(\hat{\xi}_c(t_n),t_n\bigr) \le 0
\,,
\]
see \cref{fig:proof_lem_almost_non_decrease_invasion_point}. On the other hand, it follows from inequality \cref{lower_bound_Ec} (which holds since $c$ was assumed to be in $[c_0,\widebar{c})$) that
\[
\begin{aligned}
E_c\bigl(\hat{\xi}_c(t_n),t_n'\bigr) &= e^{-c\hat{\xi}_c(t_n)}E_c\bigl(0,t_n'\bigr) \\
&\ge -e^{c\bigl(\widebar{\xi}_c(t'_n)-\hat{\xi}_c(t_n)\bigr)}\frac{\abs{\Vmin}}{c} \\
&\ge -e^{c\bigl(\hat{\xi}_c(t'_n)-\hat{\xi}_c(t_n)\bigr)}\frac{\abs{\Vmin}}{c}
\,,
\end{aligned}
\]
so that, according to the limit \cref{hat_xi_c_tn_minus_tn_prime_goes_to_infty},
\[
\liminf_{n\to+\infty} E_c\bigl(\hat{\xi}_c(t_n),t_n'\bigr) \ge 0
\,,
\]
see \cref{fig:proof_lem_almost_non_decrease_invasion_point}. It follows that the (nonnegative) quantity
\[
E_c\bigl(\hat{\xi}_c(t_n),t_n\bigr) - E_c\bigl(\hat{\xi}_c(t_n),t_n'\bigr)
\,,
\ \text{which, according to \cref{decrease_energy_trav_frame}, equals}\ 
\int_{t_n}^{t_n'} D_c\bigl(\hat{\xi}_c(t_n),t\bigr)\, dt
\,,
\]
must go to $0$ as $n$ goes to $+\infty$, see \cref{fig:proof_lem_almost_non_decrease_invasion_point}. Proceeding as in the proof of \cref{prop:invasion_speed}, it follows that the function $\partial_t u_\infty+c\partial_x u_\infty$ must be identically equal to $0$ on $\rr\times[0,+\infty)$. 

The key observation is that, for every speed $c'$ in $(c,\widebar{c})$, the following limits ``still'' hold:
\[
\hat{\xi}_{c'}(t_n)\to +\infty
\quad\text{and}\quad
\hat{\xi}_{c'}(t_n)-\hat{\xi}_{c'}(t_n')\to +\infty
\quad\text{as}\quad
n\to+\infty
\,,
\]
so that, repeating the same argument, the function $\partial_t u_\infty+c'\partial_x u_\infty$ must (still) be identically equal to $0$ on $\rr\times[0,+\infty)$. It thus follows that both functions $\partial_t u_\infty$ and $\partial_x u_\infty$ must actually vanish on $\rr\times[0,+\infty)$, and the same contradiction as in the proof of \cref{prop:invasion_speed} follows. \Cref{lem:almost_non_decrease_invasion_point} is proved. 
\end{proof}
The following corollary is the analogue of \cite[Corollary~6.3]{GallayJoly_globStabDampedWaveBistable_2009}. 
\begin{corollary}[control on the left drift of the invasion point in a frame travelling at a speed which is slightly greater than $\widebar{c}$] 
\label{cor:control_moves_to_left_invasion_point_trav_frame}
For every positive quantity $\gamma$, there exists a positive quantity $\hatXiLeft(\gamma)$ such that, for every speed $c$ in $(-\infty,\widebar{c}+\gamma]$ and for all times $t$ and $t'$ satisfying $0\le t\le t'$, 
\begin{equation}
\label{control_moves_to_left_invasion_point_trav_frame}
\hat{\xi}_c(t')\ge \hat{\xi}_c(t) - 2\gamma(t'-t) - \hatXiLeft(\gamma)
\,.
\end{equation}
\end{corollary}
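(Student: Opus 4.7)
The plan is to reduce the statement directly to \cref{lem:almost_non_decrease_invasion_point} by exploiting the affine dependence of $\hat\xi_c(\cdot)$ on the speed parameter $c$, namely the identity already used in \cref{hat_xi_cLess_of_tPrime_minus_hat_xi_cLess_of_t}:
\[
\hat\xi_{c_1}(t') - \hat\xi_{c_1}(t) = \hat\xi_{c_2}(t') - \hat\xi_{c_2}(t) - (c_1 - c_2)(t'-t),
\]
valid for any real speeds $c_1,c_2$ and any times $t,t'$. The point is that \cref{lem:almost_non_decrease_invasion_point} gives a uniform-in-time lower bound on the difference $\hat\xi_c(t')-\hat\xi_c(t)$ for each fixed speed $c < \widebar{c}$, but does so only for such fixed speeds; to reach speeds as large as $\widebar{c}+\gamma$, one simply pays a linear correction term $(c-c_\ast)(t'-t)$ coming from the identity above.

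More concretely, given $\gamma > 0$, set $c_\ast = \widebar{c} - \gamma$, which lies in $(-\infty,\widebar{c})$. Applying \cref{lem:almost_non_decrease_invasion_point} at this speed yields a nonnegative constant, which we call $\hatXiLeft(\gamma)$, such that
\[
\hat\xi_{c_\ast}(t') - \hat\xi_{c_\ast}(t) \ge -\hatXiLeft(\gamma),
\quad\text{for all }0\le t\le t'.
\]
Now take any $c\le \widebar{c}+\gamma$; then $c - c_\ast \le 2\gamma$, so that $-(c-c_\ast)(t'-t) \ge -2\gamma(t'-t)$ for $t'\ge t$ (note that if $c-c_\ast$ is negative the inequality only gets easier). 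Substituting into the affine identity gives
\[
\hat\xi_c(t') - \hat\xi_c(t) = \hat\xi_{c_\ast}(t') - \hat\xi_{c_\ast}(t) - (c - c_\ast)(t'-t) \ge -\hatXiLeft(\gamma) - 2\gamma(t'-t),
\]
which is exactly the inequality \cref{control_moves_to_left_invasion_point_trav_frame}. No step is truly an obstacle here: the corollary is essentially a change-of-frame reformulation of \cref{lem:almost_non_decrease_invasion_point}, and the only thing to notice is that choosing $c_\ast = \widebar{c}-\gamma$ (rather than some speed closer to $\widebar{c}$) ensures that the correction term $(c-c_\ast)(t'-t)$ stays bounded by $2\gamma(t'-t)$ throughout the admissible range $c\le \widebar{c}+\gamma$.
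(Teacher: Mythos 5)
Your proof is correct and follows essentially the same route as the paper: both reduce to \cref{lem:almost_non_decrease_invasion_point} applied at the speed $\widebar{c}-\gamma$ via the affine identity $\hat{\xi}_{c_1}(t')-\hat{\xi}_{c_1}(t)=\hat{\xi}_{c_2}(t')-\hat{\xi}_{c_2}(t)-(c_1-c_2)(t'-t)$; the paper merely phrases the same computation by passing through the intermediate speed $c-2\gamma$ and invoking the monotonicity noted in \cref{hat_xi_cLess_of_tPrime_minus_hat_xi_cLess_of_t}. The two arguments are trivially equivalent rearrangements of one another.
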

\begin{proof}
For every positive quantity $\gamma$ and for all times $t$ and $t'$ satisfying $0\le t\le t'$, arguing as in \cref{hat_xi_cLess_of_tPrime_minus_hat_xi_cLess_of_t}, 
\[
\begin{aligned}
\hat{\xi}_c(t') - \hat{\xi}_c(t) + 2\gamma(t'-t) &= \hat{\xi}_{c-2\gamma}(t') - \hat{\xi}_{c-2\gamma}(t) \\
&\ge \hat{\xi}_{\widebar{c}-\gamma}(t') - \hat{\xi}_{\widebar{c}-\gamma}(t) 
\,,
\end{aligned}
\]
and the intended conclusion follows from the conclusion of \cref{lem:almost_non_decrease_invasion_point} for the speed $\widebar{c}-\gamma$. 
\end{proof}
\subsection{Lipschitz continuity with respect to speed of the energy at the invasion point}
\label{subsec:Lipschitz_continuity}
\subsubsection{The energy at the invasion point}
\begin{definition}[energy at the invasion point]
\label{def:energy_at_invasion_point}
For every speed $c$ in $(0,\cDecay]$ and for every nonnegative time $t$, let us call \emph{energy at the invasion point} (at time $t$, in a frame travelling at the speed $c$) the quantity $\hat{E}_c(t)$ defined as
\begin{equation}
\label{def_hat_Ec_of_t}
\hat{E}_c(t) = E_c\bigl(\hat{\xi}_c(t),t\bigr) = e^{-c\hat{\xi}_c(t)} E_c(0,t) = e^{-c\hat{\xi}_c(t)} E_c(t) 
\,.
\end{equation}
Among the family $E_c(\xi,t)$ of energies, this specific energy $\hat{E}_c(t)$ is characterized by an exponential weight which is normalized so that it takes the value $1$ at the invasion point $\hat{\xi}_c(t)$. 
\end{definition}
The aim of this \namecref{subsec:Lipschitz_continuity} is to prove \cref{cor:Lipschitz_cont_energy} in \cref{subsubsec:Lipschitz_continuity} below, which states that this ``energy at the invasion point'' is Lipschitz continuous with respect to the speed $c$, uniformly in time. This continuity property is key for the relaxation scheme set up in the next \namecref{subsec:relaxation}. The main step leading to \cref{cor:Lipschitz_cont_energy} is \cref{prop:upper_bound_energy_at_invasion_point}, which states that the energy at the invasion point $\hat{E}_{c^*}(t)$ is bounded from above, uniformly with respect to $t$, for a speed $c^*$ slightly greater than $\widebar{c}$ to be chosen below.
\subsubsection{Choice of a speed slightly greater than the invasion speed}
Let $\gamma$ denote a (small) positive quantity to be chosen below, and let us consider the speed $c^*$ defined as
\[
c^* = \widebar{c}+\gamma
\,,
\]
see \cref{fig:speeds}. In order to obtain an upper bound on $\hat{E}_{c^*}(t)$, the rate $\nu$ involved in the exponential decrease of $F_{c^*}(t)$ (\cref{lem:lin_decrease_up_to_pollution_Fc}) and thus (on the long term) of $E_{c^*}(t)$ (\cref{cor:control_on_Ec_induced_by_Fc}) must balance the (possible) increase of $\hat{E}_{c^*}(t)$ due to the (possible) drift to the left of the invasion point $\hat{\xi}_{c^*}(t)$; according to \cref{cor:control_moves_to_left_invasion_point_trav_frame} this drift to the left does not occur, on the long term, at a speed larger than $2\gamma$ provided that $c$ is less than or equal to $c^*$, and a drift to the left at the speed $2\gamma$ induces an increase rate equal to $2\gamma c^*$. This leads us to introduce the positive quantity $\gamma$ defined as
\begin{equation}
\label{def_eta}
\gamma = \min\left(\frac{-\widebar{c}+\sqrt{\widebar{c}^2+\nu}}{2},\cDecay-\widebar{c}\right)
\,.
\end{equation}
This choice ensures that
\begin{equation}
\label{c_star_less_than_cUpp_and_2_eta_c_star_less_than_nu_over_2}
\gamma>0 
\quad\text{and}\quad
\widebar{c}<c^* = \widebar{c}+\gamma\le\cDecay
\quad\text{and}\quad
2\gamma c^*  \le \frac{\nu}{2}
\,,
\end{equation}
see \cref{fig:speeds}.
\begin{figure}[!htbp]
\centering
\includegraphics[width=.6\textwidth]{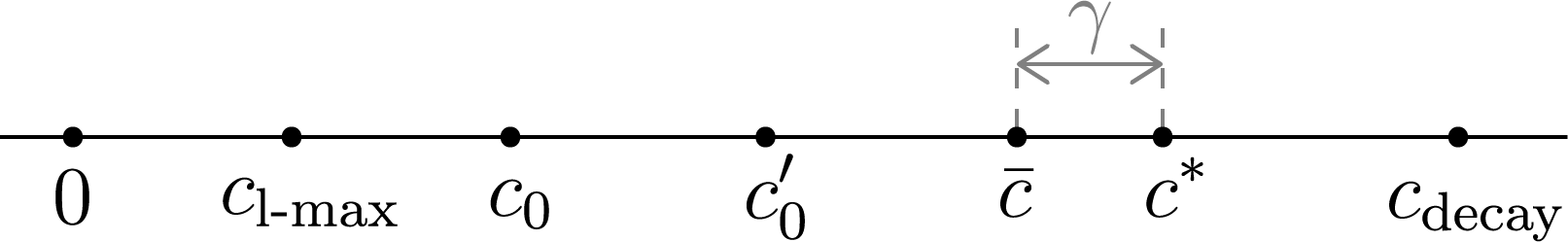}
\caption{Speeds involved in the proof.}
\label{fig:speeds}
\end{figure}
\subsubsection{Factor two shrinkage of the energy at the invasion point}
According to the last inequality of \cref{c_star_less_than_cUpp_and_2_eta_c_star_less_than_nu_over_2}, the combination of conclusion \cref{control_on_Ec_induced_by_Fc} of \cref{cor:control_on_Ec_induced_by_Fc} and conclusion \cref{control_moves_to_left_invasion_point_trav_frame} of \cref{cor:control_moves_to_left_invasion_point_trav_frame} show that, on the long run and as long as the quantity $\hat{E}_{c^*}(t)$ is large positive, this quantity decreases at an exponential rate which is at least equal to $\nu/2$, even if the possible drift to the left of the invasion point is taken into account, and provided that the invasion point $\hat{\xi}_{c^*}(t)$ is not too large positive. Let us consider the quantity $\hatXiLeft(\gamma)$, provided by \cref{cor:control_moves_to_left_invasion_point_trav_frame} for the choice \cref{def_eta} of $\gamma$, and let us recall the quantities $K$ and $K'$ involved in the conclusion \cref{control_on_Ec_induced_by_Fc} of \cref{cor:control_on_Ec_induced_by_Fc}. In order to formalize these observations in the next \cref{lem:factor_2_shrinkage_energy_at_invasion_point}, let us introduce the two parameters $\Tshrink$ and $\Elarge$ defined as
\[
\Tshrink = \frac{2}{\nu}\bigl(\ln(4K) + c^*\hatXiLeft(\gamma)\bigr)
\quad\text{and}\quad
\Elarge = 8K' e^{c^*\bigl(2\gamma \Tshrink + \hatXiLeft(\gamma)\bigr)}
\,,
\]
so that
\begin{equation}
\label{properties_Tshrink_Elarge}
K e^{-\frac{\nu}{2}\Tshrink + c^* \hatXiLeft(\gamma)} \le \frac{1}{4}
\quad\text{and}\quad
\frac{2K' e^{c^*(2\gamma \Tshrink + \hatXiLeft(\gamma))}}{\Elarge} \le \frac{1}{4}
\,.
\end{equation}
The following lemma is the analogue of the claim in the proof of \cite[Proposition~6.1]{GallayJoly_globStabDampedWaveBistable_2009}.
\begin{lemma}[factor two shrinkage of the energy at the invasion point on some time interval]
\label{lem:factor_2_shrinkage_energy_at_invasion_point}
For every nonnegative time $t$, if 
\[
\hat{E}_{c^*}(t) \ge \Elarge
\,,
\]
then there exists a time $\tShrink(t)$ in the interval $(t,t+\Tshrink]$ such that 
\begin{equation}
\label{factor_2_shrinkage_energy_at_invasion_point}
\hat{E}_{c^*}\bigl(\tShrink(t)\bigr) \le \frac{1}{2} \hat{E}_{c^*}(t)
\,.
\end{equation}
\end{lemma}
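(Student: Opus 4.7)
The plan is to take $\tShrink(t)=t+\Tshrink$ and show directly that this choice works. The strategy is to combine the delayed energy control of \cref{cor:control_on_Ec_induced_by_Fc} with the left-drift control of \cref{cor:control_moves_to_left_invasion_point_trav_frame}, both applied at the speed $c^{*}=\widebar{c}+\gamma$, and then to cash in the careful calibration of $\Tshrink$, $\Elarge$ and $\gamma$ made just above.

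First, applying \cref{cor:control_on_Ec_induced_by_Fc} at the speed $c^{*}$ with $T=\Tshrink$ yields
\[
E_{c^{*}}(t+\Tshrink)\le K e^{-\nu\Tshrink}E_{c^{*}}(t)+K' e^{c^{*}\hat{\xi}_{c^{*},\sup}(t,\Tshrink)}.
\]
Multiplying both sides by $e^{-c^{*}\hat{\xi}_{c^{*}}(t+\Tshrink)}$ turns the left-hand side into $\hat{E}_{c^{*}}(t+\Tshrink)$, rewrites the first term on the right as $K e^{-\nu\Tshrink+c^{*}(\hat{\xi}_{c^{*}}(t)-\hat{\xi}_{c^{*}}(t+\Tshrink))}\,\hat{E}_{c^{*}}(t)$, and turns the second term into $K' e^{c^{*}(\hat{\xi}_{c^{*},\sup}(t,\Tshrink)-\hat{\xi}_{c^{*}}(t+\Tshrink))}$.

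Second, I apply \cref{cor:control_moves_to_left_invasion_point_trav_frame} with $c=c^{*}$ between every intermediate time $s'\in[t,t+\Tshrink]$ and the terminal time $t+\Tshrink$; this gives $\hat{\xi}_{c^{*}}(s')-\hat{\xi}_{c^{*}}(t+\Tshrink)\le 2\gamma\Tshrink+\hatXiLeft(\gamma)$ uniformly in $s'$, and in particular the same bound holds both for $s'=t$ and for the supremum $\hat{\xi}_{c^{*},\sup}(t,\Tshrink)$. Plugging these in, and using the balance $2\gamma c^{*}\le \nu/2$ from the third inequality of \cref{c_star_less_than_cUpp_and_2_eta_c_star_less_than_nu_over_2}, the exponent $-\nu\Tshrink+c^{*}\bigl(\hat{\xi}_{c^{*}}(t)-\hat{\xi}_{c^{*}}(t+\Tshrink)\bigr)$ in the first term is bounded above by $-\nu\Tshrink/2+c^{*}\hatXiLeft(\gamma)$, so the coefficient of $\hat{E}_{c^{*}}(t)$ is at most $K e^{-\nu\Tshrink/2+c^{*}\hatXiLeft(\gamma)}\le 1/4$ by the first inequality of \cref{properties_Tshrink_Elarge}. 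The additive term is at most $K' e^{c^{*}(2\gamma\Tshrink+\hatXiLeft(\gamma))}=\Elarge/8$ by the second inequality of the same display, and the hypothesis $\hat{E}_{c^{*}}(t)\ge \Elarge$ turns it into at most $\hat{E}_{c^{*}}(t)/8$. Summing the two contributions gives $\hat{E}_{c^{*}}(t+\Tshrink)\le (1/4+1/8)\,\hat{E}_{c^{*}}(t)<\hat{E}_{c^{*}}(t)/2$, as claimed.

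There is no serious obstacle: all the genuinely delicate work (the framing of $E_c$ by $F_c$, the linear decrease up to pollution of $F_c$, and the left-drift bound on the invasion point) has already been carried out in the preceding subsections, and the present lemma is essentially their algebraic consolidation. The only minor subtlety is to use the drift corollary at every intermediate time $s'\in[t,t+\Tshrink]$, not just at $s'=t$, in order to control $\hat{\xi}_{c^{*},\sup}(t,\Tshrink)-\hat{\xi}_{c^{*}}(t+\Tshrink)$; this is why $\tShrink(t)$ is placed at the right endpoint $t+\Tshrink$ of the admissible interval rather than somewhere strictly inside it.
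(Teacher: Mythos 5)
Your proof is correct, and it takes a genuinely (if mildly) different route from the paper's. The paper proves \cref{lem:factor_2_shrinkage_energy_at_invasion_point} by a dichotomy: either the invasion point advances by at least $\ln(2)/c^*$ at some intermediate time $t'$ --- in which case it takes $\tShrink(t)=t'$ and concludes immediately from $\hat{E}_{c^*}(t')=e^{-c^*\hat{\xi}_{c^*}(t')}E_{c^*}(t')$ and the non-increase of $E_{c^*}$ (\cref{prop:decrease_energy_trav_frame}) --- or it does not, in which case $\hat{\xi}_{c^*,\sup}(t,\Tshrink)\le\hat{\xi}_{c^*}(t)+\ln(2)/c^*$ and the endpoint estimate goes through with the pollution term bounded by $2K'e^{c^*\hat{\xi}_{c^*}(t)}$. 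You dispense with the dichotomy by observing that \cref{cor:control_moves_to_left_invasion_point_trav_frame}, applied between each intermediate time $s'$ and the \emph{terminal} time $t+\Tshrink$, already yields $\hat{\xi}_{c^*}(s')-\hat{\xi}_{c^*}(t+\Tshrink)\le 2\gamma\Tshrink+\hatXiLeft(\gamma)$ uniformly in $s'$, hence controls $\hat{\xi}_{c^*,\sup}(t,\Tshrink)$ relative to $\hat{\xi}_{c^*}(t+\Tshrink)$ directly; this is a legitimate use of the corollary (its hypothesis $0\le s'\le t+\Tshrink$ and $c^*\le\widebar{c}+\gamma$ are satisfied), and the same calibration \cref{properties_Tshrink_Elarge} together with $2\gamma c^*\le\nu/2$ from \cref{c_star_less_than_cUpp_and_2_eta_c_star_less_than_nu_over_2} then gives the coefficient $1/4$ on $\hat{E}_{c^*}(t)$ and an additive term $\Elarge/8\le\hat{E}_{c^*}(t)/8$, for a total factor $3/8<1/2$. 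What your version buys is a single-case argument with a deterministic $\tShrink(t)=t+\Tshrink$ (which would also marginally simplify the bookkeeping in the proof of the subsequent uniform upper bound); what the paper's version buys is that Case 1 requires nothing beyond the monotonicity of the energy, reserving the quantitative drift estimate for the case where it is genuinely needed. Both rest on exactly the same three ingredients (\cref{cor:control_on_Ec_induced_by_Fc}, \cref{cor:control_moves_to_left_invasion_point_trav_frame}, and the choice of $\Tshrink$ and $\Elarge$), so the difference is one of arrangement rather than substance.
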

\begin{proof}
Let us distinguish two cases, see \cref{fig:illustration_proof_shrinkage}.
\begin{figure}[!htbp]
\centering
\includegraphics[width=\textwidth]{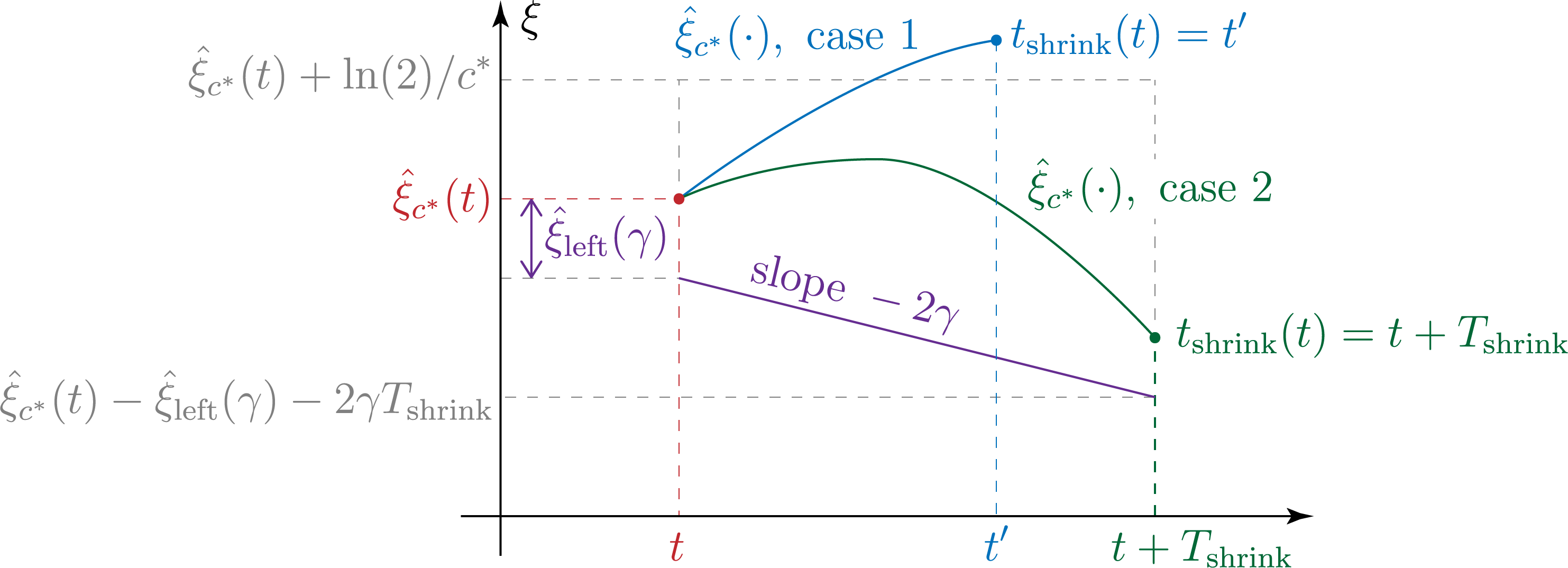}
\caption{Illustration of the proof of \cref{lem:factor_2_shrinkage_energy_at_invasion_point}.}
\label{fig:illustration_proof_shrinkage}
\end{figure}
\paragraph*{Case 1.} There exists a time $t'$ in the interval $(t,t+\Tshrink]$ such that
\begin{equation}
\label{hat_xi_of_t_plus_Trlag_minus_hat_xi_of_t_larger_than_zetaRlag}
\hat{\xi}_{c^*}(t') \ge \hat{\xi}_{c^*}(t) + \frac{\ln(2)}{c^*}
\,.
\end{equation}
In this case, let us choose $\tShrink(t) = t'$; the intended conclusion \cref{factor_2_shrinkage_energy_at_invasion_point} follows from the expression \cref{def_hat_Ec_of_t} of $\hat{E}_c(\cdot)$. 
\paragraph*{Case 2.} For every time $t''$ in the interval $(t,t+\Tshrink]$,
\[
\hat{\xi}_{c^*}(t'') \le \hat{\xi}_{c^*}(t) + \frac{\ln(2)}{c^*}
\,.
\]
In this case, let us choose $\tShrink(t) = t + \Tshrink$. It follows from the expression \cref{def_hat_Ec_of_t} of $\hat{E}_c(\cdot)$ and conclusion \cref{control_on_Ec_induced_by_Fc} of \cref{cor:control_on_Ec_induced_by_Fc} that
\[
\begin{aligned}
\hat{E}_{c^*}\bigl(\tShrink(t)\bigr) &= e^{-c^*\hat{\xi}_{c^*}\bigl(\tShrink(t)\bigr)} E_{c^*}(t+\Tshrink) \\
&\le e^{-c^*\hat{\xi}_{c^*}\bigl(\tShrink(t)\bigr)} \left( K e^{-\nu \Tshrink} E_{c^*}(t) + K' e^{c^*\bigl(\hat{\xi}_{c^*}(t) + \ln(2)/c^*\bigr)}\right) \\
&\le e^{-c^*\bigl(\hat{\xi}_{c^*}\bigl(\tShrink(t)\bigr)-\hat{\xi}_{c^*}(t)\bigr)}\left( K e^{-\nu \Tshrink} \hat{E}_{c^*}(t) + 2K' \right)
\,,
\end{aligned}
\]
so that, according to conclusion \cref{control_moves_to_left_invasion_point_trav_frame} of \cref{cor:control_moves_to_left_invasion_point_trav_frame} and the last inequality of \cref{c_star_less_than_cUpp_and_2_eta_c_star_less_than_nu_over_2}, 
\[
\begin{aligned}
\frac{\hat{E}_{c^*}\bigl(\tShrink(t)\bigr)}{\hat{E}_{c^*}(t)} &\le e^{c^*\bigl(2\gamma \Tshrink + \hatXiLeft(\gamma)\bigr)}\left( K e^{-\nu \Tshrink} + \frac{2K'}{\Elarge}\right) \\
&\le K e^{-\frac{\nu}{2}\Tshrink + c^* \hatXiLeft(\gamma)} + \frac{2K' e^{c^*(2\gamma \Tshrink + \hatXiLeft(\gamma))}}{\Elarge}
\,,
\end{aligned}
\]
and it follows from the inequalities \cref{properties_Tshrink_Elarge} satisfied by $\Tshrink$ and $\Elarge$ that the right-hand side of this last inequality is smaller than or equal to $1/2$, showing that the intended inequality \cref{factor_2_shrinkage_energy_at_invasion_point} holds.
\end{proof}
\subsubsection{Uniform upper bound on the energy at the invasion point}
The following proposition is the analogue of \cite[Proposition~6.1]{GallayJoly_globStabDampedWaveBistable_2009} and \cite[Proposition~4.1]{Luo_globStabDampedWaveEqu_2013}.
\begin{proposition}[uniform upper bound on the energy at the invasion point]
\label{prop:upper_bound_energy_at_invasion_point}
The quantity $\hat{E}_{c^*}(t)$ is bounded from above, uniformly with respect to $t$ in $[0,+\infty)$. 
\end{proposition}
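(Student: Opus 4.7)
The plan is to combine the factor-two shrinkage provided by \cref{lem:factor_2_shrinkage_energy_at_invasion_point} with a uniform growth bound for $\hat{E}_{c^*}(t)$ on time intervals of length $\Tshrink$, following the strategy pioneered in the setting of damped wave equations by Gallay and Joly \cite{GallayJoly_globStabDampedWaveBistable_2009} and Luo \cite{Luo_globStabDampedWaveEqu_2013}.

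First I would establish three elementary properties of $\hat{E}_{c^*}(\cdot)$. The inequality $\widebar{\xi}_{c^*}(t) \le \hat{\xi}_{c^*}(t)$ combined with inequality \cref{lower_bound_Ec} on $E_{c^*}(t)$ yields the uniform lower bound $\hat{E}_{c^*}(t) \ge -\abs{\Vmin}/c^*$. Next, the monotonicity of $E_{c^*}(\cdot)$ (\cref{prop:decrease_energy_trav_frame}) together with the left-drift control of $\hat{\xi}_{c^*}(\cdot)$ provided by \cref{cor:control_moves_to_left_invasion_point_trav_frame} (applied to $c = c^*$) yields the following \emph{growth estimate}: for every nonnegative time $t$ with $\hat{E}_{c^*}(t) \ge 0$ and every $t' \in [t, t+\Tshrink]$,
\[
\hat{E}_{c^*}(t') \le M\, \hat{E}_{c^*}(t)
\,,
\quad\text{where}\quad
M = \exp\bigl(c^*(2\gamma\Tshrink + \hatXiLeft(\gamma))\bigr)
\,.
\]
Finally, since the sign of $\hat{E}_{c^*}(t)$ coincides with that of $E_{c^*}(t)$ and $E_{c^*}$ is non-increasing, $\hat{E}_{c^*}(t) \le 0$ implies $\hat{E}_{c^*}(t') \le 0$ for every $t'\ge t$ as well (sign inheritance).

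The main argument then constructs an infinite sequence $0 = \tau_0 < \tau_1 < \tau_2 < \dots$ as follows: at each step, if $\hat{E}_{c^*}(\tau_k) \ge \Elarge$, set $\tau_{k+1} = \tShrink(\tau_k) \in (\tau_k, \tau_k + \Tshrink]$ as provided by \cref{lem:factor_2_shrinkage_energy_at_invasion_point}, so that $\hat{E}_{c^*}(\tau_{k+1}) \le \hat{E}_{c^*}(\tau_k)/2$; otherwise, set $\tau_{k+1} = \tau_k + \Tshrink$. Let $L = M\max\bigl(\hat{E}_{c^*}(0),\Elarge\bigr)$. A straightforward induction, splitting the inductive step according to whether $\hat{E}_{c^*}(\tau_k)$ exceeds $\Elarge$, is nonnegative but smaller than $\Elarge$, or is nonpositive, yields $\hat{E}_{c^*}(\tau_k) \le L$ for every $k$. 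The growth estimate then gives $\sup_{t \in [\tau_k,\tau_{k+1}]}\hat{E}_{c^*}(t) \le ML$ on each interval (the supremum being actually nonpositive when $\hat{E}_{c^*}(\tau_k) \le 0$, thanks to sign inheritance).

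The remaining obstacle, which constitutes the main subtlety of the argument, is to ensure that $\tau_k \to +\infty$ so that these local bounds cover the whole half-line $[0,+\infty)$. This follows from the observation that any run of consecutive shrinkage steps must be finite: within such a run, the value $\hat{E}_{c^*}(\tau_k)$ is halved at every step while remaining above $\Elarge > 0$, so the run cannot exceed $1 + \lfloor\log_2(L/\Elarge)\rfloor$ steps. Consequently, the sequence $(\tau_k)$ contains infinitely many non-shrinkage steps, each contributing exactly $\Tshrink$ to the time, which forces $\tau_k \to +\infty$ and concludes the proof with the uniform bound $\hat{E}_{c^*}(t) \le ML$ for every nonnegative time $t$.
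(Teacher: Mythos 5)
Your proof is correct and follows essentially the same route as the paper's: a recursively defined sequence of times alternating between shrinkage steps (via \cref{lem:factor_2_shrinkage_energy_at_invasion_point}) and fixed-length steps, an inductive bound on $\hat{E}_{c^*}$ along that sequence, and the gap-filling growth estimate obtained from the monotonicity of $E_{c^*}$ combined with \cref{cor:control_moves_to_left_invasion_point_trav_frame}. The only differences are cosmetic (non-shrinkage steps of length $\Tshrink$ instead of $1$, and a slightly more explicit argument, via finiteness of runs of shrinkage steps, that the times diverge).
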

\begin{proof}
Let us consider the sequence $(t_n)_{n\in\nn}$ defined as follows: $t_0=0$, and, for every nonnegative time $n$, 
\[
t_{n+1} = \left\{
\begin{aligned}
t_n + 1 \quad&\text{if}\quad \hat{E}_{c^*}(t_n) < \Elarge \,,\\
\tShrink(t_n)\quad&\text{if}\quad \hat{E}_{c^*}(t_n)\ge \Elarge \,,
\end{aligned}
\right.
\]
where $\Tshrink(\cdot)$ is the time provided by \cref{lem:factor_2_shrinkage_energy_at_invasion_point}. The sequence $(t_n)_{n\in\nn}$ is thus strictly increasing, and in the second of those cases it follows from the conclusion \cref{factor_2_shrinkage_energy_at_invasion_point} of \cref{lem:factor_2_shrinkage_energy_at_invasion_point} that $\hat{E}_{c^*}(t_{n+1})$ is less than or equal to $\hat{E}_{c^*}(t_n)/2$; as a consequence, the first case where $t_{n+1}$ equals $t_n+1$ must occur for an infinite number of nonnegative integers $n$, so that $t_n$ goes to $+\infty$ as $n$ goes to $+\infty$. 

Now, it follows from the expression \cref{def_hat_Ec_of_t} of $\hat{E}_c(\cdot)$, from the non-increase of energy with respect to time (\cref{prop:decrease_energy_trav_frame}), and from the conclusion \cref{control_moves_to_left_invasion_point_trav_frame} of \cref{cor:control_moves_to_left_invasion_point_trav_frame} that, for every nonnegative integer $n$ and for every time $t$ in the interval $[t_n,t_{n+1}]$, 
\[
\begin{aligned}
\hat{E}_{c^*}(t) &= e^{c^*\bigl(\hat{\xi}_{c^*}(t_n)-\hat{\xi}_{c^*}(t)\bigr)} \hat{E}_{c^*}(t_n) \\
&\le e^{c^*\bigl(\hat{\xi}_{c^*}(t_n)-\hat{\xi}_{c^*}(t)\bigr)} \max\bigl(\hat{E}_{c^*}(t_n),0\bigr) \\
&\le e^{c^*\bigl(2\gamma (t-t_n) + \hatXiLeft(\gamma)\bigr)} \max\bigl(\hat{E}_{c^*}(t_n),0\bigr) 
\,.
\end{aligned}
\]
It follows that, for every nonnegative integer $n$,
\[
\hat{E}_{c^*}(t_n) \le \max\Bigl(e^{c^*\bigl(2\gamma+\hatXiLeft(\gamma)\bigr)} \Elarge,\hat{E}_{c^*}(0)\Bigr)
\,,
\]
and that, for every nonnegative time $t$, 
\[
\hat{E}_{c^*}(t) \le e^{c^*\bigl(2\gamma\Tshrink+\hatXiLeft(\gamma)\bigr)} \max\Bigl(e^{c^*\bigl(2\gamma+\hatXiLeft(\gamma)\bigr)} \Elarge,\hat{E}_{c^*}(0)\Bigr)
\,,
\]
which completes the proof. 
\end{proof}
\subsubsection{Uniform bound on the \texorpdfstring{$H^1_{c^*}$}{H1c*}-norm with weight normalized at the invasion point}
\label{subsubsec:uniform_bound_H1_cStar_norm}
The following corollary is the analogue of \cite[inequality~(7.2)]{GallayJoly_globStabDampedWaveBistable_2009}. 
\begin{corollary}[uniform bound on the $H^1_{c^*}$-norm with weight normalized at the invasion point]
\label{cor:uniform_bound_H1_cStar}
The quantity
\begin{equation}
\label{uniform_bound_H1_cStar}
\int_0^{+\infty}e^{c^*y}(u^2 + u_x^2)\bigl(\hat{x}(t)+ y,t\bigr) \, dy
\end{equation}
is bounded, uniformly with respect to $t$ in $[0,+\infty)$.
\end{corollary}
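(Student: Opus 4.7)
The plan is to combine the uniform upper bound on the energy at the invasion point (\cref{prop:upper_bound_energy_at_invasion_point}) with the quantitative lower bound on the energy provided by inequality \cref{lower_bound_energy_trav_frame_H1_norm_from_bar_xi} of \cref{lem:lower_bound_energy_trav_frame}. Both ingredients are already established, so the corollary should follow by a short chain of estimates and a change of variables.

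First, I would apply inequality \cref{lower_bound_energy_trav_frame_H1_norm_from_bar_xi} to the function $v(\cdot,t)$ at time $t$, with the parameter $\widebar{\xi}$ chosen as the invasion point $\widebar{\xi}_{c^*}(t)$: this is legitimate because, by the defining property \cref{abs_v_smaller_than_deltaStab_of_c0_to_the_right_of_invasion_point} of $\widebar{\xi}_{c^*}(t)$ combined with \cref{V_greater_than_mu0_u2_for_abs_u_smaller_than_deltaStab_of_c_0}, the pointwise inequality $V(v(\xi,t))\ge\frac12\mu_0 v(\xi,t)^2$ holds for every $\xi$ in $[\widebar{\xi}_{c^*}(t),+\infty)$; and because $c^*$ satisfies $c_0<c^*\le\cDecay$ (by \cref{c_star_less_than_cUpp_and_2_eta_c_star_less_than_nu_over_2}), so that in particular $v(\cdot,t)$ belongs to $H^1_{c^*}(\rr,\rr^d)$. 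This yields a positive quantity $\alpha$ depending only on $c^*$ and $c_0$ such that
\[
E_{c^*}(t) \ge -e^{c^*\widebar{\xi}_{c^*}(t)}\frac{\abs{\Vmin}}{c^*} + \alpha\int_{\widebar{\xi}_{c^*}(t)}^{+\infty} e^{c^*\xi}\bigl(v_\xi(\xi,t)^2+v(\xi,t)^2\bigr)\, d\xi.
\]

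Next, using the inequality $\widebar{\xi}_{c^*}(t)\le\hat{\xi}_{c^*}(t)$ (which follows from \cref{deltaHess_less_than_deltaStab}), I would shrink the integration interval to $[\hat{\xi}_{c^*}(t),+\infty)$, divide the whole inequality by $e^{c^*\hat{\xi}_{c^*}(t)}$, and perform the change of variables $y=\xi-\hat{\xi}_{c^*}(t)$ to get
\[
\hat{E}_{c^*}(t) \ge -e^{c^*(\widebar{\xi}_{c^*}(t)-\hat{\xi}_{c^*}(t))}\frac{\abs{\Vmin}}{c^*} + \alpha\int_{0}^{+\infty} e^{c^*y}\bigl(v_\xi^2+v^2\bigr)\bigl(\hat{\xi}_{c^*}(t)+y,t\bigr)\, dy.
\]
The exponential prefactor in the first term is bounded by $1$, and the change of variables from the travelling frame to the laboratory frame (using $v(\xi,t)=u(c^*t+\xi,t)$ and $\hat{x}(t)=c^*t+\hat{\xi}_{c^*}(t)$) converts the integrand into $(u^2+u_x^2)(\hat{x}(t)+y,t)$, so that the integral equals exactly the quantity \cref{uniform_bound_H1_cStar} whose uniform boundedness is sought.

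Rearranging, it follows that
\[
\int_0^{+\infty}e^{c^*y}(u^2+u_x^2)\bigl(\hat{x}(t)+y,t\bigr)\, dy \le \frac{1}{\alpha}\left(\hat{E}_{c^*}(t) + \frac{\abs{\Vmin}}{c^*}\right),
\]
and the right-hand side is bounded uniformly in $t$ thanks to \cref{prop:upper_bound_energy_at_invasion_point}. There is no real obstacle here, since all the nontrivial work has been done in establishing the uniform bound on $\hat{E}_{c^*}(t)$; the only point that requires a little care is to verify that the hypotheses of \cref{lem:lower_bound_energy_trav_frame} are genuinely met at the point $\widebar{\xi}_{c^*}(t)$ with the speed $c^*$, which is immediate from the chain of inequalities $c_0<c^*\le\cDecay$ and from the defining property of $\widebar{\xi}_{c^*}(t)$ in conjunction with the maximal radius of stability $\deltaStab(c_0)$ introduced in \cref{subsubsec:max_radius_stability_pushed_invasion}.
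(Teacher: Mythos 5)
Your proof is correct and follows essentially the same route as the paper: a Poincar\'e-type lower bound on the energy in terms of the weighted $H^1$-norm, combined with the uniform upper bound on $\hat{E}_{c^*}(t)$ from \cref{prop:upper_bound_energy_at_invasion_point}. The only (harmless) difference is that you recycle inequality \cref{lower_bound_energy_trav_frame_H1_norm_from_bar_xi} anchored at $\widebar{\xi}_{c^*}(t)$ and transfer it to $\hat{\xi}_{c^*}(t)$ using $\widebar{\xi}_{c^*}(t)\le\hat{\xi}_{c^*}(t)$, whereas the paper redoes the Poincar\'e splitting directly at $\hat{\xi}_{c^*}(t)$.
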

\begin{proof}
For every nonnegative time $t$, it follows from the definition \cref{def_Ec_of_t} of $E_c(\cdot)$ that
\[
\hat{E}_{c^*}(t) = \int_{\rr} e^{c^*y} \left(\frac{1}{2}u_x^2 + V(u)\right)\bigl(\hat{x}(t)+y,t\bigr) \, dy
\,,
\]
so that, according to the definitions of $\deltaHess(c_0)$ and $\hat{x}(\cdot)$ (see \cref{subsec:invasion_point_defined_by_smaller_radius,fig:travelling_frame_invasion_points}),
\[
\hat{E}_{c^*}(t) +\frac{\abs{\Vmin}}{c^*} \ge \frac{1}{2}\int_0^{+\infty} e^{c^*y}\bigl(u_x^2 - \abs{\mu_0}u^2\bigr)\bigl(\hat{x}(t)+y,t\bigr) \, dy
\,.
\]
Let us consider the quantity $\alpha$ defined as
\[
\alpha = \frac{\abs{\mu_0'}+\abs{\mu_0}}{2\abs{\mu_0'}}
\,,
\]
so that both quantities
\begin{equation}
\label{1_minus_alpha_and_alpha_abs_muLow_minus_abs_hatMuLow}
1-\alpha = \frac{\abs{\mu_0'}-\abs{\mu_0}}{2\abs{\mu_0'}}
\quad\text{and}\quad
\alpha\abs{\mu_0'} - \abs{\mu_0} = \frac{\abs{\mu_0'}-\abs{\mu_0}}{2}
\end{equation}
are positive. It follows from the previous inequality that
\[
2\left(\hat{E}_{c^*}(t) + \frac{\abs{\Vmin}}{c^*}\right) \ge \int_0^{+\infty} e^{c^*y}\Bigl((1-\alpha)u_x^2 + \alpha u_x^2 - \abs{\mu_0} u^2\Bigr)\bigl(\hat{x}(t)+y,t\bigr)\, dy
\,,
\]
so that, applying the Poincaré inequality \cref{Poincare_inequality_gamma_equals_c_over_2} (with $c^*$ instead of $c$) to the term $\alpha u_x^2$ in the integrand and using the inequality $(c^*)^2/4>\abs{\mu_0}$ it follows that, denoting by $\beta$ the minimum of the two (positive) quantities \cref{1_minus_alpha_and_alpha_abs_muLow_minus_abs_hatMuLow}, 
\[
2\left(\hat{E}_{c^*}(t) + \frac{\abs{\Vmin}}{c^*}\right) \ge \beta\int_0^{+\infty} e^{c^*y}(u_x^2 + u^2)\bigl(\hat{x}(t)+y,t\bigr)\, dy
\,,
\]
and, in view of \cref{prop:upper_bound_energy_at_invasion_point}, the intended conclusion \cref{uniform_bound_H1_cStar} follows. 
\end{proof}
\subsubsection{Lipschitz continuity with respect to speed of the energy at the invasion point}
\label{subsubsec:Lipschitz_continuity}
The next \cref{cor:Lipschitz_cont_energy} is the analogue of \cite[Lemma~7.2]{GallayJoly_globStabDampedWaveBistable_2009}.
\begin{corollary}[Lipschitz continuity with respect to speed of the energy at the invasion point]
\label{cor:Lipschitz_cont_energy}
There exist a (finite, positive) quantity $\Klipsch$ such that, for all speeds $c_1$ and $c_2$ in $[c_0,c^*]$ and every time $t$ in $[0,+\infty)$, 
\begin{equation}
\label{Lipschitz_cont_energy}
\abs{\hat{E}_{c_1}(t) - \hat{E}_{c_2}(t)}\le \Klipsch \abs{c_1 - c_2} 
\,.
\end{equation}
\end{corollary}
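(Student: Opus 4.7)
Since the invasion point $\hat{x}(t)$ is defined by the $c$-independent threshold $\deltaHess(c_0)$, the only $c$-dependence in
\[
\hat{E}_c(t) = \int_{\rr} e^{cy}\, \mathfrak{e}(y,t)\, dy,
\qquad
\mathfrak{e}(y,t) := \frac{1}{2} u_x\bigl(\hat{x}(t)+y, t\bigr)^2 + V\bigl(u(\hat{x}(t)+y, t)\bigr),
\]
lies in the exponential weight. Consequently
\[
\hat{E}_{c_1}(t) - \hat{E}_{c_2}(t) = \int_{\rr}\bigl(e^{c_1 y} - e^{c_2 y}\bigr)\mathfrak{e}(y,t)\, dy,
\]
and the mean value theorem applied to $c\mapsto e^{cy}$ yields $\abs{e^{c_1 y} - e^{c_2 y}} \le \abs{c_1 - c_2}\,\abs{y}\, e^{c^* y}$ for $y \ge 0$ and $\abs{e^{c_1 y} - e^{c_2 y}} \le \abs{c_1 - c_2}\,\abs{y}\, e^{c_0 y}$ for $y \le 0$. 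My plan is to estimate the two halves of the integral separately by splitting at $y = 0$.

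On $(-\infty, 0]$ the uniform $\ccc^1$-bound \cref{bound_u_ux_setup} yields a pointwise bound $\abs{\mathfrak{e}(y,t)} \le C_1$ depending only on $V$, and since $\int_{-\infty}^0 \abs{y}\, e^{c_0 y}\, dy = 1/c_0^2$, this half of the integral contributes at most $(C_1/c_0^2)\abs{c_1 - c_2}$. On $[0, +\infty)$ the definition of $\hat{x}(t)$ gives $\abs{u(\hat{x}(t)+y, t)} \le \deltaHess(c_0)$; combined with $V(0_{\rr^d}) = 0$, $\nabla V(0_{\rr^d}) = 0$ and Taylor's formula this produces $\abs{\mathfrak{e}(y,t)} \le C_2 (u^2 + u_x^2)(\hat{x}(t)+y, t)$, so that the $y \ge 0$ contribution is bounded by $C_2 \abs{c_1 - c_2}\, J(t)$, where
\[
J(t) := \int_0^{+\infty} y\, e^{c^* y}\, (u^2 + u_x^2)\bigl(\hat{x}(t)+y, t\bigr)\, dy.
\]

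The hard part is to bound $J(t)$ uniformly in $t$, since \cref{cor:uniform_bound_H1_cStar} provides only the unweighted integral at the exponent $c^*$. Two candidate routes come to mind. The first is to upgrade \cref{cor:uniform_bound_H1_cStar} to a slightly larger exponent $c^{**} > c^*$ and then exploit the elementary inequality $y\, e^{c^* y} \le \bigl(e(c^{**}-c^*)\bigr)^{-1}\, e^{c^{**} y}$ valid for $y \ge 0$; the delicate point here is that the parameter $\gamma$ in \cref{def_eta} would need to be readjusted so that the factor-two shrinkage of \cref{lem:factor_2_shrinkage_energy_at_invasion_point} still operates at the supercritical speed $c^{**}$. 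The second is to introduce a polynomially weighted version of the dissipation functional $F_c(t)$ (incorporating an extra factor $1 + y$ for $y \ge 0$), to establish for it a linear-decrease-up-to-pollution inequality analogous to \cref{lem:lin_decrease_up_to_pollution_Fc}, and to re-run the relaxation programme leading to \cref{prop:upper_bound_energy_at_invasion_point} for this enriched functional in order to derive the required bound on $J(t)$. Whichever route succeeds, combining the bound on $J(t)$ with the two-sided splitting above yields the Lipschitz estimate \cref{Lipschitz_cont_energy} with a constant $\Klipsch$ depending only on $V$, $c_0$, $c_0'$ and the auxiliary parameters introduced in the proof.
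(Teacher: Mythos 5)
Your decomposition is exactly the paper's: the same identity $\hat{E}_{c_1}(t)-\hat{E}_{c_2}(t)=\int_{\rr}\bigl(e^{c_1y}-e^{c_2y}\bigr)\mathfrak{e}(y,t)\,dy$, the same split at $y=0$, the bound \cref{bound_u_ux_setup} on the left half-line, and \cref{cor:uniform_bound_H1_cStar} on the right half-line. The one place you diverge is that you apply the mean value theorem honestly and keep the factor $\abs{y}$, whereas the paper's proof uses $\abs{e^{c_1y}-e^{c_2y}}\le\abs{c_1-c_2}\,e^{c^*y}$ for $y\ge0$, which omits that factor and is not literally correct (take $c_1=c^*$, $c_2$ close to $c^*$, and $y$ large). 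Your extra care therefore exposes a real (though repairable) slip in the paper: the right-hand contribution is genuinely governed by the weighted integral $J(t)=\int_0^{+\infty}y\,e^{c^*y}(u^2+u_x^2)\bigl(\hat{x}(t)+y,t\bigr)\,dy$, which \cref{cor:uniform_bound_H1_cStar} does not control directly.

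That said, as written your argument has a gap: neither of your two routes for bounding $J(t)$ is carried out, and the second (a polynomially weighted analogue of $F_c$) would be a substantial detour. Two economical repairs are available. (a) Prove the estimate only for $c_1,c_2$ in $[c_0,c^{**}]$ for some fixed $c^{**}$ with $\widebar{c}<c^{**}<c^*$; then for $y\ge0$ one has $\abs{e^{c_1y}-e^{c_2y}}\le\abs{c_1-c_2}\,y\,e^{c^{**}y}\le\abs{c_1-c_2}\,\bigl(e(c^*-c^{**})\bigr)^{-1}e^{c^*y}$, and \cref{cor:uniform_bound_H1_cStar} applies unchanged; this weaker statement is enough for \cref{prop:relaxation}, where the relevant speeds $\widebar{c}_n$ converge to $\widebar{c}$ and hence eventually lie in $[c_0,c^{**}]$ (one must then also supply a uniform bound on $\hat{E}_{\widebar{c}_n}$ by a Lipschitz comparison within that smaller interval). (b) To keep the full interval $[c_0,c^*]$, follow your first route: build a strict safety margin into the choice \cref{def_eta} (for instance halve $\gamma$, so that $2\gamma c^*\le\nu/4$); then the shrinkage argument of \cref{lem:factor_2_shrinkage_energy_at_invasion_point}, \cref{prop:upper_bound_energy_at_invasion_point} and \cref{cor:uniform_bound_H1_cStar} all run verbatim at an exponent strictly larger than $c^*$, and $y\,e^{c^*y}$ is absorbed as you describe. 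Either way the statement is saved, but the step you flag as "the hard part" must actually be resolved before the proof is complete.
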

\begin{proof}
For all speeds $c_1$ and $c_2$ in $[c_0,c^*]$ and every time $t$ in $[0,+\infty)$, 
\[
\hat{E}_{c_1}(t) - \hat{E}_{c_2}(t) = \int_{\rr} \bigl(e^{c_1 y}-e^{c_2 y}\bigr)\left(\frac{1}{2}u_x\bigl(\hat{x}(t)+y,t\bigr)^2 + V\Bigl(u\bigl(\hat{x}(t)+y,t\bigr)\Bigr)\right)\, dy 
\,.
\]
Besides, for every $y$ in $\rr$, 
\[
\abs{e^{c_1 y}-e^{c_2 y}} \le \left\{
\begin{aligned}
\abs{c_1-c_2} e^{\max(c_1,c_2)y} \le \abs{c_1-c_2} e^{c^*y} \quad\text{if}\quad y\ge 0\,,\\
\abs{c_1-c_2} e^{\min(c_1,c_2)y} \le \abs{c_1-c_2} e^{c_0 y} \quad\text{if}\quad y\le 0
\,. 
\end{aligned}
\right.
\]
It follows that, if $c_1$ and $c_2$ differ, 
\[
\begin{aligned}
\frac{\abs{\hat{E}_{c_1}(t) - \hat{E}_{c_2}(t)}}{\abs{c_1-c_2}} &\le \int_{-\infty}^0 e^{c_0 y}\abs{\frac{1}{2}u_x\bigl(\hat{x}(t)+y,t\bigr)^2 + V\Bigl(u\bigl(\hat{x}(t)+y,t\bigr)\Bigr)} \, dy \\
&\quad+ \int_0^{+\infty} e^{c^*y}\abs{\frac{1}{2}u_x\bigl(\hat{x}(t)+y,t\bigr)^2 + V\Bigl(u\bigl(\hat{x}(t)+y,t\bigr)\Bigr)} \, dy
\,.
\end{aligned}
\]
It follows from the bound \cref{bound_u_ux_setup} on the solution that the first among the two integrals of the right-hand side of this inequality is bounded (uniformly with respect to $t$ in $[0,+\infty)$), and it follows from inequality \cref{uniform_bound_H1_cStar} of \cref{cor:uniform_bound_H1_cStar} that the same is true for the second integral. Inequality \cref{Lipschitz_cont_energy} (for a large enough positive quantity $\Klipsch$) is proved. 
\end{proof}
\subsection{Relaxation}
\label{subsec:relaxation}
The aim of this \namecref{subsec:relaxation} is to prove the following proposition, which is the analogue of \cite[Proposition~7.1]{GallayJoly_globStabDampedWaveBistable_2009} and \cite[limit~(4.2)]{GallayJoly_globStabDampedWaveBistable_2009}. 
\begin{proposition}[relaxation]
\label{prop:relaxation}
For every positive quantity $T$, the following limit holds:
\begin{equation}
\label{relaxation}
\int_{t-T}^{t} D_{\widebar{c}}\bigl(\hat{\xi}_{\widebar{c}}(t),s\bigr)\, ds \to 0
\quad\text{as}\quad
t\to+\infty
\,.
\end{equation}
\end{proposition}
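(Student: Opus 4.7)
The plan is to argue by contradiction, following the strategy of Gallay and Joly \cite{GallayJoly_globStabDampedWaveBistable_2009} adapted to pushed fronts by Luo \cite{Luo_globStabDampedWaveEqu_2013}. Suppose there exist $\varepsilon > 0$ and a sequence $(t_n)_{n \in \nn}$ going to $+\infty$ such that
\[
I_n := \int_{t_n - T}^{t_n} D_{\widebar{c}}\bigl(\hat{\xi}_{\widebar{c}}(t_n), s\bigr)\, ds \ge \varepsilon
\]
for every integer $n$. Using the scaling identity $E_c(\xi, t) = e^{-c\xi} E_c(t)$ of \cref{def_Ec_of_xi_and_t} together with $\partial_t E_c(\xi, t) = -D_c(\xi, t)$, rewrite this lower bound as
\[
I_n = e^{\widebar{c}\bigl[\hat{\xi}_{\widebar{c}}(t_n - T) - \hat{\xi}_{\widebar{c}}(t_n)\bigr]}\,\hat{E}_{\widebar{c}}(t_n - T) - \hat{E}_{\widebar{c}}(t_n) \ge \varepsilon,
\]
and introduce the analogue $I_n(c)$ of $I_n$ at every speed $c$ in $[\widebar{c}, c^*]$.

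The first step of the plan is to transfer this lower bound to a slightly larger speed, making use of the full Lipschitz machinery developed so far. The Lipschitz continuity of $c \mapsto \hat{E}_c(t)$ uniformly in $t$ (\cref{cor:Lipschitz_cont_energy}), combined with the uniform control on the exponential factor $e^{c[\hat{\xi}_c(t_n - T) - \hat{\xi}_c(t_n)]}$ coming from the left-drift estimate \cref{control_moves_to_left_invasion_point_trav_frame}, and with the uniform upper and lower bounds on $\hat{E}_c$ provided by \cref{prop:upper_bound_energy_at_invasion_point} and \cref{lem:lower_bound_energy_trav_frame}, together imply that the family $\{c \mapsto I_n(c)\}_{n \in \nn}$ is equicontinuous on $[\widebar{c}, c^*]$. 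After reducing $\gamma = c^* - \widebar{c}$ at the outset if necessary (this is compatible with the choice \cref{def_eta} since all previous constructions remain valid for a smaller value of $\gamma$), this produces a speed $c_\varepsilon \in (\widebar{c}, c^*]$ and an integer $n_0$ such that $I_n(c_\varepsilon) \ge \varepsilon/2$ for every $n \ge n_0$.

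The contradiction will then follow from the fact that the same quantity $I_n(c_\varepsilon)$ tends to $0$. Extracting via the compactness \cref{lem:compactness} an entire solution $u_\infty$ as the $C^{2,1}_{\textup{loc}}$-limit (along a subsequence) of the translated solutions $u(\hat{x}(t_n) + \cdot, t_n + \cdot)$, the identity
\[
I_n(c_\varepsilon) = \int_{-T}^{0}\int_{\rr} e^{c_\varepsilon(\zeta - c_\varepsilon \tau)}\bigl(u_t + c_\varepsilon u_x\bigr)^2\bigl(\hat{x}(t_n) + \zeta, t_n + \tau\bigr)\, d\zeta\, d\tau
\]
combined with Fatou's lemma (tails at $\zeta \to +\infty$ being controlled by the uniform $H^1_{c^*}$-bound of \cref{cor:uniform_bound_H1_cStar}) produces a positive lower bound on the corresponding weighted spacetime ``dissipation'' of $u_\infty$. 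Showing that this latter quantity actually vanishes --- thus contradicting the lower bound --- is the main obstacle: it requires combining the arguments of \cref{prop:invasion_speed} (which, applied to $u_\infty$ in frames traveling at speeds in $(\widebar{c}, c^*]$, should force $u_\infty$ to be constant) with a careful analysis of the indeterminate product $e^{-c_\varepsilon \hat{\xi}_{c_\varepsilon}(t_n)}\bigl[E_{c_\varepsilon}(t_n - T) - E_{c_\varepsilon}(t_n)\bigr]$, in which the first factor diverges while the second vanishes. The quantitative framework built in Sections~\ref{subsec:control_energy_to_the_right_of_invasion_point}--\ref{subsec:Lipschitz_continuity} is precisely what allows this balance to be controlled.
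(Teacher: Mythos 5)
Your set-up (contradiction, the sequence $(t_n)$, the identity $I_n = e^{-\widebar{c}\hat{\xi}_{\widebar{c}}(t_n)}\bigl[E_{\widebar{c}}(t_n-T)-E_{\widebar{c}}(t_n)\bigr]$) matches the paper's, but the core of your argument is missing, and the route you sketch for it does not go through. You correctly identify that everything hinges on the indeterminate product $e^{-c\hat{\xi}_{c}(t_n)}\bigl[E_{c}(t_n-T)-E_{c}(t_n)\bigr]$ for $c$ slightly above $\widebar{c}$ (the bracket tends to $0$ by summability of the dissipation, the prefactor diverges since $\hat{\xi}_c(t_n)\to-\infty$), but you then assert that ``the quantitative framework \dots\ is precisely what allows this balance to be controlled'' without giving the estimate. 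No such estimate is available: summability only gives $E_{c}(t_n-T)-E_{c}(t_n)\to 0$, with no rate, whereas the prefactor grows like $e^{c(c-\widebar{c})t_n}$. The compactness route is also a dead end on its own terms: the expected limit $u_\infty$ is precisely a front travelling at speed $\widebar{c}$, whose weighted dissipation in a frame moving at $c_\varepsilon\neq\widebar{c}$ is \emph{genuinely positive} (of order $(c_\varepsilon-\widebar{c})^2$), so ``the dissipation of $u_\infty$ vanishes'' is false and cannot furnish the contradiction; and the single speed $c_\varepsilon$ only yields $\partial_t u_\infty+c_\varepsilon\partial_x u_\infty\equiv 0$, not constancy (the argument of \cref{prop:invasion_speed} needed a whole interval of admissible speeds, available there only because $\widebar{\xi}_c(t_n)\to+\infty$ for $c<\widebar{c}_+$, which fails here). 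A secondary point: Fatou's lemma gives the dissipation of $u_\infty$ as a \emph{lower} bound for $\liminf I_n$, so it cannot convert $I_n\ge\varepsilon/2$ into a positive lower bound on the limit without a separate tightness argument.

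The paper's proof avoids this impasse entirely by never trying to show that an individual $I_n$ tends to $0$. Instead it telescopes: it introduces the broken line joining $\hat{x}(t_n)$ at time $t_n$ to $\hat{x}(t_{n+1})-\widebar{c}T$ at time $t_{n+1}-T$ (slope $\widebar{c}_n\to\widebar{c}$) and then to $\hat{x}(t_{n+1})$ at time $t_{n+1}$ (slope $\widebar{c}$), and decomposes $\Delta_n=\hat{E}_{\widebar{c}}(t_n)-\hat{E}_{\widebar{c}}(t_{n+1})$ into two speed-change terms, controlled by the Lipschitz continuity of \cref{cor:Lipschitz_cont_energy} and the drift bound of \cref{cor:control_moves_to_left_invasion_point_trav_frame} and therefore $o(1)$ as $\widebar{c}_n\to\widebar{c}$, plus two dissipation terms, one nonnegative and one equal to $I_{n+1}\ge\epsDissip$. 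Hence $\liminf\Delta_n\ge\epsDissip$, the sum $\sum_k\Delta_k$ diverges, and $\hat{E}_{\widebar{c}_n}(t_n)\to-\infty$, contradicting the uniform lower bound $\hat{E}_c(t)\ge-\abs{\Vmin}/c$ from inequality \cref{lower_bound_Ec}. If you want to salvage your draft, replace your second step by this accumulation argument; your first step (the Lipschitz transfer between nearby speeds) then becomes exactly the estimate of the terms $\Delta_n^1$ and $\Delta_n^3$.
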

\begin{proof}
Let us proceed by contradiction and assume that the converse holds. In this case, there exists a positive quantity $\epsDissip$ and a sequence $(t_n)_{n\in\nn}$ of times, going to $+\infty$ such that, for every $n$ in $\nn$, $t_n-T$ is nonnegative and
\begin{equation}
\label{hyp_contradiction_finite_dissip}
\int_{t_n-T}^{t_n} D_{\widebar{c}}\bigl(\hat{\xi}_{\widebar{c}}(t_n),s\bigr)\, ds \ge \epsDissip
\,.
\end{equation}
Up to replacing the sequence $(t_n)_{n\in\nn}$ by a subsequence, let us assume that, for every $n$ in $\nn$, $t_{n+1}$ is greater than $t_n+T$. For every $n$ in $\nn$, let us consider the speed $\widebar{c}_n$ defined as
\begin{equation}
\label{def_cn}
\widebar{c}_n = \frac{\bigl(\hat{x}(t_{n+1}) - \widebar{c}T\bigr) - \hat{x}(t_n)} {(t_{n+1}-T) - t_n} 
= 
\widebar{c} + \frac{\hat{\xi}_{\widebar{c}}(t_{n+1}) - \hat{\xi}_{\widebar{c}}(t_n)} {(t_{n+1}-T) - t_n}
\,,
\end{equation}
see \cref{fig:broken_line}. 
\begin{figure}[!htbp]
\centering
\includegraphics[width=\textwidth]{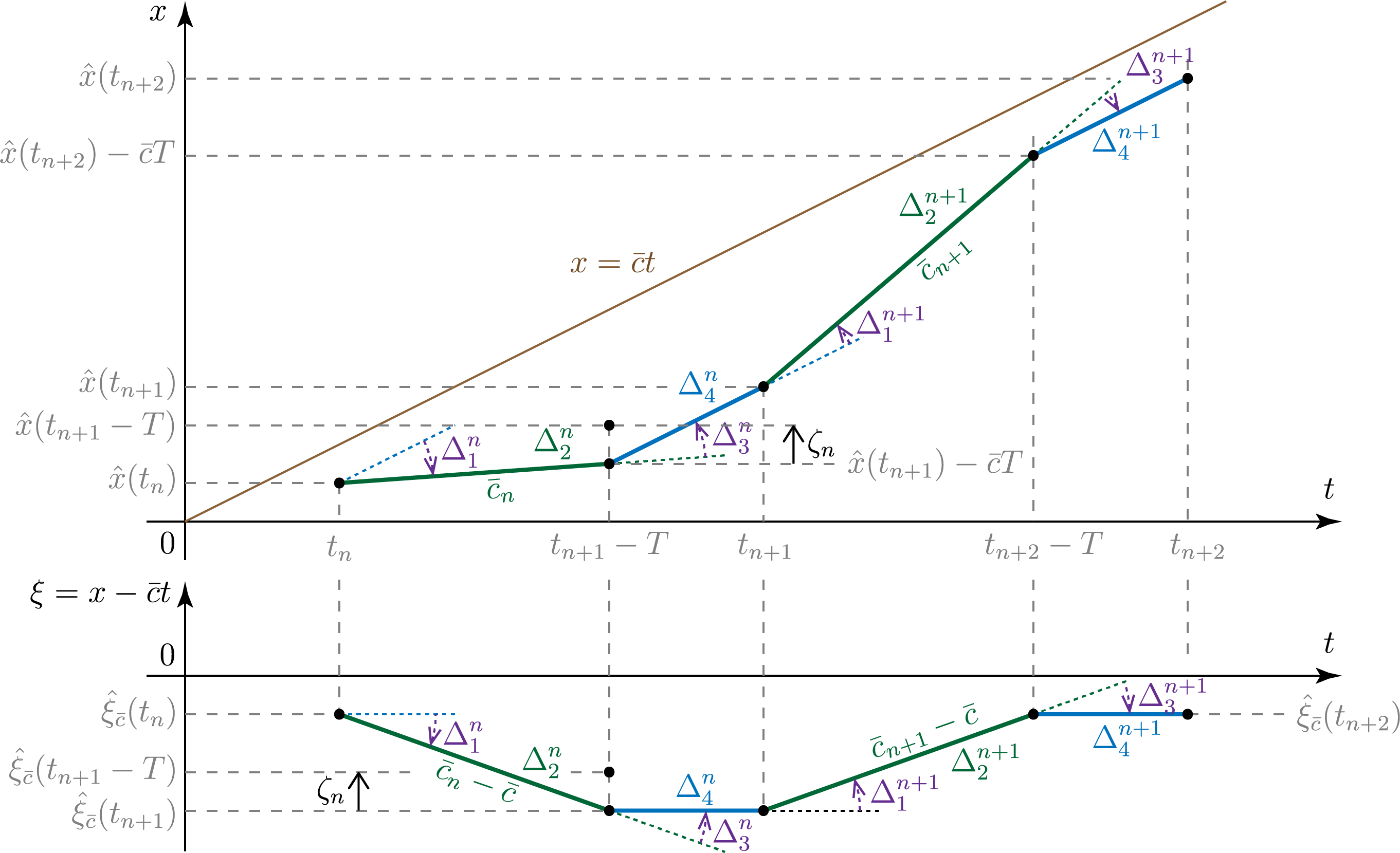}
\caption{Broken line in the coordinates $(t,x)$ and $(t,\xi = x-\bar{c}t)$ used to decompose the differences $\Delta_n$ between $\hat{E}_{\bar{c}}(t_n)$ and $\hat{E}_{\bar{c}}(t_{n+1})$ (and $\Delta_{n+1}$ between $\hat{E}_{\bar{c}}(t_{n+1})$ and $\hat{E}_{\bar{c}}(t_{n+2})$). Along the green lines, the contribution (dissipation) is nonnegative, and it is significant (not smaller than the positive quantity $\epsDissip$) along the blue lines (which have slope $\bar{c}$ in the coordinates $(t,x)$ and slope $0$ in the coordinates $(t,\xi)$). The contributions due to the changes of speed at the vertices (in purple) are small.}
\label{fig:broken_line}
\end{figure}
Again up to replacing the sequence $(t_n)_{n\in\nn}$ by a subsequence, it may be assumed, according to \cref{prop:invasion_speed}, that 
\[
\widebar{c}_n\xrightarrow[n\to+\infty]{}\widebar{c}
\quad\text{and, for every $n$ in $\nn$,}\quad
c_0\le \widebar{c}_n \le c^*
\,.
\]
Following the notation of \cite[Proposition~7.1]{GallayJoly_globStabDampedWaveBistable_2009}, let us introduce, for every $n$ in $\nn$, the quantities
\begin{align}
\label{def_Delta_n}
\Delta_n &= \hat{E}_{\widebar{c}}(t_n) - \hat{E}_{\widebar{c}}(t_{n+1}) \,, \\
\nonumber
\text{and}\quad 
\Delta^1_n &= \hat{E}_{\widebar{c}}(t_n) - \hat{E}_{\widebar{c}_n}(t_n)  \,, \\
\nonumber
\text{and}\quad 
\Delta^2_n &= \hat{E}_{\widebar{c}_n}(t_n) -  E_{\widebar{c}_n}\bigl(\hat{\xi}_{\widebar{c}_n}(t_n),t_{n+1}-T\bigr) \\
\nonumber
&= E_{\widebar{c}_n}\bigl(\hat{\xi}_{\widebar{c}_n}(t_n),t_n\bigr) - E_{\widebar{c}_n}\bigl(\hat{\xi}_{\widebar{c}_n}(t_n),t_{n+1}-T\bigr) \,, \\
\nonumber
\text{and}\quad 
\Delta^3_n &= E_{\widebar{c}_n}\bigl(\hat{\xi}_{\widebar{c}_n}(t_n),t_{n+1}-T\bigr) - E_{\widebar{c}}\bigl(\hat{\xi}_{\widebar{c}}(t_{n+1}),t_{n+1}-T\bigr) \,, \\
\nonumber
\text{and}\quad 
\Delta^4_n &= E_{\widebar{c}}\bigl(\hat{\xi}_{\widebar{c}}(t_{n+1}),t_{n+1}-T\bigr) - \hat{E}_{\widebar{c}}(t_{n+1}) \\
\nonumber
&= E_{\widebar{c}}\bigl(\hat{\xi}_{\widebar{c}}(t_{n+1}),t_{n+1}-T\bigr) - E_{\widebar{c}}\bigl(\hat{\xi}_{\widebar{c}}(t_{n+1}),t_{n+1}\bigr) \,, \\
\label{Delta_n_equals_Delta_1_n_plus_plus_Delta_4_n}
\text{so that}\quad 
\Delta_n &= \Delta^1_n + \Delta^2_n + \Delta^3_n + \Delta^4_n
\,.
\end{align}
According to inequality \cref{Lipschitz_cont_energy} of \cref{cor:Lipschitz_cont_energy}, 
\begin{equation}
\label{upper_bound_abs_Delta1n}
\abs{\Delta^1_n}\le \Klipsch\abs{\widebar{c}-\widebar{c}_n}
\,.
\end{equation}
According to equality \cref{decrease_energy_trav_frame} of \cref{prop:decrease_energy_trav_frame}, 
\begin{equation}
\label{Delta2n_nonnegative}
\Delta^2_n = \int_{t_n}^{t_{n+1}-T} D_{\widebar{c}_n}\bigl(\hat{\xi}_{\widebar{c}_n}(t_n),s\bigr)\, ds \ge 0 
\,,
\end{equation}
and according to assumption \cref{hyp_contradiction_finite_dissip}, 
\begin{equation}
\label{Delta4n_larger_than_deltaDissip}
\Delta^4_n = \int_{t_{n+1}-T}^{t_{n+1}} D_{\widebar{c}}\bigl(\hat{\xi}_{\widebar{c}}(t_{n+1}),s\bigr) \, ds \ge \epsDissip
\,.
\end{equation}
Let us consider the quantity $\zeta_n$ defined as
\begin{equation}
\label{def_alpha_n_cn}
\zeta_n = \hat{\xi}_{\widebar{c}_n}(t_{n+1}-T) - \hat{\xi}_{\widebar{c}_n}(t_n)
\,.
\end{equation}
According to the first inequality of \cref{def_cn}, 
\begin{align}
\nonumber
\zeta_n &= \hat{x}(t_{n+1}-T) - \hat{x}(t_n) - \widebar{c}_n (t_{n+1}-T-t_n) \\
\nonumber
&= \hat{x}(t_{n+1}-T) - \hat{x}(t_{n+1}) + \widebar{c} T \\
\label{def_alpha_n_bar_c}
&= \hat{\xi}_{\widebar{c}}(t_{n+1}-T) - \hat{\xi}_{\widebar{c}}(t_{n+1})
\,,
\end{align}
and according to inequality \cref{control_moves_to_left_invasion_point_trav_frame} of \cref{cor:control_moves_to_left_invasion_point_trav_frame},
\begin{equation}
\label{upper_bound_zeta_n}
\zeta_n \le 2\gamma T + \hatXiLeft(\gamma)
\,.
\end{equation}
According to the two expressions \cref{def_alpha_n_cn,def_alpha_n_bar_c} of $\zeta_n$,
\[
\begin{aligned}
\Delta^3_n &= e^{\widebar{c}_n\zeta_n} \hat{E}_{\widebar{c}_n}(t_{n+1}-T) - e^{\widebar{c}\zeta_n} \hat{E}_{\widebar{c}}(t_{n+1}-T) \\
&= \left(e^{\widebar{c}_n\zeta_n} - e^{\widebar{c}\zeta_n}\right) \hat{E}_{\widebar{c}_n}(t_{n+1}-T) + e^{\widebar{c}\zeta_n} \bigl(\hat{E}_{\widebar{c}_n}(t_{n+1}-T) - \hat{E}_{\widebar{c}}(t_{n+1}-T) \bigr) 
\,,
\end{aligned}
\]
so that, according to inequality \cref{Lipschitz_cont_energy} of \cref{cor:Lipschitz_cont_energy} and inequality \cref{upper_bound_zeta_n},
\begin{equation}
\label{upper_bound_Delta3n}
\abs{\Delta^3_n} \le \abs{e^{\widebar{c}_n\zeta_n} - e^{\widebar{c}\zeta_n}} \abs{\hat{E}_{\widebar{c}_n}(t_{n+1}-T)} \\ + e^{\widebar{c}\bigl(2\gamma T + \hatXiLeft(\gamma)\bigr)} \Klipsch\abs{\widebar{c}_n - \widebar{c}}
\,.
\end{equation}
Observe that, for every positive quantities $z_{\max}$ and $c$
\[
\sup_{z\in(-\infty,z_{\max}]} z e^{cz} = \max\left(\frac{1}{ce},z_{\max} e^{c z_{\max}}\right)
\,.
\]
Thus it follows from inequality \cref{upper_bound_zeta_n} that
\begin{align}
\nonumber
\abs{e^{\widebar{c}_n\zeta_n} - e^{\widebar{c}\zeta_n}} &\le \abs{\widebar{c}_n - \widebar{c}} \abs{\zeta_n} \max(e^{\widebar{c}_n\zeta_n},e^{\widebar{c}\zeta_n}) \\
&\le \abs{\widebar{c}_n - \widebar{c}} \max\left(\frac{1}{c_0 e}, \bigl(2\gamma T + \hatXiLeft(\gamma)\bigr)e^{c^*\bigl(2\gamma T + \hatXiLeft(\gamma)\bigr)}\right)
\,.
\label{upper_bound_difference_of_exp}
\end{align}
Since $\widebar{c}_n$ goes to $\widebar{c}$ as $n$ goes to $+\infty$, it follows from inequalities \cref{upper_bound_abs_Delta1n,Delta2n_nonnegative,Delta4n_larger_than_deltaDissip,upper_bound_Delta3n,upper_bound_difference_of_exp} and from the expression \cref{Delta_n_equals_Delta_1_n_plus_plus_Delta_4_n} of $\Delta_n$ that
\begin{equation}
\label{liminf_Delta_n}
\liminf_{n\to+\infty}\Delta_n \ge \epsDissip
\,.
\end{equation}
According to the definition \cref{def_Delta_n} of $\Delta_n$, for every positive integer $n$, 
\[
\hat{E}_{c_0}(t_0) - \hat{E}_{\widebar{c}_n}(t_n) = \sum_{k=0}^{n-1}\Delta_k
\,,
\]
so that, according to inequality \cref{liminf_Delta_n}, 
\begin{equation}
\label{lim_E_cn_of_tn}
\hat{E}_{\widebar{c}_n}(t_n) \to -\infty 
\quad\text{as}\quad 
n\to+\infty
\,.
\end{equation}
On the other hand, for every speed $c$ in $[c_0,\cDecay]$ and for every nonnegative time $t$, it follows from inequality \cref{lower_bound_Ec} that
\[
\begin{aligned}
\hat{E}_c(t) &\ge -\frac{\abs{V_{\min}}}{c}e^{-c\bigl(\hat{\xi}_c(t)-\widebar{\xi}_c(t)\bigr)} \\
&\ge -\frac{\abs{V_{\min}}}{c}
\,,
\end{aligned}
\]
a contradiction with the limit \cref{lim_E_cn_of_tn}. \Cref{prop:relaxation} is proved. 
\end{proof}
\begin{remark}
In the proof above, inequality \cref{upper_bound_zeta_n} is intimately related to the choice of the ``dissipation interval'' (namely $[t_{n+1}-T,t_{n+1}]$, rather than, say $[t_n,t_n+T]$), and is the reason for the choice of the integration interval $[t-T,t]$ (rather than, say, $[t,t+T]$) in inequality \cref{relaxation} of \cref{prop:relaxation}. 
\end{remark}
\subsection{Convergence and proof of \texorpdfstring{\cref{thm:main}}{Theorem \ref{thm:main}}}
\label{subsec:convergence}
Let $\deltaLocMan(\widebar{c})$ denote a positive quantity, small enough so that the ``local steep stable manifold'' \cref{prop:local_steep_stable_manifold} holds for $e$ equal to $0_{\rr^d}$ and $c$ equal to $\widebar{c}$ and $\delta$ equal to $\deltaLocMan(\widebar{c})$. Let us assume in addition that $\deltaLocMan(\widebar{c})$ is smaller than or equal to the quantity $\deltaHess(c_0)$ defined in \cref{subsec:invasion_point_defined_by_smaller_radius}. For every nonnegative time $t$, let us consider the set $\SigmaFar{\deltaLocMan(\widebar{c})}(t)$ (see definition \cref{def_SigmaFar}). For the same reasons as for the set $\SigmaFar{\deltaHess(\widebar{c})}(t)$ (see the beginning of \cref{subsec:invasion_point_defined_by_smaller_radius}), this set $\SigmaFar{\deltaLocMan(\widebar{c})}(t)$ is altogether nonempty and bounded from above; let $\tilde{x}(t)$ denote its supremum and let us write (for a positive quantity $c$)
\[
\tilde{\xi}_c(t) = \tilde{x}(t) - ct
\,,
\]
see \cref{fig:travelling_frame_invasion_points}. According to these definitions, 
\begin{equation}
\label{hat_x_less_than_tilde_x}
\widebar{x}(t)\le\hat{x}(t)\le\tilde{x}(t)<+\infty 
\quad\text{and}\quad
\widebar{\xi}_c(t)\le\hat{\xi}_c(t)\le\tilde{\xi}_c(t)<+\infty
\,, 
\end{equation}
and
\begin{equation}
\label{abs_u_of_tilde_x_of_t_t_equals_tilde_delta}
\abs{u\bigl(\tilde{x}(t),t\bigr)} = \deltaLocMan(\widebar{c})
\,.
\end{equation}
\begin{remark}
According to \cref{prop:extension_local_steep_stable_manifold}, it turns out that the quantity $\deltaLocMan(\widebar{c})$ could actually be chosen equal to the quantity $\deltaHess(c_0)$ introduced in \cref{subsec:invasion_point_defined_by_smaller_radius}; with such a choice, the invasion points $\tilde{x}(t)$ and $\tilde{\xi}_c(t)$ would not differ from $\hat{x}(t)$ and from $\hat{\xi}_c(t)$, respectively. However, this would not significantly simplify the remaining part of the proof; for that reason, this remaining part will be presented for a quantity $\deltaLocMan(\widebar{c})$ not necessarily equal to $\deltaHess(c_0)$, in other words without calling upon the conclusions of \cref{prop:extension_local_steep_stable_manifold}. 
\end{remark}
Let us recall the notation $\uPushedFront{0_{\rr^d}}{\deltaLocMan(\widebar{c})}{\widebar{c}}$ introduced in \cref{def_uPushedFront}, and, for every $w$ in $\partial B_{\rr^d}\bigl(0_{\rr^d},\deltaLocMan(\widebar{c})\bigr)$, the notation $\phi_{\widebar{c},w}$ introduced in \cref{initial_condition_phi_c_u}.
\begin{lemma}[invasion through profiled of pushed fronts, 1]
\label{lem:invasion_through_profiles_pushed_fronts}
The following conclusions hold:
\begin{enumerate}
\item $\limsup_{t\to+\infty} u\bigl(\tilde{x}(t),t\bigr)\cdot u_x\bigl(\tilde{x}(t),t\bigr)<0$. 
\label{item:lem_invasion_through_profiles_pushed_fronts_u_dot_ux_negative}
\item the set $\uPushedFront{0_{\rr^d}}{\deltaLocMan(\widebar{c})}{\widebar{c}}$ is nonempty;
\item the following limits hold as $t$ goes to $+\infty$:
\begin{enumerate}
\item $\dist\Bigl(u\bigl(\tilde{x}(t),t\bigr),\uPushedFront{0_{\rr^d}}{\deltaLocMan(\widebar{c})}{\widebar{c}}\Bigr)\to 0$;
\item $u_t\bigl(\tilde{x}(t),t\bigr)+\widebar{c}u_x\bigl(\tilde{x}(t),t\bigr)\to0$;
\label{item:lem_invasion_through_profiles_pushed_fronts_u_t_plus_bar_c_u_x_goes_to_0}
\item for every positive quantity $L$, 
\[
\sup_{y\in\bigl[-L,+\infty\bigr)}\abs{u\bigl(\tilde{x}(t)+y,t\bigr)-\phi_{\widebar{c},u(\tilde{x}(t),t)}(y)}\to 0
\,.
\]
\end{enumerate}
\end{enumerate}
\end{lemma}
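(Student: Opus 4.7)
The plan is to reduce the four conclusions to properties of a limit profile extracted, by compactness, along an arbitrary sequence of times going to $+\infty$. First I apply \cref{lem:compactness} with the spatial shifts $x_n=\tilde{x}(t_n)$: up to a subsequence there exists an entire solution $u_\infty$ of \cref{parabolic_system} with $D^{2,1}u\bigl(\tilde{x}(t_n)+\cdot,\,t_n+\cdot\bigr)\to D^{2,1}u_\infty$ uniformly on compacts. By \cref{abs_u_of_tilde_x_of_t_t_equals_tilde_delta} and the definition of $\tilde{x}(\cdot)$, the limit satisfies $\abs{u_\infty(0,0)}=\deltaLocMan(\widebar{c})$ and $\abs{u_\infty(y,0)}\le\deltaLocMan(\widebar{c})$ for every $y\ge 0$.

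The central step is to identify $u_\infty$ as a pushed travelling wave at speed $\widebar{c}$. After the change of variables $\xi=x-\widebar{c}t$, $s=t-t_n$, the quantity $\int_{t_n-T}^{t_n} D_{\widebar{c}}\bigl(\hat{\xi}_{\widebar{c}}(t_n),s\bigr)\,ds$ controls the weighted $L^2$-norm of $u_t+\widebar{c}u_x$ in a bounded window around the invasion point, up to the spatial gap $\tilde{x}(t_n)-\hat{x}(t_n)$, whose boundedness follows from an argument analogous to \cref{lem:gap_between_invasion_points_is_bounded}. The relaxation provided by \cref{prop:relaxation} then forces $\partial_t u_\infty+\widebar{c}\partial_x u_\infty\equiv 0$ on $\rr\times(-\infty,0]$, so $u_\infty(x,t)=\phi(x-\widebar{c}t)$ for some profile $\phi:\rr\to\rr^d$. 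To check that $\phi$ is \emph{pushed}, I pass to the limit in \cref{cor:uniform_bound_H1_cStar} through Fatou's lemma to obtain $\phi\in H^1_{c^*}\bigl([0,+\infty),\rr^d\bigr)$; since $c^*>\widebar{c}$, conclusion~\cref{item:lem_Poincare_inequality_convergence_at_both_ends} of \cref{lem:Poincare_inequality} yields $\phi(y)=o\bigl(e^{-c^*y/2}\bigr)$, which exceeds the pushed threshold of \cref{def:pushed_travelling_wave_front}. Applying \cref{prop:local_steep_stable_manifold} with $\delta=\deltaLocMan(\widebar{c})$ identifies $\bigl(\phi(0),\phi'(0)\bigr)$ with a point of $\Wssloc{0_{\rr^d}}{\deltaLocMan(\widebar{c})}{\widebar{c}}$, so $\phi(0)\in\uPushedFront{0_{\rr^d}}{\deltaLocMan(\widebar{c})}{\widebar{c}}$ (yielding conclusion~2) and $\phi=\phi_{\widebar{c},\phi(0)}$.

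From this extraction scheme, conclusions 3(a)--3(c) follow by the standard ``every subsequence admits a further subsequence with a common limit'' pattern: 3(a) because any accumulation point of $u\bigl(\tilde{x}(t),t\bigr)$ lies in $\uPushedFront{0_{\rr^d}}{\deltaLocMan(\widebar{c})}{\widebar{c}}$; 3(b) because $\bigl(u_t+\widebar{c}u_x\bigr)\bigl(\tilde{x}(t_n),t_n\bigr)\to\bigl(\partial_t u_\infty+\widebar{c}\partial_x u_\infty\bigr)(0,0)=0$; and 3(c) because local uniform convergence on $[-L,L']$ for any $L'>L$ combines with a tail bound on $[L',+\infty)$ uniform in $n$ coming from \cref{cor:uniform_bound_H1_cStar} (the weighted $H^1$-bound produces a pointwise decay $\abs{u\bigl(\tilde{x}(t_n)+y,t_n\bigr)}\lesssim e^{-c^*y/2}$ uniformly in $n$, and the same pointwise decay holds for $\phi_{\widebar{c},u(\tilde{x}(t_n),t_n)}$).

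The delicate point is conclusion~1. The compactness argument only delivers the non-strict inequality $\phi(0)\cdot\phi'(0)\le 0$, obtained by taking the right-derivative at $y=0$ of $\abs{\phi(y)}^2\le\abs{\phi(0)}^2$ on $[0,+\infty)$. To upgrade to a strict uniform bound $\phi(0)\cdot\phi'(0)\le -\eta<0$, the plan is to shrink $\deltaLocMan(\widebar{c})$ if needed so that on $\widebar{B}_{\rr^d}\bigl(0_{\rr^d},\deltaLocMan(\widebar{c})\bigr)$ the map $u\mapsto u\cdot\wssloc{0_{\rr^d}}{\deltaLocMan(\widebar{c})}{\widebar{c}}(u)$ is uniformly strictly negative. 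This is a linearisation argument: since $\widebar{c}>\cLinMax$, all the eigenvalues $\lambda_{\widebar{c},-}(\mu_j)$ entering the basis of eigenvectors \cref{eigenvectors} of the differential of $\wssloc{0_{\rr^d}}{\deltaLocMan(\widebar{c})}{\widebar{c}}$ at $0_{\rr^d}$ are real and strictly negative, so that for $u=\sum_j a_j u_j$ with $\abs{u}$ small, $u\cdot\wssloc{0_{\rr^d}}{\deltaLocMan(\widebar{c})}{\widebar{c}}(u)=\sum_j a_j^2\lambda_{\widebar{c},-}(\mu_j)+O\bigl(\abs{u}^3\bigr)$ is bounded above by a negative constant times $\abs{u}^2$. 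Applying this to $u=\phi(0)$ and invoking continuity and compactness of $\uPushedFront{0_{\rr^d}}{\deltaLocMan(\widebar{c})}{\widebar{c}}$ then yields the desired uniform upper bound, from which the strict negativity of the $\limsup$ follows. Ensuring that this uniform quantitative bound passes cleanly through the compactness extraction is the main technical step.
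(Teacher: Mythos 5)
Your architecture matches the paper's: compactness extraction along an arbitrary sequence $t_n\to+\infty$, identification of the limit as a travelling wave at speed $\widebar{c}$ via \cref{prop:relaxation} (note that only the inequality $\tilde{\xi}_{\widebar{c}}(t_n)\ge\hat{\xi}_{\widebar{c}}(t_n)$ is needed to transfer the dissipation estimate, not boundedness of the gap), the pushed character via \cref{cor:uniform_bound_H1_cStar}, Fatou's lemma and conclusion \cref{item:lem_Poincare_inequality_convergence_at_both_ends} of \cref{lem:Poincare_inequality}, and the subsequence pattern for the three limits. Two steps are genuinely missing. First, the upgrade from pushed \emph{wave} to pushed \emph{front}: identifying $\bigl(\phi(0),\phi'(0)\bigr)$ with a point of $\Wssloc{0_{\rr^d}}{\deltaLocMan(\widebar{c})}{\widebar{c}}$ only shows that $\phi$ is the profile of a pushed travelling wave, whereas the set $\uPushedFront{0_{\rr^d}}{\deltaLocMan(\widebar{c})}{\widebar{c}}$ is defined in \cref{def_uPushedFront} through pushed \emph{fronts}; conclusions 2 and 3(a) therefore do not follow from what you wrote. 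The missing ingredient is that $\phi=u_\infty(\cdot,0)$ is bounded (inherited from the bound \cref{bound_u_ux_setup} through the locally uniform convergence), after which conclusion \cref{item:lem_asymptotics_at_the_two_ends_of_space_left_end} of \cref{lem:asymptotics_at_the_two_ends_of_space} supplies the behaviour at the left end of space and turns the wave into a front. Second, in 3(c) the asserted uniform pointwise decay of $\phi_{\widebar{c},w}(y)$ at the rate $e^{-c^*y/2}$, uniformly in $w$, is not justified: the individual decay rates are the $\abs{\lambda_{\widebar{c},-}(\mu_j)}$, which exceed $\widebar{c}/2$ but need not exceed $c^*/2$. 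What the argument actually needs, and what requires its own proof (\cref{lem:uniform_convergence_towards_e_wrt_u}, obtained from the monotonicity of $\xi\mapsto\abs{\phi_{\widebar{c},w}(\xi)}$ together with a covering argument on the compact sphere), is only the uniform convergence $\sup_w\abs{\phi_{\widebar{c},w}(y)}\to0$ as $y\to+\infty$.

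On conclusion 1 you take a different route from the paper. The paper obtains the strict inequality $\phi(0)\cdot\phi'(0)<0$ directly for \emph{every} limit profile from conclusion \cref{item:lem_asymptotics_at_the_two_ends_of_space_negative_scalar_product} of \cref{lem:asymptotics_at_the_two_ends_of_space} (a global differential inequality for $\xi\mapsto\frac{1}{2}\abs{\phi(\xi)-e}^2$, valid up to the radius $\deltaHess(c_0)$ without shrinking $\deltaLocMan(\widebar{c})$), and strictness for each subsequential limit already forces the $\limsup$ to be negative; your premise that compactness ``only delivers $\le0$'' is therefore not accurate. Your linearisation of $\wssloc{0_{\rr^d}}{\deltaLocMan(\widebar{c})}{\widebar{c}}$ is nevertheless a workable alternative, and it suffices for \cref{thm:main}, which only needs \emph{some} admissible $\delta$; but with $V$ of class $\ccc^2$ the manifold is only $\ccc^1$, so the remainder is $o\bigl(\abs{u}^2\bigr)$ rather than $O\bigl(\abs{u}^3\bigr)$ (still sufficient since all $\lambda_{\widebar{c},-}(\mu_j)\le-\widebar{c}/2$), and the compactness of $\uPushedFront{0_{\rr^d}}{\deltaLocMan(\widebar{c})}{\widebar{c}}$ that you invoke is neither established nor needed: the uniform bound on the sphere applies directly to each limit profile.
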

The proof calls upon some properties of the profiles of pushed travelling waves invading a critical point, namely \cref{lem:asymptotics_at_the_two_ends_of_space,lem:uniform_convergence_towards_e_wrt_u} stated in the next \cref{sec:properties_profiles_pushed_trav_waves}.
\begin{proof}
Let us proceed by contradiction and assume that at least one of the conclusions of this lemma does not hold. Then, there exists a sequence $(t_n)_{n\in\nn}$ of times going to $+\infty$ such that one of the following properties hold:
\begin{enumerate}[label=\Alph*]
\item the quantity $u_t\bigl(\tilde{x}(t_n),t_n\bigr)+\widebar{c}u_x\bigl(\tilde{x}(t_n),t_n\bigr)$ does not go to $0$ as $n$ goes to $+\infty$;
\label{item:proof_lem_invasion_through_profiles_pushed_fronts_ut_plus_bar_c_ux_goes_to_0_at_invasion point}
\item or, either the set $\uPushedFront{0_{\rr^d}}{\deltaLocMan(\widebar{c})}{\widebar{c}}$ is empty or, if it is nonempty, the distance $\dist\Bigl(u\bigl(\tilde{x}(t_n),t_n\bigr),\uPushedFront{0_{\rr^d}}{\deltaLocMan(\widebar{c})}{\widebar{c}}\Bigr)$ does not go to $0$ as $n$ goes to $+\infty$;
\label{item:proof_lem_invasion_through_profiles_pushed_fronts_dist_to_uPushedFront_goes_to_0}
\item or the quantity $\limsup_{n\to+\infty} u\bigl(\tilde{x}(t_n),t_n\bigr)\cdot u_x\bigl(\tilde{x}(t_n),t_n\bigr)$ is nonnegative;
\label{item:proof_lem_invasion_through_profiles_pushed_fronts_scalar_product_ux_ut_negative}
\item or there exists a positive quantity $L_0$ such that the quantity
\[
\sup_{y\in\bigl[-L_0,+\infty\bigr)}\abs{u\bigl(\tilde{x}(t_n)+y,t_n\bigr)-\phi_{\widebar{c},u(\tilde{x}(t_n),t_n)}(y)}
\]
does not go to $0$ as $n$ goes to $+\infty$. 
\label{item:proof_lem_invasion_through_profiles_pushed_fronts_approach_profile}
\end{enumerate}
By compactness (\cref{lem:compactness}), up to replacing the sequence $(t_n)_{n\in\nn}$ by a subsequence, there exists an entire solution $u_\infty$ of system \cref{parabolic_system} such that, with the notation of \cref{subsec:compactness}, 
\begin{equation}
\label{converge_for_subsequence_proof_lemma_invasion_through_profiles_pushed_fronts}
D^{2,1}u\bigl(\tilde{x}(t_n)+\cdot,t_n+\cdot\bigr) \to D^{2,1} u_\infty
\quad\text{as}\quad
n\to+\infty
\,,
\end{equation}
uniformly on compact subsets of $\rr^2$. Let $T$ denote a positive quantity, and let us consider the quantity $\iii_n$ defined as
\[
\iii_n = \int_{t_n-T}^{t_n} D_{\widebar{c}}\bigl(\hat{\xi}_{\widebar{c}}(t_n),t\bigr) \, dt
\,.
\]
According to \cref{prop:relaxation}, this quantity $\iii_n$ goes to $0$ as $n$ goes to $+\infty$. Observe that
\[
\begin{aligned}
\iii_n &= \int_{t_n-T}^{t_n} e^{-\widebar{c}\hat{\xi}_{\widebar{c}}(t_n)} D_{\widebar{c}}(t) \, dt \\
&= \int_{t_n-T}^{t_n} e^{-\widebar{c}\hat{\xi}_{\widebar{c}}(t_n)} \left(\int_{\rr} e^{\widebar{c}(x-\widebar{c}t)} (u_t + \widebar{c} u_x)^2(x,t) \, dx \right) \, dt
\,,
\end{aligned}
\]
so that, substituting $t$ with $t_n+s$ and $x$ with $\tilde{x}(t_n)+y$, 
\[
\iii_n = \int_{-T}^0 e^{\widebar{c}\bigl(\tilde{x}(t_n)-\widebar{c}t_n-\hat{\xi}_{\widebar{c}}(t_n)-\widebar{c}s\bigr)}\left(\int_{\rr} e^{\widebar{c}y} (u_t + \widebar{c}u_x)^2\bigl(\tilde{x}(t_n)+y,t_n+s\bigr) \, dy \right)\, ds 
\,.
\]
According to inequality \cref{hat_x_less_than_tilde_x}, 
\[
\tilde{x}(t_n)-\widebar{c}t_n-\hat{\xi}_{\widebar{c}}(t_n) = \tilde{\xi}_{\widebar{c}}(t_n)-\hat{\xi}_{\widebar{c}}(t_n) \ge 0
\,,
\]
and the term $-\widebar{c}^2 s$ in the argument of the exponential factor is nonnegative for $s$ in $[-T,0]$; it follows that
\[
\iii_n \ge \int_{-T}^0 \left(\int_{\rr} e^{\widebar{c}y} (u_t + \widebar{c}u_x)^2\bigl(\tilde{x}(t_n)+y,t_n+s\bigr) \, dy \right)\, ds
\,.
\]
Let $L$ denote a positive quantity, and let us consider the integrals
\[
\begin{aligned}
\jjj_n &= \int_{-T}^0 \left(\int_{-L}^L (u_t + \widebar{c}u_x)^2\bigl(\tilde{x}(t_n)+y,t_n+s\bigr) \, dy \right)\, ds \,, \\
\text{and}\quad
\jjj_\infty &= \int_{-T}^0 \left(\int_{-L}^L \bigl(\partial_t u_\infty(y,s) + \widebar{c}\partial_x u_\infty(y,s)\bigr)^2 \, dy \right)\, ds
\,.
\end{aligned}
\]
It follows from the previous inequality that
\[
\iii_n\ge e^{-\widebar{c}L} \jjj_n 
\,,
\]
and since $\iii_n$ goes to $0$ as $n$ goes to $+\infty$, the same must therefore be true for the nonnegative quantity $\jjj_n$. On the other hand, it follows from the convergence \cref{converge_for_subsequence_proof_lemma_invasion_through_profiles_pushed_fronts} that $\jjj_n$ goes to $\jjj_\infty$ as $n$ goes to $+\infty$. It follows that $\jjj_\infty$ must be equal to $0$, and since the positive quantity $L$ was any, it follows that the function $\partial_t u_\infty + \widebar{c}\partial_x u_\infty$ is identically equal to $0_{\rr^d}$ on $\rr\times[0,+\infty)$, thus in particular on $\rr\times\{0\}$. In view of the convergence \cref{converge_for_subsequence_proof_lemma_invasion_through_profiles_pushed_fronts}, it follows that the property \cref{item:proof_lem_invasion_through_profiles_pushed_fronts_ut_plus_bar_c_ux_goes_to_0_at_invasion point} above cannot hold. 

Let us consider the function $\phi_\infty:\rr\to\rr^d$ defined as
\[
\phi_\infty(\xi) = u_\infty(\xi,0)\,, \quad\text{for all}\quad \xi \text{ in }\rr
\,.
\]
Then, since $u_\infty$ is a solution of the parabolic system \cref{parabolic_system}, it follows that $\phi_\infty$ is a solution of the differential systems \cref{syst_trav_front_order_2,syst_trav_front_order_1} (for $c$ equal to $\widebar{c}$) governing the profiles of waves travelling at the speed $\widebar{c}$. In addition, since according to inequalities \cref{hat_x_less_than_tilde_x} $\tilde{x}(t)$ is greater than or equal to $\hat{x}(t)$, it follows from conclusion \cref{uniform_bound_H1_cStar} of \cref{cor:uniform_bound_H1_cStar} that the quantity 
\begin{equation}
\label{H1cstar_norm_of_u_centred_at_tidle_x_of_tn}
\int_0^{+\infty} e^{c^*y} (u^2 + u_x^2)\bigl(\tilde{x}(t_n)+y,t_n\bigr) \, dy 
\end{equation}
is bounded, uniformly with respect to $n$; thus, according to Fatou Lemma, it follows from the convergence \cref{converge_for_subsequence_proof_lemma_invasion_through_profiles_pushed_fronts} that $\phi_\infty$ must belong to the space $H^1_{c^*}(\rr,\rr^d)$. Thus, according to conclusion \cref{item:lem_Poincare_inequality_convergence_at_both_ends} of \cref{lem:Poincare_inequality}, the following limit holds:
\begin{equation}
\phi_\infty(\xi) = o\bigl(e^{-\frac{1}{2}c^*\xi}\bigr)
\quad\text{as}\quad
\xi\to+\infty
\,.
\end{equation}
This shows that $\phi_\infty$ is the profile of a \emph{pushed} travelling wave invading $0_{\rr^d}$ (\cref{def:pushed_travelling_wave_front}). And, since according to equality \cref{abs_u_of_tilde_x_of_t_t_equals_tilde_delta} $\abs{\phi_\infty(0)}$ is equal to $\deltaLocMan(\widebar{c})$, it follows that 
\[
\phi_\infty = \phi_{\widebar{c},\phi_\infty(0)} = \phi_{\widebar{c},u_\infty(0,0)}
\quad\text{(notation \cref{initial_condition_phi_c_u}).}
\]
Finally, since according to the bound \cref{bound_u_ux_setup} $\phi_\infty(\xi)$ is bounded, uniformly with respect to $\xi$, it follows from conclusion \cref{item:lem_asymptotics_at_the_two_ends_of_space_left_end} of \cref{lem:asymptotics_at_the_two_ends_of_space} that $\phi_\infty$ must be the profile of a pushed \emph{front} invading $0_{\rr^d}$ at the speed $\widebar{c}$. And since according to equality \cref{abs_u_of_tilde_x_of_t_t_equals_tilde_delta} the quantity $\abs{\phi_\infty(0)}$ is equal to $\deltaLocMan(\widebar{c})$, the vector $\phi_\infty(0)$ must belong to the set $\uPushedFront{0_{\rr^d}}{\deltaLocMan(\widebar{c})}{\widebar{c}}$, which is therefore nonempty. In view of the convergence \cref{converge_for_subsequence_proof_lemma_invasion_through_profiles_pushed_fronts}, this shows that the property \cref{item:proof_lem_invasion_through_profiles_pushed_fronts_dist_to_uPushedFront_goes_to_0} above cannot hold. In addition, since according to conclusion \cref{item:lem_asymptotics_at_the_two_ends_of_space_negative_scalar_product} of \cref{lem:asymptotics_at_the_two_ends_of_space} (applied with $\delta$ equal to $\deltaHess(c_0)$ and $c$ equal to $\widebar{c}$) the scalar product $\phi_\infty(0)\cdot\phi_\infty'(0)$ is negative, it follows from the convergence \cref{converge_for_subsequence_proof_lemma_invasion_through_profiles_pushed_fronts} that the property \cref{item:proof_lem_invasion_through_profiles_pushed_fronts_scalar_product_ux_ut_negative} above cannot hold either. 

It remains to derive a contradiction from property \cref{item:proof_lem_invasion_through_profiles_pushed_fronts_approach_profile}. Observe that, since $u\bigl(\tilde{x}(t_n),t_n\bigr)$ goes to $u_\infty(0,0)$ as $n$ goes to $+\infty$, according to the continuity of the solutions of the differential systems \cref{syst_trav_front_order_2,syst_trav_front_order_1} with respect to initial conditions, 
\[
\phi_{\widebar{c},u(\tilde{x}(t_n),t_n)}(\cdot)\to \phi_{\widebar{c},u_\infty(0,0)} (\cdot) = \phi_\infty(\cdot) = u_\infty(\cdot,0) 
\,,
\]
uniformly on every compact subset of $\rr$. Thus, it follows from the convergence \cref{converge_for_subsequence_proof_lemma_invasion_through_profiles_pushed_fronts} that 
\[
u\bigl(\tilde{x}(t_n)+\cdot,t_n\bigr) - \phi_{\widebar{c},u(\tilde{x}(t_n),t_n)}(\cdot) \to 0_{\rr^d}
\,,
\]
uniformly on every compact subset of $\rr$. Therefore, it follows from property \cref{item:proof_lem_invasion_through_profiles_pushed_fronts_approach_profile} that there must exist a sequence $(y_n)_{n\in\nn}$, going to $+\infty$ as $n$ goes to $+\infty$, such that 
\begin{equation}
\label{liminf_u_minus_phi_at_tilde_x_of_tn_plus_yn}
\liminf_{n\to+\infty} \abs{u\bigl(\tilde{x}(t_n)+y_n,t_n\bigr) - \phi_{\widebar{c},u(\tilde{x}(t_n),t_n)}(y_n)} >0
\,.
\end{equation}
According to the uniform convergence stated in \cref{lem:uniform_convergence_towards_e_wrt_u} (for the same parameters and $\delta$ and $c$ as the ones chosen above to apply \cref{lem:asymptotics_at_the_two_ends_of_space}), 
\[
\phi_{\widebar{c},u(\tilde{x}(t_n),t_n)}(y_n)\to 0_{\rr^d}
\quad\text{as}\quad
n\to+\infty
\,;
\]
thus, it follows from inequality \cref{liminf_u_minus_phi_at_tilde_x_of_tn_plus_yn} that
\[
\liminf_{n\to+\infty} \abs{u\bigl(\tilde{x}(t_n)+y_n,t_n\bigr)} >0
\,,
\]
a contradiction with the uniform bound on the quantity \cref{H1cstar_norm_of_u_centred_at_tidle_x_of_tn}. \Cref{lem:invasion_through_profiles_pushed_fronts} is proved. 
\end{proof}
\begin{proof}[End of the proof of \cref{thm:main}]
Let us consider the function $f:\rr\times[0,+\infty)\to\rr$ defined as: $f(x,t) = \frac{1}{2} u(x,t)^2$. Then, in view of the property \cref{abs_u_of_tilde_x_of_t_t_equals_tilde_delta}, for every $(x,t)$ in $\rr\times[0,+\infty)$,
\[
\begin{aligned}
f\bigl(\tilde{x}(t),t\bigr) = \frac{1}{2} \deltaLocMan(\widebar{c})^2
\quad\text{and}\quad
\partial_x f(x,t) &= u_x(x,t)\cdot u(x,t) \,, \\
\text{and}\quad
\partial_t f(x,t) &= u_t(x,t)\cdot u(x,t)
\,.
\end{aligned}
\]
It thus follows from conclusion \cref{item:lem_invasion_through_profiles_pushed_fronts_u_dot_ux_negative} of \cref{lem:invasion_through_profiles_pushed_fronts} and from the Implicit Function Theorem that, for $t$ large enough positive, the function $t\mapsto \tilde{x}(t)$ is of class $C^1$ and satisfies: 
\[
\begin{aligned}
\tilde{x}'(t) &= - \frac{u_t\bigl(\tilde{x}(t),t\bigr)\cdot u\bigl(\tilde{x}(t),t\bigr)}{u_x\bigl(\tilde{x}(t),t\bigr)\cdot u\bigl(\tilde{x}(t),t\bigr)} \\
&= \widebar{c} - \frac{\Bigl(u_t\bigl(\tilde{x}(t),t\bigr) + \widebar{c}u_x\bigl(\tilde{x}(t),t\bigr)\Bigr)\cdot u\bigl(\tilde{x}(t),t\bigr)}{u_x\bigl(\tilde{x}(t),t\bigr)\cdot u\bigl(\tilde{x}(t),t\bigr)}
\,,
\end{aligned}
\]
and it follows from this expression, from conclusions \cref{item:lem_invasion_through_profiles_pushed_fronts_u_dot_ux_negative,item:lem_invasion_through_profiles_pushed_fronts_u_t_plus_bar_c_u_x_goes_to_0} of \cref{lem:invasion_through_profiles_pushed_fronts}, and from the bound \cref{bound_u_ux_setup} on $\abs{u(x,t)}$ that
\[
\tilde{x}'(t)\to\widebar{c}
\quad\text{as}\quad
t\to+\infty
\,.
\]
In view of the property \cref{abs_u_of_tilde_x_of_t_t_equals_tilde_delta} and of the conclusions of \cref{lem:invasion_through_profiles_pushed_fronts}, all the conclusions of \cref{thm:main} are proved. 
\end{proof}
\subsection{Proof of \texorpdfstring{\cref{thm:characterization_existence_pushed_front}}{Theorem \ref{thm:characterization_existence_pushed_front}}}
\label{subsec:proof_cor_main}
As in the previous \namecrefs{subsec:proof_cor_main}, let us assume that the critical point $e$ is equal to $0_{\rr^d}$. 
\begin{proof}[Proof of the equivalence between conditions \cref{item:thm_characterization_existence_pushed_front_cLinMax_le_cNonLinMax,item:thm_characterization_existence_pushed_front_existence} of \cref{thm:characterization_existence_pushed_front}]
Let us assume that condition \cref{item:thm_characterization_existence_pushed_front_cLinMax_le_cNonLinMax} of \cref{thm:characterization_existence_pushed_front} holds. Since $e$ is assumed to be equal to $0_{\rr^d}$, this means that there exists a quantity $c_0$, greater than $\cLinMax$, and a function $w$ in $H^1_{c_0}(\rr,\rr^d)$ such that the energy $\eee_{c_0}[w]$ is negative. Let $\chi:\rr\to\rr$ denote a smooth cutoff function satisfying the conditions \cref{cut_off}, let $\xLarge$ denote a (large) positive quantity to be chosen below, and let us consider the function $\tilde{w}$ defined as in \cref{expression_cutoffed_initial_condition}:
\[
\tilde{w}(x) = \chi(x-\xLarge) w(x)
\,.
\]
Let us consider the solution $(x,t)\mapsto u(x,t)$ of the parabolic system \cref{parabolic_system} for the initial condition $u(\cdot,0) = \tilde{w}(\cdot)$. According to the definition of $\tilde{w}$, the quantity $\cDecay[u]$ (defined in \cref{decay_variational_invasion_speed}) is equal to $+\infty$. In addition, since $w$ is in $H^1_{c_0}(\rr,\rr^d)$, the quantity $\eee_{c_0}[\tilde{w}]$ goes to $\eee_{c_0}[w]$ as $\xLarge$ goes to $+\infty$; thus, if $\xLarge$ is large enough positive, the quantity $\eee_{c_0}[\tilde{w}]$ is (also) negative, so that, in this case the variational speed $\cVar[u]$ (also defined in \cref{decay_variational_invasion_speed}) is greater than $c_0$. It follows that
\[
\cLinMax < \cVar[u] < \cDecay[u]
\,,
\]
or in other words that the condition \cref{thm_main_cLinMax_smaller_than_cVar_smaller_than_cDecay} of \cref{thm:main} holds. According to the conclusion of this theorem, the solution $u$ invades the critical point $0_{\rr^d}$ through the profiles of pushed fronts, which ensures the existence of (at least) one pushed front invading $0_{\rr^d}$ at the speed $\cVar[u]$ (see \cref{def:invasion_through_profiles_pushed_fronts}); in other words, conclusion \cref{item:thm_characterization_existence_pushed_front_existence} of \cref{thm:characterization_existence_pushed_front} holds. 

Conversely, if conclusion \cref{item:thm_characterization_existence_pushed_front_existence} of \cref{thm:characterization_existence_pushed_front} holds (with $e$ equal to $0_{\rr^d}$), then there exists a pushed front invading $0_{\rr^d}$ at a speed $c$ greater than $\cLinMax$; let us denote by $\phi$ the profile of this pushed front. According to equality \cref{energy_pushed_front_travelling_frame} of \cref{prop:energy_pushed_front_travelling_frame}, for every speed $c'$ in the interval $(\cLinMax,c)$, the energy $\eee_{c'}[\phi]$ is negative. Since $\phi$ belongs to $H^1_c(\rr,\rr^d)$, it also belongs to $H^1_{c'}(\rr,\rr^d)$, thus $c'$ belongs to the set $\ccc_{-\infty}$; this ensures that the quantity $\cNonLinMax$ is greater than or equal to $c'$, and thus greater than $\cLinMax$; in other words, condition \cref{item:thm_characterization_existence_pushed_front_cLinMax_le_cNonLinMax} of \cref{thm:characterization_existence_pushed_front} holds.
\end{proof}
\begin{proof}[Proof of conclusion \cref{item:thm_characterization_existence_pushed_front_existence_speed_cNonLinMax} of \cref{thm:characterization_existence_pushed_front}]
Let us proceed by contradiction and assume that no pushed front invades $0_{\rr^d}$ at the speed $\cNonLinMax$. The proof (written above) that condition \cref{item:thm_characterization_existence_pushed_front_cLinMax_le_cNonLinMax} (of \cref{thm:characterization_existence_pushed_front}) implies condition \cref{item:thm_characterization_existence_pushed_front_existence} that there exists a pushed front invading $e$ at a speed arbitrarily close to $\cNonLinMax$ in the interval $(\cLinMax,\cNonLinMax)$. In other words, there exists an increasing sequence $(c_n)_{n\in\nn}$ of speeds in the interval $(\cLinMax,\cNonLinMax)$, going to $\cNonLinMax$ as $n$ goes to $+\infty$, such that, for every nonnegative integer $n$, there exists a pushed front invading $e$ at the speed $c_n$. Let $\phi_n$ denote the profile of this pushed front, normalized (with respect to space translations) by the condition $\abs{\phi_n(0)} = \deltaHess(c_0)$. Up to replacing the sequence $(\phi_n)_{n\in\nn}$ by a subsequence, it may be assumed that the vectors $\phi_n(0)$ converge, as $n$ goes to $+\infty$, towards a vector $u_\infty$ of $\partial B_{\rr^d}\bigl(0_{\rr^d},\deltaHess(c_0)\bigr)$. As shown by \cref{prop:extension_local_steep_stable_manifold}, the map $\wssloc{e}{\deltaHess(c_0)}{\cNonLinMax}$ defining the stable manifold of $(e,0_{\rr^d})$ for the differential system \cref{syst_trav_front_order_1} governing the profiles of fronts travelling at the speed $\cNonLinMax$ is defined on the closed ball $\widebar{B}_{\rr^d}\bigl(e,\deltaHess(c_0)\bigr)$. Let $\phi_\infty$ denote the solution of the corresponding second order differential \cref{syst_trav_front_order_2} (still for the speed $\cNonLinMax$) for the initial condition: 
\[
\bigl(\phi_\infty(0),\phi_\infty'(0)\bigr) = \bigl(u_\infty,\wssloc{e}{\deltaHess(c_0)}{\cNonLinMax}(u_\infty)\bigr)
\,.
\]
According to this definition, $\phi_\infty$ is the profile of a pushed travelling wave invading $e$ at the speed $\cNonLinMax$ (\cref{def:pushed_travelling_wave_front}). In addition, since according to conclusion \cref{item:lem_asymptotics_at_the_two_ends_of_space_left_end} of \cref{lem:asymptotics_at_the_two_ends_of_space} the quantities $\sup_{\xi\in\rr}\abs{\phi_n(\xi)}$ are bounded from above by a quantity depending only on $V$, the solution $\phi_\infty$ is globally defined, and $\sup_{\xi\in\rr}\abs{\phi_\infty(\xi)}$ is bounded from above by the same quantity. In other words, $\phi_\infty$ is the profile of a pushed travelling front (and not only a pushed travelling wave).
\end{proof}
\begin{remark}
For a generic potential $V$, the set of profiles of pushed travelling fronts and the set of speeds of pushed travelling fronts are discrete, \cite{JolyOliverBRisler_genericTransvPulledPushedTFParabGradSyst_2023}, so that in this case, the speeds $c_n$ introduced in the proof above must be equal to $\cNonLinMax$ for $n$ large enough, and the last compactness argument is unnecessary. 
\end{remark}
\begin{proof}[Proof of conclusion \cref{item:thm_characterization_existence_pushed_front_global_minimizer} of \cref{thm:characterization_existence_pushed_front}]
Let $\phi$ denote the profile of a pushed front invading $e$ at the speed $\cNonLinMax$. It follows from equality \cref{energy_of_pushed_front_vanishes} of \cref{prop:energy_pushed_front_travelling_frame} that its energy $\eee_{\cNonLinMax}[\phi]$ vanishes. On the other hand, it follows from the definition of $\cNonLinMax$ (\cref{def:max_nonlin_invasion_speed}) and from the fact that $\ccc_0$ is closed (conclusion \cref{item:prop_basic_properties_variational_structure_topology} of \cref{prop:basic_properties_variational_structure}) that $\cNonLinMax$ belongs to $\ccc_0$; according to the definition \cref{def_ccc_minus_infty_ccc_0} of $\ccc_0$, $\phi$ is therefore a global minimizer of the energy $\eee_{\cLinMax}[\cdot]$ in $H^1_{\cNonLinMax}(\rr,\rr^d)$, which is the intended conclusion. 
\end{proof}
\subsection{Proof of \texorpdfstring{\cref{cor:full_properties_ccc_minus_infty_and_ccc_0}}{Corollary \ref{cor:full_properties_ccc_minus_infty_and_ccc_0}}}
\label{subsec:proof_cor_full_properties_ccc_minus_infty_and_ccc_0}
\begin{proof}[Proof of \cref{cor:full_properties_ccc_minus_infty_and_ccc_0}]
According to conclusions \cref{item:prop_basic_properties_variational_structure_inclusion_ccc_minus_infty,item:prop_basic_properties_variational_structure_topology} of \cref{prop:basic_properties_variational_structure} and to the definition \cref{def_maximal_nonlinear_invasion_speed} of the quantity $\cNonLinMax$, 
\begin{equation}
\label{framing_ccc_minus_infty}
(0,\cLinMax) \subset \ccc_{-\infty} \subset (0,\cNonLinMax)
\,,
\end{equation}
and these inclusions show that, if $\cNonLinMax$ is equal to (that is, not larger than) $\cLinMax$, then conclusion \cref{full_properties_ccc_minus_infty_and_ccc_0} of \cref{cor:full_properties_ccc_minus_infty_and_ccc_0} holds. 

Let us assume that the converse holds, or in other words let us assume that $\cNonLinMax$ is larger than $\cLinMax$. In this case, conclusion \cref{item:thm_characterization_existence_pushed_front_existence_speed_cNonLinMax} of \cref{thm:characterization_existence_pushed_front} states that there exists a pushed front invading $e$ at the speed $\cNonLinMax$. Let us denote by $\phi$ the profile of this pushed front. According to equality \cref{energy_pushed_front_travelling_frame} of \cref{prop:energy_pushed_front_travelling_frame} (see \cref{fig:graph_eee_cprime_of_phi}), for every $c$ in $(0,\cNonLinMax)$, the energy $\eee_c[\phi]$ is negative; this shows that the whole interval $(0,\cNonLinMax)$ is included in $\ccc_{-\infty}$, and in view of the inclusions \cref{framing_ccc_minus_infty}, conclusion \cref{full_properties_ccc_minus_infty_and_ccc_0} of \cref{cor:full_properties_ccc_minus_infty_and_ccc_0} again holds. 

Still in this case where $\cNonLinMax$ is larger than $\cLinMax$, it follows from equality \cref{energy_of_pushed_front_vanishes} of \cref{prop:energy_pushed_front_travelling_frame} that the energy $\eee_{\cNonLinMax}[\phi]$ vanishes. This shows that $\cNonLinMax$ cannot be equal to $\cQuadHull$, or else conclusion \cref{item:lem_lower_bound_energy_trav_frame_without_bar_xi} of \cref{lem:lower_bound_energy_trav_frame} would ensure that the same energy is positive, a contradiction. In view of the second inequality of \cref{cLinMax_le_cNonLinMax_le_cQuadHull}, conclusion \cref{cor_full_properties_cNonLinMax_less_than_cQuadHull} of \cref{cor:full_properties_ccc_minus_infty_and_ccc_0} is proved. 

According to conclusion \cref{item:thm_characterization_existence_pushed_front_global_minimizer} of \cref{thm:characterization_existence_pushed_front}, for $c$ equals $\cNonLinMax$ there exists at least one global minimizer of $\eee_c[\cdot]$ in $H^1_c(\rr,\rr^d)$ which is not identically equal to $0_{\rr^d}$. According to the definition of $\ccc_{-\infty}$, such a global minimizer does not exist for $c$ in $(-\infty,\cNonLinMax)$. To complete the proof, let us proceed by contradiction and assume that a global minimizer of $\eee_c[\cdot]$ in $H^1_c(\rr,\rr^d)$, not identically equal to $0_{\rr^d}$, exists for some $c$ in $(\cNonLinMax,+\infty)$. It follows from \cref{prop:decrease_energy_trav_frame} that this minimizer must be the profile of a travelling wave invading $e$ at the speed $c$ (and, since this profile is in $H^1_c(\rr,\rr^d)$, a \emph{pushed} travelling wave). In addition, it follows from the coercivity assumption \cref{hyp_coerc} that this profile must be bounded, it is therefore the profile of a pushed travelling \emph{front}; and again according to equality \cref{energy_pushed_front_travelling_frame} of \cref{prop:energy_pushed_front_travelling_frame}, it follows that the whole interval $(0,c)$ must be in $\ccc_{-\infty}$, a  contradiction with the definition of $\cNonLinMax$. \Cref{cor:full_properties_ccc_minus_infty_and_ccc_0} is proved. 
\end{proof}
\appendix
\section{Some properties of the profiles of pushed travelling waves invading a critical point}
\label{sec:properties_profiles_pushed_trav_waves}
As in the previous sections, let us consider a potential $V$ in $\ccc^2(\rr^d,\rr)$ and a critical point $e$ of $V$, and let us assume that assumptions \cref{hyp_coerc} and \cref{hyp_crit_point} (stated in \cref{subsec:system_semi_flow,subsec:invaded_critical_point}) hold (in this \namecref{sec:properties_profiles_pushed_trav_waves} it will not be assumed that $e$ equals $0_{\rr^d}$). As in the notation \cref{def_mu_1_mu_d} let us denote by $\mu_1$ the least eigenvalue of $D^2V(e)$, and as in \cref{def_max_linear_invasion_speed} let us denote by $\cLinMax$ the maximal linear invasion speed of $e$. Let us recall the notation $\eee_c[\cdot]$ introduced in \cref{def_eee_c}, the notation $H^1_c(\rr,\rr^d)$ introduced in \cref{H1c}, and the notation $\muQuadHull$ and $\cQuadHull$ introduced in \cref{subsubsec:lower_quadratic_hull}. As in \cref{subsec:set_up}, let us consider a positive quantity $c_0$ and a negative quantity $\mu_0$ related by
\begin{equation}
\label{def_c0_appendix}
c_0 = 2\sqrt{-\mu_0} \iff \mu_0 = -\frac{c_0^2}{4}
\,,
\end{equation}
and let us assume that
\begin{equation}
\label{cLinMax_less_than_c0_less_than_cQuadHull_appendix}
\cLinMax< c_0 < \cQuadHull\,,
\quad\text{or equivalently}\quad
\muQuadHull < \mu_0 < \mu_1
\,, 
\end{equation}
see \vref{fig:correspondence_mu_c}. According to this assumption the quantity $\deltaHess(c_0)$ can be defined exactly as in \cref{subsec:invasion_point_defined_by_smaller_radius}, so that, for every $w$ in $\widebar{B}_{\rr^d}\bigl(e,\deltaHess(c_0)\bigr)$, 
\begin{equation}
\label{spetrum_of_D2V_of_u_larger_than_mu_0_appendix}
\sigma\bigl(D^2V(w)\bigr)\subset [\mu_0,+\infty)
\,,
\end{equation}
see \cref{fig:travelling_frame_invasion_points}. Finally, let us consider a speed $c$ greater than $c_0$, and let us consider the differential systems \cref{syst_trav_front_order_2,syst_trav_front_order_1} governing the profiles $\xi\mapsto\phi(\xi)$ of waves travelling at the speed $c$ for the parabolic system \cref{parabolic_system}:
\begin{equation}
\label{syst_trav_front_appendix}
\phi'' = -c\phi' + \nabla V(\phi)
\,,
\quad\text{or equivalently}\quad
\begin{pmatrix} \phi' \\ \psi' \end{pmatrix} = \begin{pmatrix}\psi \\ - c \psi + \nabla V(\phi)  \end{pmatrix}
\,.
\end{equation}
\subsection{Asymptotics at the two ends of space}
Let $\xi\mapsto\phi(\xi)$ denote a solution of the differential system \cref{syst_trav_front_appendix}, defined on a maximal interval $(\xi_-,+\infty)$ (for same quantity $\xi_-$ in $\{-\infty\}\cup\rr$) and satisfying the following properties:
\[
\phi(\xi)\xrightarrow[\xi\to+\infty]{}e 
\quad\text{and}\quad
\phi\not\equiv e
\,.
\]
\begin{lemma}[asymptotics at the two ends of space]
\label{lem:asymptotics_at_the_two_ends_of_space}
The following statements hold. 
\begin{enumerate}
\item If $\phi(\cdot)$ is bounded on $(\xi_-,+\infty)$, then $\xi_-$ equals $-\infty$, the quantity $\sup_{\xi\in\rr} \abs{\phi(\xi)}$ is bounded from above by a quantity depending only on $V$, and there exists a negative quantity $V_{-\infty}$ such that the following limits hold as $\xi$ goes to $-\infty$:
\[
\phi'(\xi)\to 0
\quad\text{and}\quad 
\dist\Bigl(\phi(\xi),\SigmaCrit(V)\cap V^{-1}\bigl(\{V_{-\infty}\}\bigr)\Bigr) \to 0
\,.
\]
\label{item:lem_asymptotics_at_the_two_ends_of_space_left_end}
\item If $\phi$ is the profile of a pushed travelling wave (\cref{def:pushed_travelling_wave_front}), then there exists a unique quantity $\hat{\xi}$ in $(\xi_-,+\infty)$ such that
\begin{equation}
\label{abs_phi_of_hat_xi_minus_e_equals_deltaHess}
\abs{\phi(\hat{\xi})-e} = \deltaHess(c_0) 
\,,
\end{equation}
and such that, for every $\xi$ in $(\hat{\xi},+\infty)$,
\begin{equation}
\label{abs_phi_of_hat_xi_minus_e_less_than_deltaHess_and_decreasing}
\abs{\phi(\xi)-e} < \deltaHess(c_0) 
\quad\text{and}\quad
\bigl(\phi(\xi)-e\bigr)\cdot\phi'(\xi)<0
\,.
\end{equation}
\label{item:lem_asymptotics_at_the_two_ends_of_space_negative_scalar_product}
\end{enumerate}
\end{lemma}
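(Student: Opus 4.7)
For conclusion~\cref{item:lem_asymptotics_at_the_two_ends_of_space_left_end} the plan is to exploit the Hamiltonian $H(\xi):=\tfrac{1}{2}\abs{\phi'(\xi)}^2-V\bigl(\phi(\xi)\bigr)$ associated with \cref{syst_trav_front_appendix}, which satisfies $H'(\xi)=-c\abs{\phi'(\xi)}^2\le 0$. Boundedness of $\phi$ combined with standard ODE continuation gives $\xi_-=-\infty$; viewing $(x,t)\mapsto\phi(x-ct)$ as a global-in-time solution of \cref{parabolic_system} in $\HoneulofR$ and applying the attracting-ball bound \cref{att_ball_C1_norm} yields the universal $L^\infty$-bound on $\phi$ and on $\phi'$ (and hence on $\phi''$ through the equation). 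From $\phi(\xi)\to e$ at $+\infty$ and uniform continuity of $\phi'$ one deduces $\phi'(\xi)\to 0$, so $H(\xi)\to -V(e)=0$ at $+\infty$; monotonicity of $H$ then provides $\int_\rr\abs{\phi'}^2\,d\xi<\infty$ and a finite limit $V_{-\infty}:=-\lim_{\xi\to-\infty}H(\xi)\le 0$. A LaSalle-type argument using \cref{lem:compactness} applied to translates of $\phi$ identifies the $\omega$-limit set at $-\infty$ with a subset of $\SigmaCrit(V)\cap V^{-1}\bigl(\{V_{-\infty}\}\bigr)$, and the case $V_{-\infty}=0$ is excluded because it would force $H\equiv 0$, hence $\phi'\equiv 0$, hence $\phi\equiv e$, contradicting $\phi\not\equiv e$.

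For conclusion~\cref{item:lem_asymptotics_at_the_two_ends_of_space_negative_scalar_product} the existence of $\hat{\xi}$ relies on the local steep stable manifold. The pushed-wave hypothesis places $\bigl(\phi(\xi),\phi'(\xi)\bigr)$ on the manifold $\Wssloc{e}{\delta}{c}$ of \cref{prop:local_steep_stable_manifold} for all $\xi$ sufficiently large; on this manifold the forward-$\xi$ flow is exponentially contracting at all rates strictly below $-c/2$, so the backward-$\xi$ flow is strictly expanding and a nonconstant orbit must exit $\widebar{B}_{\rr^d}\bigl(e,\deltaHess(c_0)\bigr)$ (invoking \cref{prop:extension_local_steep_stable_manifold} so that this ball is contained in the manifold's domain). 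Together with $\phi\to e$ at $+\infty$, this makes $\bigl\{\xi:\abs{\phi(\xi)-e}=\deltaHess(c_0)\bigr\}$ nonempty and bounded above; its supremum is the candidate $\hat{\xi}$.

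The heart of the argument is the strict monotonicity. Set $\eta(\xi):=\abs{\phi(\xi)-e}$ and $f:=\eta^2/2$; a direct computation gives $f''+cf'=\abs{\phi'}^2+(\phi-e)\cdot\nabla V(\phi)$, and combining the Taylor bound $(\phi-e)\cdot\nabla V(\phi)\ge\mu_0\abs{\phi-e}^2$ (from $\nabla V(e)=0$ and \cref{spetrum_of_D2V_of_u_larger_than_mu_0_appendix}) with the elementary inequality $\abs{\phi'}^2\ge\abs{\eta'}^2$ yields, on every open sub-interval of $\{\abs{\phi-e}\le\deltaHess(c_0)\}$ on which $\eta>0$, the differential inequality
\[
\eta''(\xi)+c\,\eta'(\xi)-\mu_0\,\eta(\xi)\ge 0 .
\]
Since $c>c_0=2\sqrt{-\mu_0}$, the polynomial $\lambda^2+c\lambda-\mu_0$ has two distinct real negative roots, namely $\lambda_{c,\pm}(\mu_0)$. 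Setting $u(\xi):=\eta'(\xi)-\lambda_{c,+}(\mu_0)\,\eta(\xi)$ recasts the inequality as $u'\ge\lambda_{c,-}(\mu_0)\,u$, so $\xi\mapsto e^{-\lambda_{c,-}(\mu_0)\xi}\,u(\xi)$ is non-decreasing on intervals of positivity of $\eta$. The inequality $\mu_0<\mu_1$ gives $\lambda_{c,-}(\mu_1)<\lambda_{c,-}(\mu_0)$, and standard stable-manifold asymptotics at $e$ provide that $\eta(\xi)$ decays at $+\infty$ at a rate at least $\abs{\lambda_{c,-}(\mu_1)}$, hence strictly faster than $\abs{\lambda_{c,-}(\mu_0)}$; this forces $e^{-\lambda_{c,-}(\mu_0)\xi}\,u(\xi)\to 0$, and the monotonicity then gives $u\le 0$ throughout, i.e.\ $\eta'\le\lambda_{c,+}(\mu_0)\,\eta<0$ wherever $\eta>0$. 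Starting from $\eta(\hat{\xi})=\deltaHess(c_0)>0$, this makes $\eta$ strictly decreasing on $[\hat{\xi},+\infty)$ and nowhere zero (a zero would force $\eta'\ge 0$ immediately to the right, contradicting the inequality on the next positivity interval), giving both $\abs{\phi-e}<\deltaHess(c_0)$ and $(\phi-e)\cdot\phi'=\eta\eta'<0$ on $(\hat{\xi},+\infty)$, and the uniqueness of $\hat{\xi}$. The main technical obstacle is precisely this last step: upgrading the naive decay $\eta=o(e^{-c\xi/2})$ coming from the definition of ``pushed'' to the sharper rate $o(e^{\lambda_{c,-}(\mu_0)\xi})$ needed to close the monotonicity argument requires a genuine nonlinear stable-manifold asymptotic analysis at $e$ for \cref{syst_trav_front_appendix}, and is where the ``pushed'' hypothesis really pays off.
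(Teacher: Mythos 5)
Your proof of statement \cref{item:lem_asymptotics_at_the_two_ends_of_space_negative_scalar_product} is correct in substance but takes a genuinely different route from the paper's. The paper works with the same scalar quantity $q=\tfrac12\abs{\phi-e}^2$ and the same identity $q''+cq'=\abs{\phi'}^2+(\phi-e)\cdot\nabla V(\phi)\ge\abs{\phi'}^2+\mu_0\abs{\phi-e}^2$, but instead of factorizing a second-order differential inequality for $\eta=\abs{\phi-e}$ it integrates $\frac{d}{d\xi}\bigl(e^{c\xi}q'\bigr)$ over $[\xi_0,+\infty)$ and applies the weighted Poincaré inequality \cref{Poincare_inequality_gamma_equals_c_over_2} to $\phi-e$, obtaining $-e^{c\xi_0}q'(\xi_0)\ge 2\bigl(c^2/4-\abs{\mu_0}\bigr)\int_{\xi_0}^{+\infty}e^{c\xi}q\,d\xi>0$ directly; nonemptiness of $\bigl\{\xi:\abs{\phi(\xi)-e}>\deltaHess(c_0)\bigr\}$ then follows because otherwise $q'$ would stay bounded away from $0$ at $-\infty$ while $q$ stays bounded, with no appeal to stable-manifold theory or to \cref{prop:extension_local_steep_stable_manifold}. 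The trade-off is exactly the one you flag at the end: the Poincaré route needs only $\phi-e\in H^1_c(\rr,\rr^d)$ and $e^{c\xi}q'(\xi)\to0$, both immediate from \cref{def:pushed_travelling_wave_front}, whereas your factorization $u=\eta'-\lambda_{c,+}(\mu_0)\eta$, $u'\ge\lambda_{c,-}(\mu_0)u$ is closed by showing $e^{-\lambda_{c,-}(\mu_0)\xi}u(\xi)\to0$, which demands a decay of $\phi'$ (not just of $\phi-e$) strictly faster than $e^{\lambda_{c,-}(\mu_0)\xi}$ and hence genuine nonlinear stable-manifold asymptotics. You could avoid that extra input: if $e^{-\lambda_{c,-}(\mu_0)\xi_0}u(\xi_0)>0$ at some point of the terminal positivity interval, integrating $\eta'\ge\lambda_{c,+}(\mu_0)\eta+u(\xi_0)e^{\lambda_{c,-}(\mu_0)(\xi-\xi_0)}$ forces $\eta(\xi)\ge C e^{\lambda_{c,+}(\mu_0)\xi}$ with $C>0$ for large $\xi$, contradicting $\eta=o\bigl(e^{-\frac12 c\xi}\bigr)$ since $\lambda_{c,+}(\mu_0)>-c/2$; likewise your somewhat loose ``backward expansion'' justification of nonemptiness is already delivered by your own inequality $\eta'\le\lambda_{c,+}(\mu_0)\eta$ via backward Gronwall. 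For statement \cref{item:lem_asymptotics_at_the_two_ends_of_space_left_end} the paper simply cites \cite{Risler_globCVTravFronts_2008}; your Hamiltonian/LaSalle sketch is the standard argument behind that reference and is fine.
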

\begin{proof}
The proof of statement \cref{item:lem_asymptotics_at_the_two_ends_of_space_left_end} is identical to the proof of the last statement of \cite[Lemma 9]{Risler_globCVTravFronts_2008} (see also the proof of \cite[\globalBehaviourAsymptProfilesLeftEnd]{Risler_globalBehaviour_2016}). Let us prove statement \cref{item:lem_asymptotics_at_the_two_ends_of_space_negative_scalar_product}. For $\xi$ in $(\xi_-,+\infty)$, let us consider the quantity $q(\xi)$ defined as
\[
q(\xi) = \frac{1}{2}\bigl(\phi(\xi)-e\bigr)^2 
\,.
\]
Thus, for every $\xi$ in $(\xi_-,+\infty)$, 
\[
q'(\xi) = \bigl(\phi(\xi)-e\bigr)\cdot\phi'(\xi) 
\quad\text{and}\quad
q''(\xi) +c q'(\xi)  = \phi'(\xi)^2 + \bigl(\phi(\xi)-e\bigr)\cdot\nabla V\bigl(\phi\xi\bigr)
\,.
\]
In accordance with the notation introduced in \cref{def_SigmaFar}, let us consider the set
\[
\SigmaFar{\deltaHess(c_0)} = \bigl\{\xi\in(\xi_-,+\infty): \abs{\phi(\xi)-e}> \deltaHess(c_0)\bigr\}
\,.
\]
If this set is empty, then $\phi$ is bounded, and as a consequence $\xi_-$ is equal to $-\infty$. If it is nonempty, let us consider the quantity 
\[
\hat{\xi} = \sup(\SigmaFar{\deltaHess(c_0)})
\,,
\]
and let us consider the interval $\Iclose{\deltaHess(c_0)}$ defined as
\[
\Iclose{\deltaHess(c_0)} = \left\{
\begin{aligned}
\rr \quad&\text{if $\SigmaFar{\deltaHess(c_0)}$ is empty,}\\
[\hat{\xi},+\infty) \quad&\text{if $\SigmaFar{\deltaHess(c_0)}$ is nonempty.}
\end{aligned}
\right.
\]
It follows from the inclusion \cref{spetrum_of_D2V_of_u_larger_than_mu_0_appendix} that, for every $\xi$ in $\Iclose{\deltaHess(c_0)}$, 
\[
q''(\xi) +c q'(\xi) \ge \phi'(\xi)^2 + \mu_0 \bigl(\phi(\xi)-e\bigr)^2
\,,
\]
or, equivalently, 
\[
\frac{d}{d\xi}\bigl(e^{c\xi}q'(\xi)\bigr) \ge e^{c\xi}\Bigl( \phi'(\xi)^2 - \abs{\mu_0} \bigl(\phi(\xi)-e\bigr)^2\Bigr)
\,.
\]
Since $\phi$ is assumed to be the profile of a pushed travelling wave (\cref{def:pushed_travelling_wave_front}), it follows that, for every $\xi_0$ in $\Iclose{\deltaHess(c_0)}$,  
\[
- e^{c\xi_0} q'(\xi_0) \ge \int_{\xi_0}^{+\infty} e^{c\xi}\Bigl( \phi'(\xi)^2 - \abs{\mu_0} \bigl(\phi(\xi)-e\bigr)^2\Bigr)\, d\xi
\,,
\]
and it follows to Poincaré inequality \cref{Poincare_inequality_gamma_equals_c_over_2} applied to the function $\phi(\cdot)-e$ that
\[
- e^{c\xi_0} q'(\xi_0) \ge 2\left( \frac{c^2}{4} - \abs{\mu_0}\right) \int_{\xi_0}^{+\infty} e^{c\xi} q(\xi) \, d\xi
\,,
\]
so that 
\[
q'(\xi_0) \le - 2\left( \frac{c^2}{4} - \abs{\mu_0}\right) \int_0^{+\infty} e^{c\zeta}  q(\xi_0+\zeta) \, d\zeta
\,,
\]
and since $c$ is assumed to be greater than the quantity $c_0$ introduced in \cref{def_c0_appendix}, it follows that $q'(\xi_0)$ is negative, so that $q(\cdot)$ is strictly decreasing on $\Iclose{\deltaHess(c_0)}$. If in addition the set $\SigmaFar{\deltaHess(c_0)}$ is empty, then $q(\xi)$ must go to some finite positive limit $q_{-\infty}$ as $\xi$ goes to $-\infty$, and it would follow from the previous inequality that 
\[
\limsup_{\xi\to-\infty} q'(\xi)\le - 2\left( \frac{c^2}{4} - \abs{\mu_0}\right) \frac{q_{-\infty}}{c} < 0
\,,
\]
a contradiction. Thus the set $\SigmaFar{\deltaHess(c_0)}$ is nonempty, equality \cref{abs_phi_of_hat_xi_minus_e_equals_deltaHess} follows from the definition of $\hat{\xi}$, and inequalities \cref{abs_phi_of_hat_xi_minus_e_less_than_deltaHess_and_decreasing} from the fact that $q(\cdot)$ is strictly decreasing on $[\hat{\xi},+\infty)$. Statement \cref{item:lem_asymptotics_at_the_two_ends_of_space_negative_scalar_product} is proved. 
\end{proof}
\subsection{Uniform convergence at the right end of space}
Let us keep the notation and assumptions introduced at the beginning of \cref{sec:properties_profiles_pushed_trav_waves}, and let us consider a positive quantity $\delta$, smaller than or equal to $\deltaHess(c_0)$, such that the conclusions of \cref{prop:local_steep_stable_manifold} (local steep stable manifold) hold. The following lemma calls upon the notation $\phi_{c,u}(\cdot)$ introduced in \cref{initial_condition_phi_c_u}. 
\begin{lemma}[uniform convergence at the right end of space]
\label{lem:uniform_convergence_towards_e_wrt_u}
The convergence
\begin{equation}
\label{phi_c_u_goes_to_e_as_xi_goes_to_plus_infty}
\phi_{c,u}(\xi) \to e 
\quad\text{as}\quad
\xi\to+\infty
\end{equation}
is uniform with respect to $u$ in $\partial B_{\rr^d}(e,\delta)$.
\end{lemma}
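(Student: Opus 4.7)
The plan is to argue by contradiction, combining compactness of $\partial B_{\rr^d}(e,\delta)$ with the monotonicity property established in conclusion \cref{item:lem_asymptotics_at_the_two_ends_of_space_negative_scalar_product} of \cref{lem:asymptotics_at_the_two_ends_of_space}, and with continuous dependence of the ODE \cref{syst_trav_front_appendix} on initial conditions.

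First I would observe that, since $\delta\le\deltaHess(c_0)$, for every $u$ in $\partial B_{\rr^d}(e,\delta)$ the quantity $\hat\xi$ associated with $\phi_{c,u}$ in \cref{item:lem_asymptotics_at_the_two_ends_of_space_negative_scalar_product} of \cref{lem:asymptotics_at_the_two_ends_of_space} satisfies $\hat\xi\le 0$; consequently the function $\xi\mapsto\abs{\phi_{c,u}(\xi)-e}$ is (strictly) decreasing on $[0,+\infty)$. Together with the already known pointwise limit \cref{phi_c_u_goes_to_e_as_xi_goes_to_plus_infty} (which is part of \cref{prop:local_steep_stable_manifold}), this yields: if for some positive $\varepsilon$ the inequality $\abs{\phi_{c,u}(\xi_0)-e}\ge\varepsilon$ holds at a single abscissa $\xi_0\ge 0$, then the same inequality holds for every $\xi$ in $[0,\xi_0]$.

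Suppose then, for a contradiction, that the convergence \cref{phi_c_u_goes_to_e_as_xi_goes_to_plus_infty} is not uniform with respect to $u$ in $\partial B_{\rr^d}(e,\delta)$: there exist a positive quantity $\varepsilon$, a sequence $(u_n)_{n\in\nn}$ in $\partial B_{\rr^d}(e,\delta)$ and a sequence $(\xi_n)_{n\in\nn}$ of nonnegative quantities going to $+\infty$, such that $\abs{\phi_{c,u_n}(\xi_n)-e}\ge\varepsilon$ for every $n$ in $\nn$. By compactness of $\partial B_{\rr^d}(e,\delta)$, we may extract a subsequence and assume that $u_n$ converges to some $u_\infty$ in $\partial B_{\rr^d}(e,\delta)$. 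By the monotonicity observation above, for every nonnegative real quantity $T$ and every integer $n$ such that $\xi_n\ge T$, we have $\abs{\phi_{c,u_n}(T)-e}\ge\varepsilon$.

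Now, the map $\wssloc{e}{\delta}{c}(\cdot)$ provided by \cref{prop:local_steep_stable_manifold} is continuous (as the graph map of a local stable manifold), so that the initial conditions $\bigl(u_n,\wssloc{e}{\delta}{c}(u_n)\bigr)$ converge to $\bigl(u_\infty,\wssloc{e}{\delta}{c}(u_\infty)\bigr)$. By continuous dependence of the solutions of the differential system \cref{syst_trav_front_appendix} on initial conditions, and since the trajectory issued from $\bigl(u_\infty,\wssloc{e}{\delta}{c}(u_\infty)\bigr)$ is defined on $[0,+\infty)$, it follows that, for every nonnegative $T$, $\phi_{c,u_n}(T)\to \phi_{c,u_\infty}(T)$ as $n\to+\infty$. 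Passing to the limit in the previous inequality yields $\abs{\phi_{c,u_\infty}(T)-e}\ge\varepsilon$ for every $T\ge 0$, in contradiction with the limit \cref{phi_c_u_goes_to_e_as_xi_goes_to_plus_infty} applied to $u_\infty$. This contradiction proves the lemma. The main (mild) obstacle is purely bookkeeping: checking that the initial data $\wssloc{e}{\delta}{c}(u_n)$ indeed vary continuously with $u_n$, which is a standard outcome of the construction of $\Wssloc{e}{\delta}{c}$ in \cref{prop:local_steep_stable_manifold}.
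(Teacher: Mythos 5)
Your argument is correct and uses exactly the same three ingredients as the paper's proof — the monotonicity of $\xi\mapsto\abs{\phi_{c,u}(\xi)-e}$ on $[0,+\infty)$ from statement \cref{item:lem_asymptotics_at_the_two_ends_of_space_negative_scalar_product} of \cref{lem:asymptotics_at_the_two_ends_of_space}, continuous dependence of solutions of \cref{syst_trav_front_appendix} on initial data (hence continuity of $u\mapsto\wssloc{e}{\delta}{c}(u)$), and compactness of $\partial B_{\rr^d}(e,\delta)$ — merely packaged as a sequential-compactness contradiction where the paper uses a direct finite-subcover argument. This is essentially the same proof.
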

\begin{proof}
For every $u$ in $\partial B_{\rr^d}(e,\delta)$, the limit \cref{phi_c_u_goes_to_e_as_xi_goes_to_plus_infty} follows from the definition \cref{initial_condition_phi_c_u} of $\phi_{c,u}$ (the only thing to prove is that this convergence is uniform). As a consequence, for every positive quantity $\varepsilon$, there exists a positive time $\xi(u,\varepsilon)$ such that $\abs{\phi_{c,u}\bigl(\xi(u,\varepsilon)\bigr)}$ is smaller than $\varepsilon/2$. Thus, by continuity of the solutions of system \cref{syst_trav_front_appendix} with respect to initial conditions, there exists an open neighbourhood $\nu(u,\varepsilon)$ of $u$ in $\partial B_{\rr^d}(e,\delta)$ such that, for every $u'$ in $\nu(u,\varepsilon)$, 
\[
\abs{\phi_{c,u'}\bigl(\xi(u,\varepsilon)\bigr)-e} < \varepsilon
\,.
\]
Since this set $\partial B_{\rr^d}(e,\delta)$ is compact, there exist a finite set $\{u_1,\dots,u_n\}$ of points of this set such that
\[
\partial B_{\rr^d}(e,\delta) \subset \bigcup_{i=1}^n \nu(u_i,\varepsilon)
\,.
\]
According to statement \cref{item:lem_asymptotics_at_the_two_ends_of_space_negative_scalar_product} of \cref{lem:asymptotics_at_the_two_ends_of_space} above, for every $u$ in $\partial B_{\rr^d}(e,\delta)$, the function $\xi\mapsto\abs{\phi_{c,u}(\xi)-e}$ is decreasing on $[0,+\infty)$. It follows that, for every time $\xi$ greater than $\max\bigl(\xi(u_1,\varepsilon),\dots,\xi(u_n,\varepsilon)\bigr)$, and for every $u$ in $\partial B_{\rr^d}(e,\delta)$, 
\[
\abs{\phi_{c,u}(\xi)-e} < \varepsilon
\,,
\]
which is the intended conclusion. 
\end{proof}
\subsection{Extension of the local steep stable manifold until the radius \texorpdfstring{$\deltaHess(c_0)$}{deltaHess(c0)}}
Following the notation of \cref{subsubsec:parametrization_pushed_tw_and_pushed_tf}, let us consider the set $\Wssloc{e}{\deltaHess(c_0)}{c}$ defined as
\[
\begin{aligned}
&\Wssloc{e}{\deltaHess(c_0)}{c} = \Bigl\{(\phi_0,\psi_0)\in\rr^d\times\rr^d: \text{ the solution $\xi\mapsto\phi(\xi)$ of the} \\
&\qquad \text{differential system \cref{syst_trav_front_appendix} with initial condition $\bigl(\phi(0),\phi'(0)\bigr) = (\phi_0,\psi_0)$ satisfies:} \\
&\qquad \text{$\abs{\phi(\xi)-e}\le \deltaHess(c_0)$ for all $\xi$ in $[0,+\infty)$ and $\phi(\xi)-e = o_{\xi\to+\infty}\bigl(e^{-\frac{1}{2}c\xi}\bigr)$}\Bigr\}
\,.
\end{aligned}
\]
The following proposition is the analogue \cite[Lemma~10]{Risler_globCVTravFronts_2008}. The proof is similar, however, for sake of completeness and since the context and the notation significantly differ, a comprehensive proof is provided below. Concerning the proof of the main result provided in \cref{sec:proof}, this \cref{prop:extension_local_steep_stable_manifold} shows that the quantity $\deltaLocMan(\widebar{c})$ introduced in \cref{subsec:convergence} could actually be chosen equal to the quantity $\deltaHess(c_0)$ introduced in \cref{subsec:invasion_point_defined_by_smaller_radius} (allowing a slightly simpler presentation without any significant benefit), see the remark following the notation \cref{abs_u_of_tilde_x_of_t_t_equals_tilde_delta}.
\begin{proposition}[extension of the local steep stable manifold until the radius $\deltaHess(c_0)$]
\label{prop:extension_local_steep_stable_manifold}
The set $\Wssloc{e}{\deltaHess(c_0)}{c}$ is the graph of a $\ccc^1$-map: $\widebar{B}_{\rr^d}\bigl(e,\deltaHess(c_0)\bigr)\to\rr^d$. 
\end{proposition}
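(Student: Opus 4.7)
The plan is to invoke \cref{prop:local_steep_stable_manifold} to obtain a local $\ccc^1$-graph parametrization of the steep stable manifold on a small ball $\widebar{B}_{\rr^d}(e,\delta_*)$ for some $\delta_*\in(0,\deltaHess(c_0)]$, and then extend this parametrization up to $\widebar{B}_{\rr^d}(e,\deltaHess(c_0))$ by backward propagation along system \cref{syst_trav_front_appendix}. Let $M_0=\Wssloc{e}{\delta_*}{c}$ and denote by $\Phi^\xi$ the flow of \cref{syst_trav_front_appendix}. For every $(\phi_0,\psi_0)\in\Wssloc{e}{\deltaHess(c_0)}{c}$, conclusion \cref{item:lem_asymptotics_at_the_two_ends_of_space_negative_scalar_product} of \cref{lem:asymptotics_at_the_two_ends_of_space} ensures that $\xi\mapsto|\phi(\xi)-e|$ is strictly decreasing on $[0,+\infty)$ and tends to $0$, so the trajectory enters $\widebar{B}_{\rr^d}(e,\delta_*)$ at a smallest time $\xi^*\ge 0$ and $\Phi^{\xi^*}(\phi_0,\psi_0)\in M_0$. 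In other words, $\Wssloc{e}{\deltaHess(c_0)}{c}$ is the backward saturation of $M_0$ intersected with $\widebar{B}_{\rr^d}(e,\deltaHess(c_0))\times\rr^d$, which I denote by $N$.

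The core step is a tangent-space computation showing that the projection $\pi:(\phi,\psi)\mapsto\phi$ is, at every point of $N$, a local $\ccc^1$-diffeomorphism onto a neighbourhood in $\widebar{B}_{\rr^d}(e,\deltaHess(c_0))$. A $\ccc^1$-curve $s\mapsto(\phi_0(s),\psi_0(s))$ in $N$ produces, by smoothness of the flow, a variation $\xi\mapsto\delta\phi(\xi)$ of the corresponding pushed trajectory that satisfies the linearized equation $\delta\phi''+c\delta\phi'=D^2V(\phi(\xi))\cdot\delta\phi$ on $[0,+\infty)$ with initial data $(\delta\phi(0),\delta\phi'(0))=(\dot\phi_0(0),\dot\psi_0(0))$, and that decays as $o\bigl(e^{-c\xi/2}\bigr)$ at infinity (the steep decay is inherited from the fact that $M_0$ is tangent to the steep stable eigenspace of the linearization at $e$). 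Multiplying by $e^{c\xi}\delta\phi$ and integrating by parts on $[0,+\infty)$ yields
\[
\int_0^{+\infty} e^{c\xi}\bigl(\delta\phi'(\xi)^2 + D^2V(\phi(\xi))\cdot\delta\phi(\xi)\cdot\delta\phi(\xi)\bigr)\,d\xi = -\delta\phi(0)\cdot\delta\phi'(0)\,.
\]
Combining $D^2V(\phi(\xi))\ge\mu_0 I_d$ (which holds by \cref{spetrum_of_D2V_of_u_larger_than_mu_0_appendix} since $\phi(\xi)\in\widebar{B}_{\rr^d}(e,\deltaHess(c_0))$ for every $\xi\ge 0$) with the Poincaré inequality \cref{Poincare_inequality_gamma_equals_c_over_2} of \cref{lem:Poincare_inequality} applied to $\delta\phi$, and using $c>c_0$ so that $\tfrac{c^2}{4}+\mu_0>0$, I arrive at
\[
\frac{c}{2}\delta\phi(0)^2 + \Bigl(\frac{c^2}{4}+\mu_0\Bigr)\int_0^{+\infty}e^{c\xi}\delta\phi(\xi)^2\,d\xi \le -\delta\phi(0)\cdot\delta\phi'(0)\,.
\]
If $\delta\phi(0)=0_{\rr^d}$ this forces $\delta\phi\equiv 0_{\rr^d}$ on $[0,+\infty)$, hence $\delta\phi'(0)=\dot\psi_0(0)=0_{\rr^d}$. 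Therefore no tangent vector to $N$ has a vanishing $\phi$-component, and the inverse function theorem gives the local $\ccc^1$-graph property.

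It then remains to show that $\pi(N)=\widebar{B}_{\rr^d}(e,\deltaHess(c_0))$. Openness in the closed ball is immediate; for closedness, take $\phi_0^{(n)}\to\phi_0^{(\infty)}$ with $(\phi_0^{(n)},\psi_0^{(n)})\in N$ and let $\xi_n\ge 0$ be the time at which the corresponding trajectory reaches radius $\delta_*$. The differential inequality derived in the proof of \cref{lem:asymptotics_at_the_two_ends_of_space}, namely $q'(\xi)\le -2\bigl(\tfrac{c^2}{4}+\mu_0\bigr)\int_0^{+\infty}e^{c\zeta}q(\xi+\zeta)\,d\zeta$ applied to $q^{(n)}=\tfrac{1}{2}|\phi^{(n)}-e|^2$, combined with the lower bound $q^{(n)}\ge\delta_*^2/2$ on $[0,\xi_n]$, gives a uniform upper bound on $\xi_n$. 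Since $\bigl(\phi^{(n)}(\xi_n),(\phi^{(n)})'(\xi_n)\bigr)$ lies in the compact set $M_0\cap\bigl(\partial B_{\rr^d}(e,\delta_*)\times\rr^d\bigr)$ and backward propagation by the flow on a bounded time interval preserves compactness, the sequence $(\phi_0^{(n)},\psi_0^{(n)})$ is bounded and admits a convergent subsequence whose limit lies in $N$. Hence $\pi(N)$ is closed, and by connectedness equals $\widebar{B}_{\rr^d}(e,\deltaHess(c_0))$. Injectivity of $\pi|_N$ follows from a covering argument: $\pi|_N$ is a proper local diffeomorphism onto the simply connected closed ball, and $\pi^{-1}(e)\cap N=\{(e,0_{\rr^d})\}$, so the covering has degree one. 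Thus $N$ is the graph of a $\ccc^1$-map $\widebar{B}_{\rr^d}(e,\deltaHess(c_0))\to\rr^d$, which is the desired extension $\wssloc{e}{\deltaHess(c_0)}{c}$.

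The main obstacle is the boundedness step in the closedness argument: a priori nothing prevents the initial velocities $\psi_0^{(n)}$ of a family of pushed profiles sharing a common $\phi$-coordinate from escaping to infinity, and ruling this out requires the quantitative control on the entry time $\xi_n$ afforded by the differential inequality of \cref{lem:asymptotics_at_the_two_ends_of_space}. The tangent-space energy identity is the other essential ingredient; it plays, at the linearized level, the structural role that the Poincaré-based lower bounds of \cref{lem:lower_bound_energy_trav_frame} play at the nonlinear level throughout the paper, replacing the comparison-principle arguments unavailable in the vector setting.
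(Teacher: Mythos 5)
Your proof is correct, and its core analytic ingredient --- the tangent-space identity obtained by multiplying the variational equation by $e^{c\xi}\delta\phi$, integrating by parts, and combining $D^2V(\phi)\ge\mu_0 I_d$ with the Poincaré inequality \cref{Poincare_inequality_gamma_equals_c_over_2} to conclude that $\delta\phi(0)=0_{\rr^d}$ forces $\delta\phi\equiv 0_{\rr^d}$ --- is exactly the paper's \cref{lem:transversality}. Where you genuinely diverge is surjectivity: the paper proves it by a topological argument (the maps $h_r:\mathbb{S}^{d-1}\to\mathbb{S}^{d-1}$ obtained by flowing the sphere of radius $\delta$ out to radius $r$ are isotopic to the identity, hence surjective), whereas you show that the image of the projection is open and closed in the connected ball, the closedness resting on a quantitative uniform bound on the entry time into the small ball extracted from the differential inequality in the proof of \cref{lem:asymptotics_at_the_two_ends_of_space}. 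Your route has the merit of establishing properness of the projection explicitly, which the paper in fact needs as well (a local diffeomorphism is a covering only if it is proper) but leaves implicit in the last sentence of its proof; the paper's isotopy argument is shorter but yields surjectivity only, while your compactness argument yields surjectivity and properness in one stroke. One phrase should be tightened: ``backward propagation by the flow on a bounded time interval preserves compactness'' is not true for a general ODE, since solutions may blow up backward in finite time. Here it holds because the $\phi$-component of the connecting trajectory remains in $\widebar{B}_{\rr^d}\bigl(e,\deltaHess(c_0)\bigr)$ throughout $[0,\xi_n]$, so $\nabla V(\phi)$ is bounded along it and Gronwall's lemma controls $\phi'$ on that interval in terms of $\xi_n$ and the data at time $\xi_n$; combined with the uniform bound on $\xi_n$ that you do supply, this closes the properness step.
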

In other words, for $c$ greater than $c_0$, the conclusions of \cref{prop:local_steep_stable_manifold} (defining the local steep stable manifold of $e$) hold for a parameter $\delta$ equal to $\deltaHess(c_0)$. 

The proof of this \cref{prop:extension_local_steep_stable_manifold} will follow from the next two lemmas. Let us consider the projectors 
\begin{equation}
\label{projectors}
\pi_1: \rr^d\times\rr^d\to\rr^d
\,,\quad
(u,v)\mapsto u 
\quad\text{and}\quad
\pi_2: \rr^d\times\rr^d\to\rr^d
\,,\quad
(u,v)\mapsto v
\,,
\end{equation}
and the map 
\[
\piOneRes: \Wssloc{e}{\deltaHess(c_0)}{c} \to \widebar{B}_{\rr^d}\bigl(e,\deltaHess(c_0)\bigr)
\]
defined as the restriction of $\pi_1$ to the departure set $\Wssloc{e}{\deltaHess(c_0)}{c}$ and the arrival set $\widebar{B}_{\rr^d}\bigl(e,\deltaHess(c_0)\bigr)$. 
\begin{lemma}[surjectivity]
\label{lem:surjectivity}
The map $\piOneRes$ is surjective. 
\end{lemma}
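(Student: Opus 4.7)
The plan is to extend the local steep stable manifold $\Wssloc{e}{\delta_1}{c}$ provided by \cref{prop:local_steep_stable_manifold} for some small $\delta_1 \in \bigl(0, \deltaHess(c_0)\bigr]$ via the backward $\xi$-flow of \cref{syst_trav_front_appendix}, and to conclude by a topological degree argument. The cases $u_0 = e$ and $u_0 \in \widebar{B}_{\rr^d}(e, \delta_1)$ are immediate: take $v_0 = 0_{\rr^d}$ for the former, and $v_0 = \wssloc{e}{\delta_1}{c}(u_0)$ for the latter, using that $\widebar{B}_{\rr^d}(e, \delta_1) \subset \widebar{B}_{\rr^d}(e, \deltaHess(c_0))$ so that profiles staying in the small ball a fortiori stay in the large one.

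For the remaining case $r_0 := \abs{u_0 - e} \in \bigl(\delta_1, \deltaHess(c_0)\bigr]$, for each $u \in \partial B_{\rr^d}(e, \delta_1)$ I would let $\phi_u$ denote the maximal solution of \cref{syst_trav_front_appendix} with $\bigl(\phi_u(0), \phi_u'(0)\bigr) = \bigl(u, \wssloc{e}{\delta_1}{c}(u)\bigr)$. Since $\phi_u$ is the profile of a pushed wave, conclusion \cref{item:lem_asymptotics_at_the_two_ends_of_space_negative_scalar_product} of \cref{lem:asymptotics_at_the_two_ends_of_space} yields a unique $\hat\xi_u \le 0$ with $\abs{\phi_u(\hat\xi_u) - e} = \deltaHess(c_0)$ and ensures that $\xi\mapsto\abs{\phi_u(\xi) - e}$ is strictly decreasing on $[\hat\xi_u, +\infty)$, with transverse crossings $(\phi_u - e)\cdot\phi_u' < 0$. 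Consequently, for every $r \in [\delta_1, \deltaHess(c_0)]$ there is a unique $\xi_u^r \in [\hat\xi_u, 0]$ with $\abs{\phi_u(\xi_u^r) - e} = r$, and setting $\sigma_r(u) = \phi_u(\xi_u^r)$ defines a map $\sigma_r: \partial B_{\rr^d}(e, \delta_1) \to \partial B_{\rr^d}(e, r)$. Continuous dependence of solutions of \cref{syst_trav_front_appendix} on their initial data, together with the implicit function theorem (applicable thanks to the strict transversality above), will show that $(u, r) \mapsto \sigma_r(u)$ is jointly continuous.

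I would then conclude by a homotopy argument. Since $\sigma_{\delta_1}$ is the identity of $\partial B_{\rr^d}(e, \delta_1)$, composing $\sigma_r$ with the radial rescaling $x \mapsto e + \delta_1(x - e)/\abs{x - e}$ produces, as $r$ varies in $[\delta_1, \deltaHess(c_0)]$, a continuous homotopy of self-maps of $\partial B_{\rr^d}(e, \delta_1)$ starting at the identity. By invariance of the topological degree, every map in this family has degree $1$ and is therefore surjective, so $\sigma_r$ is surjective onto $\partial B_{\rr^d}(e, r)$ for every $r \in [\delta_1, \deltaHess(c_0)]$. Applied with $r = r_0$, this yields $u \in \partial B_{\rr^d}(e, \delta_1)$ with $\sigma_{r_0}(u) = u_0$, and the translated profile $\tilde\phi(\xi) = \phi_u(\xi + \xi_u^{r_0})$ is a pushed profile with $\tilde\phi(0) = u_0$ satisfying $\abs{\tilde\phi(\xi) - e} \le \deltaHess(c_0)$ for every $\xi \ge 0$, so that $\bigl(u_0, \tilde\phi'(0)\bigr)$ belongs to $\Wssloc{e}{\deltaHess(c_0)}{c}$. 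The main technical hurdle will be establishing the joint continuity of $\sigma_r$ uniformly up to $r = \deltaHess(c_0)$; the case $d = 1$ reduces the degree argument to a straightforward intermediate-value statement, while for $d \ge 2$ it is the classical homotopy invariance of the degree of self-maps of $S^{d-1}$.
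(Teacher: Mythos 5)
Your proposal is correct and follows essentially the same route as the paper's proof: extend the local steep stable manifold by the backward flow, use the strict monotonicity of $\xi\mapsto\abs{\phi_{c,u}(\xi)-e}$ from conclusion \cref{item:lem_asymptotics_at_the_two_ends_of_space_negative_scalar_product} of \cref{lem:asymptotics_at_the_two_ends_of_space} to parametrize each sphere $\partial B_{\rr^d}(e,r)$, and conclude by homotopy invariance of the degree of self-maps of $\mathbb{S}^{d-1}$ (the paper phrases this as the non-existence of a retraction of $\mathbb{S}^{d-1}$ to a point). Your explicit attention to the joint continuity of $(u,r)\mapsto\sigma_r(u)$ via the transversality $(\phi_u-e)\cdot\phi_u'<0$ is a point the paper leaves implicit, but it is not a different argument.
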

\begin{proof}
For every $u$ in $\partial B_{\rr^d}(e,\delta)$, it follows from statement \cref{item:lem_asymptotics_at_the_two_ends_of_space_negative_scalar_product} of \cref{lem:asymptotics_at_the_two_ends_of_space} that there exists a negative quantity $\hat{\xi}_u$ such that the function 
\[
\xi\mapsto \abs{\phi_{c,u}(\xi)-e}
\]
defines a a one-to-one correspondence between the interval $[\hat{\xi}_u,+\infty)$ and the interval $\bigl(0,\deltaHess(c_0)\bigr]$, see \cref{fig:one_to_one_correspondence}. 
\begin{figure}[!htbp]
\centering
\includegraphics[width=.7\textwidth]{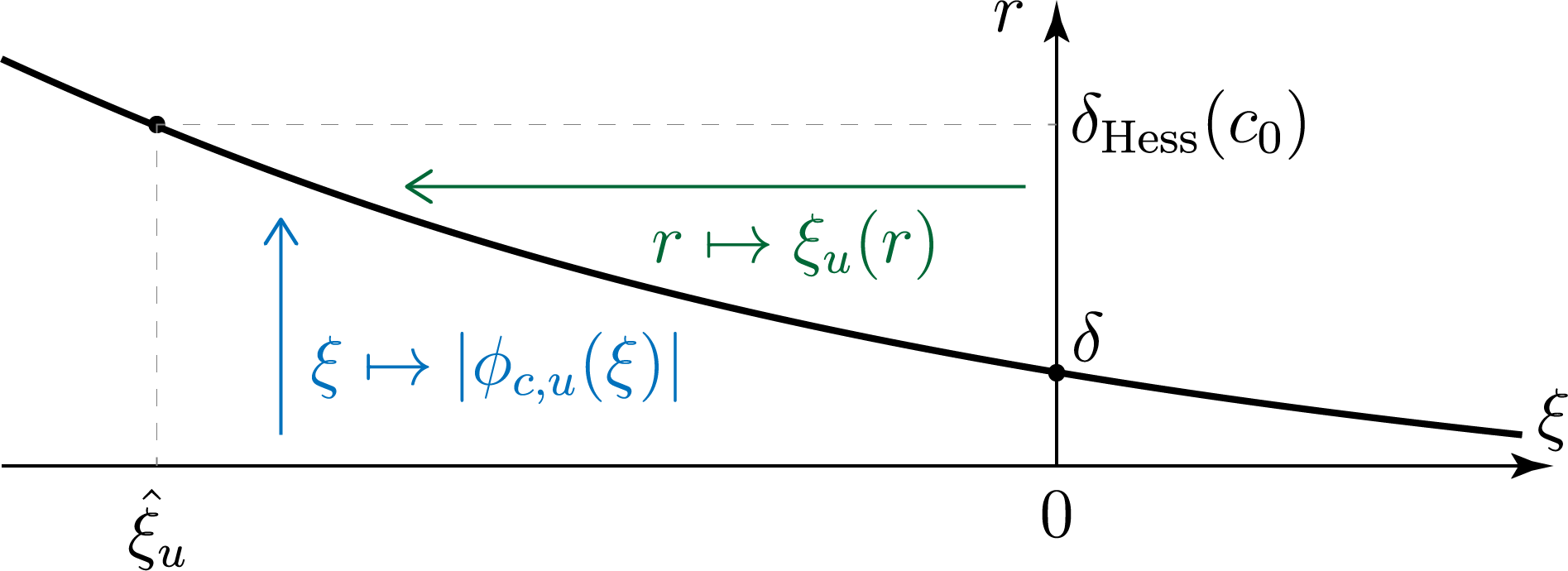}
\caption{One-to-one correspondence $\xi\mapsto\abs{\phi_{c,u}(\xi)}$ and inverse correspondence $r\mapsto\xi_u(r)$.}
\label{fig:one_to_one_correspondence}
\end{figure}
Let 
\[
\bigl(0,\deltaHess(c_0)\bigr]\to [\hat{\xi}_u,+\infty)\,,
\quad
r\mapsto \xi_u(r)
\]
denote the inverse correspondence. Then, for every $r$ in $\bigl(0,\deltaHess(c_0)\bigr]$, 
\[
\Bigl(\phi_{c,u}\bigl(\xi_u(r)\bigr),\phi_{c,u}'\bigl(\xi_u(r)\bigr)\Bigr) \in \Wssloc{e}{\deltaHess(c_0)}{c}
\,,
\]
so that
\[
\phi_{c,u}\bigl(\xi_u(r)\bigr) \in \piOneRes(\Wssloc{e}{\deltaHess(c_0)}{c})
\,.
\]
Let us consider the one-parameter family $(h_r)_{r\in[\delta,\deltaHess(c_0)]}$ of maps from $\mathbb{S}^{d-1}$ to $\mathbb{S}^{d-1}$ defined as
\[
h_r(v) = \frac{1}{r} \Bigl(\phi_{c,\delta v}\bigl(\xi_{\delta v}(r)\bigr)-e\Bigr)
\,.
\]
For every $v$ in $\mathbb{S}^{d-1}$, the quantity $\xi_{\delta v}(\delta)$ is equal to $0$, so that $\phi_{c,\delta v}\bigl(\xi_{\delta v}(r)\bigr)$ is equal to $\delta v$, and as a consequence $h_\delta(v)$ is equal to $v$. Thus $h_\delta$ is the identity of $\mathbb{S}^{d-1}$, so that, for every $r$ in $\bigl[\delta,\deltaHess(c_0)\bigr]$, $h_r$ is isotopic to the identity of $\mathbb{S}^{d-1}$, thus surjective (for topological reasons there is no retraction of $\mathbb{S}^{d-1}$ to a point, and the property ``$h_r$ non surjective'' would lead to the existence of such a retraction).  This shows that, for every $r$ in $\bigl[\delta,\deltaHess(c_0)\bigr]$, the set $\partial B_{\rr^d}(e,r)$ belongs to the image of $\piOneRes$, which is is therefore surjective. 
\end{proof}
Let us denote by $\Wss{e}{c}$ the (global) steep stable manifold of the equilibrium $(e,0_{\rr^d})$ for the differential system \cref{syst_trav_front_appendix}. This set is a $d$-dimensional $\ccc^1$-submanifold of $\rr^{2d}$, containing $\Wssloc{e}{\deltaHess(c_0)}{c}$. 
\begin{lemma}[transversality]
\label{lem:transversality}
For every $(\phi_0,\psi_0)$ in $\Wssloc{e}{\deltaHess(c_0)}{c}$, the intersection between the tangent space $T_{(\phi_0,\psi_0)}\Wss{e}{c}$ and $\{0_{\rr^d}\}\times\rr^d$ is transverse in $\rr^{2d}$. 
\end{lemma}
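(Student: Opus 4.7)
The plan is to reduce the transversality claim to a trivial-intersection claim by a dimension count: both $T_{(\phi_0, \psi_0)}\Wss{e}{c}$ and $\{0_{\rr^d}\}\times\rr^d$ are $d$-dimensional linear subspaces of $\rr^{2d}$, so their intersection being transverse in $\rr^{2d}$ is equivalent to it being reduced to $\{(0_{\rr^d}, 0_{\rr^d})\}$. I would therefore take an arbitrary element $(0_{\rr^d}, w)$ of the intersection and aim to prove that $w=0_{\rr^d}$.

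To exploit membership in the tangent space, I will use the classical identification coming from invariant manifold theory: a vector $(a,b)$ belongs to $T_{(\phi_0, \psi_0)}\Wss{e}{c}$ if and only if the solution $\eta:[0,+\infty)\to\rr^d$ of the linearization of system \cref{syst_trav_front_appendix} along $\phi$,
\[
\eta'' + c\eta' = D^2V(\phi)\cdot\eta,
\]
with initial conditions $\eta(0)=a$ and $\eta'(0)=b$, decays at the steep rate $\eta(\xi) = o\bigl(e^{-\frac{1}{2}c\xi}\bigr)$ as $\xi\to+\infty$. Because this linear ODE has bounded coefficients, $\eta'$ and $\eta''$ inherit the same steep decay, so $\eta$ belongs to $H^1_c\bigl([0,+\infty),\rr^d\bigr)$.

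The key calculation is to multiply the linearized equation by $e^{c\xi}\eta(\xi)$ and integrate over $[0,+\infty)$. Using the identity $\bigl(e^{c\xi}\eta'\bigr)' = e^{c\xi}(\eta''+c\eta')$ and integrating by parts, the boundary contribution at $+\infty$ vanishes thanks to the steep decay of $\eta$ and $\eta'$, while the boundary term at $0$ is $-\eta(0)\cdot\eta'(0)$, which vanishes since $\eta(0)=0_{\rr^d}$. This yields the identity
\[
\int_0^{+\infty}e^{c\xi}\Bigl(\eta'(\xi)^2 + D^2V\bigl(\phi(\xi)\bigr)\cdot\eta(\xi)\cdot\eta(\xi)\Bigr)\, d\xi = 0.
\]
Now I would bring in two ingredients. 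First, since $(\phi_0,\psi_0)$ lies in $\Wssloc{e}{\deltaHess(c_0)}{c}$, the trajectory $\phi(\xi)$ stays in $\widebar{B}_{\rr^d}\bigl(e,\deltaHess(c_0)\bigr)$ for every $\xi\ge 0$, so the inclusion \cref{spetrum_of_D2V_of_u_larger_than_mu_0_appendix} gives the pointwise lower bound $D^2V\bigl(\phi(\xi)\bigr)\cdot\eta\cdot\eta \ge \mu_0\,\eta^2 = -\frac{c_0^2}{4}\eta^2$. Second, because $\eta(0)=0_{\rr^d}$, the Poincaré inequality \cref{Poincare_inequality_gamma_equals_c_over_2} (applied with $\xi_0=0$) yields
\[
\int_0^{+\infty}e^{c\xi}\eta'(\xi)^2 \, d\xi \ge \frac{c^2}{4}\int_0^{+\infty}e^{c\xi}\eta(\xi)^2 \, d\xi,
\]
with strict inequality as soon as $\eta$ is not identically $0_{\rr^d}$ on $[0,+\infty)$. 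Plugging these two bounds into the identity above leads to
\[
0 \ge \frac{c^2-c_0^2}{4}\int_0^{+\infty}e^{c\xi}\eta(\xi)^2\, d\xi,
\]
with strict inequality if $\eta\not\equiv 0_{\rr^d}$. Since $c>c_0$, the prefactor is strictly positive, so this forces $\eta\equiv 0_{\rr^d}$ on $[0,+\infty)$ and hence $w=\eta'(0)=0_{\rr^d}$, as desired.

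The main point to handle with care, though standard, is the identification of the tangent space $T_{(\phi_0,\psi_0)}\Wss{e}{c}$ with the space of initial data of steeply-decaying linearized solutions: this follows from the smoothness of the global steep stable manifold of the hyperbolic equilibrium $(e,0_{\rr^d})$ of the ODE system \cref{syst_trav_front_appendix} and from invariance under the linearized flow; together with the matching dimensions ($d$ steep eigenvalues at $(e,0_{\rr^d})$ when $c>\cLinMax$), it pins down the tangent space uniquely. The integration by parts, once this characterization is in place, is routine thanks to the steep decay.
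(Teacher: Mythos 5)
Your proof is correct and follows essentially the same route as the paper's: identify tangent vectors with steeply decaying solutions of the linearization along $\phi$, then combine the Hessian lower bound valid on $\widebar{B}_{\rr^d}\bigl(e,\deltaHess(c_0)\bigr)$ with the Poincaré inequality \cref{Poincare_inequality_gamma_equals_c_over_2} to force any such solution with $\eta(0)=0_{\rr^d}$ to vanish identically. The only cosmetic differences are that the paper works with $\tilde{q}=\tfrac{1}{2}\tilde{\phi}^2$ and shows $\tilde{q}'(0)<0$ for every nonzero tangent vector (treating the equilibrium $(e,0_{\rr^d})$ separately via the eigenvectors \cref{eigenvectors}), whereas you integrate by parts directly in the case $\eta(0)=0_{\rr^d}$ --- the two computations are identical in substance.
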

\begin{proof}
Take $(\phi_0,\psi_0)$ in $\Wssloc{e}{\deltaHess(c_0)}{c}$. If $(\phi_0,\psi_0)$ equals $(e,0_{\rr^d})$, then the conclusion follows from the expression \cref{eigenvectors} of the eigenvectors of the linearized differential systems \cref{syst_trav_front_order_1_2_linearized}. Let us assume that $(\phi_0,\psi_0)$ differs from $(e,0_{\rr^d})$, let us take a vector $(\tilde{\phi}_0,\tilde{\psi}_0)$ of $\rr^d\times\rr^d$, and let us consider the solution $\xi\mapsto \phi(\xi)$ of the differential systems \cref{syst_trav_front_appendix} for the initial condition $\bigl(\phi(0),\phi'(0)\bigr)$ equals $(\phi_0,\psi_0)$, and the solution $\xi\mapsto \tilde{\phi}(\xi)$ of the differential system
\[
\tilde{\phi}'' = -c \tilde{\phi} + D^2 V\bigl(\phi\bigr) \cdot \tilde{\phi}
\,,\quad\text{for the initial condition}\quad
\bigl(\tilde{\phi}(0),\tilde{\phi}'(0)\bigr)=(\tilde{\phi}_0,\tilde{\psi}_0)
\,.
\]
For every $\xi$ in $[0,+\infty)$, let us write
\[
\tilde{q}(\xi) = \frac{1}{2} \tilde{\phi}(\xi)^2
\,.
\]
Thus, for every $\xi$ in $[0,+\infty)$,
\[
\tilde{q}'(\xi) = \tilde{\phi}(\xi) \cdot \tilde{\phi}'(\xi)
\quad\text{and}\quad
\tilde{q}''(\xi) + c \tilde{q}'(\xi) = \tilde{\phi}'(\xi)^2 + D^2V\bigl(\phi(\xi)\bigr)\cdot \tilde{\phi}(\xi)\cdot \tilde{\phi}(\xi)
\,,
\] 
and it follows from the inclusion \cref{spetrum_of_D2V_of_u_larger_than_mu_0_appendix} that
\[
\tilde{q}''(\xi) + c \tilde{q}'(\xi) \ge \tilde{\phi}'(\xi)^2 + \mu_0 \tilde{\phi}(\xi)^2 
\,,
\]
or equivalently 
\[
\frac{d}{d\xi} \bigl(e^{c\xi}\tilde{q}'(\xi)\bigr) \ge e^{c\xi}\bigl(\tilde{\phi}'(\xi)^2 - \abs{\mu_0} \tilde{\phi}(\xi)^2\bigr)
\,.
\]
The vector $(\tilde{\phi}_0,\tilde{\psi}_0)$ belongs to the tangent space $T_{(\phi_0,\psi_0)}\Wss{e}{c}$ if and only if 
\begin{equation}
\label{condition_for_beeing_in_tangent_space_to_steep_stable_manifold}
\tilde{\phi}(\xi) = o\bigl(e^{-\frac{1}{2}c\xi}\bigr)
\quad\text{as}\quad
\xi\to+\infty
\,.
\end{equation}
If this equality \cref{condition_for_beeing_in_tangent_space_to_steep_stable_manifold} holds, then it follows that, 
\[
\tilde{q}'(0) \ge \int_{0}^{+\infty} e^{c\xi}\Bigl( \tilde{\phi}'(\xi)^2 - \abs{\mu_0} \tilde{\phi}^2\Bigr)\, d\xi
\,,
\]
and it follows from Poincaré inequality \cref{Poincare_inequality_gamma_equals_c_over_2} applied to the function $\tilde{\phi}(\cdot)-e$ that
\[
\tilde{q}'(0) \ge 2\left( \frac{c^2}{4} - \abs{\mu_0}\right) \int_{0}^{+\infty} e^{c\xi} \tilde{q}(\xi) \, d\xi
\,,
\]
and since $c$ is assumed to be greater than the quantity $c_0$ introduced in \cref{def_c0_appendix}, it follows that $\tilde{q}'(0)$ is negative, so that $\tilde{\psi}(0)$ is nonzero, which is the intended conclusion.
\end{proof}
\begin{proof}[Proof of Proposition \ref{prop:extension_local_steep_stable_manifold}]
According to \cref{lem:surjectivity,lem:transversality}, the map $\piOneRes$ defines a covering of $\widebar{B}_{\rr^d}\bigl(e,\deltaHess(c_0)\bigr)$ by $\Wssloc{e}{\deltaHess(c_0)}{c}$, and since $\Wssloc{e}{\deltaHess(c_0)}{c}$ is connected and $\widebar{B}_{\rr^d}\bigl(e,\deltaHess(c_0)\bigr)$ is simply connected, this covering must be a one-to-one correspondence. Let us denote by $\piOneRes^{-1}$ the inverse correspondence. Then, with the notation $\pi_2$ introduced in \cref{projectors}, the local steep stable manifold $\Wssloc{e}{\deltaHess(c_0)}{c}$ is the graph of the $\ccc^1$-map
\[
\pi_2\circ\Wssloc{e}{\deltaHess(c_0)}{c}: \widebar{B}_{\rr^d}\bigl(e,\deltaHess(c_0)\bigr) \to \rr^d
\,,
\]
which is the intended conclusion. 
\end{proof}
\section{An additional upper bound on the speeds of pushed fronts}
\label{sec:upper_bound_speed_pushed_front}
Let us keep the notation and assumptions of the beginning of \cref{sec:properties_profiles_pushed_trav_waves} (until the conditions \cref{cLinMax_less_than_c0_less_than_cQuadHull_appendix}, including these conditions). As in \cref{subsubsec:max_radius_stability_pushed_invasion}, let $\deltaStab(c_0)$ denote the maximal radius of stability of $e$ for pushed invasion at the speed $c_0$, and let us consider the quantity $\cUpp(c_0)$ defined as
\begin{equation}
\label{def_cUpp_preclusion_pushed_fronts_statement}
\cUpp(c_0) = \frac{2\sqrt{\abs{\Vmin}}}{\deltaStab(c_0)}
\,.
\end{equation}
\begin{figure}[!htbp]
\centering
\includegraphics[width=\textwidth]{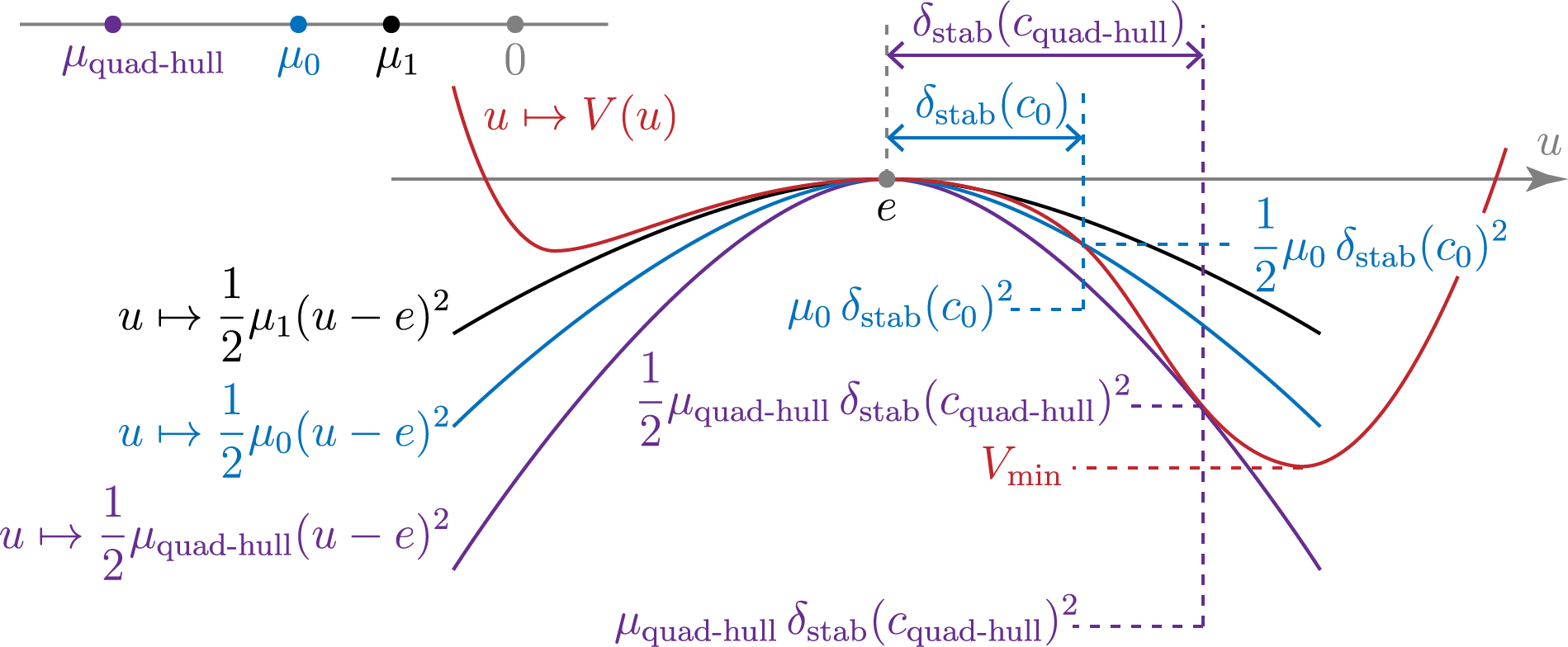}
\caption{Completion of \cref{fig:graph_of_V} with the quantity $\deltaStab(\cQuadHull)$ introduced in \cref{def_deltaStab_of_cQuadHull} and the quantity $\muQuadHull\deltaStab(\cQuadHull)^2$ appearing in \cref{condition_for_cUppDiag_less_than_cQuadHull}.}
\label{fig:graph_of_V_for_appendix}
\end{figure}
Both quantities $\deltaStab(c_0)$ and $\cUpp(c_0)$ can be viewed as functions of the parameter $c_0$, defined on the interval $(\cLinMax,\cQuadHull)$. On this interval, these functions are positive, bounded, and monotone (the function $\deltaStab(\cdot)$ is strictly increasing and the function $\cUpp(\cdot)$ is strictly decreasing), but not necessarily continuous, see \cref{fig:graph_delta_of_c_0}.
\begin{figure}[!htbp]
\centering
\includegraphics[width=.7\textwidth]{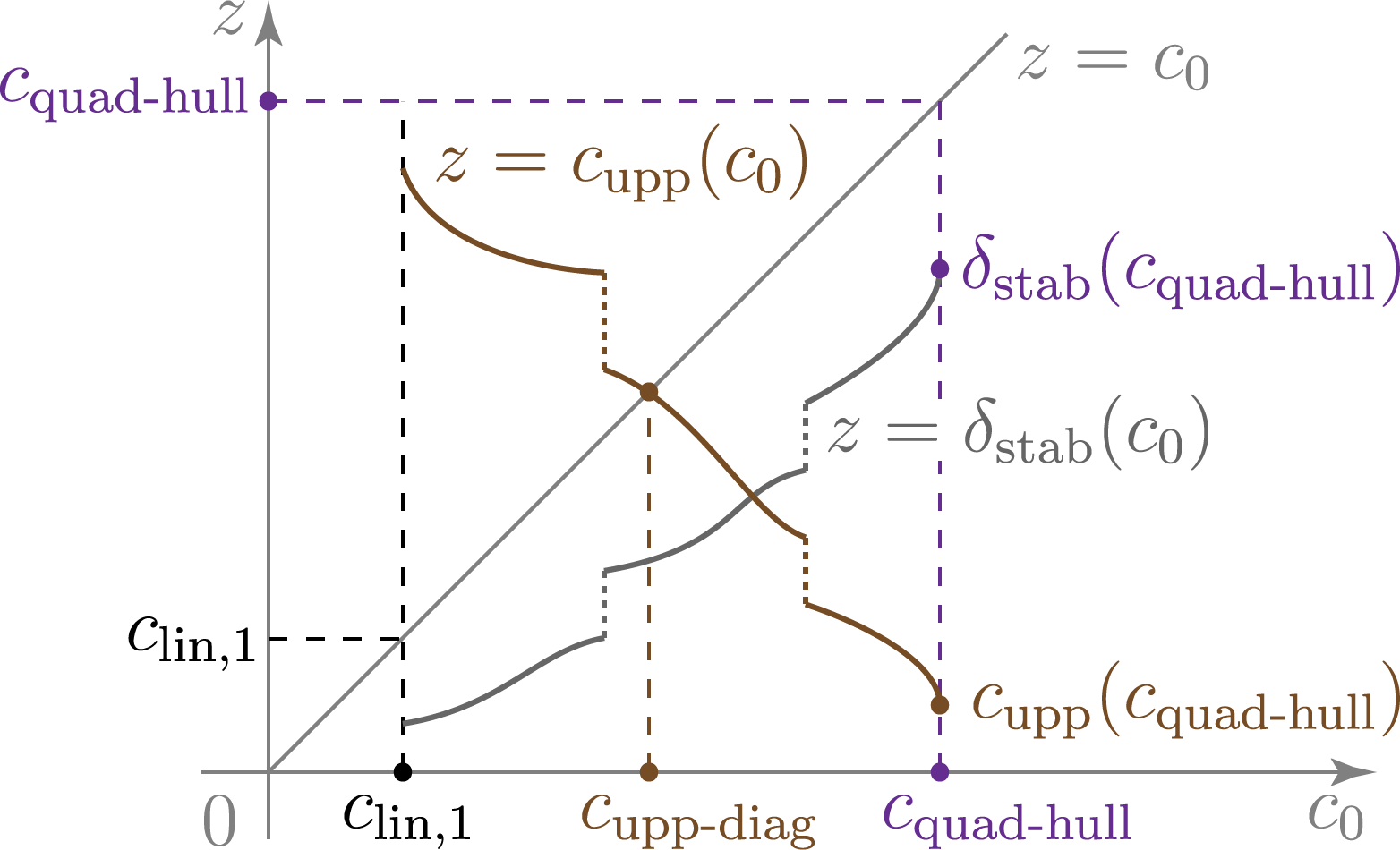}
\caption{Graphs of the functions $c_0\mapsto \deltaStab(c_0)$ and $c_0\mapsto\cUpp(c_0)$ and quantity $\cUppDiag$ defined in \cref{set_defining_cUppDiag}.}
\label{fig:graph_delta_of_c_0}
\end{figure}
The following proposition provides an additional constraint on the speed of a pushed front invading $e$. 
\begin{proposition}
\label{prop:c_smaller_than_cUpp_of_c0}
Let $c$ denote the speed of a pushed front invading $e$. Then, 
\[
c_0 \le c \implies c \le \cUpp(c_0)
\,.
\]
\end{proposition}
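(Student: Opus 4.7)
The plan is to combine two facts: the energy of a pushed front in its own travelling frame vanishes (equality \cref{energy_of_pushed_front_vanishes} of \cref{prop:energy_pushed_front_travelling_frame}), whereas \cref{lem:lower_bound_energy_trav_frame} gives a quantitative positive contribution to this energy as soon as one can locate a point where the profile has norm $\deltaStab(c_0)$. Reducing, by translation, to the case $e = 0_{\rr^d}$ (a change preserving all the quantities involved in the statement), let $\phi$ denote the profile of the pushed front, and observe that $\phi\in H^1_c(\rr,\rr^d)$: at $+\infty$ the pushed decay $\phi(\xi) = O\bigl(e^{\lambda_{c,-}(\mu_j)\xi}\bigr)$ with $2\abs{\lambda_{c,-}(\mu_j)}>c$ makes the integrals of $e^{c\xi}\phi^2$ and $e^{c\xi}(\phi')^2$ converge there, while at $-\infty$ the boundedness of $\phi$ (conclusion \cref{item:lem_asymptotics_at_the_two_ends_of_space_left_end} of \cref{lem:asymptotics_at_the_two_ends_of_space}) is enough since the weight $e^{c\xi}$ is itself exponentially small.

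The key step is to produce a real quantity $\widebar{\xi}$ such that $\abs{\phi(\widebar{\xi})} = \deltaStab(c_0)$ and $\abs{\phi(\xi)} \le \deltaStab(c_0)$ for every $\xi\ge \widebar{\xi}$. The natural candidate is $\widebar{\xi} = \sup\bigl\{\xi\in\rr : \abs{\phi(\xi)} \ge \deltaStab(c_0)\bigr\}$, which, provided this set is nonempty, is finite (since $\phi(\xi)\to 0_{\rr^d}$ as $\xi\to+\infty$) and has both required properties. The nonemptiness of the set follows by contradiction: otherwise $\abs{\phi(\xi)}<\deltaStab(c_0)$ on all of $\rr$, so by definition of $\deltaStab(c_0)$ (see \cref{V_greater_than_mu0_u2_for_abs_u_smaller_than_deltaStab_of_c_0}) the inequality $V\bigl(\phi(\xi)\bigr) \ge \frac{1}{2}\mu_0\phi(\xi)^2$ would hold everywhere, and statement \cref{item:lem_lower_bound_energy_trav_frame_without_bar_xi} of \cref{lem:lower_bound_energy_trav_frame} (applicable since $c\ge c_0$ and $\phi\not\equiv 0_{\rr^d}$) would force $\eee_c[\phi]>0$, contradicting the vanishing $\eee_c[\phi]=0$ provided by \cref{prop:energy_pushed_front_travelling_frame}.

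With such a $\widebar{\xi}$ at hand, the assumption of statement \cref{item:lem_lower_bound_energy_trav_frame_with_bar_xi} of \cref{lem:lower_bound_energy_trav_frame} is met on $[\widebar{\xi},+\infty)$, and inequality \cref{lower_bound_energy_trav_frame_with_bar_xi} yields
\[
0 = \eee_c[\phi] \ge e^{c\widebar{\xi}}\left(-\frac{\abs{\Vmin}}{c} + \frac{1}{2}\abs{\lambda_{c,-}(\mu_0)}\,\deltaStab(c_0)^2\right),
\]
so that the bracketed quantity is non-positive. The elementary bound $\abs{\lambda_{c,-}(\mu_0)} = \frac{c}{2} + \sqrt{\frac{c^2}{4}-\frac{c_0^2}{4}}\ge \frac{c}{2}$ (valid since $c\ge c_0$) then rearranges the inequality into $c^2\deltaStab(c_0)^2 \le 4\abs{\Vmin}$, which is exactly the intended $c\le \cUpp(c_0)$. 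No step is genuinely delicate; the only mild care concerns verifying that $\phi$ does cross the sphere $\partial B_{\rr^d}\bigl(0_{\rr^d},\deltaStab(c_0)\bigr)$, which is secured by the strict positivity provided by statement \cref{item:lem_lower_bound_energy_trav_frame_without_bar_xi} of the same lemma.
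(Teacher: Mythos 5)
Your proposal is correct and follows essentially the same route as the paper's own proof: it combines the vanishing of $\eee_c[\phi]$ from \cref{prop:energy_pushed_front_travelling_frame} with statement \cref{item:lem_lower_bound_energy_trav_frame_without_bar_xi} of \cref{lem:lower_bound_energy_trav_frame} to locate a crossing point $\widebar{\xi}$ of the sphere of radius $\deltaStab(c_0)$, and then applies inequality \cref{lower_bound_energy_trav_frame_with_bar_xi} together with $\abs{\lambda_{c,-}(\mu_0)}\ge c/2$ to conclude. The only differences (defining the level set with $\ge$ rather than $>$, and spelling out why $\phi\in H^1_c(\rr,\rr^d)$) are cosmetic.
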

\begin{proof}
Let $c$ denote the speed of a pushed front invading $e$, let $v$ denote the profile of this front, and let us assume that $c$ is greater than or equal to $c_0$. According to equality \cref{energy_of_pushed_front_vanishes} of \cref{prop:energy_pushed_front_travelling_frame}, the quantity $\eee_c[v]$ vanishes. As a consequence, it follows from inequality \cref{lower_bound_energy_trav_frame_without_bar_xi} of statement \cref{item:lem_lower_bound_energy_trav_frame_without_bar_xi} of \cref{lem:lower_bound_energy_trav_frame} that inequality \cref{V_greater_than_one_half_mu0_v_of_xi_square} cannot hold for every $\xi$ in $\rr$, so that the set 
\[
\bigl\{\xi\in\rr:\abs{v(\xi)-e}>\deltaStab(c_0)\bigr\}
\]
is nonempty. On the other hand, since $v$ is in $H^1_c(\rr,\rr^d)$, this set is bounded from above. Let us denote by $\widebar{\xi}$ the supremum of this set. Then, $\abs{v(\widebar{\xi})-e}$ is equal to $\deltaStab(c_0)$ and inequality \cref{V_greater_than_one_half_mu0_v_of_xi_square} (with $e$ instead of $0_{\rr^d}$) holds for every $\xi$ in $[\widebar{\xi},+\infty)$, so that, according to inequality \cref{lower_bound_energy_trav_frame_with_bar_xi} of statement \cref{item:lem_lower_bound_energy_trav_frame_with_bar_xi} of \cref{lem:lower_bound_energy_trav_frame}, 
\[
\eee_c[v] \ge e^{c\widebar{\xi}}\left(-\frac{1}{c}\abs{\Vmin}+\frac{1}{2}\abs{\lambda_{c,-}(\mu_0)}\deltaStab(c_0)^2\right)
\,.
\]
Since $\eee_c[v]$ vanishes and since $\abs{\lambda_{c,-}(\mu_0)}$ is greater than or equal to $c/2$, it follows that
\[
c\le \frac{2\sqrt{\Vmin}}{\deltaStab(c_0)} = \cUpp(c_0)
\,,
\]
which is the intended conclusion. 
\end{proof}
It follows from this \cref{prop:c_smaller_than_cUpp_of_c0} that the condition
\[
c_0 \le \cUpp(c_0)\,,
\quad\text{or equivalently}\quad
c_0 \le \frac{2\sqrt{\abs{\Vmin}}}{\deltaStab(c_0)} 
\iff
\Vmin \le \mu_0\deltaStab(c_0)^2
\]
(see \cref{fig:graph_of_V_for_appendix}) is mandatory in order pushed fronts invading $e$ at some speed $c$ greater than or equal to $c_0$ to exist. In particular, if the condition
\begin{equation}
\label{condition_cUpp_larger_than_cLin1_for_c_close_to_cLin1}
\cLinMax < \lim_{c_0\to\cLinMax^+}\cUpp(c_0) 
\end{equation}
(see \cref{fig:graph_delta_of_c_0}) is not satisfied, then there exists no pushed front invading $e$ at a speed $c$ greater than $\cLinMax$. Let us therefore assume that this condition \cref{condition_cUpp_larger_than_cLin1_for_c_close_to_cLin1} is fulfilled, and let us consider the quantity $\cUppDiag$ defined as the supremum of the (nonempty) set
\begin{equation}
\label{set_defining_cUppDiag}
\bigl\{ c_0\in(\cLinMax,\cQuadHull): c_0 \le \cUpp(c_0) \bigr\}
\,.
\end{equation}
The following corollary is an immediate consequence of \cref{prop:c_smaller_than_cUpp_of_c0}.
\begin{corollary}
\label{cor:speed_of_a_front_cannot_exceed_cUppDiag}
The speed of a pushed front invading $e$ cannot be greater than the quantity $\cUppDiag$. 
\end{corollary}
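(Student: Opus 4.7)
The plan is to reduce the corollary directly to \cref{prop:c_smaller_than_cUpp_of_c0}, by specializing its auxiliary parameter $c_0$ to the speed $c$ of the pushed front itself. Let $c$ denote the speed of an arbitrary pushed front invading $e$, and distinguish two cases according to whether $c$ exceeds $\cLinMax$ or not.

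The easy case is $c \le \cLinMax$. Condition \cref{condition_cUpp_larger_than_cLin1_for_c_close_to_cLin1} has been assumed, so the set \cref{set_defining_cUppDiag} defining $\cUppDiag$ is nonempty; since every element of that set lies in the interval $(\cLinMax,\cQuadHull)$, its supremum $\cUppDiag$ is strictly larger than $\cLinMax$, and therefore $c \le \cLinMax < \cUppDiag$, which is the desired bound.

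The substantive case is $c > \cLinMax$. Combined with inequality \cref{speed_of_a_pushd_front_is_less_than_cQuadHull} in conclusion \cref{item:prop_basic_properties_variational_structure_inclusion_ccc_0} of \cref{prop:basic_properties_variational_structure}, this places $c$ in the open interval $(\cLinMax,\cQuadHull)$, so that $c$ is an admissible choice for the parameter $c_0$ in the setup \cref{def_c0_appendix}--\cref{cLinMax_less_than_c0_less_than_cQuadHull_appendix} of \cref{sec:properties_profiles_pushed_trav_waves,sec:upper_bound_speed_pushed_front}. Applying \cref{prop:c_smaller_than_cUpp_of_c0} with $c_0 := c$ (the hypothesis $c_0 \le c$ then being a trivial equality) yields $c \le \cUpp(c)$, which is precisely the statement that $c$ belongs to the set \cref{set_defining_cUppDiag}; taking the supremum of that set gives $c \le \cUppDiag$.

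There is no real obstacle: the only point that requires some care is the admissibility of the substitution $c_0 := c$ in \cref{prop:c_smaller_than_cUpp_of_c0}, which rests on the a priori upper bound \cref{speed_of_a_pushd_front_is_less_than_cQuadHull} on the speed of any pushed front invading $e$. The whole argument is therefore a one-line repackaging of \cref{prop:c_smaller_than_cUpp_of_c0} together with the defining property of $\cUppDiag$ as a supremum.
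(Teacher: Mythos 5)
Your proof is correct and is essentially the paper's own argument: the paper simply declares the corollary an immediate consequence of \cref{prop:c_smaller_than_cUpp_of_c0}, and your specialization $c_0:=c$ (legitimate since \cref{speed_of_a_pushd_front_is_less_than_cQuadHull} places $c$ in $(\cLinMax,\cQuadHull)$ and the proposition's hypothesis $c_0\le c$ allows equality) is exactly the intended unpacking, showing $c$ itself lies in the set \cref{set_defining_cUppDiag}. The preliminary case $c\le\cLinMax$ is handled correctly as well, since the nonempty set defining $\cUppDiag$ forces $\cUppDiag>\cLinMax$.
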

Let us consider the continuous extensions of the functions $\deltaStab(\cdot)$ and $\cUpp(\cdot)$ to the interval $(\cLinMax,\cQuadHull]$ defined by
\begin{align}
\label{def_deltaStab_of_cQuadHull}
\deltaStab(\cQuadHull) &= \lim_{c_0\to\cQuadHull^-} \deltaStab(c_0)\,,\\
\nonumber
\text{and}\quad
\cUpp(\cQuadHull) &= \lim_{c_0\to\cQuadHull^-} \cUpp(c_0) = \frac{2\sqrt{\abs{\Vmin}}}{\deltaStab(\cQuadHull)}
\,,
\end{align}
see \cref{fig:graph_of_V_for_appendix,fig:graph_delta_of_c_0}. Observe that, if the following condition holds:
\[
\cUpp(\cQuadHull) < \cQuadHull
\,,
\]
see \cref{fig:graph_delta_of_c_0}, or equivalently:
\begin{equation}
\label{condition_for_cUppDiag_less_than_cQuadHull}
\frac{2\sqrt{\abs{\Vmin}}}{\deltaStab(\cQuadHull)} < \cQuadHull
\iff
\Vmin > \muQuadHull\deltaStab(\cQuadHull)^2
\,,
\end{equation}
see \cref{fig:graph_of_V_for_appendix}, then it follows that
\[
\cUppDiag < \cQuadHull
\,.
\]
In this case, the upper bound on the speeds of pushed fronts provided by \cref{cor:speed_of_a_front_cannot_exceed_cUppDiag} is better than the one provided by conclusion \cref{item:prop_basic_properties_variational_structure_inclusion_ccc_0} of \cref{prop:basic_properties_variational_structure}.
\section{Pulled and pushed travelling fronts in Fisher's model}
\label{sec:Fischer_model}
In the scalar case ($d$ equals $1$), the speed of a pushed front invading $e$ is necessarily greater than $\cLinMax$ (see the expression \cref{def_lambda_c_pm_of_mu} of the eigenvalues of the linearized system \cref{syst_trav_front_order_1_2_linearized}), and it follows from conclusion \cref{item:prop_basic_properties_variational_structure_inclusion_ccc_0} of \cref{prop:basic_properties_variational_structure} that, if $\muQuadHull$ is not less than $\mu_1$, then there is no pushed travelling front invading $e$. This result is well known and goes back (at least) to \cite[Corollary~9]{HadelerRothe_travellingFrontsNonlinearDiffEqu_1975}; in the same reference, Hadeler and Rothe consider the following reformulation of Fisher's (scalar) model \cite{Fisher_waveAdvanceAdvatageousGenes_1937}:
\begin{equation}
\label{Fisher_reaction_term}
u_t = f(u) + u_{xx}\,, 
\quad 
f(u) = u(1-u)\left(1+\frac{u}{\nu}\right) = u + \left(\frac{1}{\nu}-1\right) u^2 - \frac{1}{\nu} u^3
\,, 
\end{equation}
where $\nu$ is a positive parameter. The reaction term $f(u)$ derives from (that is, is equal to minus the derivative of) the potential $V$ defined as
\begin{equation}
\label{Fisher_potential}
V(u) = -\frac{1}{2}u^2 + \frac{1}{3}\left(1-\frac{1}{\nu}\right)u^3 + \frac{1}{4\nu} u^4
\,,
\end{equation}
see \cref{fig:Fisher_potential,fig:Fisher_reaction_term}. 
\begin{figure}[!htbp]
\centering
\begin{subfigure}{.45\textwidth}
\centering
\includegraphics[width=\textwidth]{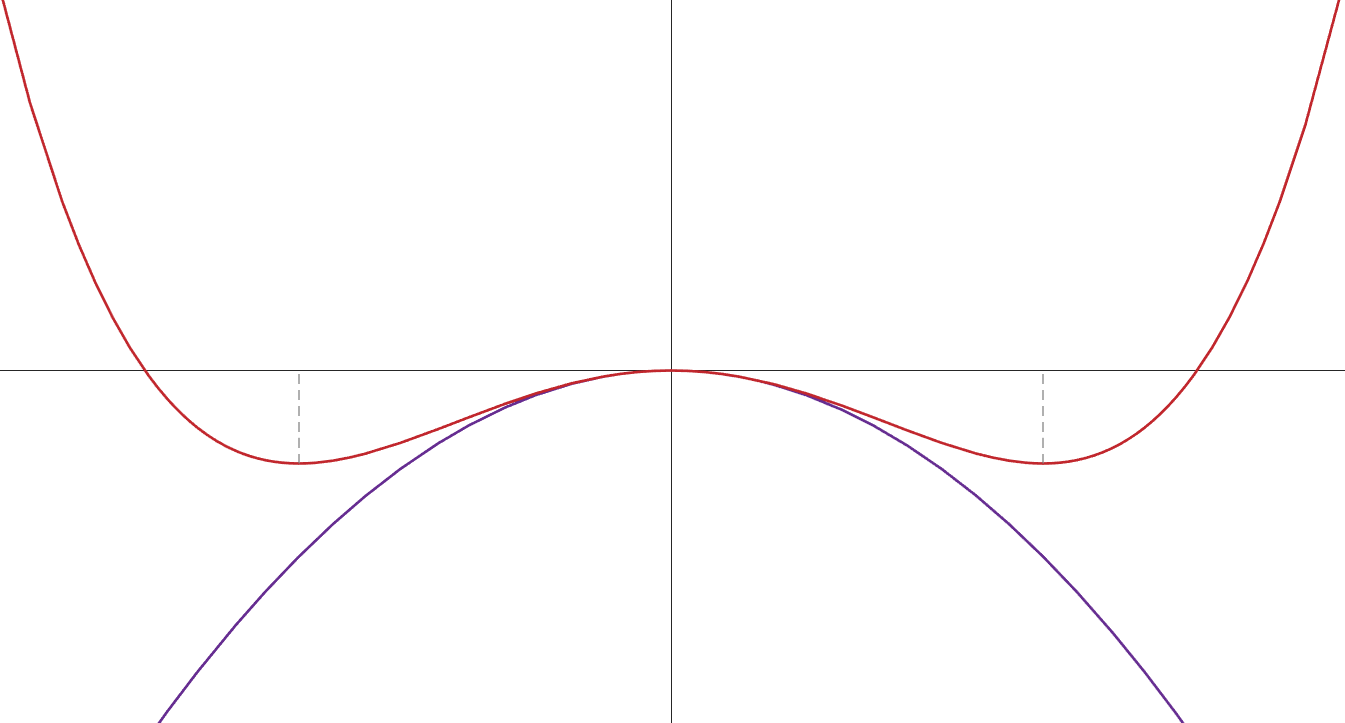}
\caption{$\nu = 1$}
\end{subfigure}
\hspace{.05\textwidth}
\begin{subfigure}{.45\textwidth}
\centering
\includegraphics[width=\textwidth]{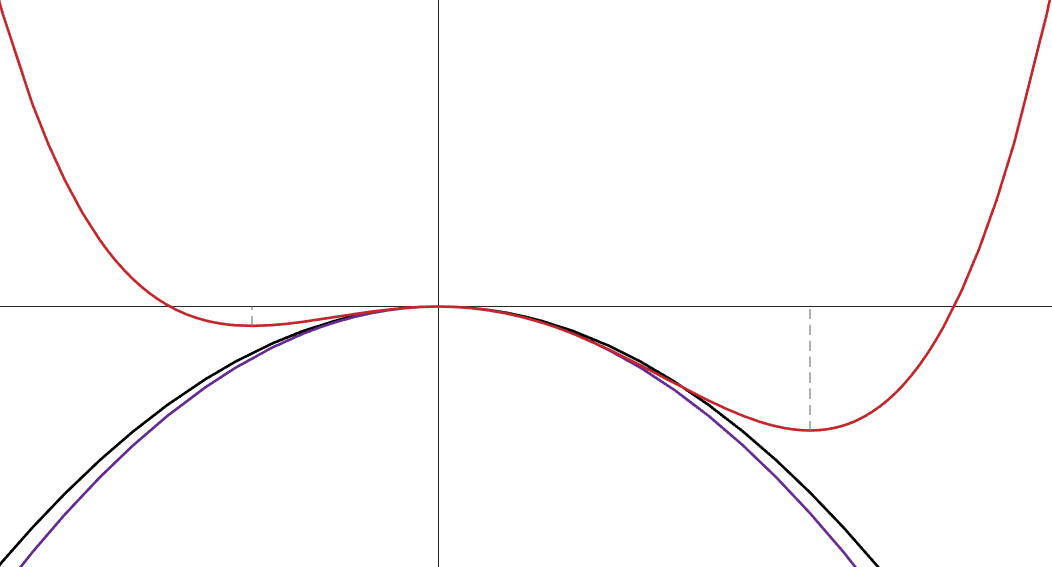}
\caption{$\nu = 1/2$}
\end{subfigure}
\begin{subfigure}{.45\textwidth}
\centering
\includegraphics[width=\textwidth]{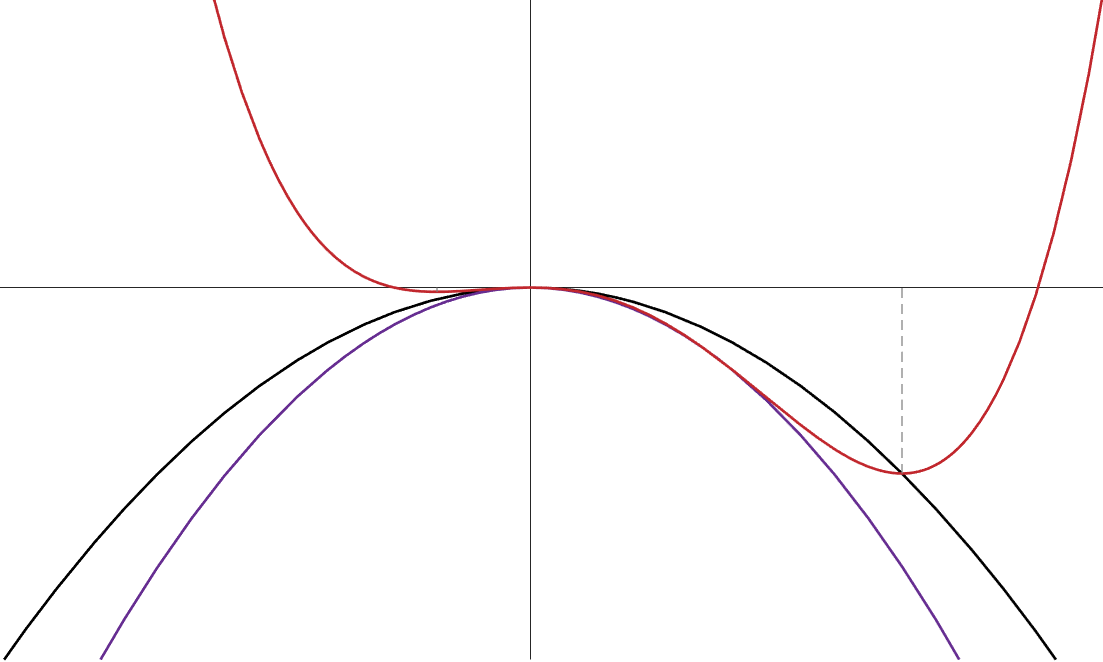}
\caption{$\nu = 1/4$}
\end{subfigure}
\hspace{.05\textwidth}
\begin{subfigure}{.45\textwidth}
\centering
\includegraphics[width=\textwidth]{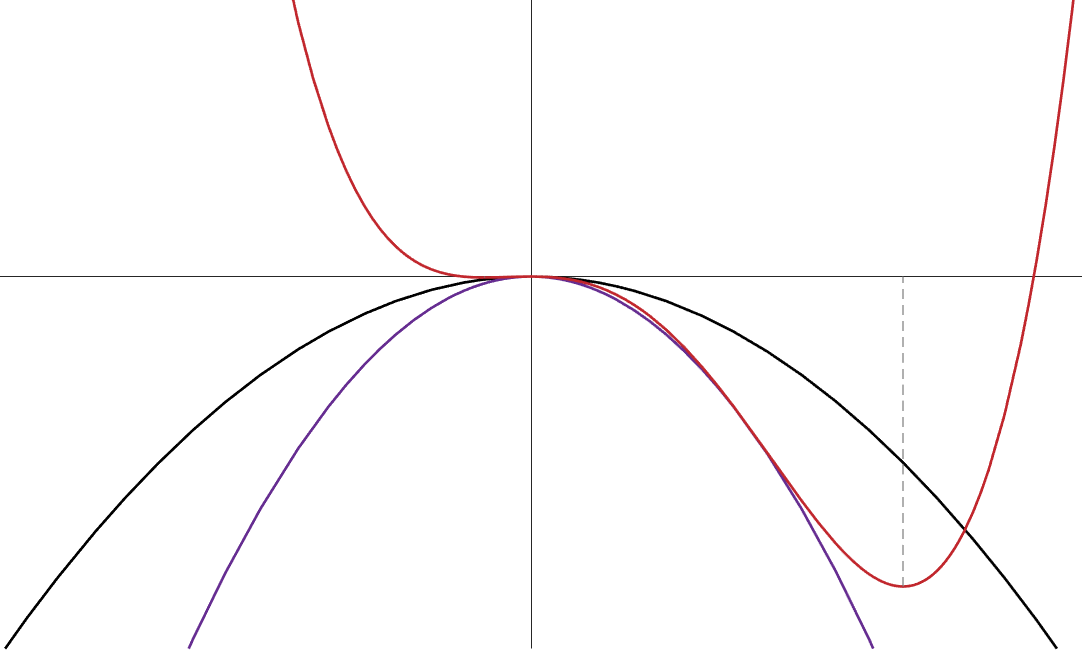}
\caption{$\nu = 1/8$}
\end{subfigure}
\caption{Graphs of the Fisher potential \cref{Fisher_potential} (in red) for various values of the parameter $\nu$. As on \cref{fig:graph_of_V_for_appendix}, the graph of $u\mapsto \frac{1}{2}\mu_1 u^2 = - \frac{1}{2} u^2$ is drawn in black, and the graph of $u\mapsto \frac{1}{2} \muQuadHull u^2$ is drawn in purple.}
\label{fig:Fisher_potential}
\end{figure}
\begin{figure}[!htbp]
\centering
\begin{subfigure}{.28\textwidth}
\centering
\includegraphics[width=\textwidth]{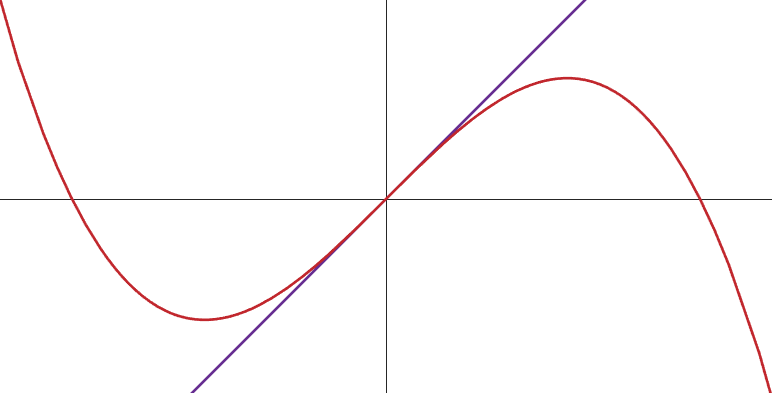}
\caption{$\nu = 1$}
\end{subfigure}
\hspace{.02\textwidth}
\begin{subfigure}{.22\textwidth}
\centering
\includegraphics[width=\textwidth]{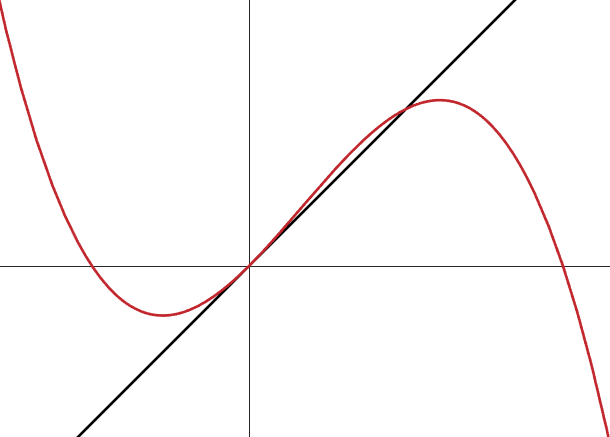}
\caption{$\nu = 1/2$}
\end{subfigure}
\hspace{.02\textwidth}
\begin{subfigure}{.2\textwidth}
\centering
\includegraphics[width=\textwidth]{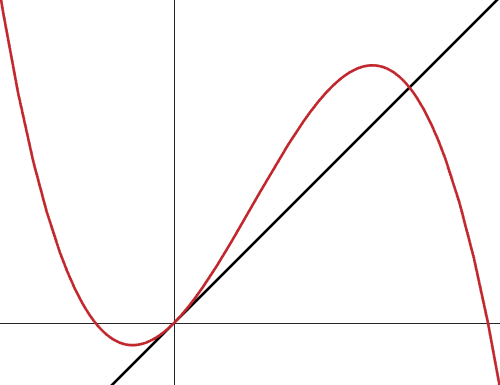}
\caption{$\nu = 1/4$}
\end{subfigure}
\hspace{.02\textwidth}
\begin{subfigure}{.15\textwidth}
\centering
\includegraphics[width=\textwidth]{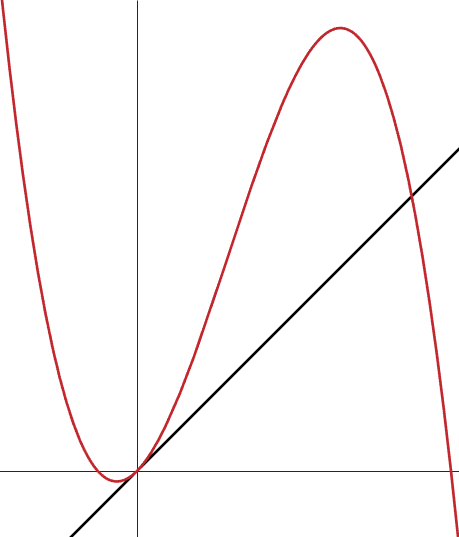}
\caption{$\nu = 1/8$}
\end{subfigure}
\caption{Graphs of the Fisher reaction term \cref{Fisher_reaction_term} (in red), for various values of the parameter $\nu$, and of the linear function $u\mapsto -\mu_1 u = u$ (in black) (this representation is more familiar than the potentials of \cref{fig:Fisher_potential} in the scalar case). The graphs of $u\mapsto -\muQuadHull u$, corresponding to the purple parabolas of \cref{fig:Fisher_potential}, are irrelevant, and therefore not represented.}
\label{fig:Fisher_reaction_term}
\end{figure}
This potential satisfies the coercivity assumption \cref{hyp_coerc} and has three critical points: a local maximum point at $u=0$ and two local minimum points at $u=-\nu$ and $u=1$. Thus, the role of the critical point $e$ considered insofar is played by $0$, and the quantity $\mu_1$ is equal to $V''(0)$, that is to $-1$. For every positive quantity $\nu$, there exist exactly two (up to translation) travelling fronts with monotone profiles invading $0$ and which are either pulled or pushed: one to the right of $0$ (with $1$ as the invading equilibrium) and one to the left of $0$ (with $-\nu$ as the invading equilibrium). When $\nu$ is equal to $1$ the potential $V$ is even and in this case the quantity called upon as $\muQuadHull$ is also equal to $-1$, so that no pushed front exists (see conclusion \cref{item:prop_basic_properties_variational_structure_inclusion_ccc_0} of \cref{prop:basic_properties_variational_structure} and comment above), and both fronts are pulled. As shown in \cite[Theorem~11]{HadelerRothe_travellingFrontsNonlinearDiffEqu_1975}, for $\nu$ between $0$ and $1$, the front to the left of $0$ is still pulled, and the front to the right of $0$ is: pulled if $1/2\le\nu\le1$ (and more precisely, pulled ``variational'' if $\nu$ equals $1/2$, and pulled ``non-variational'' if $1/2<\nu\le1$, \cite{Muratov_globVarStructPropagation_2004,JolyOliverBRisler_genericTransvPulledPushedTFParabGradSyst_2023}), and pushed if $0<\nu < 1/2$; see \cref{table:Fisher_model}.
\begin{table}
\centering
\begin{tabular}{|c|c|c|c|}
\hline
$\nu = 1$ & $0<\cLinMax = \cNonLinMax = \cQuadHull$ & pulled non variational & \cref{item:four_cases_KPP} \\ \hline
$1/2 < \nu<1$ & $0<\cLinMax = \cNonLinMax < \cQuadHull$ & pulled non variational & \cref{item:four_cases_not_enough_pushed} \\ \hline
$\nu = 1/2$ & $0<\cLinMax = \cNonLinMax < \cQuadHull$ & pulled variational & \cref{item:four_cases_not_enough_pushed} \\ \hline
$0 < \nu < 1/2$ & $0<\cLinMax < \cNonLinMax < \cQuadHull$ & pushed & \cref{item:four_cases_strict_inequalities} \\ \hline
\end{tabular}
\caption{Relations between the three quantities $\cLinMax$ and $\cNonLinMax$ and $\cQuadHull$ and nature of the monotone travelling front with minimal speed (pulled or pushed) to the right of $0$ in Fisher's model \cref{Fisher_reaction_term}, depending on the value of the parameter $\nu$. The fourth column contains the number of the ``case'' introduced in \cref{subsubsec:maximal_nonlinear_invasion_speed} and displayed on \cref{fig:line_of_speeds}.} 
\label{table:Fisher_model}
\end{table}
For a similar discussion on the subcritical quintic Ginzburg--Landau equation
\[
u_t = -\mu_1 u + u^3 - u^5  + u_{xx}
\,,
\]
see \cite[section~5]{Muratov_globVarStructPropagation_2004}.
\subsubsection*{Acknowledgements} 
The authors are indebted to Thierry Gallay and Romain Joly for their interest and support through numerous fruitful discussions. 
\emergencystretch=2em % To avoid Overfull \hbox in the bibliography (reference CHen CHen Huang)
\printbibliography 
\bigskip
\RamonsSignature
\bigskip

\mySignature
\end{document}